\newtheorem{theorem}{Theorem}[section]
 \newtheorem{corollary}[theorem]{Corollary}
 \newtheorem{lemma}[theorem]{Lemma}
 \newtheorem{proposition}[theorem]{Proposition}
 \theoremstyle{definition}
 \newtheorem{definition}[theorem]{Definition}
 \theoremstyle{remark}
 \newtheorem{remark}[theorem]{Remark}
 \newtheorem{ex}[theorem]{Example}
 \numberwithin{equation}{section}
\def \bC {\mathbb C}
\def \bN {\mathbb N}
\def \bR {\mathbb R}
\def \bS {\mathbb S}
\def \bT {\mathbb T}
\def \bZ {\mathbb Z}
\def \cB {\mathcal B}
\def \cC {\mathcal C}
\def \cD {\mathcal D}
\def \cF {\mathcal F}
\def \cH {\mathcal H}
\def \cL {\mathcal L}
\def \cM {\mathcal M}
\def \cS {\mathcal S}
\def \cV {\mathcal V}
\def \cX {\mathcal X}
\def \fg {\mathfrak g}
\def \tr {\text{\rm Tr}}
\def \id {\text{\rm I}}
\def \supp {\text{\rm supp}}
\def \spec {\text{\rm Spec}}
\def \RepG {\text{\rm Rep}(G)}
\def \FundG {\text{\rm Fund}(G)}
\def \Fund {\text{\rm Fund}}
\def\Op{{{\rm Op}}}
\def \sL{\mathscr L}
\def\Gh{{\widehat{G}}}
\def\Diff{{\rm Diff }}
\def\rank{{\rm rank }}
\def\L2f{L^2_{\mbox{\tiny finite}}(G)}
\begin{document}

\title[Intrinsic pseudo-differential calculi
on any compact Lie group]
{Intrinsic pseudo-differential calculi\\ 
on any compact Lie group}

\author[V. Fischer]
{V\'eronique Fischer}

\address{Department of Mathematics,
Imperial College London,\\
180 Queen's Gate, 
London SW7 2AZ, 
United Kingdom}


\begin{abstract}
In this paper, 
we define  operators  
  on a compact Lie group in an intrinsic way
by means of symbols using the representations of the group.
The main purpose is to show that
these operators form a symbolic pseudo-differential calculus
 which coincides or generalises 
  the (local) H\"ormander pseudo-differential calculus
on the group viewed as a compact manifold. 
\end{abstract}

\subjclass[2010]{Primary: 43A75; Secondary:  22E30, 58J40, 35S05}                         

\keywords{Harmonic analysis on Lie groups, 
analysis on compact Lie groups, pseudo-differential calculus on manifolds}

\maketitle

\tableofcontents

\section{Introduction}

Over the past  five decades, 
pseudo-differential operators have become a powerful and versatile tool in the analysis of Partial Differential Equations (PDE's) in various contexts.
Although they may be used for global analysis
(essentially in the Euclidean setting), 
they can  be localised 
and this allows one to define them on closed manifolds. 
However, on a closed manifold, 
one can no longer attach a global symbol to a single operator in the calculus
(although one could recover a - partial - global definition of operators on manifolds for instance using linear connections, 
see \cite{safarov_97} and the references therein).
The subject of the present paper is to define globally and intrinsically symbolic calculi
 on a special class of manifolds, 
more precisely on any compact Lie group $G$.
Naturally the first aim of this article is to show that the fundamental properties of 
the calculi hold true, thereby justifying the vocabulary.
The second aim of this article is to prove that our calculi coincide 
with the H\"ormander calculi
localised on $G$ viewed as a compact manifold 
- when the H\"ormander calculi can be defined.
We will also show that it coincides with the calculi proposed by Michael Ruzhansky and Ville Turunen in \cite{ruzhansky+turunen_bk}.
Although this is not the purpose of this paper, 
let us mention that 
several applications to PDE's of the calculi 
have ben obtained by Michael Ruzhansky, Ville Turunen and Jens wirth, 
e.g. construction of parametrices, study of global hypoellipticity, 
see \cite{ruzhansky+turunen+wirth,ruzhansky+turunen_bk} and references therein. 

It is quite natural to define pseudo-differential operators 
globally on the torus
by using Fourier series and considering symbols 
as functions of a variable in the torus and another variable in the integer lattice, see for instance
\cite{ruzhansky+turunen_10} and all the references therein.
Michael Taylor argued
in his monograph \cite[Section I.2]{taylor_bk86}
 that an analogue quantisation is formally true 
on  any Lie group of type 1, considering again symbols as functions of a variable of the group $G$ and another variable of its dual $\Gh$
(which is the set of equivalence classes of the unitary irreducible representations of $G$).
Just afterwards,  Zelditch in \cite{zelditch}
defined a (compactly-supported) symbolic pseudo-differential calculus on a hyperbolic manifold
with a related quantisation.
Pseudo-differential calculi have also been defined on the Heisenberg group
by Taylor in \cite{taylor_bk86}, see also  \cite{bfg} and \cite{fischer+ruzhansky},
 and in other directions by Dynin, Folland, Beals, Greiner, Howe (see \cite{folland_meta} and the references therein).
See also \cite{CGGP} for  
a global pseudo-differential calculus on homogeneous Lie groups 
(although it may not qualify as symbolic, 
being defined in terms of properties of the kernels of the operators).

It would be nearly impossible to review in this introduction the vast literature on classes of operators defined on Lie groups
(especially if one has to include all the studies of spectral multipliers of sub-Laplacians).
Instead, in this article, we focus on pseudo-differential operators, 
in the sense that the operators are not necessarily of convolution type.
In this sense, studies of pseudo-differential calculi on Lie groups  
form a much shorter list and the ones known to the author were mentioned  directly or indirectly earlier in this introduction.  

Following the ideas  in the introduction of \cite{bfg},
let us formalise what  is meant here by a calculus:
\begin{definition}
\label{def_pseudo-diff_calculus}
For each $m\in \bR$, 
let $\Psi^m$  be a given Fr\'echet space of continuous operators $\cD(G)\to\cD(G)$.
We say that the space $\Psi^\infty:=\cup_m \Psi^m$ form a \emph{pseudo-differential calculus} 
when it is an algebra of operators satisfying: 
\begin{enumerate}
\item 
\label{item_def_pseudo-diff_calculus_inclusion}
The continuous inclusions
$\Psi^m \subset \Psi^{m'}$ hold for any $m\leq m'$.
\item
\label{item_def_pseudo-diff_calculus_product}
$\Psi^\infty$ is an algebra of operators.
Furthermore if $T_1\in \Psi^{m_1}$, $T_2\in \Psi^{m_2}$, 
then $T_1T_2\in \Psi^{m_1+m_2}$,
and the composition is continuous as a map
$\Psi^{m_1}\times \Psi^{m_2}\to \Psi^{m_1+m_2}$.
\item
\label{item_def_pseudo-diff_calculus_adjoint}
$\Psi^\infty$ is stable under taking the adjoint.
Furthermore if $T\in \Psi^{m}$  then $T^*\in \Psi^{m}$,
and taking the adjoint is continuous as a map
$\Psi^{m}\to \Psi^{m}$.
\item $\Psi^\infty$ contains the differential calculus on $G$.
More precisely, 
$\Diff^m(G)\subset \Psi^m(G)$
for every $m\in \bN_0$.
 \item
 \label{item_def_pseudo-diff_calculus_sobolev}
  $\Psi^\infty$ is continuous on the Sobolev spaces 
 with the loss of derivatives bounded by the order.
 Moreover, for any $s\in \bR$ and $T\in \Psi^m$, $\|T\|_{\sL(H^{s},H^{s-m})}$ is bounded by a semi-norm of $T\in \Psi^m$, up to a constant of $s,m$  and of the calculus.
 \end{enumerate}
 \end{definition}

The operator classes considered in this paper are defined in 
Section \ref{sec_calculus_def} and denoted by 
$$
\Psi^m_{\rho,\delta}(G),
\quad\mbox{or just}\quad
\Psi^m_{\rho,\delta}, \quad m\in \bR, \ 
1\geq \rho\geq \delta \geq 0, \ \rho\not=0, \ \delta\not=1.
$$
The (localised) H\"ormander class of operators defined on the group $G$ viewed as a manifold is denoted by
$$
\Psi^m_{\rho,\delta}(G,loc),
\quad m\in \bR, \ 
1\geq \rho> \delta \geq 0, \ \rho\geq 1-\delta.
$$
The conditions on the parameters $\rho,\delta$ for $\Psi^m_{\rho,\delta}(G,loc)$ comes from the necessary consistency when changing charts, 
and imply  $\rho>\frac12$.
In this paper, we  show that our classes of operators and the H\"ormander calculi coincide when the latter can be defined:

\begin{theorem}
\label{thm_main0}
Let  $\rho,\delta$ be real numbers with $1\geq \rho\geq\delta\geq 0$
with $\delta\not=1$.
Then $\Psi^\infty_{\rho,\delta}(G):=\cup_{m\in \bR}\Psi^m_{\rho,\delta}(G)$ 
is a calculus on $G$ in the sense of 
Definition \ref{def_pseudo-diff_calculus}.
Moreover, 
if $\rho>\delta$ and $\rho\geq 1-\delta$, 
then this calculus coincides with the H\"ormander calculus 
$\Psi^\infty_{\rho,\delta}(G,loc):=\cup_{m\in \bR}\Psi^m_{\rho,\delta}(G,loc)$ 
on $G$ 
viewed as a compact Riemannian manifold.
\end{theorem}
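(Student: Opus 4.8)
The plan is to prove the two assertions in turn: first that the intrinsically defined classes satisfy the five axioms of Definition \ref{def_pseudo-diff_calculus}, and then that they agree with the local H\"ormander classes when the parameters allow the latter to be defined.

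\textbf{The intrinsic classes form a calculus.} The inclusion $\Psi^m_{\rho,\delta}(G)\subset\Psi^{m'}_{\rho,\delta}(G)$ for $m\le m'$ is immediate from the definition of the symbol classes, since the eigenvalue weight $\langle\pi\rangle$ attached to each $\pi\in\Gh$ is $\ge 1$. For the algebra and adjoint properties I would establish the two fundamental asymptotic expansions of the intrinsic calculus: for $T_1\in\Psi^{m_1}_{\rho,\delta}$ and $T_2\in\Psi^{m_2}_{\rho,\delta}$ the symbol of $T_1T_2$ is asymptotically $\sum_\alpha\tfrac1{\alpha!}(\Delta^\alpha\sigma_{T_1})\,(\partial_x^{(\alpha)}\sigma_{T_2})$ and the symbol of $T^*$ is asymptotically $\sum_\alpha\tfrac1{\alpha!}\Delta^\alpha\partial_x^{(\alpha)}(\sigma_T^*)$, the difference operators $\Delta^\alpha$ lowering the order by $\rho|\alpha|$ and the invariant derivatives $\partial_x^{(\alpha)}$ raising it by at most $\delta|\alpha|$; the core is the quantitative control of the remainder after finitely many terms, obtained from Taylor's formula on $G$ together with kernel estimates, after which an asymptotic-summation lemma both closes the argument and gives continuity of composition and of $T\mapsto T^*$. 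The inclusion $\Diff^m(G)\subset\Psi^m_{\rho,\delta}(G)$ reduces to noting that a left-invariant vector field $X$ has symbol $\pi\mapsto\pi(X)$ with $\|\pi(X)\|_{\mathrm{op}}\lesssim\langle\pi\rangle$ and that difference operators applied to polynomials in such symbols decrease the order, so that $\Diff^m(G)\subset\Psi^m_{1,0}(G)\subset\Psi^m_{\rho,\delta}(G)$, smoothness of the variable coefficients on the compact manifold $G$ being automatic. Finally, for the Sobolev continuity I would first prove an $L^2$-boundedness theorem of Calder\'on--Vaillancourt type for $\Psi^0_{\rho,\delta}(G)$ (valid for all admissible parameters $1\ge\rho\ge\delta\ge0$, $\rho\neq0$, $\delta\neq1$, which is where these restrictions are used), and then bootstrap: fixing an elliptic $\Xi^s\in\Psi^s_{1,0}(G)$ with parametrix $\Xi^{-s}\in\Psi^{-s}_{1,0}(G)$ — for instance a power of $\mathrm I+\cL$ — the operator $\Xi^{s-m}T\Xi^{-s}$ lies in $\Psi^0_{\rho,\delta}(G)$ by the composition property, hence is bounded on $L^2(G)=H^0$; tracking the constants through this argument bounds $\|T\|_{\sL(H^s,H^{s-m})}$ by a seminorm of $T\in\Psi^m_{\rho,\delta}$.

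\textbf{Coincidence with the H\"ormander calculus.} Assume now $\rho>\delta$ and $\rho\ge1-\delta$, so that $\rho>\tfrac12$ and $\Psi^m_{\rho,\delta}(G,loc)$ is well defined. I would prove the two inclusions. For $\Psi^m_{\rho,\delta}(G)\subset\Psi^m_{\rho,\delta}(G,loc)$, fix a finite atlas built from the exponential map and a subordinate partition of unity; for $T$ with symbol in the intrinsic class and $\chi_1,\chi_2\in\cD(G)$ supported in one chart, express $\chi_1 T\chi_2$ via the group Fourier transform and transport it to the chart, so that its Schwartz kernel becomes an oscillatory integral whose amplitude is built from $\sigma_T$; the symbol estimates in $\pi$, combined with the comparison between $\langle\pi\rangle$ and the Euclidean frequency in the chart, then produce classical H\"ormander estimates in $(x,\xi)$, while the off-diagonal smoothness comes from the rapid decay of $\sigma_T$ under difference operators. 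Conversely, for $\Psi^m_{\rho,\delta}(G,loc)\subset\Psi^m_{\rho,\delta}(G)$, localise a H\"ormander operator by the partition of unity, reduce to one chart, and compute the intrinsic symbol $\sigma_T(x,\pi)=\pi(x)^*(T\pi)(x)$; expanding the matrix coefficients of $\pi$ in the chart turns this into an oscillatory integral in the cotangent variable, and the required decay $\langle\pi\rangle^{m-\rho|\alpha|+\delta|\beta|}$ after applying $\Delta^\alpha\partial_x^{(\beta)}$ follows from non-stationary-phase and integration-by-parts estimates, with the borderline condition $\rho\ge1-\delta$ being exactly what keeps the resulting anisotropic estimates consistent. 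Combining the two inclusions with the first part yields the asserted equality of calculi.

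\textbf{Expected main obstacle.} The genuinely hard step should be the inclusion $\Psi^m_{\rho,\delta}(G,loc)\subset\Psi^m_{\rho,\delta}(G)$: one has to extract, from purely local symbol estimates in a cotangent variable $\xi$, the global decay of $\sigma_T(x,\pi)$ and of all its difference-and-derivative transforms in the representation parameter $\pi$, uniformly over $\Gh$. This forces a careful oscillatory-integral analysis comparing the group Fourier transform with the Euclidean one through the exponential chart while respecting the anisotropic $(\rho,\delta)$-scaling, and it is precisely here that the condition $\rho\ge1-\delta$ is needed. Within the first part, the corresponding technical heart is the remainder control in the composition formula, the customary price of any symbolic calculus.
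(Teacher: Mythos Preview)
Your outline has a genuine gap in the first part and takes a different route in the second.

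\textbf{The gap: composition and adjoint when $\rho=\delta$.} Your argument for the algebra and adjoint properties rests entirely on the asymptotic expansions, with remainder control coming from Taylor's formula. But each term $\Delta^\alpha\sigma_1\,\partial_x^{(\alpha)}\sigma_2$ has order $m_1+m_2-(\rho-\delta)|\alpha|$, so when $\rho=\delta$ the expansion does not improve the order at all and the remainder bound you need simply is not there. The theorem, however, asserts the calculus property on the full range $1\ge\rho\ge\delta\ge0$, $\delta\ne1$, including $\rho=\delta$. The paper deals with this by giving a \emph{direct} proof of the composition and adjoint properties (its Propositions \ref{prop_adjoint} and \ref{prop_composition}) that avoids asymptotics altogether: one reduces to very negative order via the identity $\sigma=(1+\lambda_\pi)^N\tau$, applies a crude $L^1$-kernel bound, and then recovers the correct order through a Littlewood--Paley decomposition in $\lambda_\pi$ with uniform control of the dyadic pieces (coming from the multiplier estimate of Proposition \ref{prop_mult_t}). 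The asymptotic-expansion proof you sketch appears in the paper only as a \emph{second} proof, explicitly restricted to $\rho>\delta$ (Sections \ref{subsec_adjoint1}--\ref{subsec_comp1}).

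\textbf{Different route for the H\"ormander coincidence.} For the second assertion you propose a direct comparison: push the intrinsic operator to a chart, write its Schwartz kernel as an oscillatory integral, and compare the representation parameter with the Euclidean frequency (and vice versa for the reverse inclusion). This is plausible but heavy; the paper bypasses it completely. Instead it proves a \emph{commutator characterisation} of $\Psi^m_{\rho,\delta}(G)$ (Propositions \ref{prop_bdd_commutators} and \ref{prop_converse_commutator}, summarised in Corollary \ref{cor_commutator_characterisation}): $T\in\Psi^m_{\rho,\delta}(G)$ if and only if all iterated commutators $L_\Delta^\alpha M_D^\beta T$ are bounded $H^{m-\rho|\alpha|+\delta|\beta|}\to L^2$. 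Since this is exactly Beals' characterisation of the H\"ormander classes on a manifold, the equality $\Psi^m_{\rho,\delta}(G)=\Psi^m_{\rho,\delta}(G,loc)$ follows in a few lines (Corollary \ref{cor_hormander}). This approach buys you a clean, coordinate-free argument and sidesteps the oscillatory-integral bookkeeping you flag as the main obstacle; in the paper's organisation the technical weight sits elsewhere, in the $L^2$-boundedness for $\rho=\delta\in(0,1)$ and in the uniform multiplier estimates of the appendix.
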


We will often abuse the vocabulary and refer to 
the collection of operators 
$\Psi^\infty_{\rho,\delta}(G)$ as a calculus 
although this is the main aim of this paper to show that it is indeed a calculus in the sense of Definition \ref{def_pseudo-diff_calculus}.

\medskip

The ideas and methods used in this article come from the `classical' harmonic analysis on Lie groups.
We show that multipliers in  the Laplace-Beltrami operator $\cL$ are also in the calculus in a uniform way (see Proposition \ref{prop_mult_t}).
For this, we use the well-known properties of the heat kernel of $\cL$ \cite{varo}
and methods regarding spectral multipliers \cite{alexo}.
This enables us to use Littlewood-Payley decompositions
with uniform estimates for the dyadic pieces.
This also allows us to obtain  precise estimates for the kernels 
of the operators in Section \ref{sec_kernel}.

It seems possible to generalise many of these ideas and methods  
to any Lie group of type-1 and with polynomial growth of the volume
and even to some of their quotients.
The resulting calculi would certainly depend on the choice of 
a fixed left-invariant sub-Laplacian.
An important technical problem would come from the fact
that, on a compact Lie group, 
we choose the Laplace-Beltrami operator
which has a scalar group Fourier transform.
This could no longer be assumed for a general  left-invariant sub-Laplacian.
Another technical issue is the use of weight theory in some parts of the proofs, for instance in see \ref{sec_bilinear}.

\medskip

This paper is organised as follows.
After the preliminaries in
Section \ref{sec_preliminary},
we define 
 the symbol and operator classes in Section \ref{sec_calculus_def}
 studied in this paper.
The main result is stated in Section \ref{subsec_main_result},
where the organisation of the proofs is also explained.
In Section \ref{sec_1stprop}, we present some first results.
In Section  \ref{sec_RT}, 
we recall the definition of the calculus proposed by Michael Ruzhansky and Ville Turunen in \cite{ruzhansky+turunen_bk},
and we show that it coincides with our intrinsic definition.
Section \ref{sec_kernel} is devoted to the study of the kernels associated with our symbols.
In Sections \ref{sec_calculus} and \ref{sec_L2bdd+commutator}, 
we show that our calculus indeed satisfies the properties listed in Definition \ref{def_pseudo-diff_calculus}
and that it can be characterised via commutators, 
thereby coinciding with the H\"ormander calculus. 
Some technical results are proved in 
 \ref{sec_multipliers} and \ref{sec_bilinear}.

\medskip

\noindent\textbf{Notation:}
$\bN_0=\{0,1,2,\ldots\}$ denotes the set of non-negative integers
and 
$\bN_0=\{1,2,\ldots\}$ the set of positive integers.
$\lceil \cdot\rceil$, $\lfloor \cdot\rfloor$ denote the upper and lower integer parts of a real number. 
We also set  $(r)_+:=\max(0,r)$ for any $r\in \bR$.
If $\cH_1$ and $\cH_2$ are two Hilbert spaces, we denote by $\sL(\cH_1,\cH_2)$ the Banach space of the bounded operators from $\cH_1$ to $\cH_2$. If $\cH_1=\cH_2=\cH$ then we write $\sL(\cH_1,\cH_2)=\sL(\cH)$.

\section*{Acknowledgement}

The author is very grateful to the anonymous referee for comments which led to the notion of difference operators presented in this paper.

\section{Preliminaries}
\label{sec_preliminary}

In this section, we set the notation for the group
and some of its natural structures,
such as the convolution, its representations, the Plancherel formula, 
and the Laplace-Beltrami  operator.
References for this classical material may include
 \cite{stein_topics} and \cite{knapp_bk}.  

\subsection{Notation and convention regarding objects on the group $G$}
\label{subsec_notG}
In this paper, $G$ always denotes a connected compact Lie group 
and $n$ is its dimension.
Its Lie algebra $\fg$ is the tangent space of $G$ at 
the neutral element $e_G$.
It is always possible to define a left-invariant Riemannian distance on $G$, denoted by $d(\cdot,\cdot)$.
We also denote by $|x|=d(x,e_G)$  the Riemannian distance on the Riemann between $x$ and the neutral element $e_G$
and by $B(r):=\{|x|<r\}$ the ball about $e_G$ of radius $r>0$. 
 In this paper, $R_0$ denotes the maximum radius of the ball around the neutral element, i.e. $B(R_0) = G$, and
$\epsilon_0\in (0,1)$ denotes 
the radius of a ball $B(\epsilon_0)$ 
which gives a chart around the neutral element for the exponential mapping $\exp_G : \fg \to G$.

We may identify the Lie algebra $\fg$ 
with  the space of left-invariant vector fields.
More precisely, if $X\in \fg$, then we denote by $X$ and $\tilde X$ 
the (respectively) left and right invariant vector fields  given by:
$$
X \phi(x) =\partial_{t=0} \phi(x \exp_G(tX)),
\quad\mbox{and}\quad
\tilde X \phi(x) =\partial_{t=0} \phi(\exp_G(tX)x),
$$
respectively, for $x\in G$ and $\phi\in \cD(G)$.
In this paper, $\cD(G)$ denotes the Fr\'echet space of smooth functions on $G$.
One easily checks 
\begin{equation}
\label{eq_X_tildeX}
X \{\phi(\cdot^{-1})\} (x)= -\left(\tilde X \phi\right)(x^{-1}).
\end{equation}

We denote by $\Diff^1(G)$ the space of smooth vector fields on $G$.
It is a left $\cD(G)$-module generated by any basis of left-invariant vector fields 
or by any basis of right-invariant vector fields.
More generally, for $k\in \bN$,
$\Diff^k(G)$ denotes the space of smooth differential operators of order $k$.
Any element of $\Diff^k(G)$ may be written as a linear combination of 
$a_\alpha(x) X^\alpha$, $|\alpha|= k$, where $a_\alpha\in \cD(G)$,
and 
$$
X^\alpha:= X_1^{\alpha_1}\ldots X_n^{\alpha_n},
$$
having fixed a basis $\{X_1,\ldots,X_n\}$ for $\fg$.
We have a similar property with the right-invariant vector fields 
$\tilde X_1,\ldots,\tilde X_n$.
We also set $\Diff^0(G)=\cD(G)$.
We denote by $\Diff(G)= \cup_{k\in \bN_0} \Diff^k(G)$ the $\cD(G)$-module of all the smooth differential operators on $G$.

The Haar measure is normalised to be a probability measure.
It is denoted by $dx$ for integration and the Haar measure of a set $E$ is denoted by $|E|$. 

If $f$ and $g$ are two integrable functions, i.e. in $L^1(G)$, we define their (non-commutative) convolution $f*g\in L^1(G)$ via  
$$
f*g (x) =\int_G f(y) g(y^{-1}x) dy.
$$
The Young's inequalities holds.
The convolution may be generalised to two distributions $f,g\in \cD'(G)$.

If $\kappa \in \cD'(G)$, 
we denote by $T_\kappa:\cD(G)\to \cD(G)$
given via $T_\kappa(\phi)= \phi * \kappa$
the associated convolution operator. 
More generally, in this paper, we will allow ourselves to keep the same notation for a (linear) operator $T:\cD(G)\to\cD'(G)$ and any of its possible extension as a bounded operator on the Sobolev spaces of $G$ since such an extension, when it exists, is unique. 

\subsection{Representations}

In this paper, a representation of $G$ is any continuous group homomorphism $\pi$ from $G$ to the set of automorphisms of a finite dimensional complex space. The continuity implies smoothness.
We will denote this space $\cH_\pi$ or identify it with $\bC^{d_\pi}$,
where $d_\pi=\dim \cH_\pi$, after the choice of a basis.
We see $\pi(g)$ as a linear endomorphism of $\cH_\pi$ or as a $d_\pi\times d_\pi$-matrix. 
It is said to be  \emph{irreducible} if the only sub-spaces invariant under $G$ are trivial.
If $\cH_\pi$ is equipped with an inner product 
(often denoted $(\cdot,\cdot)_{\cH_\pi}$), then  
the representation $\pi$ is \emph{unitary} if $\pi(g)$ is unitary for any $g\in G$.
For any representation $\pi$, one can always find an inner product on $\cH_\pi$ such that $\pi$ is unitary.
If $\pi$ is a representation of the group $G$, then 
$$
\pi(X)=\partial_{t=0} \pi(\exp_G(X))
$$
defines a representation also denoted $\pi$ of $\fg$ and therefore of its universal enveloping Lie algebra (with natural definitions).

If $\pi$ is a representation of $G$, 
then its \emph{coefficients} are any function of the form 
$x\mapsto (\pi(x)u,v)_{\cH_\pi}$.
These are smooth functions on $G$ and 
we denote by $L^2_\pi(G)$ the complex finite dimensional space of coefficients of $\pi$.
If a basis $\{e_1,\ldots, e_{d\pi}\}$ of $\cH_\pi$ is fixed, 
then the \emph{matrix coefficients} of $\pi$
are the coefficients $\pi_{i,j}$, $1\leq i,j\leq d_\pi$ given by 
$\pi_{i,j}(x)=(\pi(x)e_i,e_j)_{\cH_\pi}$.
If $f\in \cD'(G)$ is a distribution and $\pi$ is a unitary representation, 
we can always define its \emph{group Fourier transform} at $\pi$
denoted by
$$
\pi(f)\equiv \widehat f(\pi)\equiv \cF_G f(\pi) \in \sL(\cH_\pi)
$$
 via
$$
 \pi(f)  = \int_G f(x)\pi(x)^* dx,
\quad \mbox{i.e.}\quad
(\pi(f)u,v)_{\cH_\pi} = \int_G f(x)(u,\pi(x)v)_{\cH_\pi} dx,
$$
since the coefficient functions are smooth.
If $f$ is integrable and $\pi$ unitary, we have 
\begin{equation}
\label{eq_cF_L1}
\|\cF_G \kappa(\pi)\|_{\sL(\cH_\pi)}\leq \|\kappa\|_{L^1(G)}.
\end{equation}
One checks easily that the group Fourier transform maps the convolution  of two distributions $f_1,f_2\in \cD'(G)$
to the matrix product or composition of their group Fourier transforms: 
$$
\cF_G(f_1*f_2) = \widehat f_2\ \widehat f_1.
$$

Two representations $\pi_1$ and $\pi_2$ of $G$ are \emph{equivalent} when there exists a map $U:\cH_{\pi_1}\to \cH_{\pi_2}$ intertwining the representations, that is,
such that $\pi_2U =U\pi_1$.  
In this case, 
one checks easily that $L^2_{\pi_1}(G)=L^2_{\pi_2}(G)$.
If $\pi_1$ and $\pi_2$ are unitary, $U$ is also assumed to be unitary. 
The dual of the group $G$, denoted by $\Gh$, is the set of unitary irreducible representations of $G$
modulo unitary equivalence.
We also consider the set $\RepG$ of the equivalence class of unitary representations  modulo unitary equivalence.

\begin{remark}[Convention]
\label{rem_convention}
We will often identify a representation of $G$ and its class in $\Gh$ or $\RepG$. In particular, we consider the Fourier transform of a function to be defined on $\RepG$ and by restriction on $\Gh$.

If $S$ is a linear mapping on the representation space of a unitary representation $\pi_0$, 
then we can consider the set $\dot S$ of linear mappings $USU^{-1}$ over $\cH_{\pi_1}$ where $\pi_1$ runs over all the representation equivalent to $\pi_0 = U \pi_1U^{-1}$ via the intertwining operator $U$.
We will often identify $S$ with the set  $\dot S$
which will be then referred as a linear mapping on $\cH_\pi$ where $\pi\in \Gh$ is the equivalence class 
of $\pi_0$. 
\end{remark}

\begin{theorem}[Peter-Weyl Theorem]
\label{thm_PW}
The dual $\Gh$ is discrete.
The Hilbert space $L^2(G)$ decomposes as the Hilbert direct sum $\oplus_{\pi\in \Gh} L^2_\pi(G)$.
Moreover, if for each $\pi\in \Gh$, one fixes a realisation as a representation with an orthonormal basis of $\cH_\pi$, 
then the functions $\sqrt d_\pi \pi_{i,j}$, $1\leq i,j \leq d_\pi$, $\pi\in \Gh$, form an orthonormal basis of $G$.
\end{theorem}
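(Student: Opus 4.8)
The plan is to argue along the classical lines, via Schur's lemma and the spectral theory of compact operators; the material is standard (see e.g.\ \cite{stein_topics,knapp_bk}), so I only indicate the steps. Since in this paper representations of $G$ are finite-dimensional by definition, the substantive content is the orthogonal decomposition of $L^2(G)$ and the completeness of the system of matrix coefficients; discreteness and countability of $\Gh$ will then follow from separability.

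First I would record the Schur orthogonality relations. Fixing $\pi,\pi'\in\Gh$ realised unitarily with orthonormal bases and an arbitrary linear map $A\colon\cH_{\pi'}\to\cH_\pi$, the averaged operator $\int_G\pi(x)\,A\,\pi'(x)^*\,dx$ intertwines $\pi'$ with $\pi$, so Schur's lemma forces it to vanish when $\pi\not\sim\pi'$ and to equal $(d_\pi^{-1}\tr A)\,\id$ when $\pi=\pi'$; letting $A$ run through rank-one operators yields
$$
\int_G\pi_{ij}(x)\,\overline{\pi'_{kl}(x)}\,dx=\frac{1}{d_\pi}\,\delta_{\pi\pi'}\,\delta_{ik}\,\delta_{jl}.
$$
This shows simultaneously that the functions $\sqrt{d_\pi}\,\pi_{ij}$ are orthonormal in $L^2(G)$, that the spaces $L^2_\pi(G)$ are mutually orthogonal, and that $\dim L^2_\pi(G)=d_\pi^2$; hence $\bigoplus_{\pi\in\Gh}L^2_\pi(G)$ is an orthogonal algebraic direct sum inside $L^2(G)$, and it remains only to prove that its closure, which I call $\cM$, is all of $L^2(G)$.

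For the completeness I would argue by contradiction. Suppose $\cM\neq L^2(G)$ and pick $h\in\cM^{\perp}$ with $h\neq0$; set $h^*(x):=\overline{h(x^{-1})}$, which again lies in $L^2(G)$ by unimodularity, and consider the convolution operator $T_{h^*}\colon\phi\mapsto\phi*h^*$ on $L^2(G)$. Its integral kernel $(x,y)\mapsto h^*(y^{-1}x)$ belongs to $L^2(G\times G)$, so $T_{h^*}$ is Hilbert--Schmidt, hence compact; consequently $S:=T_{h^*}^{*}T_{h^*}$ is a compact, positive, self-adjoint operator, and since convolution on the right commutes with left translations, so does $S$. The key point is that $S$ does not annihilate $h$: indeed $\langle Sh,h\rangle_{L^2}=\|T_{h^*}h\|_{L^2}^2=\|h*h^*\|_{L^2}^2$, and $h*h^*$ is continuous (convolution of two $L^2$ functions on a compact group) with $(h*h^*)(e_G)=\|h\|_{L^2}^2>0$, so $\langle Sh,h\rangle_{L^2}>0$. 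By the spectral theorem for compact self-adjoint operators, $h$ therefore has a nonzero component in some finite-dimensional eigenspace $E_\lambda$ with $\lambda>0$, and $E_\lambda$ is invariant under left translations.

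The last ingredient is that any finite-dimensional subspace $V\subset L^2(G)$ invariant under left translations is contained in $\cM$: decomposing the left-translation representation carried by $V$ into irreducible unitary summands $V=\bigoplus_j V_j$, choosing an orthonormal basis of each $V_j$, and evaluating at $e_G$ the identity expressing $L_z$ in that basis, one finds that every element of $V_j$ is a linear combination of matrix coefficients of an irreducible unitary representation, so $V_j\subset L^2_{\sigma'}(G)$ for some $\sigma'\in\Gh$ and $V\subset\bigoplus_{\pi\in\Gh}L^2_\pi(G)\subset\cM$. Applying this to $V=E_\lambda$ contradicts $h\in\cM^{\perp}$, so $\cM=L^2(G)$. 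Putting everything together, $L^2(G)$ is the Hilbert direct sum $\bigoplus_{\pi\in\Gh}L^2_\pi(G)$, the functions $\sqrt{d_\pi}\,\pi_{ij}$ form an orthonormal basis of $L^2(G)$, and the countability and discreteness of $\Gh$ follow from the separability of $L^2(G)$ together with the mutual orthogonality of the nonzero closed subspaces $L^2_\pi(G)$. I expect the completeness argument to be the main obstacle — concretely, producing from an arbitrary $h\perp\cM$ a compact self-adjoint operator that commutes with one-sided translations and keeps $h$ off its kernel; the remaining steps are routine once Schur's lemma and the spectral theorem are in hand.
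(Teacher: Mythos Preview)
The paper does not prove the Peter--Weyl theorem: it is stated in Section~\ref{sec_preliminary} as a classical preliminary, with references to \cite{stein_topics} and \cite{knapp_bk}, and is used thereafter as a black box (to derive the Plancherel and inversion formulae, and the density of $\L2f$). So there is no proof in the paper to compare your proposal against.

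Your outline is a correct and standard argument. The Schur orthogonality step is fine; the completeness step via the compact self-adjoint operator $S=T_{h^*}^*T_{h^*}$ is the classical route and works as you describe (the verification that $Sh\neq0$ via $(h*h^*)(e_G)=\|h\|_{L^2}^2>0$ is exactly the right observation). One small point worth tightening: you deduce discreteness of $\Gh$ from separability of $L^2(G)$, which gives you countability; to get the topological statement that $\Gh$ is discrete in the Fell topology you would also invoke that finite-dimensional irreducible unitary representations are isolated (a direct consequence of the orthogonality relations and compactness), but in the paper's usage only countability is ever needed, so this is not a genuine gap for the purposes at hand.
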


The Peter-Weyl theorem yields the Plancherel formula:
\begin{equation}
\label{eq_Plancherel}
\int_G|f(x)|^2 dx
=
\sum_{\pi\in \Gh} d_\pi \|\pi(f)\|_{HS(\cH_\pi)}^2,
\quad f\in L^2(G),
\end{equation}
and the Fourier inversion formula
\begin{equation}
\label{eq_inversion}
f(x)
=
\sum_{\pi\in \Gh} d_\pi \tr\left( \pi(x) \pi(f)\right),
\quad f\in \cC(G), \ x\in G.
\end{equation}
Here $\cC(G)$ denotes the (Banach) space of the continuous functions on $G$.

We denote by 
$$
\L2f := \sum_{\pi\in \Gh} L^2_\pi(G),
$$
the vector space 
formed of finite linear sum of vectors in some $L^2_\pi(G)$, $\pi\in \Gh$.
As each $L^2_\pi(G)$ is a finite dimensional subspace of $\cD(G)$, 
$\L2f\subset \cD(G)$.
The Peter-Weyl Theorem can be stated equivalently as follows:
$\L2f$ is dense in $L^2(G)$ and 
\begin{equation}
\label{eq_cq_Peter+Weyl_thm}
d_\pi \widehat \pi (\pi')= \delta_{\pi=\pi'} \id_{\cH_\pi'},
\end{equation}
for any two representations $\pi,\pi'\in \Gh$,
in the sense that 
$d_\pi \widehat \pi_{i,j} (\pi')= \delta_{\pi=\pi'} \delta_{i,j}$
for any $1\leq i,j\leq d_\pi$, when $\pi$ is realised as a matrix representation.

We will also use specific properties of representations on compact Lie groups 
 in relation with the Laplace-Beltrami operator, 
 see below and in \ref{sec_bilinear}.

\subsection{The Laplace-Beltrami operator}

We can decompose the Lie algebra $\fg$ of $G$ as the direct sum
$\fg=\fg_{ss}\oplus \fg_{ab}$
where $\fg_{ss}$ is semi-simple and $\fg_{ab}$ is abelian.
Note that the group $G$ can be written as the direct product of 
the semi-simple Lie group $G_{ss}$ whose Lie algebra is $\fg_{ss}$
together with the torus $\bT^{\dim \fg_{ab}}$ with the same dimension as $\fg_{ab}$:
$G=G_{ss}\times \bT^{\dim \fg_{ab}}$.
Fixing a scalar product on $\fg_{ab}$
and considering the Killing form on $\fg_{ss}$
yield a scalar product on $\fg=\fg_{ss}\oplus^\perp \fg_{ab}$.
The (positive) \emph{Laplace-Beltrami operator} of the compact Lie group $G$ is 
$$
\cL:= -X_1^2-\ldots- X_n^2,
$$
where $X_1,\ldots,X_n$ are left invariant vector fields which form an orthonormal basis of $\fg$.
However $\cL$ does not depend on a particular choice of such a basis. 
Being invariant under left and right translations,
$\cL$ is a central operator
and its group Fourier transform is scalar:
\begin{equation}
\label{eq_piLlambda}
\forall \pi\in \Gh\quad \exists ! \lambda_\pi\in [0,\infty)\quad
\pi(\cL) =  \lambda_\pi \id_{\cH_\pi}.
\end{equation}

We keep the same notation for $\cL$ and its self-adjoint extension on $L^2(G)$
having as domain of definition the space of all functions $f\in L^2(G)$ 
such that $\cL f\in L^2(G)$. 
Then $\cL$ is a positive self-adjoint operator on $L^2(G)$.
The Peter-Weyl Theorem
yields an explicit spectral decomposition for $\cL$ and of its spectrum:
$$
\spec(\cL)=\{\lambda_\pi,\pi\in \Gh\}.
$$
For any $\lambda\in \bR$, we set:
\begin{equation}
\label{eq_cHlambdacL}
\cH_\lambda^{(\cL)}
:= \ker (\cL-\lambda\id).
\end{equation}
The eigenspace corresponding to the eigenvalue $\lambda\in \spec (\cL)$ is:
\begin{equation}
\label{eq_cHlambdacL1}
\cH_\lambda^{(\cL)}=\oplus_{\lambda_\pi=\lambda} L^2_\pi(G).
\end{equation}
If $\lambda\not\in \spec (\cL)$, $\cH_\lambda^{(\cL)}=0$.
Note that $\cH_\lambda^{(\cL)}$ must be finite dimensional. 
Indeed, the operator  $(\id +\cL)^{s/2}$ is Hilbert-Schmidt
as its kernel is square integrable for any $s>n/2$
by Lemma \ref{lem_sob_embedding}.
Alternatively, this can be viewed as a general property of  an elliptic operator on the compact manifold $G$.

The spectral decomposition 
$L^2(G)=\oplus_{\lambda\in \spec(\cL)} \cH_\lambda^{(\cL)}$ 
shows that
for any function $f:[0,\infty)\to \bC$ 
the operator $f(\cL)$ is densely defined on $L^2(G)$.
By the Schwartz kernel theorem, it admits a distributional convolution kernel which we denote by $f(\cL)\delta_e\in \cD'(G)$:
\begin{equation}
\label{eq_not_f(L)delta}
f(\cL)\phi = \phi * (f(\cL)\delta_e),
\quad \phi\in \cD(G).
\end{equation}
The group Fourier transform of this kernel is 
$$
\cF(f(\cL)\delta_e)(\pi) = f(\lambda_\pi), \quad \pi\in \Gh.
$$

The Sobolev spaces $H^s(G)=H^s$ may be defined as the Hilbert space 
which is the closure of $\cD(G)$ for the norm 
$$
\phi\mapsto \|(\id+\cL)^{s/2} \phi\|_{L^2(G)}=\|\phi\|_{H^s}.
$$
If $s=0$ then $H^0=L^2(G)$. 
If $s\in \bN$, then $H^s$ coincides with the space of function $f\in L^2(G)$ such that $Df\in L^2(G)$ for any $D\in \Diff^k$, $k\leq s$
and an equivalent norm is $\sum_{|\alpha|\leq s} \|X^\alpha \cdot\|_{L^2(G)}$.

\begin{proposition}
\label{prop_L2finite_density}
The space
$\L2f$ is dense in each Hilbert space $H^s$ and in the Fr\'echet space $\cD(G) = \cap_{s\in \bR}H^s = \cap_{s\in \bN}H^s $.
\end{proposition}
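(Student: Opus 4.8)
The plan is to exploit the spectral decomposition $L^2(G)=\oplus_{\lambda\in\spec(\cL)}\cH_\lambda^{(\cL)}$ together with \eqref{eq_cHlambdacL1}, which identifies each eigenspace $\cH_\lambda^{(\cL)}$ with a \emph{finite} orthogonal sum of the spaces $L^2_\pi(G)$, $\lambda_\pi=\lambda$. Since $\L2f=\sum_{\pi\in\Gh}L^2_\pi(G)$ contains every such finite sum, it contains every $\cH_\lambda^{(\cL)}$, and hence it equals the algebraic direct sum $\oplus_{\lambda}\cH_\lambda^{(\cL)}$ of \emph{all} the eigenspaces. In particular $\L2f$ is stable under $(\id+\cL)^{s/2}$ for every $s\in\bR$, and on each eigenspace this operator acts as the scalar $(1+\lambda)^{s/2}$.

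First I would fix $s\in\bR$ and $\phi\in H^s$. By definition of $H^s$ as the completion of $\cD(G)$ for $\|(\id+\cL)^{s/2}\cdot\|_{L^2}$, the map $(\id+\cL)^{s/2}$ extends to an isometric isomorphism $H^s\to L^2(G)$; write $\psi:=(\id+\cL)^{s/2}\phi\in L^2(G)$. Since $\L2f$ is dense in $L^2(G)$ by the Peter--Weyl theorem (Theorem \ref{thm_PW}), choose $\psi_k\in\L2f$ with $\psi_k\to\psi$ in $L^2(G)$. Then $\phi_k:=(\id+\cL)^{-s/2}\psi_k$ lies in $\L2f$ (because $\L2f$ is invariant under $(\id+\cL)^{-s/2}$, each $L^2_\pi(G)$ being an eigenspace), and $\|\phi_k-\phi\|_{H^s}=\|\psi_k-\psi\|_{L^2}\to 0$. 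This proves density of $\L2f$ in each $H^s$.

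For the Fréchet space $\cD(G)=\cap_{s\in\bN}H^s$ (the equality of the two intersections follows from the Sobolev embedding, e.g.\ Lemma \ref{lem_sob_embedding}), I would argue that $\L2f$ is dense for the projective-limit topology. Given $\phi\in\cD(G)$ and $N\in\bN$, apply the preceding paragraph with $s=N$: there is $\phi_k\in\L2f$ with $\|\phi_k-\phi\|_{H^N}\to 0$. But on $\L2f$ one has the \emph{monotonicity} $\|\chi\|_{H^{s}}\le\|\chi\|_{H^{s'}}$ for $s\le s'$ and, more to the point, the difference $\phi_k-\phi$ is controlled in every lower norm by its $H^N$ norm: $\|\phi_k-\phi\|_{H^s}\le\|\phi_k-\phi\|_{H^N}$ for all $s\le N$, since $(1+\lambda)^{s/2}\le(1+\lambda)^{N/2}$. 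Hence a single sequence $\phi_k$ converges to $\phi$ simultaneously in $H^0,\dots,H^N$. A standard diagonal argument over $N\to\infty$ then produces a sequence in $\L2f$ converging to $\phi$ in $\cD(G)$.

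The only genuinely delicate point is the bookkeeping in the last step: one must be slightly careful that the approximants $\phi_k$ for a given $N$ need not approximate $\phi$ in $H^{s}$ for $s>N$, so the diagonalisation has to be set up correctly (pass to a subsequence realising, for each fixed $N$, $\|\phi_{k}-\phi\|_{H^N}<1/k$ for $k\ge N$). This is routine once the key structural fact — that $\L2f$ is exactly the algebraic span of \emph{all} eigenspaces of $\cL$ and is therefore invariant under every function of $\cL$, in particular under $(\id+\cL)^{\pm s/2}$ — has been recorded. Everything else is a direct consequence of the Peter--Weyl theorem and the definition of the Sobolev norms.
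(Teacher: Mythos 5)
Your proof is correct and follows the same route as the paper for $H^s$: conjugate by $(\id+\cL)^{\pm s/2}$ and use Peter--Weyl density of $\L2f$ in $L^2(G)$. The one difference is in the $\cD(G)$ step: the paper's approximants are the \emph{spectral truncations} $f_\ell:=$ the orthogonal projection of $f$ onto $\oplus_{\lambda_\pi\leq\ell}L^2_\pi(G)$, which commute with $(\id+\cL)^{\pm s/2}$ and hence give a single $s$-independent sequence converging to $f$ in every $H^s$ simultaneously; your abstract $L^2$-approximants may depend on $s$, forcing the diagonal argument you describe. Both work, but using the spectral truncations from the start makes the diagonalisation unnecessary and the second part of the statement immediate.
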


\begin{proof}[Sketch of the proof of Proposition \ref{prop_L2finite_density}]
If $f\in H^s$, we set $f_s:=(\id+\cL)^{-s/2} f\in L^2(G)$ and 
$f_{s,\ell}$ the orthogonal projection of $f_s$ onto $\oplus_{\lambda_\pi \leq \ell} L^2_\pi(G) \subset \L2f$.
Then one checks easily that $f_\ell:=(\id+\cL)^{-s/2} f_{s,\ell} \in  \oplus_{\lambda_\pi \leq \ell} L^2_\pi(G)$ converges in $H^s$ to $f$.
The rest of the proof is routine using Lemma \ref{lem_sob_embedding}.
\end{proof}

\section{The symbolic calculus}
\label{sec_calculus_def}

The operator classes which are the subject of this paper are presented in this section. 
We introduce the natural quantisation and our notion of symbols 
in Section \ref{subsec_symbol_quantisation}, 
then  in Section \ref{subsec_my_diff_op} 
our concept  of difference operators and symbol classes.
Eventually, in Section \ref{subsec_main_result},
 the main theorem of this paper is stated
 and we present the organisation of its proof.
 
\subsection{Symbols and quantisation}
\label{subsec_symbol_quantisation}

The natural quantisation and notion of symbols on (type 1 locally compact) groups  is due to Michael Taylor \cite{taylor_bk86}.
On compact Lie groups, $\Gh$ is discrete and the natural quantisation 
is greatly  simplified greatly.
In fact, it may be viewed as a generalisation of the Fourier series on tori.

\begin{definition}
\label{def_inv_symbol}
An \emph{invariant symbol} 
is a collection $\sigma=\{\sigma(\pi), \pi\in \Gh\}$ 
where for each $\pi\in \Gh$, $\sigma(\pi)$ 
is a linear map over $\cH_\pi$
(see Remark \ref{rem_convention}).
\end{definition}

Using a different vocabulary, 
an invariant symbol may be defined as a field of operators over 
$\oplus_{\pi\in \Gh} \cH_\pi$ modulo unitary equivalence.

The space of invariant symbols is denoted by
$$
\Sigma=\Sigma(G)=\{\sigma \ \mbox{invariant symbol}\}.
$$
One checks easily that $\Sigma(G)$ is an algebra for the product of linear mappings.

Since $\pi\in \RepG$ may be written as a finite direct sum 
$\pi= \oplus_j \tau_j$ 
of $\tau_j \in \Gh$, 
any invariant symbol may be naturally extended over $\RepG$
via $\sigma(\pi) := \oplus_j \sigma(\tau_j)$.
We will often identify an invariant symbol with its natural extension as a collection over $\RepG$.

\begin{ex}
\label{ex_widehatf_sigma}
The group Fourier transform of a distribution is an invariant symbol:
$$
\widehat f=\{\pi(f), \pi\in \Gh\}\in \Sigma, \qquad
 f\in \cD'(G).
$$
As already noticed, $\widehat f$ may equally be viewed as a collection over $\RepG$.  
\end{ex}

The set $\cF_G \cD'(G)$ is sometimes called the \emph{space of 
Fourier transform} or,  in the case of the tori, of Fourier coefficients.
Example \ref{ex_widehatf_sigma} shows 
$$
\cF_G \cD'(G)\subset \Sigma(G).
$$
The inclusion is strict as the following description of the image of the Sobolev spaces implies:
\begin{lemma}
\label{lem_hs}
\begin{enumerate}
\item Let $s\in \bR$.
An invariant symbol $\sigma\in \Sigma$ is in $\cF_G H^s$ if and only if 
$\|\sigma\|_{h_s(\Gh)}:=(\sum_{\pi\in \Gh} d_\pi (1+\lambda_\pi)^s\|\sigma\|_{HS(\cH_\pi)}^2)^{1/2}$ is finite.
\item 
An invariant symbol $\sigma\in \Sigma$ is in $\cF_G \cD'(G)$ if and only if 
there exists $s\in \bR$ satisfying
$\|\sigma\|_{h_s(\Gh)}<\infty$.
\end{enumerate}
\end{lemma}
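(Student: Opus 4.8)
The plan is to prove both parts directly from the Plancherel formula \eqref{eq_Plancherel} and the definition of the Sobolev norm $\|\phi\|_{H^s}=\|(\id+\cL)^{s/2}\phi\|_{L^2(G)}$. Recall that $\pi(\cL)=\lambda_\pi\id_{\cH_\pi}$ by \eqref{eq_piLlambda}, so $\pi((\id+\cL)^{s/2}\phi)=(1+\lambda_\pi)^{s/2}\pi(\phi)$; hence for $\phi\in\L2f$ (say), Plancherel applied to $(\id+\cL)^{s/2}\phi$ gives $\|\phi\|_{H^s}^2=\sum_{\pi\in\Gh}d_\pi(1+\lambda_\pi)^s\|\pi(\phi)\|_{HS(\cH_\pi)}^2=\|\widehat\phi\|_{h_s(\Gh)}^2$. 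This identifies the Sobolev norm with the weighted $h_s(\Gh)$ norm on the Fourier side, and the density statement of Proposition~\ref{prop_L2finite_density} lets us pass from $\L2f$ to all of $H^s$.

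For part (1), one direction is immediate: if $\phi\in H^s$, approximate by $\phi_\ell\in\L2f$ as in Proposition~\ref{prop_L2finite_density}; the identity above shows $(\widehat{\phi_\ell})_\ell$ is Cauchy in $h_s(\Gh)$, its limit must be $\widehat\phi$ (comparing entrywise, since each $\pi(\phi_\ell)\to\pi(\phi)$ by \eqref{eq_cF_L1} applied on a suitable negative Sobolev space, or by testing against matrix coefficients), so $\|\widehat\phi\|_{h_s(\Gh)}=\|\phi\|_{H^s}<\infty$. Conversely, given an invariant symbol $\sigma$ with $\|\sigma\|_{h_s(\Gh)}<\infty$, define $\phi_\ell:=\sum_{\lambda_\pi\le\ell}d_\pi\tr(\pi(\cdot)\sigma(\pi))\in\L2f$ using the inversion-type formula \eqref{eq_inversion}; one checks $\widehat{\phi_\ell}(\pi)=\sigma(\pi)$ for $\lambda_\pi\le\ell$ and $0$ otherwise, via \eqref{eq_cq_Peter+Weyl_thm}. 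Then $(\phi_\ell)_\ell$ is Cauchy in $H^s$ by the norm identity, so it converges to some $\phi\in H^s$ with $\widehat\phi=\sigma$. A small point to address is that $\sigma$ is a priori only defined up to unitary equivalence, but this is exactly the ambiguity already built into $\cD'(G)$'s Fourier transform and the $\tr(\pi(x)\sigma(\pi))$ construction, so it causes no trouble.

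For part (2): if $\sigma\in\cF_G\cD'(G)$, write $\sigma=\widehat f$ with $f\in\cD'(G)$. Since $\cD'(G)=\cup_{s\in\bR}H^{-s}$ (every distribution on a compact manifold has finite Sobolev order), there is some $s$ with $f\in H^s$, hence $\|\sigma\|_{h_s(\Gh)}<\infty$ by part (1). Conversely, if $\|\sigma\|_{h_s(\Gh)}<\infty$ for some $s\in\bR$, then by part (1) $\sigma=\widehat\phi$ for some $\phi\in H^s\subset\cD'(G)$, so $\sigma\in\cF_G\cD'(G)$. The fact that $\cD'(G)=\bigcup_{s}H^s$ can be taken from standard elliptic theory on the compact manifold $G$, or proved directly: a distribution, being continuous on $\cD(G)$, is bounded by some Sobolev seminorm, hence extends to a bounded functional on $H^s$ for $s$ large, i.e. lies in $H^{-s}$.

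The main obstacle is the bookkeeping around Remark~\ref{rem_convention}: making sure that the entrywise comparison of Fourier transforms, the reconstruction formula $\phi_\ell=\sum_{\lambda_\pi\le\ell}d_\pi\tr(\pi(\cdot)\sigma(\pi))$, and the HS-norm computations are all independent of the choice of representative in each equivalence class in $\Gh$, and that $\tr(\pi(x)\sigma(\pi))$ is well-defined. Once the Plancherel identity $\|\phi\|_{H^s}=\|\widehat\phi\|_{h_s(\Gh)}$ is in hand on $\L2f$, everything else is a routine density and completeness argument; I would present the proof as: (i) establish the norm identity on $\L2f$; (ii) deduce part (1) by density/completeness; (iii) deduce part (2) from part (1) plus $\cD'(G)=\cup_s H^s$.
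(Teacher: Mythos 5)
Your proposal is correct and follows exactly the route the paper sketches: the paper says the lemma "follows readily from" the Plancherel formula \eqref{eq_Plancherel}, the eigenvalue relation \eqref{eq_piLlambda}, and $\cD'(G)=\cup_s H^s(G)$ (via Proposition \ref{prop_L2finite_density}), and your argument is precisely the fleshed-out version of that sketch — establish $\|\phi\|_{H^s}=\|\widehat\phi\|_{h_s(\Gh)}$ on $\L2f$, extend by density and completeness, then deduce part (2) from part (1).
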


The proof of this statement follows readily from the Plancherel formula
\eqref{eq_Plancherel}, 
the definition \eqref{eq_piLlambda} of the eigenvalue $\lambda_\pi$ of $\cL$, 
and the fact (which follows from Proposition \ref{prop_L2finite_density})
that
$\cD'(G)=\cup_{s\in \bR} H^s(G)$.

\begin{definition}
\label{def_symbol+op}
A \emph{symbol} is a collection 
$\sigma=\{\sigma(x,\pi), (x,\pi)\in G\times \Gh\}$
such that for each $x\in G$, $\sigma(x,\cdot)=\{\sigma(x,\pi), (x,\pi)\in G\times \Gh\}$ is an invariant symbol.

The \emph{operator associated} with $\sigma$  is the operator $\Op(\sigma)$ defined on $\L2f$ via
$$
\Op(\sigma) \phi (x)=\sum_{\pi\in \Gh}
d_\pi \tr \left(\pi(x) \sigma(x,\pi) \widehat \phi(\pi)\right),
\quad
\phi\in \L2f, \ x\in G.
$$
\end{definition}

Naturally an invariant symbol is a symbol `which does not depend on $x$'.
In this case, the corresponding operator is a Fourier multiplier.

The Peter-Weyl theorem implies that if an invariant symbol $\sigma$ is bounded in the  sense that the quantity
\begin{equation}
\label{eq_LinftyGh}
\|\sigma\|_{L^\infty(\Gh)}
:=
\sup_{\pi\in \Gh} \|\sigma(\pi)\|_{\sL(\cH_\pi)}
=
\sup_{\pi\in \RepG} \|\sigma(\pi)\|_{\sL(\cH_\pi)},
\end{equation}
is finite, then the corresponding Fourier mulitplier $\Op(\sigma)$
is bounded on $L^2(G)$ with operator norm 
\begin{equation}
\label{eq_L2bdd_LinftyGh}
\|\Op(\sigma)\|_{\sL(L^2(G))}=\|\sigma\|_{L^\infty(\Gh)}
\end{equation}
The converse holds easily: if $\Op(\sigma)$
is bounded on $L^2(G)$ then $\|\sigma\|_{L^\infty(\Gh)}$ is finite.

Note that, using the notation of Lemma \ref{lem_hs},  
the properties of the Hilbert-Schmidt norm easily imply 
that for any invariant symbol $\sigma$ we have (with quantities possibly unbounded):
\begin{equation}
\label{eq_hs_Linfty_norm}
\|\sigma\|_{h_s(\Gh)}
\leq 
C_s
\|\sigma\|_{L^\infty(\Gh)}
\end{equation}
where
$C_s:=\|(1+\lambda_\pi)^{s/2}\|_{h_0(\Gh)}$ is finite whenever 
$s<-n/2$ by Lemma \ref{lem_sob_embedding}.

Naturally, any convolution operators may be viewed as a Fourier multiplier:
\begin{ex}
\label{ex_symbol_op}
If $\kappa\in \cD'(G)$, then 
$\Op(\widehat \kappa)$ extends to the group Fourier multiplier $T_\kappa:\cD(G) \to \cD'(G)$ associated with $\kappa$, that is,
$$
\widehat  {T_\kappa\phi} = \widehat \kappa \phi, \qquad \phi\in \cD(G).
$$
Equivalently, $T_\kappa$ is the convolution operator $T_\kappa:\phi \mapsto \phi *\kappa$.

For instance, if $\kappa=\delta_{e_G}$ is the Dirac mass at the neutral element then $T_\kappa=\id$ is the identity operator on $\cD(G)$.
More generally, for any $\beta\in \bN_0^n$, 
if $\kappa=(X^\beta)^t \delta_{e_G}(y^{-1})$ then $T_\kappa=X^\beta$.

If an operator $T\in \sL(L^2(G))$ is invariant under left-translation, 
that is, $T(f(x_0\cdot))(x)= (Tf) (x_0x)$, $x,x_0\in G$, $f\in L^2(G)$, 
then the Schwartz kernel theorem implies that 
it is a right convolution operator in the sense that there exists $\kappa\in \cD'(G)$ such that $T=T_\kappa:\phi\mapsto \phi*\kappa$ on $\cD(G)$.
Equation \eqref{eq_L2bdd_LinftyGh} yields
\begin{equation}
\label{eq_Tkappa_sup}
\|T_\kappa\|_{\sL(L^2(G))}
=\sup_{\pi\in \Gh} \|\cF_G \kappa(\pi)\|_{\sL(\cH_\pi)}.
\end{equation}
\end{ex}

If $T$ is a linear operator defined on $\L2f$
(and with image some complex-valued functions of $x\in G$), 
then one recovers the symbol via
\begin{equation}
\label{eq_sigma_T}
\sigma(x,\pi)=
\pi (x)^* (T \pi)(x), 
\quad \mbox{that is,}\quad
[\sigma(x,\pi)]_{i,j}=
\sum_{k} \overline{\pi_{ki} (x)} (T \pi_{kj})(x) ,
\end{equation}
when one has fixed a matrix realisation of $\pi$.
This can be easily checked using \eqref{eq_cq_Peter+Weyl_thm}.
This shows that the quantisation $\Op$ defined above is injective.
Moreover  \eqref{eq_sigma_T} makes sense for any $\pi\in \RepG$
and one checks easily that this coincides with the natural extension of $\sigma(x,\cdot)$ to a collection over $\RepG$.

\begin{definition}
\label{def_extended_symbol}
If $\sigma=\{\sigma(x,\pi), (x,\pi)\in G\times \Gh\}$  is a symbol, 
then it extends naturally to the collection 
$\{\pi (x)^* (\Op(\sigma) \pi)(x), (x,\pi)\in G\times \RepG\}$.
We will often keep the same notation for $\sigma$ and 
the extended collection over  $G\times \RepG$.
\end{definition}

\begin{definition}
\label{def_smooth_entries}
A symbol $\sigma=\{\sigma(x,\pi), (x,\pi)\in G\times \Gh\}$ has (resp.)
\emph{continuous, smooth, integrable, square-integrable entries} in $x$ 
when, having fixed one (and then all) matrix realisation of each $\pi\in \Gh$, 
the entries of $\sigma(x,\pi)$ are respectively continuous, smooth, integrable, square-integrable in $x$.
\end{definition}

\subsection{Difference operators and symbol classes}
\label{subsec_my_diff_op}

Here we introduce our concepts of difference operators and of classes of symbols.

For each $\tau,\pi\in \RepG$ and $\sigma\in\Sigma(G)$,
we define  the linear mapping $\Delta_\tau \sigma (\pi)$ on $\cH_\tau\otimes \cH_\pi$ via:
\begin{equation}
\label{eq_def_Deltatau}
\Delta_\tau \sigma (\pi) := \sigma(\tau\otimes \pi) - 
\sigma(\id_{\cH_\tau}\otimes \pi).
\end{equation}
The restriction of $\Delta_\tau \sigma (\pi)$ to any occurrence of  
$\rho\in \Gh$ in a decomposition of 
$\tau \otimes \pi$, $\pi\in \Gh$ defines the same mapping over $\cH_\rho$. 
Therefore \eqref{eq_def_Deltatau} defines a `partial invariant symbol' on any $\rho\in \Gh$ occurring in $\tau \otimes \pi$, $\pi\in \Gh$.
Let us extend this trivially by defining the mapping to be zero for any  $\rho\in \Gh$ never appearing in any $\tau \otimes \pi$, $\pi\in \Gh$.

\begin{definition}
\label{def_Delta_tau}
The operation $\Delta_\tau$ defined via \eqref{eq_def_Deltatau}
and extended trivially
acts on $\Sigma(G)$ 
and is called  the \emph{difference operator} associated with $\tau\in \RepG$.
\end{definition}

\begin{ex}
\label{ex_diff_op_torus}
The dual of  the torus $\bT =\bR/2\pi\bZ$,
is  $\widehat \bT=\{ e_\ell, \ell\in\bZ\}$ 
where $e_\ell(x)=e^{i \ell x}$, $x\in \bT$.
Note that $e_\ell \otimes e_m = e_{\ell+m}$.
If the invariant symbol $\sigma$ is the Fourier transform of $f\in \cD'(\bT)$
as in Example \ref{ex_widehatf_sigma},
$$
\mbox{that is,}\quad
\sigma=\widehat f, 
\quad
\sigma(e_\ell)=\widehat f(\ell) =\frac 1{2\pi}\int_{0}^{2\pi} f(x) \bar e_\ell(x) dx,
\quad \ell\in \bZ,
$$
then the difference operator $\Delta_{e_\ell}$ is given via
$$
\Delta_{e_\ell} \widehat f(e_m) 
=
\sigma  ( e_\ell \otimes e_m  ) 
-
\sigma ( 1\otimes e_m  ) 
=
\widehat f ( \ell+m  ) 
-
\widehat f( m  ). 
$$
Hence, for $\ell=\pm1 $, $\Delta_{e_\ell}$ is the usual discrete (forward or backward) difference operator on the lattice $\bZ$.
\end{ex}

We also define the iterated difference operators as follows.
For any $a\in \bN$ and  for any $\alpha=(\tau_1,\ldots,\tau_a)\in \RepG^a$, we write
$$
\Delta^\alpha :=\Delta_{\tau_1}\ldots\Delta_{\tau_a},
\qquad
|\alpha|:=a.
$$
If $\pi\in \RepG$ and $\sigma\in \Sigma$, 
then $\Delta^{\alpha}\sigma(\pi)$ is a mapping over 
$$
\cH^{\otimes \alpha }_\pi:= \cH_{\tau_1}\otimes \ldots\otimes \cH_{\tau_a}\otimes \cH_\pi.
$$
We adopt the following conventions:
if $a=0$ and $\alpha=\emptyset$, we define $\Delta^\alpha$
to be the identity operator on $\Sigma(G)$. We also set 
$$
\RepG^0 =\emptyset \quad\mbox{and}\quad
\RepG^* := \cup_{a\in \bN_0} \RepG^a.
$$

We can now define our classes of symbols.

\noindent\textbf{Convention:}
In this paper, $\rho$ and $\delta$ are two real numbers satisfying
$$
1\geq \rho\geq\delta\geq0.
$$

\begin{definition}
\label{def_Smrhodelta}
Let $m\in \bR$.
The set $S^m_{\rho,\delta}(G)$  is the space of all the symbols $\sigma=\{\sigma(x,\pi), (x,\pi)\in G\times\Gh\}$
with smooth entries in $x$ (in the sense of Definition \ref{def_smooth_entries})
such that for each $\alpha \in \RepG^a$ and $D\in \Diff^b$
there exists $C>0$  satisfying
\begin{equation}
\label{eq_def_Smrhodelta}
\forall (x,\pi)\in (x,\Gh)\qquad
 \| D_x \Delta^\alpha \sigma(x,\pi)\|_{\sL(\cH_\pi^{\otimes\alpha})}
\leq C (1+\lambda_\pi)^{\frac{m-\rho a +\delta b}2}.
\end{equation}
\end{definition}

In this definition, it appears that one should check a  non-countable number of conditions for each symbol.
Let us show that it is in fact countable and furthermore that this defines a Fr\'echet structure on $S^m_{\rho,\delta}$.

As the group $G$ is compact, any differential operator $D\in \Diff^b$ may be written as a linear combination of $X^\beta$, $|\beta|=b$, with smooth coefficients on $G$, see Section \ref{subsec_notG}. Thus 
$\sigma\in S^m_{\rho,\delta}(G)$ if and only if the symbol $\sigma$ has smooth entries in $x$ and satisfies
the condition in \eqref{eq_def_Smrhodelta} 
for any $D=X^\beta$, $\beta\in \bN_0^n$, and any $\alpha\in \RepG^*$.

As any representation in $\RepG$ is a finite sum of irreducible representations in $\Gh$, it suffices to check the condition in \eqref{eq_def_Smrhodelta} only for $\alpha\in \Gh^*:=\cup_{a\in \bN_0} \Gh^a$.
We can restrict this even more: recall that the (compact) group $G$  admits a finite set of fundamental representations:
$$
\FundG \subset \Gh \subset \RepG,
$$
in the sense that any representation in $\Gh$ will occur in a tensor product $\otimes _j \tau_j$ of $\tau_j \in \FundG$.
Hence it suffices to check the condition in \eqref{eq_def_Smrhodelta} only for $\alpha\in \FundG^*:=\cup_{a\in \bN_0} \FundG^a$.

These observations imply that a symbol $\sigma$ with smooth entries in $x$
is in $S^m_{\rho,\delta}(G)$ if and only if the following quantities
are finite for all $a,b\in \bN_0$:
$$
\|\sigma\|_{S^m_{\rho,\delta}(G), a,b}
:=
\max_{\substack{\alpha\in \FundG^*,\ \beta\in \bN_0^n\\
|\alpha|\leq a, |\beta|\leq b}}
\sup_{(x,\pi))\in G\times\Gh}
(1+\lambda_\pi)^{-\frac{m-\rho|\alpha| +\delta|\beta|}2}
 \| X^\beta_x \Delta^\alpha \sigma(x,\pi)\|_{\sL(\cH_\pi^{\otimes\alpha})}.
 $$
 
 It is a routine exercise to show that the functions
 $\|\cdot\|_{S^m_{\rho,\delta}(G), a,b}$, $a,b\in \bN_0$,
 are semi-norms on $S^m_{\rho,\delta}(G)$ 
 and that $S^m_{\rho,\delta}(G)$ 
 then becomes a Fr\'echet space.
 One checks easily that if 
\begin{equation}
\label{eq_rhodelta_inclusion}
m_1<m_2,\quad \rho_1\geq \rho_2, \qquad \delta_1\leq \delta_2,
\quad 1\geq \rho_i\geq \delta_i\geq 0, \ i=1,2,
\ \Longrightarrow \
S^{m_1}_{\rho_1,\delta_1}\subset S^{m_2}_{\rho_2,\delta_2},
\end{equation}
and this inclusion continuous.
This shows the property in
Part \eqref{item_def_pseudo-diff_calculus_inclusion}, 
 of Definition \ref{def_pseudo-diff_calculus}.

\begin{definition}
We say that a symbol is \emph{smoothing} 
when it is in 
$$
S^{-\infty}(G) = \cap_{m\in \bR}S^m_{\rho,\delta}(G).
$$
\end{definition}
One checks easily that indeed, $S^{-\infty}(G)$ does not depend on  $\rho$ and $\delta$. It is naturally endowed with a projective topology.

\begin{remark}
In the case of the torus (see Example \ref{ex_diff_op_torus}), 
Fund$(\bT)=\{e_{\pm 1}\}$ and 
the class of symbol $S^m_{\rho,\delta}(\bT)$ coincides with the one considered in \cite{ruzhansky+turunen_10}.
\end{remark}

\subsection{The main result}
\label{subsec_main_result}

We can now define the classes of operators on $G$ we are studying:
$$
\Psi^m_{\rho,\delta}(G):=
\Op(S^m_{\rho,\delta}(G)),
\qquad m\in \bR \cup\{-\infty\}.
$$
and restate our main result.

For $m\in \bR$, the space $\Psi^m_{\rho,\delta} (G)$ inherits the Fr\'echet topology via  the semi-norms  $\|\cdot\|_{\Psi^m_{\rho,\delta} (G), a,b}$ defined by:
$$
\|T\|_{\Psi^m_{\rho,\delta} (G), a,b}
:=
\|\sigma\|_{S^m_{\rho,\delta} (G), a,b}
\quad\mbox{when}\ T=\Op(\sigma).
$$
The properties of inclusion similar to \eqref{eq_rhodelta_inclusion} hold.
The smoothing operators are defined in a similar manner as well.

Let us now restate the main result of this paper (which was also given in the introduction):

\begin{theorem}
\label{thm_main}
Let  $\rho,\delta$ be real numbers with $1\geq \rho\geq\delta\geq 0$
with $\delta\not=1$.
Then $\Psi^\infty_{\rho,\delta}(G):=\cup_{m\in \bR}\Psi^m_{\rho,\delta}(G)$ 
is a calculus on $G$ in the sense of 
Definition \ref{def_pseudo-diff_calculus}.
Moreover, 
if $\rho>\delta$ and $\rho\geq 1-\delta$, 
then this calculus coincides with the H\"ormander calculus 
$\Psi^\infty_{\rho,\delta}(G,loc):=\cup_{m\in \bR}\Psi^m_{\rho,\delta}(G,loc)$ 
on $G$ 
viewed as a compact Riemannian manifold.
\end{theorem}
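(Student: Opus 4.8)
The plan is to split the proof into two independent parts: (I) showing that $\Psi^\infty_{\rho,\delta}(G)$ satisfies the five axioms of Definition \ref{def_pseudo-diff_calculus}, and (II) identifying it with the localised H\"ormander calculus under the extra hypotheses $\rho>\delta$ and $\rho\geq 1-\delta$. For part (I), the inclusion property \eqref{item_def_pseudo-diff_calculus_inclusion} is already established via \eqref{eq_rhodelta_inclusion}. The containment of $\Diff^m(G)$ follows from Example \ref{ex_symbol_op}: a differential operator $\sum_{|\alpha|\le m} a_\alpha(x) X^\alpha$ has symbol $\sum a_\alpha(x)\pi(X)^\alpha$, and one checks directly from \eqref{eq_piLlambda} and the relation between $\pi(\cL)$ and the $\pi(X_i)$ that $\|\pi(X)^\alpha\|_{\sL(\cH_\pi)}\lesssim (1+\lambda_\pi)^{|\alpha|/2}$, while applying a difference operator $\Delta_\tau$ lowers the power by (at least) one, so the symbol lies in $S^m_{1,0}\subset S^m_{\rho,\delta}$. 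The remaining three axioms — the composition formula \eqref{item_def_pseudo-diff_calculus_product}, adjoints \eqref{item_def_pseudo-diff_calculus_adjoint}, and Sobolev boundedness \eqref{item_def_pseudo-diff_calculus_sobolev} — are the substantive analytic content and, as the introduction indicates, are carried out in Sections \ref{sec_calculus} and \ref{sec_L2bdd+commutator}.

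The engine for all of these is a good kernel theory for symbols in $S^m_{\rho,\delta}$ together with the Laplace-Beltrami multiplier theorem. First I would establish (Section \ref{sec_kernel}) precise estimates for the right-convolution kernel $\kappa_x$ of $\sigma(x,\cdot)$: near $e_G$ one gets $|X^\beta \kappa_x(y)|\lesssim |y|^{-(n+\beta+(m)_+)/\rho}$-type bounds (with logarithmic borderline cases), with uniformity in $x$ and smooth dependence, by decomposing $\sigma$ dyadically in $\lambda_\pi$ using a Littlewood-Paley partition built from the heat semigroup $e^{-t\cL}$ — this is where Proposition \ref{prop_mult_t} and the heat-kernel estimates of \cite{varo,alexo} enter — and estimating each dyadic piece via the Plancherel formula \eqref{eq_Plancherel} and \eqref{eq_cq_Peter+Weyl_thm}. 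Once the kernels are controlled, $L^2$-boundedness for $m=0$ follows from a Cotlar-Stein / Calder\'on-Zygmund argument (or from the commutator characterisation), and the general Sobolev statement \eqref{item_def_pseudo-diff_calculus_sobolev} follows by composing with powers of $(\mathrm{I}+\cL)$, which are themselves in the calculus by Proposition \ref{prop_mult_t}. For composition, write $\Op(\sigma_1)\Op(\sigma_2)=\Op(\sigma)$ with $\sigma(x,\pi)\sim\sum_\alpha \tfrac1{\alpha!}(\Delta^\alpha\sigma_1)(x,\pi)\, X^\alpha_x\sigma_2(x,\pi)$ (a symbolic asymptotic expansion whose remainders are estimated using the kernel bounds and the gain $\rho-\delta\ge 0$ at each step), and similarly for the adjoint $\sigma^*(x,\pi)\sim\sum_\alpha \tfrac1{\alpha!}\Delta^\alpha X^\alpha_x \sigma(x,\pi)^*$; continuity in the Fr\'echet semi-norms is then bookkeeping. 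I expect the main obstacle here to be the borderline behaviour when $\delta=\rho$ (so there is no genuine gain in the expansion) and when $m$-dependent powers of $\lambda_\pi$ hit the logarithmic thresholds in the kernel estimates; handling these uniformly is exactly why the careful multiplier theory of Section \ref{sec_multipliers} and the weight arguments of Section \ref{sec_bilinear} are needed.

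For part (II), the strategy is the commutator characterisation in the style of Beals and Coifman-Meyer. On one side, I would show $\Psi^m_{\rho,\delta}(G)\subset \Psi^m_{\rho,\delta}(G,loc)$: localising with a chart $\exp_G$ on $B(\epsilon_0)$ and transferring the kernel estimates of Section \ref{sec_kernel} shows the localised operator has a kernel with the correct H\"ormander-type decay and derivative bounds, hence is a classical $(\rho,\delta)$-operator on the manifold (this is where $\rho>1/2$, forced by $\rho\ge 1-\delta$ and $\rho>\delta$, is needed for the charts to be consistent). Conversely, to show $\Psi^m_{\rho,\delta}(G,loc)\subset\Psi^m_{\rho,\delta}(G)$, I would characterise membership in $\Psi^m_{\rho,\delta}(G)$ intrinsically: $T\in\Psi^m_{\rho,\delta}(G)$ iff $T$ and all its iterated commutators $[\ldots[[T,\tilde X_{j_1}],\tilde X_{j_2}]\ldots]$ against right-invariant vector fields (these implement the difference operators $\Delta_\tau$ via \eqref{eq_X_tildeX} and \eqref{eq_sigma_T}) and compositions with powers of $(\mathrm{I}+\cL)$ map $H^s\to H^{s-m+\ldots}$ with appropriate gains — then show the H\"ormander class satisfies these same mapping properties by the classical Beals-type characterisation of $\Psi^m_{\rho,\delta}(G,loc)$ via commutators with vector fields and multiplication operators, after checking that right-invariant vector fields and spectral functions of $\cL$ are admissible test operators in the manifold calculus. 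The delicate point throughout part (II) will be matching the two notions of "order": the group calculus measures decay in $\lambda_\pi\sim|\xi|^2$ via the Laplacian, whereas the H\"ormander calculus uses a cotangent-bundle symbol, and one must verify that the difference operators $\Delta_\tau$ correspond, modulo lower-order terms, to $\xi$-derivatives of the full symbol — a computation that again relies on the $\rho\ge 1-\delta$ consistency condition to control the error terms.
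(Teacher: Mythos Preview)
Your overall architecture is right and matches the paper: kernel estimates via Littlewood--Paley built from $\cL$-multipliers, then adjoint/composition, then $L^2$-boundedness, then a commutator characterisation to compare with the H\"ormander calculus. But two points in your plan are genuine gaps, not just details.

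\medskip

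\textbf{1. Composition and adjoint when $\rho=\delta$.} Your proposed asymptotic expansions $\sigma\sim\sum_\alpha (\Delta^\alpha\sigma_1)X^\alpha_x\sigma_2$ and $\sigma^{(*)}\sim\sum_\alpha \Delta^\alpha X^\alpha_x\sigma^*$ gain $(\rho-\delta)|\alpha|$ per term; when $\rho=\delta$ there is no gain and the expansion is useless for proving membership in $S^{m_1+m_2}_{\rho,\delta}$. You flag this as ``the main obstacle'' but do not offer a mechanism. The paper does \emph{not} use the expansion in this case: instead (Propositions \ref{prop_adjoint} and \ref{prop_composition}) it first reduces to large negative order by writing $\sigma_1=(1+\lambda_\pi)^N\tau_1$ and integrating by parts against $\pi(z)^*$, then for $m_1\ll 0$ uses a direct dyadic estimate on $\sigma_{1,\ell}=\sigma_1\,\eta_\ell(\lambda_\pi)$ combined with Lemma \ref{lem_dyadicpiece}, summing the resulting geometric series. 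The asymptotic expansions appear only afterwards (Sections \ref{subsec_adjoint1}, \ref{subsec_comp1}) as a refinement for $\rho>\delta$.

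\medskip

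\textbf{2. The commutator characterisation.} You assert that iterated commutators of $T$ with right-invariant vector fields $\tilde X_j$ ``implement the difference operators $\Delta_\tau$''. This is false. By Lemma \ref{lem_commutator} Part (2), $M_{\tilde X}\Op(\sigma)=\Op(\tilde X_x\sigma)$: vector-field commutators produce $x$-derivatives of the symbol, controlling the $\delta$ side of the class. The difference operators are produced by commutators with \emph{multiplication} by smooth functions $q$ vanishing at $e_G$ (the operators $L_q T=[q,T]$), via Lemma \ref{lem_commutator} Part (1). The paper's characterisation (Corollary \ref{cor_commutator_characterisation}) therefore requires \emph{both} families: $L^\alpha_\Delta M^\beta_{\tilde X}T\in\sL(H^{m-\rho|\alpha|+\delta|\beta|},L^2)$ for all $\alpha,\beta$. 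Your proposed characterisation, using only $\tilde X$-commutators and powers of $(\mathrm I+\cL)$, cannot detect the $\rho$-decay under $\Delta^\alpha$ and so cannot recover the class. Once the two-parameter commutator criterion is in place, the comparison with H\"ormander's calculus (Corollary \ref{cor_hormander}) is a direct transfer to Beals' characterisation on $\bR^n$ via the exponential chart --- the kernel-estimate route you sketch for one inclusion is unnecessary.
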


Implicit in the theorem is the fact that any operator $T\in \Psi^\infty_{\rho,\delta}(G)$ extends uniquely to a continuous operator $\cD(G)\to\cD(G)$.
This is proved in Lemma \ref{lem_op_cD2cD}.

Although it is the aim of this paper to show that $\Psi^\infty_{\rho,\delta}(G)$ is a calculus, we will abuse the vocabulary and refer to it as \emph{the intrinsic $(\rho,\delta)$-calculus}.

Another important  result of the paper is the fact that the Laplace operator and its spectral calculus are part of the calculus:
\begin{proposition}
\label{prop_mult}
 For any function  $f:\spec(\cL) \to \bC$,
the spectral multiplier $f(\cL)$ is in $\Psi^m_{1,0}$
provided that $\sup_{\lambda\in \spec(\cL)} 
(1+\lambda)^{- \frac {m}  2} 
| f(\lambda)|<\infty$.

Moreover, the symbol given by $f(x,\lambda_\pi)$
is in $S^m_{1,\delta}$ 
provided that 
the function $f:G\times \spec(\cL) \to \bC$ 
satisfies
$$
\forall \beta\in \bN_0^n\qquad
\sup_{\lambda\in \spec(\cL)} 
(1+\lambda)^{- \frac {m+\delta|\beta|}  2} 
| X^\beta_x f(x,\lambda)| <\infty.
$$
\end{proposition}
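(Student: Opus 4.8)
\textbf{Proof proposal for Proposition \ref{prop_mult}.}

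The plan is to compute the symbol of $f(\cL)$ directly from \eqref{eq_sigma_T} and then estimate the difference operators. First I would treat the invariant case. By \eqref{eq_not_f(L)delta} the operator $f(\cL)$ is the convolution operator $T_\kappa$ with $\kappa=f(\cL)\delta_e$, so by Example \ref{ex_symbol_op} its symbol is the invariant symbol $\sigma(\pi)=\widehat\kappa(\pi)=f(\lambda_\pi)\,\id_{\cH_\pi}$, and since $\cL$ acts as $\lambda_{\pi\otimes\pi'}$ on each constituent, the extension to $\RepG$ is still $\sigma(\pi)=f(\lambda_\pi)\id_{\cH_\pi}$. The $x$-derivatives all vanish, so it only remains to control $\Delta^\alpha\sigma(\pi)$ for $\alpha\in\FundG^*$. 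For $|\alpha|=0$ the bound $\|\sigma(\pi)\|_{\sL(\cH_\pi)}=|f(\lambda_\pi)|\leq C(1+\lambda_\pi)^{m/2}$ is exactly the hypothesis. For higher $|\alpha|$, the key point is that $\Delta_\tau$ applied to a symbol of the form $g(\lambda_\pi)\id$ produces a symbol built from divided differences $g(\lambda_{\tau\otimes\pi})-g(\lambda_\pi)$ over the constituents of $\tau\otimes\pi$; one needs that $\lambda_{\tau\otimes\pi}$ and $\lambda_\pi$ differ by $O((1+\lambda_\pi)^{1/2})$ (with a constant depending on $\tau$), together with a Taylor-type estimate for $g$. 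I expect this is precisely the content of one of the technical results deferred to \ref{sec_multipliers}, which I would invoke: the statement that symbols of the form $\{g(\lambda_\pi)\id_{\cH_\pi}\}$ with $|g(\lambda)|\lesssim(1+\lambda)^{m/2}$ — and more generally with the appropriate symbolic decay of derivatives — lie in $S^m_{1,0}$. Granting that, the invariant case follows immediately.

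For the $x$-dependent case, I would write $\sigma(x,\pi)=f(x,\lambda_\pi)\id_{\cH_\pi}$ and apply the product-type structure of $X^\beta_x\Delta^\alpha$. Since the difference operator $\Delta^\alpha$ acts only in the $\pi$ variable and the vector fields $X^\beta_x$ act only in $x$, they commute, so $X^\beta_x\Delta^\alpha\sigma(x,\pi)=\Delta^\alpha\big(X^\beta_xf(x,\cdot)\big)(\pi)$, where $X^\beta_xf(x,\cdot)$ is again, for each fixed $x$, a function of $\lambda_\pi$ alone. The hypothesis says $|X^\beta_xf(x,\lambda)|\leq C_\beta(1+\lambda)^{(m+\delta|\beta|)/2}$ uniformly in $x$, i.e. for each fixed $x$ the function $\lambda\mapsto X^\beta_xf(x,\lambda)$ satisfies the bound needed to place the invariant symbol $\{X^\beta_xf(x,\lambda_\pi)\id\}$ in $S^{m+\delta|\beta|}_{1,0}$. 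Applying the invariant case (again via the deferred multiplier lemma) and reading off the semi-norm of order $(|\alpha|,0)$ for that symbol yields
$$
\|X^\beta_x\Delta^\alpha\sigma(x,\pi)\|_{\sL(\cH_\pi^{\otimes\alpha})}
\leq C(1+\lambda_\pi)^{\frac{m+\delta|\beta|-|\alpha|}{2}},
$$
which is exactly the estimate \eqref{eq_def_Smrhodelta} defining $S^m_{1,\delta}(G)$ with $\rho=1$. One technical caveat is that the hypothesis only gives the bound on $X^\beta_xf(x,\lambda)$ for $\lambda\in\spec(\cL)$, not on an interval, so any divided-difference argument must be phrased purely in terms of the discrete values $\lambda_\pi$; the multiplier lemma in \ref{sec_multipliers} is presumably already set up this way (e.g. by using the heat-kernel representation of $f(\cL)$ and interpolating via a smooth extension of $f$ off the spectrum).

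The main obstacle is therefore not in this proposition itself but is pushed into the analysis of how $\Delta_\tau$ interacts with scalar functions of $\lambda_\pi$: one must show that $\lambda_{\tau\otimes\pi}$ stays within a bounded multiple of $(1+\lambda_\pi)^{1/2}$ of $\lambda_\pi$ and, iterating, that $a$ difference operators gain a full factor $(1+\lambda_\pi)^{-a/2}$, uniformly over the finitely many $\tau\in\FundG$. This is the type of estimate one proves using the relation between the Laplace--Beltrami eigenvalues and the Casimir/highest-weight data, or alternatively via the heat semigroup bounds of \cite{varo} and spectral-multiplier techniques of \cite{alexo} as announced in the introduction. Once that lemma is in hand, the proof of Proposition \ref{prop_mult} is the short bookkeeping argument sketched above.
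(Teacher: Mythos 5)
Your reduction is essentially the same one the paper performs: the operator $f(\cL)$ is a Fourier multiplier with symbol $f(\lambda_\pi)\id_{\cH_\pi}$ (Example \ref{ex_symbol_op}), the $x$-dependent case is obtained by noting that $X^\beta_x$ commutes with $\Delta^\alpha$, and the whole content is pushed into an invariant multiplier estimate (proved in \ref{sec_multipliers}, stated there as Proposition \ref{prop_app_mult_t} and specialised in Corollary \ref{cor_mult}). You also correctly observe that since the hypothesis only controls $f$ on the discrete set $\spec(\cL)$, one must first manufacture a smooth extension with the right decay of derivatives; this is exactly what the proof of Corollary \ref{cor_mult} does, by gluing together bump functions supported on disjoint neighbourhoods of the eigenvalues (this uses that $\spec(\cL)$ has a positive minimal spacing $\delta_0$). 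Up to this point the proposal matches the paper's proof of Proposition \ref{prop_mult}, which is little more than a citation of Corollary \ref{cor_mult} plus a remark that the same estimates go through with $x$-derivatives tagging along.

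Where the proposal diverges from the paper, and where I would flag a genuine gap, is in your preferred route to the deferred multiplier estimate. You suggest treating $\Delta_\tau\sigma(\pi)$ as a divided difference $g(\lambda_\rho)-g(\lambda_\pi)$ over constituents $\rho\subset\tau\otimes\pi$, using the eigenvalue estimate $|\lambda_\rho-\lambda_\pi|\lesssim(1+\lambda_\pi)^{1/2}$ (which does follow from $\lambda_\pi=|\tilde\pi+\rho_G|^2-|\rho_G|^2$, the formula the paper uses in Lemma \ref{lem_cL_rep} for the bilinear estimate) together with a mean-value bound on $g'$. This gives the gain for $|\alpha|=1$. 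However, for $|\alpha|\geq2$ this argument does not simply "iterate": the four terms in $\Delta_{\tau_1}\Delta_{\tau_2}\sigma(\pi)=\sigma(\tau_2\otimes\tau_1\otimes\pi)-\sigma(\id_{\tau_2}\otimes\tau_1\otimes\pi)-\sigma(\tau_2\otimes\id_{\tau_1}\otimes\pi)+\sigma(\id\otimes\id\otimes\pi)$ are block-diagonal with respect to four \emph{different} decompositions of $\cH_{\tau_2}\otimes\cH_{\tau_1}\otimes\cH_\pi$, so the expression is not a second divided difference of a scalar function of the eigenvalue, and the cancellation has to be extracted by some other means. The paper sidesteps this entirely: Proposition \ref{prop_mult_t} and its proof (Proposition \ref{prop_app_mult_t}, Lemma \ref{lem_prop_mult_1}) are stated and proved for the RT-difference operators $\Delta_q$ — for which iteration is just pointwise multiplication of the $q$'s, and the gain is governed by the order of vanishing of $q^\alpha_\Delta$ at $e_G$ — and the translation to the tensor difference operators $\Delta^\alpha$ is done once and for all in Theorem \ref{thm_Deltaeq+coincide} and Lemma \ref{lem_linkDeltas}. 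The underlying kernel estimates then come from the heat kernel bounds of \cite{varo}, finite propagation speed of $\cos(s\sqrt\cL)$, a dyadic spectral decomposition, and Cotlar--Stein. This machinery also gives the $t$-uniform version (Proposition \ref{prop_mult_t}) which is used repeatedly later (Littlewood--Paley, kernel estimates, adjoint/composition), and which a pure eigenvalue-arithmetic argument would not obviously deliver. You do mention this route as an alternative, but your main sketched path stops short at a real technical obstacle.
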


In fact, in \ref{sec_multipliers},
we will also prove a property as in Proposition \ref{prop_mult} but for multipliers in $t\cL$, uniformly in $t\in (0,1)$;
this property is stated in Proposition \ref{prop_mult_t}
and this is the main technical argument of this paper.
It enable us to use Littlewood-Payley decompositions and analyse precisely the singularity of the kernels,
and these two results are the keys to show the rest of the properties of the calculus.

\medskip

The proof of Theorem \ref{thm_main} is organised as follows.
In Section \ref{sec_1stprop}, 
we show that the symbol classes form an algebra, that the differential calculus is in the intrinsic calculus and we define our notion of kernels associated with a symbol.
In Section  \ref{sec_RT}, we recall the definition of the calculus proposed by Michael Ruzhansky and Ville Turunen and we show that it coincides with our intrinsic definition.
Section \ref{sec_kernel} is devoted to the study of the kernels associated with our symbols.
In Section \ref{sec_calculus}, we show that our calculus satisfy the properties of composition and adjoint 
as in Parts 
\eqref{item_def_pseudo-diff_calculus_product}
and
\eqref{item_def_pseudo-diff_calculus_adjoint}
of Definition \ref{def_pseudo-diff_calculus}.
In Section \ref{sec_L2bdd+commutator}, 
we show that our operators are bounded on Sobolev spaces 
as in Part \eqref{item_def_pseudo-diff_calculus_sobolev}
of Definition \ref{def_pseudo-diff_calculus}
and that it can be characterised via commutators. 
This implies that our calculus coincides with the H\"ormander calculus when the latter is defined and concludes the proof of  Theorem \ref{thm_main}.
In \ref{sec_multipliers}, we prove 
Propositions \ref{prop_mult} and \ref{prop_mult_t}.
In  \ref{sec_bilinear}, we show a bilinear estimate used in 
Section \ref{sec_L2bdd+commutator}.

\section{First properties}
\label{sec_1stprop}

\subsection{The algebra of symbols}
\label{subsec_symbol_algebra}

In this section, we summarise properties of the classes of symbols which are easily obtained.

\begin{proposition}
\label{prop_symbol_1stprop}
\begin{enumerate}
\item 
If $\sigma\in S^m_{\rho,\delta}(G)$, 
then for any $\alpha,\beta\in \bN_0^n$,
 $X^\beta \Delta^\alpha \sigma \in S^{m-\rho |\alpha|+\delta|\beta|}_{\rho,\delta} (G)$ and $$
\| X_x^\beta \Delta^\alpha \sigma\|_{S^{m-\rho |\alpha|+\delta|\beta|}_{\rho,\delta} , a,b}
\lesssim_{a,b,\alpha,\beta, m} \| \sigma\|_{S^{m-\rho |\alpha|+\delta|\beta|}_{\rho,\delta}, a+|\alpha|,b+|\beta|}.
$$
\item 
If $\sigma\in S^m_{\rho,\delta}(G)$, 
then the symbol 
$$
\sigma^*=\{\sigma(x,\pi)^*, (x,\pi)\in G\times\Gh\}
$$
is in $S^m_{\rho,\delta}(G)$ 
and 
$$
\| \sigma^*\|_{S^{m}_{\rho,\delta} , a,b,}
= \| \sigma\|_{S^{m}_{\rho,\delta} , a,b}.
$$
\item 
If $\sigma_1\in S^{m_1}_{\rho,\delta}(G)$ 
and $\sigma_2\in S^{m_2}_{\rho,\delta}(G)$ 
then the symbol 
$\sigma=\sigma_1\sigma_2$
is in $S^{m_1+m_2}_{\rho,\delta}(G)$ 
and 
$$
\| \sigma\|_{S^{m_1+m_2}_{\rho,\delta} , a,b}
\lesssim_{a,b, m} 
\| \sigma_1\|_{S^{m_1}_{\rho,\delta} , a,b}
\| \sigma_2\|_{S^{m_2}_{\rho,\delta} , a,b}.
$$
\end{enumerate}
\end{proposition}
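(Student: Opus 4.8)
The plan is to verify each of the three parts directly from the definition \eqref{eq_def_Smrhodelta} of $S^m_{\rho,\delta}(G)$, using the semi-norm description via $\FundG^*$ and $\bN_0^n$. Part (1) is essentially a bookkeeping statement: given $\sigma \in S^m_{\rho,\delta}(G)$, the iterated difference operators and left-invariant derivatives commute in the appropriate sense, so that $X^{\beta'}_x \Delta^{\alpha'}(X^\beta_x \Delta^\alpha \sigma)(x,\pi) = X^{\beta+\beta'}_x \Delta^{\alpha\cdot\alpha'}\sigma(x,\pi)$, where $\alpha\cdot\alpha'$ is concatenation; here one should be slightly careful that $\Delta^{\alpha}$ acts on the $\cH_\pi$-tensor leg while $X_x$ acts on the $x$-variable, so the two operations genuinely commute. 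Plugging this into \eqref{eq_def_Smrhodelta} for $X^\beta_x\Delta^\alpha\sigma$ and matching the exponent $\tfrac{(m-\rho|\alpha|+\delta|\beta|) - \rho|\alpha'| + \delta|\beta'|}{2}$ gives exactly the membership and the semi-norm estimate claimed; the implied constant absorbs the finitely many representations in $\alpha$ when one restricts back to $\FundG^*$, which is why the right-hand side is controlled by $\|\sigma\|$ at the shifted multi-indices $a+|\alpha|$, $b+|\beta|$.

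For Part (2), the key point is that $\Delta_\tau$ commutes with the adjoint: $\Delta_\tau(\sigma^*)(\pi) = (\Delta_\tau\sigma(\pi))^*$ on $\cH_\tau\otimes\cH_\pi$, because $\sigma^*(\tau\otimes\pi) = \sigma(\tau\otimes\pi)^*$ and $\sigma^*(\id_{\cH_\tau}\otimes\pi) = \sigma(\id_{\cH_\tau}\otimes\pi)^*$, and subtraction commutes with $*$; iterating gives $\Delta^\alpha(\sigma^*) = (\Delta^\alpha\sigma)^*$. Similarly $X^\beta_x$ acts entrywise and commutes with the pointwise adjoint in the operator variable (differentiation in $x$ of the conjugate-transpose matrix is the conjugate-transpose of the derivative). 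Since $\|A^*\|_{\sL(\cH)} = \|A\|_{\sL(\cH)}$, the estimate \eqref{eq_def_Smrhodelta} for $\sigma^*$ is literally the one for $\sigma$, which yields both the membership and the equality of semi-norms.

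Part (3) is the substantive one and will be the main obstacle, since one must control $\Delta^\alpha$ of a product. The plan is to establish a Leibniz-type rule for $\Delta_\tau$ on products of invariant symbols: writing $\tau\otimes\pi$ and using that $(\sigma_1\sigma_2)(\tau\otimes\pi) = \sigma_1(\tau\otimes\pi)\sigma_2(\tau\otimes\pi)$ and $(\sigma_1\sigma_2)(\id\otimes\pi) = \sigma_1(\id\otimes\pi)\sigma_2(\id\otimes\pi)$, one gets
$$
\Delta_\tau(\sigma_1\sigma_2)(\pi) = (\Delta_\tau\sigma_1)(\pi)\,\sigma_2(\tau\otimes\pi) + \sigma_1(\id_{\cH_\tau}\otimes\pi)\,(\Delta_\tau\sigma_2)(\pi),
$$
where on the right one reads $\sigma_2(\tau\otimes\pi)$ and $\sigma_1(\id_{\cH_\tau}\otimes\pi)$ as operators on $\cH_\tau\otimes\cH_\pi$, the latter being $\id_{\cH_\tau}\otimes\sigma_1(\pi)$ in the sense of Remark \ref{rem_convention}. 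Iterating this rule over $\alpha = (\tau_1,\dots,\tau_a)$ produces a sum, indexed by subsets $S\subset\{1,\dots,a\}$, of terms of the shape $(\text{iterated }\Delta\text{ of }\sigma_1)\cdot(\text{iterated }\Delta\text{ of }\sigma_2)$ with the leftover $\tau_j$-legs tensored in as identities; combined with the ordinary Leibniz rule for $X^\beta_x$, one gets a finite sum of products $X^{\beta_1}_x\Delta^{\alpha_1}\sigma_1 \cdot X^{\beta_2}_x\Delta^{\alpha_2}\sigma_2$ with $|\alpha_1|+|\alpha_2| = |\alpha|$ and $|\beta_1|+|\beta_2| \le |\beta|$. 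Applying \eqref{eq_def_Smrhodelta} to each factor, the exponents add to $\tfrac{(m_1-\rho|\alpha_1|+\delta|\beta_1|) + (m_2-\rho|\alpha_2|+\delta|\beta_2|)}{2} \le \tfrac{m_1+m_2 - \rho|\alpha|+\delta|\beta|}{2}$ (using $\lambda_\pi\ge 0$ so that extra positive powers of $(1+\lambda_\pi)$ only help, and submultiplicativity of the operator norm under composition and under tensoring with identities), which gives the desired bound with an implied constant depending only on $a$, $b$ (the number of terms in the expansion) and $m$. The one technical care point is keeping track of which tensor leg each factor lives on so that the composition in $\sL(\cH_\pi^{\otimes\alpha})$ is well-defined and the norm is genuinely submultiplicative; this is a routine but slightly tedious matching of tensor factors.
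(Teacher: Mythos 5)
Your proposal takes essentially the same route as the paper: part (1) by direct bookkeeping, part (2) by noting $\Delta_\tau$ commutes with the pointwise adjoint, and part (3) by deriving a telescoping Leibniz-type rule for $\Delta_\tau(\sigma_1\sigma_2)$ and iterating. The one cosmetic difference is the order of telescoping: the paper writes $\Delta_\tau(\sigma_1\sigma_2)(\pi)=\Delta_\tau\sigma_1(\pi)\,\sigma_2(\id_\tau\otimes\pi)+\sigma_1(\tau\otimes\pi)\,\Delta_\tau\sigma_2(\pi)$, while you place the $\tau\otimes\pi$ evaluation on the $\sigma_2$ factor and the $\id\otimes\pi$ on $\sigma_1$; both identities are correct and either works. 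Both you and the paper leave implicit the comparability fact that $(1+\lambda_\rho)\asymp_\tau(1+\lambda_\pi)$ for $\rho$ occurring in $\tau\otimes\pi$ with $\tau$ a fixed fundamental representation (which is what lets one replace the eigenvalue on a tensor component by $\lambda_\pi$ in the weighted estimate), so your level of rigor matches the paper's on this point.
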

\begin{proof}
The first property in this statement is straightforward from the properties of the tensor product and of the representations.
The second one follows from 
$$
\{\Delta_\tau \sigma (\pi)\}^*=
\Delta_\tau (\sigma^*) (\pi).
$$
For the last one, we notice that our  difference operators
generally do not satisfy  exactly a Leibniz property since
one can check that for any $\sigma_1,\sigma_2\in \Sigma(G)$ 
and $\tau,\pi\in \Gh$,
$$
\Delta_\tau (\sigma_1\sigma_2)(\pi)=
\Delta_\tau (\sigma_1) (\pi)\ \sigma_2(\id_\tau\otimes\pi)
+
\sigma_1 (\tau\otimes\pi)\ \Delta_\tau (\sigma_2)(\pi).
$$
However 
taking taking the supremum over $\pi\in \Gh$ of the $\sL(\cH_{\pi\otimes\tau})$- norm of the expression above,
this yields 
 (see \eqref{eq_LinftyGh}):
$$
\|\Delta_\tau (\sigma_1\sigma_2)\|_{L^\infty(\Gh)}
\leq
\|\Delta_\tau (\sigma_1) \|_{L^\infty(\Gh)} 
\|\sigma_2\|_{L^\infty(\Gh)}
+
\|\sigma_1\|_{L^\infty(\Gh)} 
\| \Delta_\tau (\sigma_2)\|_{L^\infty(\Gh)} ,
$$
with quantities possibly infinite.
More generally, it is not difficult to prove recursively that we have
for any $\alpha\in \FundG$:
\begin{equation}
\label{eq_leibniz_LinftyGh}
\|\Delta^\alpha (\sigma_1\sigma_2)\|_{L^\infty(\Gh)}
\leq C_\alpha
\sum_{|\alpha_1|+|\alpha_2| = |\alpha|} 
\|\Delta^{\alpha_1} (\sigma_1) \|_{L^\infty(\Gh)} 
\|\Delta^{\alpha_2} (\sigma_2) \|_{L^\infty(\Gh)} 
\end{equation}
And this easily implies the last property in the statement above.
\end{proof}

Consequently, we have:
\begin{corollary}
\label{cor_algebra_of_symbol}
The classes of symbols $\cup_{m\in \bR}S^m_{\rho,\delta}$ form an algebra stable under taking the adjoint. 
Moreover the operations of composition and taking the adjoint are continuous.

Furthermore if $\sigma_0$ is smoothing,
then for any $\sigma\in S^m_{\rho,\delta}$, 
the symbols
$\sigma\sigma_0$ and $\sigma_0\sigma$ are also smoothing.
\end{corollary}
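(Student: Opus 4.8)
The whole corollary is a bookkeeping consequence of Proposition \ref{prop_symbol_1stprop} together with the inclusions \eqref{eq_rhodelta_inclusion}, so I would not introduce any new analysis. First I would check that $\cup_{m\in\bR}S^m_{\rho,\delta}(G)$ is a vector space: given $\sigma_i\in S^{m_i}_{\rho,\delta}(G)$ for $i=1,2$ and $m:=\max(m_1,m_2)$, the inclusions \eqref{eq_rhodelta_inclusion} place both $\sigma_1$ and $\sigma_2$ inside the Fr\'echet space $S^{m}_{\rho,\delta}(G)$, whence $\sigma_1+\sigma_2\in S^m_{\rho,\delta}(G)$; scalar multiples obviously stay in the same class. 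Together with Part (3) of Proposition \ref{prop_symbol_1stprop}, which gives $\sigma_1\sigma_2\in S^{m_1+m_2}_{\rho,\delta}(G)$, this shows $\cup_m S^m_{\rho,\delta}(G)$ is an algebra (in fact unital, with unit the constant symbol $\{\id_{\cH_\pi}\}\in S^0_{\rho,\delta}(G)$, though unitality is not asserted here), and Part (2) shows it is stable under $\sigma\mapsto\sigma^*$.

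For the continuity of the operations the estimates already recorded in Proposition \ref{prop_symbol_1stprop} suffice. Part (2) gives $\|\sigma^*\|_{S^m_{\rho,\delta}(G),a,b}=\|\sigma\|_{S^m_{\rho,\delta}(G),a,b}$, so the adjoint is an isometry on each $S^m_{\rho,\delta}(G)$, hence continuous. For composition, regarded as a bilinear map $S^{m_1}_{\rho,\delta}(G)\times S^{m_2}_{\rho,\delta}(G)\to S^{m_1+m_2}_{\rho,\delta}(G)$, I would combine the product estimate
$$\|\sigma_1\sigma_2\|_{S^{m_1+m_2}_{\rho,\delta}(G),a,b}\lesssim_{a,b,m}\|\sigma_1\|_{S^{m_1}_{\rho,\delta}(G),a,b}\,\|\sigma_2\|_{S^{m_2}_{\rho,\delta}(G),a,b}$$
with the algebraic identity $\sigma_1\sigma_2-\sigma_1'\sigma_2'=(\sigma_1-\sigma_1')\sigma_2+\sigma_1'(\sigma_2-\sigma_2')$: if $\sigma_i\to\sigma_i'$ in the respective Fr\'echet topologies then each seminorm of the right-hand side tends to $0$, because $\|\sigma_2\|_{S^{m_2}_{\rho,\delta}(G),a,b}$ stays bounded along a convergent sequence. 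This passage from the product estimate to joint continuity at an arbitrary point is the only step that needs a sentence of care.

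Finally, for the ideal property, let $\sigma_0\in S^{-\infty}(G)=\cap_{m'\in\bR}S^{m'}_{\rho,\delta}(G)$ and $\sigma\in S^m_{\rho,\delta}(G)$. Fix an arbitrary target order $N\in\bR$ and apply Part (3) of Proposition \ref{prop_symbol_1stprop} with $\sigma_0\in S^{N-m}_{\rho,\delta}(G)$: this gives $\sigma\sigma_0\in S^{N}_{\rho,\delta}(G)$ and, symmetrically, $\sigma_0\sigma\in S^{N}_{\rho,\delta}(G)$. Since $N$ was arbitrary, both products lie in $\cap_{N\in\bR}S^{N}_{\rho,\delta}(G)=S^{-\infty}(G)$, which is the claim; the accompanying seminorm bounds moreover show that $\sigma_0\mapsto\sigma\sigma_0$ and $\sigma_0\mapsto\sigma_0\sigma$ are continuous on $S^{-\infty}(G)$ for its projective topology. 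In short there is no genuine obstacle here: everything reduces to Proposition \ref{prop_symbol_1stprop}, the only mildly delicate point being the bilinearity argument that upgrades the product seminorm estimate to continuity of the composition map.
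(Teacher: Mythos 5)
Your proof is correct and takes the same approach the paper has in mind: the corollary is stated immediately after Proposition \ref{prop_symbol_1stprop} with only the word ``Consequently,'' and your write-up simply spells out the bookkeeping (vector-space structure via \eqref{eq_rhodelta_inclusion}, algebra and adjoint-stability from Parts (2)--(3), continuity from the seminorm estimates, and the ideal property by applying Part (3) with $\sigma_0\in S^{N-m}_{\rho,\delta}$ for arbitrary $N$). The only point you add beyond the paper's implicit argument is the explicit bilinearity identity for joint continuity of the product, which is standard and correct.
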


Note that the calculus is invariant under translations in the following sense:
\begin{lemma}
\label{lem_Psi0_inv_left_translation}
If $T\in \Psi^m_{\rho,\delta}$ then for all $x_o \in G$, 
 the operator $\tau_{x_o} T\tau_{x_o}^{-1}$
  is in $\Psi^m_{\rho,\delta}$
 where $\tau_{x_o}:f\mapsto f(x_o\ \cdot)$ is the left translation.
Furthermore, if $\kappa_x$ is the kernel of $T$ and $\sigma=\Op^{-1}(T)$ is its symbol,
then $\tau_{x_o}T\tau_{x_o}^{-1}$ has $\kappa_{x_ox}$ as kernel and $\sigma(x_ox,\pi)$ as symbol, and 
$$
\|T\|_{\Psi^m_{\rho,\delta},a,b}=\|\tau_{x_o}T\tau_{x_o}^{-1}\|_{\Psi^m_{\rho,\delta},a,b}.
$$
\end{lemma}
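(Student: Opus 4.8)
The plan is a direct computation; the only subtleties are the direction of the translation and the cancellation of the two occurrences of $\pi(x_o)$. First I would unwind the conjugation: for $\phi\in\L2f$ one has $\tau_{x_o}T\tau_{x_o}^{-1}\phi(x)=(T\psi)(x_ox)$ with $\psi(y):=\phi(x_o^{-1}y)$, where $\psi\in\L2f$ since left translation preserves each $L^2_\pi(G)$. The key elementary fact is the effect of left translation on the group Fourier transform: the change of variables $y=x_oz$ in the (bi-invariant) Haar integral, together with $\pi(x_oz)^*=\pi(z)^*\pi(x_o)^*$, gives
$$
\widehat\psi(\pi)=\int_G\phi(z)\,\pi(z)^*\pi(x_o)^*\,dz=\widehat\phi(\pi)\,\pi(x_o)^*.
$$

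Next I would substitute this into the quantisation formula. Writing $\pi(x_ox)=\pi(x_o)\pi(x)$ and using the cyclicity of the trace together with the unitarity $\pi(x_o)^*\pi(x_o)=\id_{\cH_\pi}$, the factors $\pi(x_o)$ and $\pi(x_o)^*$ cancel, leaving
$$
\tau_{x_o}T\tau_{x_o}^{-1}\phi(x)=\sum_{\pi\in\Gh}d_\pi\tr\big(\pi(x)\,\sigma(x_ox,\pi)\,\widehat\phi(\pi)\big)=\Op(\sigma_{x_o})\phi(x),
$$
where $\sigma_{x_o}(x,\pi):=\sigma(x_ox,\pi)$. By injectivity of $\Op$ (see \eqref{eq_sigma_T}), $\sigma_{x_o}$ is the symbol of $\tau_{x_o}T\tau_{x_o}^{-1}$.

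It then remains to check that $\sigma_{x_o}\in S^m_{\rho,\delta}(G)$ with the same seminorms. Since the difference operators $\Delta^\alpha$ act on the representation variable only, they commute with the shift $x\mapsto x_ox$, so $\Delta^\alpha\sigma_{x_o}(x,\pi)=(\Delta^\alpha\sigma)(x_ox,\pi)$; and since each $X_j$ is left-invariant, $X_j[f(x_o\cdot)](x)=(X_jf)(x_ox)$, whence $X^\beta_x\sigma_{x_o}(x,\pi)=(X^\beta\sigma)(x_ox,\pi)$. As $x\mapsto x_ox$ is a bijection of $G$, taking $\sup_{(x,\pi)\in G\times\Gh}$ of the norms appearing in the definition of $\|\cdot\|_{S^m_{\rho,\delta}(G),a,b}$ is unaffected, so $\|\sigma_{x_o}\|_{S^m_{\rho,\delta}(G),a,b}=\|\sigma\|_{S^m_{\rho,\delta}(G),a,b}$ for all $a,b\in\bN_0$. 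This yields both $\tau_{x_o}T\tau_{x_o}^{-1}\in\Psi^m_{\rho,\delta}(G)$ and the claimed equality of seminorms.

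Finally, for the kernel: recalling that the kernel $\kappa_x$ of $T$ is characterised by $\widehat{\kappa_x}(\pi)=\sigma(x,\pi)$ (equivalently $\Op(\sigma)\phi(x)=(\phi*\kappa_x)(x)$), the identity $\sigma_{x_o}(x,\pi)=\sigma(x_ox,\pi)=\widehat{\kappa_{x_ox}}(\pi)$ shows immediately that $x\mapsto\kappa_{x_ox}$ is the kernel of $\tau_{x_o}T\tau_{x_o}^{-1}$. I do not expect a genuine obstacle here: the only thing to watch is bookkeeping — translating on the correct side in $\widehat\psi$, and tracking the two copies of $\pi(x_o)$ so that they cancel by unitarity.
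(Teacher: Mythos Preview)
Your proof is correct and is exactly the natural direct computation one would write for this lemma; the paper in fact states Lemma~\ref{lem_Psi0_inv_left_translation} without proof, leaving it as a routine verification, so there is nothing to compare against. Your bookkeeping of the two factors $\pi(x_o)$ and $\pi(x_o)^*$ via cyclicity of the trace, and your use of left-invariance of the $X_j$ to get equality of seminorms, are the right ingredients.
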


\subsection{The differential calculus}

We can now give important examples of operators in the intrinsic calculus.
Namely  we prove that the differential calculus,
that is, $\cup_{k\in \bN_0} \Diff^k$,
is included in $\Psi^\infty_{1,0}$.
We start with studying the case of the operator $X^\beta$:

\begin{lemma}
\label{lem_Deltaq_piXbeta}
Let $\beta\in \bN_0^n$ and $\alpha\in \FundG^*$.
Then if $|\beta|< |\alpha|$ then $\Delta^\alpha \sigma=0$.
If $|\beta|\geq |\alpha|$ then
there exists $C=C_{\alpha,\beta}$ such that
$$
\forall \pi\in \Gh\qquad
\|\Delta^\alpha \pi(X)^\beta\|_{\sL(\cH_\pi^{\otimes\alpha})} 
\leq C (1+\lambda_\pi)^{\frac{|\beta|}2(|\alpha|+1)}.
$$
\end{lemma}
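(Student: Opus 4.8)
The statement concerns the difference operators $\Delta^\alpha$ applied to the invariant symbol $\pi \mapsto \pi(X)^\beta = \pi(X_1)^{\beta_1}\cdots\pi(X_n)^{\beta_n}$. First I would record the basic fact that, for a single vector field $X \in \fg$ and $\tau, \pi \in \RepG$, the representation $\tau \otimes \pi$ differentiates via the Leibniz rule: $(\tau\otimes\pi)(X) = \tau(X)\otimes \id_{\cH_\pi} + \id_{\cH_\tau}\otimes \pi(X)$. Consequently $(\id_{\cH_\tau}\otimes\pi)(X) = \id_{\cH_\tau}\otimes \pi(X)$, so that
\[
\Delta_\tau\bigl(\pi(X)^\beta\bigr)
= \bigl(\tau(X)\otimes\id + \id\otimes\pi(X)\bigr)^{\beta}\Big|_{\text{acting as }\pi(X)^\beta\text{ on }\cH_\pi}
\ -\ \id_{\cH_\tau}\otimes \pi(X)^\beta,
\]
where the power $\beta$ is taken in the noncommutative order $X_1^{\beta_1}\cdots X_n^{\beta_n}$. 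Expanding each factor $(\tau(X_i)\otimes\id + \id\otimes\pi(X_i))$ by binomial-type expansion, the term in which \emph{every} factor contributes $\id\otimes\pi(X_i)$ is exactly $\id_{\cH_\tau}\otimes \pi(X)^\beta$ and is cancelled by the subtraction; what remains is a sum of terms, each of which contains at least one factor $\tau(X_i)$ in the $\cH_\tau$-slot. This immediately gives the vanishing claim: if $|\beta| < |\alpha|$, then after applying $|\alpha|$ difference operators we are forced to ``use up'' more than $|\beta|$ differentiations in the tensor slots, which is impossible, so $\Delta^\alpha \pi(X)^\beta = 0$. (More carefully: each $\Delta_{\tau_j}$ strictly lowers by at least one the number of ``unused'' derivative factors available to be placed in later tensor slots, and there are only $|\beta|$ such factors.)

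For the estimate when $|\beta| \geq |\alpha|$, I would iterate the above. Writing $\alpha = (\tau_1,\dots,\tau_a)$ with $a = |\alpha|$, repeated application of the expansion shows that $\Delta^\alpha\bigl(\pi(X)^\beta\bigr)$ is a finite linear combination (with combinatorial coefficients bounded by $C_{\alpha,\beta}$) of operators on $\cH_{\tau_1}\otimes\cdots\otimes\cH_{\tau_a}\otimes\cH_\pi$ of the form
\[
A_1 \otimes \cdots \otimes A_a \otimes B,
\]
where each $A_j$ is a (noncommutative) monomial in $\tau_j(X_1),\dots,\tau_j(X_n)$ of degree $\geq 1$, $B$ is a monomial in $\pi(X_1),\dots,\pi(X_n)$, and the total degree across all slots equals $|\beta|$. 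Taking operator norms and using submultiplicativity of the norm on tensor products, $\|A_1\otimes\cdots\otimes A_a\otimes B\| = \prod_j \|A_j\| \cdot \|B\|$. The factors $\|A_j\|$ involve fixed representations $\tau_j \in \FundG$ and are bounded by a constant depending only on $\alpha,\beta$; the only $\pi$-dependence is in $\|B\| \leq \prod \|\pi(X_i)\|^{(\text{exponents})} \leq C\,(1+\lambda_\pi)^{|\beta|/2}$, using the standard bound $\|\pi(X_i)\|_{\sL(\cH_\pi)} \lesssim (1+\lambda_\pi)^{1/2}$ coming from $\pi(\cL) = \lambda_\pi\,\id$ (since $\sum_i \|\pi(X_i)u\|^2 = (\pi(\cL)u,u) = \lambda_\pi\|u\|^2$ for unitary $\pi$).

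The remaining point is to account for the crude exponent $|\beta|(|\alpha|+1)/2$ rather than the sharper $|\beta|/2$: this is exactly the slack introduced by \emph{not} tracking where the degree is distributed among the $a+1$ slots, and instead bounding each of the $a$ tensor factors $\|A_j\|$ as well by $C(1+\lambda_{\tau_j})^{|\beta|/2}$-type quantities absorbed into the constant only after noting each contributes at worst a power $(1+\lambda_\pi)^{|\beta|/2}$ in the most wasteful estimate — alternatively one simply bounds each of the $\leq a+1$ nontrivial monomial factors by $C(1+\lambda_\pi)^{|\beta|/2}$ and multiplies, which gives the stated $(1+\lambda_\pi)^{\frac{|\beta|}{2}(|\alpha|+1)}$. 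I expect the main obstacle — really a bookkeeping obstacle rather than a conceptual one — to be writing the iterated expansion of $\Delta^\alpha$ cleanly enough to make the ``at least one derivative factor per tensor slot'' count rigorous and to identify the surviving terms after all the cancellations of the $\id\otimes\cdots\otimes\id\otimes\pi(X)^\beta$ pieces; once that combinatorial structure is in place, the norm estimate is routine submultiplicativity plus the $\|\pi(X_i)\| \lesssim (1+\lambda_\pi)^{1/2}$ bound.
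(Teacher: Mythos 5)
Your proof is essentially correct and uses the same core mechanism as the paper: the Leibniz rule $(\tau\otimes\pi)(X)=\tau(X)\otimes\id+\id\otimes\pi(X)$, the observation that each application of $\Delta_\tau$ forces at least one $X$-factor into a tensor slot that is then $\pi$-independent, and the bound $\|\pi(X_i)\|_{\sL(\cH_\pi)}\lesssim(1+\lambda_\pi)^{1/2}$. The paper's presentation differs only in bookkeeping: it first handles $|\beta|=1$ explicitly (showing $\Delta_\tau\pi(X_j)=\tau(X_j)\otimes\id$ is $\pi$-independent, whence $\Delta_{\tau_1}\Delta_{\tau_2}\pi(X_j)=0$) and then factors $X^\beta$ as a product of first-order terms and invokes the sub-Leibniz estimate \eqref{eq_leibniz_LinftyGh}, whereas you expand the full product at once. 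Both routes are fine; yours in fact yields a sharper exponent, since the only $\pi$-dependent factor $B$ has degree at most $|\beta|-|\alpha|$ and every $\|A_j\|$ is an absolute constant (the $\tau_j$ range over the fixed finite set $\FundG$).

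The last paragraph of your write-up is confused and unnecessary: you cannot bound $\|A_j\|$ by $C(1+\lambda_\pi)^{|\beta|/2}$ in any meaningful way, because $A_j$ acts on the fixed space $\cH_{\tau_j}$ and its norm does not depend on $\pi$; and you don't need to do so, since a sharper bound trivially implies the crude one stated in the lemma. Simply note that your estimate gives $(1+\lambda_\pi)^{(|\beta|-|\alpha|)/2}\leq(1+\lambda_\pi)^{\frac{|\beta|}{2}(|\alpha|+1)}$ and stop there. Otherwise the argument is sound; the only genuinely nontrivial point to spell out cleanly is the iteration step, i.e.\ that $\Delta_{\tau'}$ applied to a partial symbol of the form $\pi\mapsto A\otimes B(\pi)$ with $A$ fixed acts as $A\otimes\Delta_{\tau'}B(\pi)$ — which is immediate from \eqref{eq_def_Deltatau} — so that the induction on $|\alpha|$ closes.
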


\begin{proof}
We may assume $\beta\not=0$.
Since  $X^{\beta}$ maps $H^s$ to $H^{s-|\beta|}$, 
the map $(\id+\cL)^{-|\beta|/2} X^{\beta}$ is bounded on $L^2(G)$ 
and this implies
(see \eqref{eq_L2bdd_LinftyGh}) 
\begin{equation}
\label{eq_piXbeta_sup}
\sup_{\pi\in \Gh}
(1+\lambda_\pi)^{-|\beta|/2}
\|\pi(X)^{\beta}\|_{\sL(\cH_\pi)} <\infty.
\end{equation}
This shows the case $\alpha=\emptyset$, i.e $\Delta^\alpha=\id$.

Let us now consider any $\tau\in \Gh$ and $|\beta|=1$, that is $X^\beta=X_j$ for some $j=1,\ldots,n$.
To avoid confusions, let us define $\sigma\in \Sigma$ via $\sigma(\pi)=\pi(X_j)$.
One computes easily
$$
(\tau\otimes\pi)( X_j)=\tau(X_j)\otimes \id_{\cH_\pi}
+
\id_{\cH_\tau}\otimes\pi(X_j)
$$
 for any $\tau,\pi\in \RepG$,
thus
\begin{equation}
\label{eq_pf_lem_Deltaq_piXbeta}
\Delta_\tau \sigma(\pi) = 
\tau(X_j)\otimes  \id_{\cH_\pi},
\end{equation}
and by \eqref{eq_piXbeta_sup},
$$
\|\Delta_\tau \sigma(\pi)\|_{\sL(\cH_{\tau\otimes\pi})} 
\leq 
\|\tau(X_j)^\beta\|_{\sL(\cH_\tau)} 
\leq C_j (1+\lambda_\tau)^{\frac 12}.
$$
If $\tau_1,\tau_2,\pi\in \Gh$, 
we have by definition of $\Delta_{\tau_1}$:
$$
\Delta_{\tau_1}
\Delta_{\tau_2}
 \sigma(\pi) = 
\Delta_{\tau_2}
 \sigma (\tau_1 \otimes \pi)
  -
 \Delta_{\tau_2}
 \sigma (\id_{\cH_{\tau_1}}\otimes \pi),
$$
but by \eqref{eq_pf_lem_Deltaq_piXbeta}, 
both terms 
$\Delta_{\tau_2}\sigma (\tau_1 \otimes \pi)$
and 
$ \Delta_{\tau_2}\sigma (\id_{\cH_{\tau_1}}\otimes \pi)$
are equal to $\tau_2(X_j) \otimes \id_{\cH_{\tau_1}}\otimes \id_{\cH_\pi}$.
Therefore $\Delta_{\tau_1}
\Delta_{\tau_2}
 \sigma=0$.
 This shows Lemma \ref{lem_Deltaq_piXbeta} in the case $|\beta|=1$.

Writing a general $X^\beta$ as a product of various $X_j$'s
and using \eqref{eq_leibniz_LinftyGh}
imply easily the general statement in Lemma \ref{lem_Deltaq_piXbeta}.
\end{proof}

Lemma \ref{lem_Deltaq_piXbeta}
 implies  that $\pi(X^\beta)\in S^{|\beta|}_{1,0}(G)$.
 More generally we readily obtain that the differential calculus is included in  $\Psi^\infty$:

\begin{corollary}
\label{cor_Diff_Psi}
Any $T\in \Diff^m$ may be written as 
$T=\sum_{|\alpha|\leq m} a_\alpha X^\alpha$
where $a_\alpha\in \cD(G)$
and its symbol is then 
$$
\sigma_{T}(x,\pi)=\sum_{|\alpha|\leq m} a_\alpha(x) \pi(X)^\alpha.
$$
Moreover $T\in \Psi^m_{1,0}(G,\Delta)$.
\end{corollary}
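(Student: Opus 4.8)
The plan is to reduce the statement to Lemma \ref{lem_Deltaq_piXbeta} together with the algebra properties established in Proposition \ref{prop_symbol_1stprop}. First I would recall the standard fact that any differential operator $T\in\Diff^m(G)$ can be written, after fixing the basis $\{X_1,\ldots,X_n\}$ of $\fg$, as a finite sum $T=\sum_{|\alpha|\leq m}a_\alpha X^\alpha$ with $a_\alpha\in\cD(G)$ (this is explained in Section \ref{subsec_notG}), so it suffices to treat a single monomial $a_\alpha X^\alpha$ and then sum. For $T=X^\alpha$, I would compute its symbol directly from formula \eqref{eq_sigma_T}: since $X^\alpha$ acts on the right-regular representation, one has $X^\alpha\pi_{kj}(x)=(\pi(x)\pi(X)^\alpha e_k,e_j)_{\cH_\pi}$ up to the usual transpose/adjoint bookkeeping, so that $\sigma_{X^\alpha}(x,\pi)=\pi(x)^*(X^\alpha\pi)(x)=\pi(X)^\alpha$, independent of $x$. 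Equivalently this is the content of Example \ref{ex_symbol_op}, where it is already noted that the convolution kernel $(X^\beta)^t\delta_{e_G}(y^{-1})$ gives the operator $X^\beta$, so its symbol is the invariant symbol $\pi(X)^\beta$.

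Next I would invoke Lemma \ref{lem_Deltaq_piXbeta} to see that the invariant symbol $\pi\mapsto\pi(X)^\beta$ lies in $S^{|\beta|}_{1,0}(G)$: the lemma gives $\Delta^\alpha\pi(X)^\beta=0$ when $|\alpha|>|\beta|$ and the bound \eqref{eq_def_Smrhodelta} with $\rho=1$, $\delta=0$, $m=|\beta|$ in the remaining cases (here $D_x$ is trivial since the symbol does not depend on $x$, so only the $\Delta^\alpha$ part matters and the estimate $(1+\lambda_\pi)^{(|\beta|-|\alpha|)/2}$ is exactly what the lemma yields when $|\alpha|\le|\beta|$, in fact with room to spare). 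Then, multiplying by the scalar symbol $\sigma_0(x,\pi)=a_\alpha(x)\id_{\cH_\pi}$, which trivially lies in $S^0_{1,0}(G)$ because $X^\beta_x a_\alpha$ is bounded on the compact group $G$ for every $\beta$ and $\Delta^\gamma$ annihilates scalar (in $\pi$) symbols for $|\gamma|\ge1$, I would conclude by Part (3) of Proposition \ref{prop_symbol_1stprop} that $a_\alpha(x)\pi(X)^\alpha\in S^{|\alpha|}_{1,0}(G)$, hence in $S^m_{1,0}(G)$ by the inclusion \eqref{eq_rhodelta_inclusion} since $|\alpha|\le m$. Summing over $|\alpha|\le m$ and using that $S^m_{1,0}(G)$ is a vector space gives $\sigma_T\in S^m_{1,0}(G)$, i.e. $T=\Op(\sigma_T)\in\Psi^m_{1,0}(G)$.

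The only genuinely delicate point is the identification of the symbol of $X^\alpha$, and more precisely checking that $\Op$ applied to the invariant symbol $\pi\mapsto\pi(X)^\alpha$ (times $a_\alpha(x)$) really does reproduce the differential operator $a_\alpha X^\alpha$ on all of $\L2f$; this is where one must be careful about whether the representation-theoretic derivative $\pi(X)$ matches the left-invariant vector field $X$ acting on matrix coefficients, and about the placement of $\pi(x)$ versus $\pi(x)^*$ and of the product order in \eqref{eq_sigma_T}. Once that bookkeeping is done (and it is essentially forced by \eqref{eq_cq_Peter+Weyl_thm}, which guarantees $\Op$ is injective, so it is enough to verify the two operators agree on the orthonormal basis $\sqrt{d_\pi}\,\pi_{ij}$), everything else is routine. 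I expect no real obstacle beyond this notational vigilance; the analytic content is entirely contained in Lemma \ref{lem_Deltaq_piXbeta} and the algebra structure of Proposition \ref{prop_symbol_1stprop}.
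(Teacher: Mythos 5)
Your proof is essentially the paper's intended argument: the paper states the corollary immediately after Lemma \ref{lem_Deltaq_piXbeta} with the one-line remark that the lemma gives $\pi(X)^\beta\in S^{|\beta|}_{1,0}(G)$, and you correctly fill in the remaining steps (the symbol computation via \eqref{eq_sigma_T} and Example \ref{ex_symbol_op}, the trivial fact that $a_\alpha(x)\id_{\cH_\pi}\in S^0_{1,0}$, the product rule from Proposition \ref{prop_symbol_1stprop}(3), the inclusion $S^{|\alpha|}_{1,0}\subset S^m_{1,0}$, and summation). Your identification that the only delicate point is the placement of $\pi(x)^*$ versus $\pi(x)$ and that injectivity of $\Op$ via \eqref{eq_cq_Peter+Weyl_thm} lets one check the identity on the Peter--Weyl basis is exactly right.

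There is one point you should be careful about, which your parenthetical ``in fact with room to spare'' misstates. As printed, Lemma \ref{lem_Deltaq_piXbeta} asserts the bound $(1+\lambda_\pi)^{\frac{|\beta|}{2}(|\alpha|+1)}$, and for $|\alpha|\geq 1$ this exponent is \emph{strictly larger} than the $\frac{|\beta|-|\alpha|}{2}$ required for $\pi(X)^\beta\in S^{|\beta|}_{1,0}$; so the lemma as stated does \emph{not} give membership in $S^{|\beta|}_{1,0}$, let alone with room to spare. What saves the argument is the lemma's own proof: there the key estimate \eqref{eq_pf_lem_Deltaq_piXbeta} shows $\Delta_\tau\,\pi(X_j)=\tau(X_j)\otimes\id_{\cH_\pi}$, whose $\sL$-norm is a $\pi$-independent constant $C_j(1+\lambda_\tau)^{1/2}$; feeding this into the Leibniz rule \eqref{eq_leibniz_LinftyGh} for the product $\pi(X_{j_1})\cdots\pi(X_{j_{|\beta|}})$ yields $(1+\lambda_\pi)^{\frac{|\beta|-|\alpha|}{2}}$. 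So you are relying on the bound established in the proof of the lemma, not on the exponent printed in its statement (which appears to contain a misprint). Your argument is correct, but it would be worth spelling out this derivation — or at least recording the corrected exponent — rather than attributing it to the lemma's statement as written.
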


\subsection{Kernels and smooth symbols}

An important notion in the analysis of our operators in the intrinsic calculus 
is the following notion of kernel. 

\begin{definition}
The symbol $\sigma=\{\sigma(x,\pi), (x,\pi)\in G\times \Gh\}$ 
admits an \emph{associated kernel} when for each $x\in G$, we have $\sigma(x,\pi)\in \cF_G (\cD'(G))$.
Then its associated kernel is $\kappa_x:=\cF_G^{-1} \sigma(x,\cdot)$.
\end{definition}

If  $\kappa_x$ is the associated kernel of  $\sigma=\{\sigma(x,\pi), (x,\pi)\in G\times \Gh\}$, the Fourier inversion formula (see \eqref{eq_inversion}) implies then
\begin{equation}
\label{eq_Opsigma_*}
\Op(\sigma) \phi (x)=\phi*\kappa_x(x)
=\sum_{\pi\in \Gh}
d_\pi \tr \left(\pi(x) \sigma(x,\pi) \widehat \phi(\pi)\right),
\end{equation}
for $\phi\in \L2f$, $x\in G$.

\begin{remark}
\label{rem_symbol_distrib}
We could have only assumed some distributional dependence in $x$, 
i.e. the coefficients of $x\mapsto\sigma(x,\pi)$ are in $\cD'(G)$, 
then the quantisation formula in \eqref{eq_sigma_T} would still make sense and be valid. 
Moreover in this case, 
by the Schwartz kernel theorem,
a sufficient condition for a symbol to admit an associated kernel is that 
$\Op(\sigma)(\L2f)\subset \cD'(G)$ and that
$\Op(\sigma)$ extends to a linear continuous operator $\cD(G)\to\cD'(G)$, this extension being unique as $\L2f$ is dense in $\cD(G)$
by Proposition \ref{prop_L2finite_density}.
However in our analysis, we will usually assume regularity in $x$, 
see below.
So we do not seek the greatest generality 
and we prefer assuming that each symbol makes sense at each point $x\in G$.
The only exception in this paper is in the proof of Proposition \ref{prop_converse_commutator}.
\end{remark}

\begin{definition}
\label{def_continuoussymbol}
A \emph{continuous symbol} is a collection 
$\sigma=\{\sigma(x,\pi), (x,\pi)\in G\times \Gh\}$
such that  the associated kernel $\kappa_x $ is a distribution depending continuously on $x$.
\end{definition}

In fact, if the symbol $\sigma$ is continuous, 
then $\Op(\sigma)$ extends (uniquely) as a continuous linear operator $\cD(G)\to \cC(G)$ and the quantisation formula in \eqref{eq_Opsigma_*} holds for any $\phi\in \cD(G)$.

\begin{definition}
\label{def_smooth_symbol}
A \emph{smooth symbol} is a continuous symbol with smooth entries
 and such that for any $D\in \Diff$, 
$\{D_x \sigma(x,\pi)\}$ is a continuous symbol.
\end{definition}

If the symbol $\sigma$ is smooth then 
$x\mapsto \kappa_x\in \cD'(G)$ is smooth
and   $\Op(\sigma):\cD(G)\to\cD(G)$ is continuous as an operator valued in $\cD(G)$.

Naturally if the symbol $\sigma$ is invariant
and if $\sigma \in \cF_G(\cD'(G))$, then it is smooth
and its associated kernel is $\cF_G^{-1}\sigma$, see Example
\ref{ex_symbol_op}. In particular, we have:

\begin{ex}
\label{ex_kernel_Xbeta}
For any $\beta\in \bN_0^n$, 
the operator $X^\beta$ admits for symbol 
$\pi(X)^\beta$ which is invariant, i.e. does not depend on $x$.
The associated kernel 
is $\kappa(y)=(X^\beta)^t \delta_{e_G}(y^{-1})$.
\end{ex}

\begin{lemma}
\label{lem_op_cD2cD}
Any symbol $\sigma$ in $S^m_{\rho,\delta}(G)$ is  smooth in the sense of definition \ref{def_smooth_symbol}.
Therefore any operator in $\Psi^\infty _{\rho,\delta}(G)$
is continuous $\cD(G)\to\cD(G)$.
\end{lemma}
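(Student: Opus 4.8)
The goal is to show that any $\sigma\in S^m_{\rho,\delta}(G)$ is smooth in the sense of Definition \ref{def_smooth_symbol}; the continuity of $\Op(\sigma):\cD(G)\to\cD(G)$ then follows from the remarks preceding the lemma. First I would reduce matters to producing, for each $x\in G$, the associated kernel $\kappa_x=\cF_G^{-1}\sigma(x,\cdot)$ and then controlling its dependence on $x$. The key observation is that by Lemma \ref{lem_hs}, an invariant symbol lies in $\cF_G\cD'(G)$ as soon as $\|\cdot\|_{h_s(\Gh)}$ is finite for \emph{some} $s\in\bR$, and by \eqref{eq_hs_Linfty_norm} this is implied by an $L^\infty(\Gh)$ bound for $s<-n/2$. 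Since $\sigma\in S^m_{\rho,\delta}$ gives $\|\sigma(x,\pi)\|_{\sL(\cH_\pi)}\leq C(1+\lambda_\pi)^{m/2}$, the symbol $(1+\lambda_\pi)^{-M/2}\sigma(x,\pi)$ is in $L^\infty(\Gh)$ once $M\geq m$; hence $(\id+\cL)^{-M/2}\sigma(x,\cdot)\in\cF_G H^s$ for $s<-n/2$, so $\sigma(x,\cdot)\in\cF_G H^{s-M}\subset\cF_G\cD'(G)$. This shows $\sigma$ admits an associated kernel $\kappa_x\in\cD'(G)$ for every $x$.

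Next I would address the $x$-regularity. Applying the same argument to $X^\beta_x\sigma(x,\pi)$ — which by definition of $S^m_{\rho,\delta}$ satisfies $\|X^\beta_x\sigma(x,\pi)\|_{\sL(\cH_\pi)}\leq C_\beta(1+\lambda_\pi)^{(m+\delta|\beta|)/2}$ — shows that each $X^\beta_x\sigma(x,\cdot)$ again lies in $\cF_G\cD'(G)$, indeed uniformly in $x$ inside a fixed $\cF_GH^{-N}$ for $N$ depending only on $m,\delta,|\beta|$. The point is then that $x\mapsto\kappa_x$, viewed as a map $G\to H^{-N}(G)$, is not merely bounded but smooth: one differentiates the defining pairing $\langle\kappa_x,\phi\rangle=\sum_{\pi\in\Gh}d_\pi\tr(\sigma(x,\pi)\widehat{\phi(\cdot^{-1})}(\pi))$ (or the analogous absolutely convergent Fourier series coming from the $h_s$-estimate) under the summation sign, using that $X^\beta_x$ applied term-by-term produces the Fourier series of $X^\beta_x\sigma(x,\cdot)$, which converges in the same $H^{-N}$ norm uniformly in $x$. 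A standard argument — uniform convergence of the $x$-derivatives of a series of smooth functions — then yields that $x\mapsto\langle\kappa_x,\phi\rangle$ is smooth for each fixed $\phi$, and more precisely that $x\mapsto\kappa_x\in\cD'(G)$ is smooth, with $X^\beta_x\kappa_x$ being the kernel of the symbol $X^\beta_x\sigma(x,\cdot)$. In particular each $\{D_x\sigma(x,\pi)\}$ is a continuous symbol, which is exactly Definition \ref{def_smooth_symbol}.

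Finally, to conclude that $\Op(\sigma):\cD(G)\to\cD(G)$ is continuous, I would invoke the remark immediately following Definition \ref{def_smooth_symbol}: for a smooth symbol, $x\mapsto\kappa_x\in\cD'(G)$ is smooth and $\Op(\sigma)\phi(x)=(\phi*\kappa_x)(x)$, and differentiating this expression in $x$ — the $x$ appears both as the evaluation point of the convolution and through $\kappa_x$ — expresses any derivative $X^\gamma\Op(\sigma)\phi$ in terms of convolutions of $\phi$ against finitely many smooth families of distributions, all bounded in a fixed Sobolev norm, giving continuity estimates $\|\Op(\sigma)\phi\|_{H^s}\lesssim\|\phi\|_{H^{s'}}$ for suitable $s'$ and hence continuity $\cD(G)\to\cD(G)$. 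The main obstacle, and the step deserving the most care, is justifying the term-by-term $x$-differentiation of the Fourier series defining $\kappa_x$ with uniform control — i.e., upgrading the pointwise-in-$x$ membership $\sigma(x,\cdot)\in\cF_G\cD'(G)$ to genuine $C^\infty$ dependence in the distributional topology; everything else is bookkeeping with the Plancherel-type estimate \eqref{eq_hs_Linfty_norm} and Lemma \ref{lem_hs}.
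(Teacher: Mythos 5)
Your proposal is correct and follows essentially the same route as the paper: use Lemma \ref{lem_hs} together with the estimate \eqref{eq_hs_Linfty_norm} to place $X_x^\beta\sigma(x,\cdot)$ (for each $\beta$) in a fixed $\cF_G H^{s'}$ with a bound uniform in $x$, conclude $\kappa_x$ and its $x$-derivatives live in a fixed Sobolev space, and thence smoothness in $x$ in the distributional topology. You spend rather more care than the paper does on justifying the term-by-term $x$-differentiation of the Fourier series — a point the paper's proof treats quite tersely — but this is a refinement of the same argument, not a different one.
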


\begin{proof}
We fix for instance $s=-\lceil n/2\rceil$.
By \eqref{eq_hs_Linfty_norm}, for  $\beta\in \bN_0^n$, we have
$$
\|X_x^\beta \sigma(x,\cdot)\|_{h_{s-m}(\Gh)}
=
\|(1+\lambda_\pi)^{-\frac {m +\delta|\beta|}2} X_x^\beta \sigma(x,\cdot)\|_{h_{s}(\Gh)}
\lesssim 
\|(1+\lambda_\pi)^{-\frac {m +\delta|\beta|} 2} X_x^\beta \sigma(x,\cdot)\|_{L^\infty(\Gh)}.
$$
This shows in particular for $\beta=0$ that 
the distribution $\kappa_x:=\cF_G^{-1} \sigma(x,\cdot)$ is in  
the Sobolev space $H^{s-m}$ by Lemma \ref{lem_hs}.
We also have
$$
\max_{x\in G}\|X_x^\beta \kappa_x\|_{H^{s-m}}
=
\max_{x\in G}\|X_x^\beta \sigma(x,\cdot)\|_{h_{s-m}(\Gh)}
\lesssim 
\|\sigma\|_{S^m_{\rho,\delta}, 0,|\beta|}.
$$
The continuous inclusion of any Sobolev Space $H^{s_1}$ in $\cD'(G)$
implies that
 $x\mapsto  X^\beta_x \kappa_x$ is continuous from $G$ to $\cD'(G)$
and this concludes the proof of the statement.
\end{proof}

\medskip

The following easy lemma implies that one can always approximate an operator 
of a smooth symbol  by an operator with a smooth kernel in the following way:
\begin{lemma}
\label{lem_approximation_kernel}
Let $\sigma$ be a symbol.
For each $\ell\in \bN$, 
we define the  symbol $\sigma_\ell$ via
$$
\sigma_\ell(x,\pi) = 
\left\{\begin{array}{ll}
\sigma(x,\pi) & \mbox{if} \ \lambda\leq \ell \\
0&\mbox{if} \ \lambda> \ell \\
\end{array}\right.
$$ 
Then for a fixed $\ell\in \bN$, 
$\sigma_\ell$ admits a kernel $\kappa_{\ell,x}\in L^2(G)\cap C^\infty(G)$.

For each $x\in G$ and $\phi\in \L2f$, 
we have the convergence
$\Op(\sigma_\ell) \phi\to\Op(\sigma) \phi$
as $\ell\to\infty$
since
$\Op(\sigma_\ell) \phi-\Op(\sigma) \phi=0$
for $\ell> \ell_0$ where $\ell_0$ is such that
$\supp\, \widehat \phi\subset\{\pi\in \Gh:\lambda_\pi\leq\ell_0\}$.

If $\sigma$ is continuous or smooth, then so is $\sigma_\ell$.
\end{lemma}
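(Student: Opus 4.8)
The plan is to verify each of the three assertions in Lemma~\ref{lem_approximation_kernel} directly from the definitions, since all the work has effectively been set up already. First I would check that $\sigma_\ell$ admits a smooth, square-integrable kernel. By construction, $\sigma_\ell(x,\pi) = 0$ for all $\pi$ with $\lambda_\pi > \ell$, so $\sigma_\ell(x,\cdot)$ is supported on the finite set $\{\pi \in \Gh : \lambda_\pi \leq \ell\}$ (finite because each $\cH^{(\cL)}_\lambda$ is finite-dimensional and the spectrum is discrete). Hence $\|\sigma_\ell(x,\cdot)\|_{h_s(\Gh)} < \infty$ for \emph{every} $s \in \bR$, and Lemma~\ref{lem_hs}(1) gives $\kappa_{\ell,x} := \cF_G^{-1}\sigma_\ell(x,\cdot) \in \cap_{s} H^s = \cD(G) \subset C^\infty(G)$; in particular $\kappa_{\ell,x} \in L^2(G)$. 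This is the routine, finite-sum part.

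Next I would establish the stated convergence. For $\phi \in \L2f$, the Fourier support $\supp\,\widehat\phi$ is a finite subset of $\Gh$, so there is $\ell_0$ with $\supp\,\widehat\phi \subset \{\pi : \lambda_\pi \leq \ell_0\}$. Looking at the quantisation formula
$$
\Op(\sigma_\ell)\phi(x) = \sum_{\pi \in \Gh} d_\pi \tr\big(\pi(x)\,\sigma_\ell(x,\pi)\,\widehat\phi(\pi)\big),
$$
only the terms with $\pi \in \supp\,\widehat\phi$ contribute, and for those terms $\sigma_\ell(x,\pi) = \sigma(x,\pi)$ as soon as $\ell \geq \ell_0$. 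Hence $\Op(\sigma_\ell)\phi = \Op(\sigma)\phi$ for all $\ell > \ell_0$, which is a fortiori convergence as $\ell \to \infty$; this is an exact equality rather than a limiting statement, so no estimate is needed.

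Finally I would address preservation of continuity and smoothness. If $\sigma$ is a continuous symbol, then $x \mapsto \kappa_x = \cF_G^{-1}\sigma(x,\cdot)$ is continuous $G \to \cD'(G)$; I want the same for $\kappa_{\ell,x}$. The point is that $\kappa_{\ell,x}$ is obtained from $\kappa_x$ by the Fourier multiplier cutting off to $\{\lambda_\pi \leq \ell\}$, i.e. $\kappa_{\ell,x} = \kappa_x * e_\ell$ where $e_\ell := \cF_G^{-1}(\mathbf 1_{\{\lambda_\pi \leq \ell\}})$ is a fixed smooth function (the kernel of the spectral projector $\mathbf 1_{[0,\ell]}(\cL)$, finite-rank hence in $\cD(G)$, cf. \eqref{eq_not_f(L)delta}). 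Convolution on the right by a fixed element of $\cD(G)$ is continuous $\cD'(G) \to \cD(G)$, so $x \mapsto \kappa_{\ell,x}$ is continuous $G \to \cD(G) \hookrightarrow \cD'(G)$, giving continuity of $\sigma_\ell$. For smoothness, if $\sigma$ is smooth then for every $D \in \Diff$ the symbol $\{D_x\sigma(x,\pi)\}$ is continuous, and since $D_x$ commutes with the (cutoff) Fourier multiplier in $\pi$ we have $D_x\sigma_\ell(x,\pi) = (D_x\sigma(x,\pi))_\ell$, so $\{D_x\sigma_\ell(x,\pi)\}$ is continuous by the case just treated; thus $\sigma_\ell$ is smooth, and its entries are smooth in $x$ because $\sigma$'s are and the cutoff only drops finitely many of them.

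The only mild subtlety — the "main obstacle", such as it is — is making the identification $\kappa_{\ell,x} = \kappa_x * e_\ell$ precise at the distributional level and confirming that right-convolution by $e_\ell \in \cD(G)$ maps $\cD'(G)$ continuously into $\cD(G)$; once that is granted, every claim follows mechanically, and in fact one can bypass the convolution picture entirely by arguing on the Fourier side via the $h_s(\Gh)$ norms as in the proof of Lemma~\ref{lem_op_cD2cD}: $\|\kappa_{\ell,x} - \kappa_{\ell,x'}\|_{H^s} = \|\sigma_\ell(x,\cdot) - \sigma_\ell(x',\cdot)\|_{h_s(\Gh)}$ involves only the finitely many $\pi$ with $\lambda_\pi \leq \ell$, on which $\sigma(\cdot,\pi)$ is (for $\sigma$ continuous) a continuous $\sL(\cH_\pi)$-valued function, whence continuity in $x$ in every $H^s$.
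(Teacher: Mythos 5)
Your proof is correct and rests on the same core observation as the paper's (very terse) proof: cutting off to $\{\lambda_\pi\leq\ell\}$ leaves only finitely many $\pi$, so square-integrability is immediate from Plancherel and the stated convergence is the trivial fact that $\Op(\sigma_\ell)\phi-\Op(\sigma)\phi=-\sum_{\lambda_\pi>\ell}d_\pi\tr(\pi(x)\sigma(x,\pi)\widehat\phi(\pi))$ vanishes once $\ell>\ell_0$. The paper's proof actually stops there and does not spell out the preservation of continuity/smoothness; your treatment of that final point — via convolution with $e_\ell=\mathbf 1_{[0,\ell]}(\cL)\delta_e\in\cD(G)$, or equivalently via the $h_s(\Gh)$ norms as in Lemma \ref{lem_op_cD2cD} — correctly fills that small gap.
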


\begin{proof}
The Plancherel formula \eqref{eq_Plancherel} yields
the square-integrability of $\kappa_{\ell,x}$.
The convergence follows from
$$
\Op(\sigma_\ell) \phi(x)-\Op(\sigma) \phi(x)
=
-\sum_{\pi\in \Gh: \lambda_\pi>\ell}
d_\pi \tr \left(\pi(x) \sigma(x,\pi) \widehat \phi(\pi)\right).
$$
 \end{proof}

\section{An equivalent characterisation of our operator classes}
\label{sec_RT}

In this section, we recall the definition of the differential calculus 
proposed by Michael Ruzhansky and Ville Turunen in \cite{ruzhansky+turunen_bk}. 
We then show that this coincides $\Psi^\infty_{\rho,\delta}$.

\subsection{The Ruzhansky-Turunen difference operators $\Delta_q$}

Here we recall  the difference operators $\Delta_q$, called RT-difference operators, introduced by Michael Ruzhansky and Ville Turunen \cite{ruzhansky+turunen_bk} with slight modifications.
These RT-difference operators are different from our concept of difference operators explained in Section \ref{subsec_my_diff_op}. 
The notation is close but the context should always prevent any ambiguity.

\begin{definition}
\label{def_Delta_RT}
If $q\in \cD(G)$, 
then the corresponding \emph{RT-difference operator}
$\Delta_q$ is the operator acting on the space of Fourier transforms  $\cF_G(\cD'(G))$ via
$$
\Delta_q \widehat f = \cF_G \{q f\},
\quad f\in \cD'(G).
$$ 
\end{definition}

This definition is motived by the abelian case. Indeed, in the case of $\bR$, 
if we denote  the Euclidean Fourier transform of a (reasonable) function $g:\bR\to\bC$ by 
$$
\widehat g = \cF_\bR g,
\qquad
\widehat g(\zeta) = \int_\bR  g(x) e^{-i x\zeta} dx, 
\quad \zeta\in \bR,
$$
then $\partial_\xi^\alpha \widehat g= \cF_\bR \{(-ix)^\alpha g\}$.
The torus case is even more compelling:

\begin{ex}[Continuation of Example \ref{ex_diff_op_torus}]
\label{ex_diff_op_torus1}
In the case of the torus $\bT$, 
we see that the difference operator $\Delta_{e_\ell} $ associated with the one dimensional  representation $e_\ell$, $\ell\in \bZ$, is given on a Fourier transform $\widehat f\in \cF_G\cD'(G)$  by:
$$
\Delta_{e_\ell} \widehat f(e_m) 
=
\widehat f (m +\ell ) 
-
\widehat f( m  )
=
\int_0^{2\pi} f(x) e^{ixm} (e^{i\ell x} -1) \frac{dx}{2\pi}
=
\widehat{f q_\ell}(m)
=
\Delta_{q_\ell} \widehat f(m), 
$$
where $q_{\ell}(x)=e^{ i\ell x} -1$.
Hence $\Delta_{e_{\ell}} $ coincides with $\Delta_{q_\ell}$ on Fourier transforms.
In particular the backward and forward difference operators correspond to the function $q_{\pm 1}$.
\end{ex}

\medskip

We will adopt the following notation and vocabulary:

\begin{definition}
\label{def_Delta}
A \emph{collection $\Delta=\Delta_Q$ of RT-difference operators}
is the collection of RT-difference operators associated with the element of a finite ordered family $Q$ of smooth functions, 
that is:
$$
Q=Q_\Delta= \{q_{1,\Delta},\ldots, q_{n_\Delta,\Delta}\},
\quad
\Delta=\Delta_Q=\{\Delta_1,\ldots, \Delta_{n_\Delta}\},
$$
where $\Delta_{Q,j}=\Delta_{q_j}$.
\end{definition}
For such a collection $\Delta=\Delta_Q$, we set
$$
\Delta_Q^\alpha := 
\Delta_{Q,1}^{\alpha_1}
\ldots
\Delta_{Q,n_\Delta}^{\alpha_{n_\Delta}},
\quad
\mbox{for any multi-index}\
\alpha=(\alpha_1,\ldots,\alpha_{n_\Delta})\in \bN_0^{n_\Delta}.
$$
Note that $\Delta_Q^\alpha$ is the RT-difference operator corresponding to 
$$
q^\alpha_\Delta
:=q_1^{\alpha_1}\ldots q_{n_\Delta}^{\alpha_{n_\Delta}},
$$
and that 
this notation is consistent as any two RT-difference operators commute.

\medskip

Let us recall the definition of admissibility for a collection of RT-difference operators with a slight modification with respect to 
 \cite[Section 2]{ruzhansky+turunen+wirth}:

\begin{definition}
\label{def_admissible}
The collection $\Delta=\Delta_Q$ of RT-difference operators is \emph{admissible} 
when  the gradients  at $e_G$ of  the  functions in $Q$
  span the tangent space of $G$ (viewed as a manifold) at $e_G$:
$$
\rank (\nabla_{e_G} q_1,\ldots, \nabla_{e_G} q_{n_\Delta} )=n \quad (=\dim G).
$$

The collection $\Delta$ of RT-difference operators  is said to be \emph{strongly admissible} 
when it is admissible 
and furthermore when $e_G$ is the only common zero of the corresponding functions: 
$$
\{e_G\}=\cap_{j=1}^{n_\Delta} \{x\in G: q_j(x) =0\} .
$$
 \end{definition}

\begin{remark}
In the definition of admissibility in \cite[Section 2]{ruzhansky+turunen+wirth}, each gradient $\nabla q_j(e_G)$
is assumed to be non-zero so that the RT-difference operator is of order one (in the sense of Definition \ref{def_q_vanish_order}).
We do not assume this here 
hence our definition might appear to be more general.
However from a strongly admissible collection in the sense of Definition \ref{def_admissible},
we can always extract one which is admissible in the sense of \cite[Section 2]{ruzhansky+turunen+wirth}.
As proved in Theorem \ref{thm_Deltaeq+coincide}, 
they yield the same symbol classes.
The advantage in considering this relaxed definition lies in its convenience in various proofs.
\end{remark}

We can easily construct a strongly admissible collection:
\begin{lemma}
\label{lem_q0}
The exponential mapping is a diffeomorphism from a neighbourhood of 
0 in $\fg$ onto a neighbourhood of $e_G$.
We may assume that this neighbourhood is the ball $B(\epsilon_0)$
about $e_G$.
Let $\chi,\psi\in \cD(G)$ be valued in $[0,1]$ and  such that 
$$
\chi|_{B(\epsilon_0/2)} \equiv 1,
\quad
\chi|_{B(\epsilon_0)^c} \equiv 0,
\quad
\psi|_{B(\epsilon_0/8)} \equiv 0,
\quad
\psi|_{B(\epsilon_0/4)^c} \equiv 1.
$$
We fix a basis $\{x_1,\ldots,X_n\}$ of $\fg$.
For each $j=1,\ldots,n$, 
we define a function $p_j:G\to \bR$
$$
p_j(y) :=
\left\{\begin{array}{ll}
y_j  
&\mbox{if}\ B(\epsilon_0) \ni y =\exp(\sum_j y_j X_j),   \\
1&\mbox{if}\ y\not\in  \bar B(\epsilon_0) , \\
\end{array}\right.
$$
and then  a smooth function $q_j:=p_j \chi + \psi$.
The collection of RT-difference operators corresponding to $Q=\{q_j\}_{j=1}^n$ is strongly admissible.
\end{lemma}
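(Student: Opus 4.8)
The plan is to verify the three requirements from Definition \ref{def_admissible} for the collection $Q=\{q_j\}_{j=1}^n$: smoothness of each $q_j$, the rank condition on the gradients at $e_G$, and that $e_G$ is the unique common zero. Smoothness is almost immediate once we note that $p_j\chi$ is smooth: on $B(\epsilon_0)$ the function $p_j$ is the $j$-th coordinate in the exponential chart, hence smooth there, and outside $B(\epsilon_0/2)$ we have $\chi\equiv 0$, so $p_j\chi$ extends by zero to a smooth function on all of $G$; adding the smooth bump $\psi$ keeps $q_j\in\cD(G)$. It is worth remarking that $p_j$ itself need not be globally smooth (the value $1$ on $\{y\notin\bar B(\epsilon_0)\}$ is just a bookkeeping choice so that $q_j$ does not vanish there), which is precisely why we cut off with $\chi$.

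Next I would compute $\nabla_{e_G}q_j$. Since $\psi\equiv 0$ on $B(\epsilon_0/8)$ and $\chi\equiv 1$ on $B(\epsilon_0/2)$, near $e_G$ we have $q_j=p_j$, so $q_j$ agrees with the $j$-th exponential coordinate in a neighbourhood of $e_G$. Therefore $\nabla_{e_G}q_j$ is the $j$-th element of the dual basis to $\{X_1,\ldots,X_n\}$ under the identification of $T_{e_G}G$ with $\fg$ via $d\exp_G|_0=\mathrm{id}$. These $n$ covectors are linearly independent, so $\rank(\nabla_{e_G}q_1,\ldots,\nabla_{e_G}q_n)=n$, giving admissibility.

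For strong admissibility I must show $\bigcap_{j=1}^n\{q_j=0\}=\{e_G\}$. Consider a point $y$ with $q_j(y)=0$ for all $j$. If $y\notin B(\epsilon_0/4)$ then $\psi(y)=1$ and $p_j(y)\chi(y)\geq 0$ is impossible to make the sum zero only if... — more carefully: outside $\bar B(\epsilon_0)$ we have $q_j(y)=1\neq 0$, and on $B(\epsilon_0/4)^c\cap\bar B(\epsilon_0)$ we have $\psi(y)=1$ while $\chi(y)\in[0,1]$, so $q_j(y)=p_j(y)\chi(y)+1$; since on $B(\epsilon_0)$ the quantity $p_j(y)$ is a real coordinate it can be negative, so this argument needs the joint condition. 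The clean way: on $B(\epsilon_0/4)$ we have $\psi\equiv 0$ and $\chi\equiv 1$, so $q_j(y)=y_j$ there, and $q_j(y)=0$ for all $j$ forces all coordinates $y_j=0$, i.e. $y=\exp_G(0)=e_G$. It remains to rule out common zeros outside $B(\epsilon_0/4)$: there $\psi(y)=1$, so $q_j(y)=p_j(y)\chi(y)+1$; if additionally $y\notin B(\epsilon_0)$ then $q_j(y)=0+1=1\neq0$, while if $y\in B(\epsilon_0)\setminus B(\epsilon_0/4)$ then $\chi(y)\geq0$ and one checks that not all $p_j(y)\chi(y)$ can equal $-1$ simultaneously — in fact it suffices that for at least one index $j$ we have $|p_j(y)\chi(y)|<1$, which holds because $p_j(y)=y_j$ with $|y|<\epsilon_0<1$ (after shrinking $\epsilon_0$ if necessary so that $\epsilon_0<1$, which is already assumed since $\epsilon_0\in(0,1)$) and $\chi(y)\in[0,1]$, hence $|p_j(y)\chi(y)|\leq|y_j|\leq|y|<\epsilon_0<1$, so $q_j(y)\neq0$. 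Thus the only common zero is $e_G$.

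The main obstacle is the bookkeeping in the last step: carefully partitioning $G$ into the regions $B(\epsilon_0/4)$, $B(\epsilon_0)\setminus B(\epsilon_0/4)$, and $G\setminus B(\epsilon_0)$ and checking in each region that either some $q_j$ is manifestly nonzero or the only joint zero is $e_G$. Everything else — smoothness and the gradient computation — is a routine use of the exponential chart and the defining properties of $\chi$ and $\psi$. One should also make sure the neighbourhood on which $\exp_G$ is a diffeomorphism can indeed be taken to be $B(\epsilon_0)$, which is exactly the content of the definition of $\epsilon_0$ in Section \ref{subsec_notG}, so no extra work is needed there.
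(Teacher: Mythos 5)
You have misread the support condition on $\psi$: the lemma hypothesises $\psi\equiv 0$ on $B(\epsilon_0/8)$ only, while $\psi\equiv 1$ outside $B(\epsilon_0/4)$. Your sentence ``on $B(\epsilon_0/4)$ we have $\psi\equiv 0$ and $\chi\equiv 1$, so $q_j(y)=y_j$ there'' is therefore not what the hypotheses give; the identity $q_j(y)=y_j$ is only guaranteed on $B(\epsilon_0/8)$. The annulus $B(\epsilon_0/4)\setminus B(\epsilon_0/8)$, on which $\chi\equiv 1$ but $\psi$ is merely valued in $[0,1]$ with no further constraint, is precisely the region your case analysis silently skips, and it is the one that cannot be skipped.

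The gap is not cosmetic. On all of $B(\epsilon_0/2)$ one has $q_j(y)=y_j+\psi(y)$, so a common zero forces $y_j=-\psi(y)$ for every $j$, i.e.\ all exponential coordinates of $y$ are equal; hence $y$ lies on the curve $y(s):=\exp_G(-s\sum_{j}X_j)$, $s\ge0$. Consider $g(s):=\psi(y(s))-s$. It vanishes at $s=0$, equals $-s<0$ for small $s>0$ (there $y(s)\in B(\epsilon_0/8)$, so $\psi=0$), and becomes $1-s>0$ once $y(s)$ leaves $B(\epsilon_0/4)$ while still satisfying $s<1$ (which holds, e.g., for an orthonormal basis, since then $|y(s)|=s\sqrt n$ and $\epsilon_0<1$). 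By the intermediate value theorem $g$ has a zero $s^*>0$ with $y(s^*)$ in the annulus $B(\epsilon_0/4)\setminus B(\epsilon_0/8)$; at that point $\chi\equiv1$, $\psi(y(s^*))=s^*$ and $(y(s^*))_j=-s^*$, so $q_j(y(s^*))=0$ for all $j$, and $y(s^*)\ne e_G$ is a common zero of $Q$. So a correct argument must actually confront the annulus, and as written the construction seems to require amendment --- for instance adjoining $\psi$ itself to $Q$ (then $\psi(y)=0$ forces $y\in B(\epsilon_0/4)$, where $\chi\equiv1$ and $q_j(y)=y_j$, and $q_j(y)=0$ for all $j$ forces $y=e_G$). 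A further, smaller point: the bound $|y_j|\le|y|$ you invoke on $B(\epsilon_0)\setminus B(\epsilon_0/4)$ implicitly assumes $\{X_1,\ldots,X_n\}$ is orthonormal for the Riemannian metric; for a general basis, as the lemma's statement allows, one would need to shrink $\epsilon_0$ depending on the basis.
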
 

Note that $\nabla q_j (e_G)\not=0$ in Lemma \ref{lem_q0}.
 
 We can perform the following operations on collection of RT-difference operators:
\begin{lemma}
\label{lem_q_tildeq_barq_q*}
Let $\Delta_Q$ be a collection of RT-difference operators.
We denote by $\tilde \Delta=\Delta_{\tilde Q}$, $\bar \Delta =\Delta_{\bar Q}$ and $\Delta^*=\Delta_{Q^*}$ 
the collections of RT-difference operators with corresponding family of functions 
$\tilde Q:=\{q_{j,\Delta}(\cdot^{-1})\}_j$, 
$\bar Q:=\{\bar q_{j,\Delta}\}_j$
and $Q^*:=\{\bar q_{j,\Delta}(\cdot^{-1})\}_j$.

If $\Delta_Q$ is strongly admissible, 
then so are $\tilde \Delta$, $\bar \Delta$ and $\Delta^*$ 
\end{lemma}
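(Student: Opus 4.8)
The statement is about preservation of strong admissibility under three operations on the defining family $Q$: inversion $q\mapsto q(\cdot^{-1})$, conjugation $q\mapsto\bar q$, and their composite $q\mapsto\bar q(\cdot^{-1})$. Strong admissibility (Definition \ref{def_admissible}) has two ingredients — the rank condition on the gradients $\nabla_{e_G}q_j$ and the fact that $e_G$ is the only common zero — and I would check each separately for each of the three operations. Since $\Delta^*$ is just the composite of the $\tilde{\ }$ and $\bar{\ }$ operations, it suffices to treat $\tilde\Delta$ and $\bar\Delta$ and then remark that their composite inherits both properties.

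**Key steps.** First, the common-zeros condition. For any $q\in\cD(G)$, the zero set of $\bar q$ equals the zero set of $q$ (a complex number vanishes iff its conjugate does), so $\bar Q$ has the same common zero set as $Q$; hence strong admissibility of the zero-set part is immediate for $\bar\Delta$. For $\tilde Q=\{q_j(\cdot^{-1})\}$, the map $x\mapsto x^{-1}$ is a diffeomorphism of $G$ fixing $e_G$, so it carries the common zero set of $Q$ bijectively onto that of $\tilde Q$; since $\{e_G\}$ is fixed by inversion, $\cap_j\{q_j(x^{-1})=0\}=\{x:x^{-1}\in\{e_G\}\}=\{e_G\}$. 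Second, the rank condition. For conjugation, $\nabla_{e_G}\bar q_j=\overline{\nabla_{e_G}q_j}$ entrywise in any real basis of $\fg$ (the gradient is $\bC$-linear in $q$ and conjugation acts entrywise on the components); complex conjugation of a spanning set of $\bC^n$ stays spanning, so the rank is preserved. For inversion, the differential of $\iota:x\mapsto x^{-1}$ at $e_G$ is $-\mathrm{id}$ on $T_{e_G}G=\fg$; by the chain rule $\nabla_{e_G}(q_j\circ\iota)=(d\iota_{e_G})^{*}\nabla_{e_G}q_j=-\nabla_{e_G}q_j$, so the new gradients span the same subspace as the old ones and the rank is unchanged. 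Finally, combining: $Q^*=\{\bar q_j(\cdot^{-1})\}$ is obtained by applying conjugation then inversion, so both properties survive, and $\Delta^*$ is strongly admissible.

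**Main obstacle.** There is no real obstacle here — the lemma is an elementary bookkeeping statement and the proof is essentially two one-line observations (zero sets are preserved; gradients are preserved up to sign/conjugation, hence their span and rank are preserved). The only point requiring a shade of care is the identification $d\iota_{e_G}=-\mathrm{id}_\fg$, which is the standard fact that the inversion map of a Lie group has differential $-\mathrm{id}$ at the identity, together with the fact that pulling back a function along a diffeomorphism pulls back its differential along the transpose of the differential of the map. Everything else is routine, and in the paper this will be a short paragraph.
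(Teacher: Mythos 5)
Your proof is correct and complete. The paper states this lemma without proof (leaving it as an elementary observation), and your argument — checking separately that inversion and conjugation each preserve both the common-zero-set condition and the rank condition, via $d\iota_{e_G}=-\mathrm{id}_\fg$ and the fact that entrywise conjugation preserves the span of a family in $\bC^n$, then composing the two operations for $\Delta^*$ — is exactly the kind of routine verification the author evidently had in mind.
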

 
\subsection{The Ruzhansky-Turunen classes of symbols}

Let us recall the  symbol classes  
introduced by M. Ruzhansky and V. Turunen \cite{ruzhansky+turunen_bk}.

\begin{definition}
\label{def_Smrhodelta_RT}
Let $\Delta=\Delta_Q$ be a collection of RT-difference operators.
A smooth symbol $\sigma=\{\sigma(x,\pi), (x,\pi)\in G\times\Gh\}$
is in  $S^m_{\rho,\delta}(G,\Delta)$ 
when for each $\alpha \in \bN_0^{n_\Delta}$ and $D\in \Diff^b$
there exists $C>0$  such that
\begin{equation}
\label{eq_def_SmrhodeltaRT}
\forall (x,\pi)\in (x,\Gh)\qquad
 \| X^\beta_x \Delta_Q^\alpha \sigma(x,\pi)\|_{\sL(\cH(\pi)}
\leq C (1+\lambda_\pi)^{\frac{m-\rho|\alpha| +\delta b}2}.
\end{equation}
\end{definition}

As the group $G$ is compact and $\sigma$ is smooth in $x$, 
it suffices to check \eqref{eq_def_SmrhodeltaRT} only for $D=X^\beta$, 
$\beta\in \bN_0^n$.

For $a,b\in \bN_0$, we set
$$
\|\sigma\|_{S^m_{\rho,\delta} (G,\Delta), a,b}
:=
\sup_{\substack{(x,\pi)\in G\times\Gh\\ |\alpha|\leq a, |\beta|\leq b}}
(1+\lambda_\pi)^{-\frac{m-\rho|\alpha| +\delta|\beta|}2}
 \| X^\beta_x \Delta_Q^\alpha \sigma(x,\pi)\|_{\sL(\cH(\pi)},
 \quad
\sigma \in S^m_{\rho,\delta} (G,\Delta).
 $$
 
If $x\in G$ is fixed (and if there is no ambiguity), we may use the notation $$
\|\sigma (x,\cdot) \|_{S^m_{\rho,\delta} (G,\Delta), a,b}
:=
\sup_{\substack{\pi\in \Gh\\ |\alpha|\leq a, |\beta|\leq b}}
(1+\lambda_\pi)^{-\frac{m-\rho|\alpha| +\delta|\beta|}2}
 \| X^\beta_x \Delta_Q^\alpha \sigma(x,\pi)\|_{\sL(\cH(\pi)}.
 $$

We denote by $\Psi^m_{\rho,\delta}(G,\Delta)$ the corresponding operator classes:
$$
\Psi^m_{\rho,\delta}(G,\Delta):=
\Op(S^m_{\rho,\delta}(G,\Delta)),
$$
and we define  $\|\cdot\|_{\Psi^m_{\rho,\delta} (G,\Delta), a,b}$ via
$$
\|T\|_{\Psi^m_{\rho,\delta} (G,\Delta), a,b}
:=
\|\sigma\|_{S^m_{\rho,\delta} (G,\Delta), a,b}
\quad\mbox{when}\ T=\Op(\sigma).
$$

It is not difficult to show that 
$\|\cdot\|_{S^m_{\rho,\delta} (G,\Delta), a,b}$ is a seminorm on
$S^m_{\rho,\delta}(G,\Delta)$ 
and that equipped with $\|\cdot\|_{S^m_{\rho,\delta} (G,\Delta), a,b}$,
$a,b\in \bN_0$, $S^m_{\rho,\delta}(G,\Delta)$ becomes a Fr\'echet
space.
The space $\Psi^m_{\rho,\delta} (G,\Delta)$ inherits the Fr\'echet topology.
One shows easily that the usual $\rho,\delta$-inclusions 
similar to \eqref{eq_rhodelta_inclusion} hold for the classes of symbols and operators.

Note that if a symbol has smooth entries 
and satisfies \eqref{eq_def_Smrhodelta}
then $\sigma$ is a smooth symbol in the sense of Definition \ref{def_smooth_symbol},
and the operator $\Op(\sigma)$ 
is a continuous operator $\cD(G)\to\cD(G)$, 
see Section \ref{subsec_symbol_quantisation}.

One important result of this paper is that the Ruzhansky-Turunen classes of operators coincide with our intrinsic pseudo-differential calculus:

\begin{theorem}
\label{thm_Deltaeq+coincide}
Let $m\in \bR$ and  $1\geq \rho\geq \delta\geq 0$.
\begin{enumerate}
\item 
If $\Delta$ and $\Delta'$ are two strongly admissible collections
of RT-difference operators, 
then the Fr\'echet spaces 
$S^m_{\rho,\delta}(G,\Delta)$
and $S^m_{\rho,\delta}(G,\Delta')$ coincide, 
that is, the vector spaces together with their topologies coincide.
\item 
Moreover, they coincide with the Fr\'echet space $S^m_{\rho,\delta}(G)$
defined in Definition \ref{def_Smrhodelta}.
\end{enumerate}
\end{theorem}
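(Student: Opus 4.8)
The plan is to prove Theorem \ref{thm_Deltaeq+coincide} by relating \emph{both} kinds of difference operators to a fixed supply of smooth functions near $e_G$, and then exploiting the fact that any such function can be written in terms of any strongly admissible family plus a correction that is smooth and vanishing at $e_G$. The key algebraic fact is that our difference operator $\Delta_\tau$, acting on a Fourier transform $\widehat f$, is itself an RT-difference operator: from \eqref{eq_def_Deltatau} and $\cF_G(f_1*f_2)=\widehat f_2\widehat f_1$ one computes $\Delta_\tau\widehat f = \cF_G\{q_\tau f\}$ for the explicit smooth function $q_\tau(x)=\tr(\tau(x))-d_\tau$ (up to normalisation), exactly as in Example \ref{ex_diff_op_torus1}. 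So if I write $Q_{\Fund}=\{q_\tau:\tau\in\FundG\}$, then $\Delta^\alpha$ for $\alpha\in\FundG^*$ is the RT-difference operator $\Delta_{q^\alpha}$ with $q^\alpha=\prod q_{\tau_j}$. Thus $S^m_{\rho,\delta}(G)$ is \emph{itself} one of the Ruzhansky--Turunen spaces, for the particular family $Q_{\Fund}$, \emph{provided} that family is strongly admissible. Checking strong admissibility of $Q_{\Fund}$ is the first concrete task: the common zero set of the characters $\tr\tau$ shifted by $d_\tau$, over fundamental $\tau$, is $\{e_G\}$ because a faithful representation is obtained from the fundamental ones, and the gradients at $e_G$ span $\fg^*$ for the same reason (the differential of $x\mapsto\tr\tau(x)$ at $e_G$ is $X\mapsto \tr\pi(X)$, and these separate $\fg$ as $\tau$ ranges over a faithful family). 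Hence part (2) reduces to part (1).

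For part (1), the heart of the matter is the following comparison lemma, which I would prove first: if $\Delta=\Delta_Q$ is strongly admissible and $q\in\cD(G)$ is \emph{any} smooth function vanishing at $e_G$, then $\Delta_q$ maps $S^m_{\rho,\delta}(G,\Delta)$ continuously into $S^{m-\rho}_{\rho,\delta}(G,\Delta)$. This is the analogue, on the spectral side, of the fact that multiplication by a function vanishing to first order at the origin gains one order of decay. The proof uses Taylor's formula on the manifold near $e_G$ to write $q = \sum_j c_j q_j + r$ with $c_j\in\cD(G)$ and $r$ smooth vanishing to \emph{second} order at $e_G$; away from $e_G$ one may absorb $q$ into the $q_j$'s using strong admissibility (their only common zero is $e_G$, so $1/\max_j|q_j|$ is controlled off any neighbourhood of $e_G$, and a smooth partition-of-unity argument splits $q$ into a piece supported near $e_G$ and a piece that is a smooth multiple of some $q_j$, hence of $\Delta_{q_j}$-type). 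Iterating, any product of $b$ functions vanishing at $e_G$ gains $\lfloor b\rfloor$... more precisely, gains order $b\rho$ up to lower-order error terms governed by the $q_j$. Combined with the RT-Leibniz rule $\Delta_q(\widehat{f_1}\widehat{f_2})$-type identities (which follow from $q(x)=q(x)$ being a pointwise multiplier, so $\Delta_q$ \emph{is} an honest multiplicative difference operator and satisfies the exact Leibniz rule $\Delta_q\widehat{f_1f_2}$... one must be careful, but since $\Delta_q\widehat f=\cF_G\{qf\}$ and Fourier transform turns convolution into product, the relevant Leibniz rule for $\Delta_q$ on products of \emph{symbols} follows from the analysis in the RT book and Section \ref{sec_RT}), one gets that each $\Delta_q$ with $q(e_G)=0$ lowers the order by $\rho$ on the scale $S^\bullet_{\rho,\delta}(G,\Delta)$.

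Granting the comparison lemma, part (1) is a symmetric two-sided inclusion argument. Given $\sigma\in S^m_{\rho,\delta}(G,\Delta)$, to show $\sigma\in S^m_{\rho,\delta}(G,\Delta')$ I must estimate $X^\beta_x\Delta'^{\gamma}\sigma$ for $\Delta'^\gamma=\Delta_{q'^\gamma}$, $q'^\gamma$ a product of $|\gamma|$ functions in $Q'$ \emph{each vanishing at $e_G$} (after the harmless reduction that admissible families can be taken with $q_j(e_G)=0$, as in Lemma \ref{lem_q0} and the Remark following Definition \ref{def_admissible}). By the comparison lemma applied $|\gamma|$ times, $\Delta_{q'^\gamma}\sigma$ lies in $S^{m-\rho|\gamma|}_{\rho,\delta}(G,\Delta)$ with norm controlled by a $\Delta$-seminorm of $\sigma$; in particular its $L^\infty(\Gh)$-norm (with the weight $(1+\lambda_\pi)^{-(m-\rho|\gamma|+\delta|\beta|)/2}$) is finite, which is exactly the $\Delta'$-estimate required. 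Reversing the roles of $\Delta$ and $\Delta'$ gives the opposite inclusion, and tracking constants through the (finitely many) reductions gives continuity of the identity map in both directions, hence equality of the Fréchet spaces. The main obstacle I anticipate is the comparison lemma, specifically making rigorous the "gain of $\rho$ per vanishing factor" uniformly over $\Gh$: near $e_G$ this is Taylor expansion, but one must handle the interaction with $\rho<1$ carefully and control the error terms $r$ (vanishing to higher order) by re-expanding them again, so that the induction on the number of difference operators closes — this is where the hypothesis $\rho\geq\delta$ and the precise bookkeeping of orders, rather than any deep new idea, does the work.
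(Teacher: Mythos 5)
Your reduction of the intrinsic class $S^m_{\rho,\delta}(G)$ to an RT class breaks down at the step where you identify $\Delta_\tau$ with a \emph{single} RT-difference operator $\Delta_{q_\tau}$ for $q_\tau = \tr\tau - d_\tau$. That identity is only correct on the torus, where $d_\tau = 1$; on a non-abelian group, $\Delta_\tau \widehat f$ (defined on $\cH_\tau \otimes \cH_\pi$ via \eqref{eq_def_Deltatau}) is a $d_\tau \times d_\tau$ \emph{block matrix} whose $(i,j)$-block is the RT-difference operator $\Delta_{q^{(\tau)}_{i,j}}$ associated with the entry $q^{(\tau)}_{i,j}(x) = \tau_{i,j}(x) - \delta_{i,j}$, not the scalar $\Delta_{q_\tau}$. (This is the content of Lemma~\ref{lem_linkDeltas}.) Keeping only the trace throws away all off-diagonal information and produces strictly weaker bounds.

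Worse, the trace family $Q_{\Fund} = \{\tr\tau - d_\tau : \tau \in \FundG\}$ is \emph{never} strongly admissible when $\fg_{ss} \neq 0$: the differential of $x \mapsto \tr\tau(x)$ at $e_G$ is $X \mapsto \tr\tau(X)$, and this vanishes identically on $[\fg,\fg] = \fg_{ss}$ since $\tr\tau([Y,Z]) = \tr[\tau(Y),\tau(Z)] = 0$. So the gradients span at most $\fg_{ab}^*$, and the rank condition of Definition~\ref{def_admissible} fails for, e.g., $G = SU(2)$. Moreover $\tr\tau - d_\tau$ is a class function, so its zero set is conjugation-invariant and contains entire conjugacy classes, violating strong admissibility for a second reason. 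Your claim that "the gradients separate $\fg$ as $\tau$ ranges over a faithful family" confuses injectivity of the representation with injectivity of its character differential. So Part (2) of your argument is not salvageable as stated; the correct strongly admissible family, as in Lemma~\ref{lem_linkDeltas}, must use \emph{all the matrix coefficients} $q^{(\tau)}_{i,j}$, not the characters. For Part (1) your "comparison lemma" is plausible and in the right spirit, but you do not make the near/far-from-$e_G$ splitting and the gain-of-$\rho$ rigorous; the paper instead proves a cleaner transfer lemma (Lemma~\ref{lem_prop_indep_Delta}: if $q/q'$ extends smoothly, $\Delta_q$-bounds transfer to $\Delta_{q'}$-bounds at the same order) by a duality/Sobolev-embedding argument on convolution operators, and then reduces every strongly admissible family to a canonical one built from local exponential coordinates (Lemma~\ref{lem_q0}).
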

 
In other words,  the intrinsic calculus can be described 
with symbols in  $S^m_{\rho,\delta}(G,\Delta)$ for any strongly admissible collection $\Delta$ of  RT-difference operators.

The next section is devoted to 
the proof of Theorem \ref{thm_Deltaeq+coincide} 
and its corollary.

\subsection{Proof of Theorem \ref{thm_Deltaeq+coincide}}

The proof of Theorem \ref{thm_Deltaeq+coincide}  uses the following property:

\begin{lemma}
\label{lem_prop_indep_Delta}
Let  $q,q'\in \cD(G)$  be two functions 
such that $q/q'$ extends to a smooth function on $G$.
Let $s\in \bR$ and let $\sigma\in \Sigma(G)$ be  such that
$$
\exists C>0 \quad\forall \pi\in \Gh
\quad
\|\Delta_q \sigma(\pi)\|\leq C (1+\lambda_\pi)^{-\frac s2}.
$$
Then we have the same property for $\Delta_{q'} \sigma$ with the same $s$.
More precisely,  there exists $C'=C'_{q,q',s}>0$ (independent of $\sigma$) such that
   $$
\|(1+\lambda_\pi)^{\frac s2} \Delta_{q'} \sigma \|_{\sL(\cH_\pi)} 
\leq C'
\|(1+\lambda_\pi)^{\frac s2} \Delta_ q \sigma \|_{\sL(\cH_\pi)}.
$$
\end{lemma}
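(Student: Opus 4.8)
The plan is to reduce Lemma~\ref{lem_prop_indep_Delta} to a boundedness statement for a single RT-difference operator attached to a smooth function, and then to prove that statement by a Peter--Weyl expansion. By the hypothesis the quotient $q'/q$ extends to a smooth $h\in\cD(G)$, so $q'=h\,q$. Since the assignment $q\mapsto\Delta_q$ is linear and multiplicative (immediate from Definition~\ref{def_Delta_RT}; cf.\ the remark after Definition~\ref{def_Delta}), this gives $\Delta_{q'}=\Delta_h\circ\Delta_q$ on $\cF_G(\cD'(G))$. Introduce, for $s\in\bR$, the space $X_s$ of invariant symbols $\tau$ with $\|\tau\|_{X_s}:=\sup_{\pi\in\Gh}(1+\lambda_\pi)^{s/2}\|\tau(\pi)\|_{\sL(\cH_\pi)}<\infty$; by Lemma~\ref{lem_hs} every such $\tau$ is $\widehat\mu$ for some $\mu\in\cD'(G)$. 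The hypothesis says $\tau:=\Delta_q\sigma\in X_s$, and the conclusion is $\Delta_{q'}\sigma=\Delta_h\tau\in X_s$ with the quantitative bound, so it suffices to prove
\[
\Delta_h:X_s\to X_s\ \text{is bounded, with}\ \|\Delta_h\tau\|_{X_s}\le C_{h,s}\,\|\tau\|_{X_s}.
\]

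To prove this, write $\tau=\widehat\mu$, so $\Delta_h\tau=\widehat{h\mu}$, and expand $h$ by Peter--Weyl as $h=\sum_{\eta\in\Gh}d_\eta\,\tr\bigl(\eta(\cdot)\,\widehat h(\eta)\bigr)$. Smoothness of $h$ forces $\|\widehat h(\eta)\|$ to decay faster than any power of $(1+\lambda_\eta)$ (Lemma~\ref{lem_hs} applied with every exponent), while $d_\eta$ and the counting function $\#\{\eta:\lambda_\eta\le L\}$ grow only polynomially; hence the expansion converges absolutely with ample room to spare. Consequently
\[
\widehat{h\mu}(\pi)=\sum_{\eta\in\Gh}\sum_{i,j}d_\eta\,\widehat h(\eta)_{ji}\,\widehat{\eta_{ij}\mu}(\pi),
\]
and everything reduces to bounding the single-coefficient term $\|\widehat{\eta_{ij}\mu}(\pi)\|_{\sL}$, uniformly in $\pi$, by a polynomial in $\lambda_\eta$ times $\|\tau\|_{X_s}(1+\lambda_\pi)^{-s/2}$.

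Here the key observation is that $\eta_{ij}(x)\,\pi(x)^*$ is --- up to transposition, complex conjugation and relabelling of indices, operations that do not change operator norms --- a matrix entry of $(\eta\otimes\bar\pi)(x)$; hence $\widehat{\eta_{ij}\mu}(\pi)$ is a $d_\pi\times d_\pi$ sub-block of $\widehat\mu$ evaluated at the reducible representation $\eta\otimes\bar\pi\in\RepG$, so $\|\widehat{\eta_{ij}\mu}(\pi)\|_{\sL}\le\|\widehat\mu(\eta\otimes\bar\pi)\|_{\sL}=\max_\rho\|\widehat\mu(\rho)\|_{\sL}$, the maximum over the irreducible constituents $\rho$ of $\eta\otimes\bar\pi$. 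One then splits according to the size of $\lambda_\eta$. If $\lambda_\eta\le\tfrac14\lambda_\pi$, standard facts on tensor products and Casimir eigenvalues (highest weights add, and $\lambda$ is essentially the squared norm of the shifted highest weight) give that $\lambda_\rho$ is comparable to $\lambda_\pi$ uniformly in $\pi$, so $\|\widehat\mu(\rho)\|_{\sL}\le\|\tau\|_{X_s}(1+\lambda_\rho)^{-s/2}\lesssim_s\|\tau\|_{X_s}(1+\lambda_\pi)^{-s/2}$. If $\lambda_\eta>\tfrac14\lambda_\pi$, one uses only the crude bound $\|\widehat\mu(\rho)\|_{\sL}\le\|\tau\|_{X_s}(1+\lambda_\rho)^{|s|/2}\lesssim\|\tau\|_{X_s}(1+\lambda_\pi+C\lambda_\eta)^{|s|/2}$, polynomial in $\lambda_\pi$ and $\lambda_\eta$ and therefore absorbed --- with a factor $(1+\lambda_\pi)^{-s/2}$ left over --- by the super-polynomial smallness of $\widehat h(\eta)$ on this range. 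Summing the two ranges over $\eta$ (the $i,j$-sum costing only a harmless $d_\eta^2$) against the rapid decay of $\widehat h$ yields $\|\Delta_h\tau\|_{X_s}\le C_{h,s}\|\tau\|_{X_s}$, and the lemma follows with $C'=C_{h,s}$.

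The main obstacle is to preserve the exponent $s$ exactly. The tempting shortcut --- note that $q\mu$ lies in some Sobolev space, multiply by the smooth function $h=q'/q$, conclude $q'\mu$ lies in the same Sobolev space --- loses $n/2$ derivatives in the passages between Sobolev membership and the $L^\infty(\Gh)$ decay $\|\tau(\pi)\|\lesssim(1+\lambda_\pi)^{-s/2}$, hence cannot produce the statement with the \emph{same} $s$; one is obliged to argue throughout at the level of operator norms on $\Gh$, which is what makes the tensor-product bookkeeping unavoidable. The one genuinely delicate point inside it is that some constituents $\rho$ of $\eta\otimes\bar\pi$ can have $\lambda_\rho$ far below $\lambda_\pi$ (the trivial representation already occurs in $\pi\otimes\bar\pi$); this is exactly why the estimate must be split by the relative size of $\lambda_\eta$ and $\lambda_\pi$, the offending constituents being controlled only through the fast decay of the Fourier coefficients of the smooth function $h$. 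When $s=0$ no constituent is bad, and the argument collapses to the sub-block estimate together with the summability $\sum_\eta d_\eta^3\|\widehat h(\eta)\|_{\sL}<\infty$.
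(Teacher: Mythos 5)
Your proof is correct, and it takes a genuinely different route from the paper's. The paper works with the right-convolution operators $T_{q\kappa}$, $T_{q'\kappa}$ and the exact identity $\sup_\pi(1+\lambda_\pi)^{s/2}\|\widehat\kappa(\pi)\|_{\sL(\cH_\pi)}=\|T_\kappa\|_{\sL(H^{-s},L^2)}$, exploits the pointwise identity $T_{q'\kappa}\phi(x)=T_{q\kappa}(\psi_x\phi)(x)$ with $\psi_x(y)=(q'/q)(y^{-1}x)$, applies the Sobolev inequality in the auxiliary variable $x_1$ to dominate $\sup_{x_1}$ by an $L^2(dx_1)$-average of derivatives, and closes with the multiplicative Sobolev estimate $\|\phi\psi\|_{H^s}\lesssim_s\max_{|\alpha|\le|s|+1}\|X^\alpha\psi\|_\infty\,\|\phi\|_{H^s}$. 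You instead stay entirely on the Fourier side: you factor $\Delta_{q'}=\Delta_h\Delta_q$ with $h=q'/q$, expand $h$ by Peter--Weyl, identify $\widehat{\eta_{ij}\mu}(\pi)$ as a sub-block of $\widehat\mu(\bar\eta\otimes\pi)$, and split the $\eta$-sum according to whether $\lambda_\eta\le\frac14\lambda_\pi$ or not, the large-$\lambda_\eta$ range being swallowed by the rapid decay of $\widehat h$. Both approaches preserve the exponent $s$ exactly, which is the crux you correctly identify; the paper's argument is shorter and needs no representation-theoretic weight estimates, while yours is in the same spirit as the bilinear estimate of Lemma~\ref{lem_bilinear}, where the very same Casimir-eigenvalue comparison on tensor-product constituents reappears. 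One small correction to your closing discussion: you claim that working at the level of operator norms on $\Gh$ makes the tensor-product bookkeeping \emph{unavoidable}, but the paper's proof already operates at that level (that is precisely the content of the convolution-norm identity above) and avoids the tensor products entirely via the $x_1$-Sobolev trick, so the bookkeeping is a choice, not a necessity; what genuinely fails is only the cruder Hilbert--Schmidt route through Lemma~\ref{lem_hs}.
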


\begin{proof}[Lemma \ref{lem_prop_indep_Delta}]
Let $q,q'$ as in the statement. 
Let $\kappa \in \cD'(G)$ and $s\in \bR$. 
Denoting by $T_{q\kappa}$ and $T_{q'\kappa}$ the convolution operators with kernels $q\kappa$ and $q'\kappa$ respectively, we  have to prove 
\begin{equation}
\label{eq_lem_prop_indep_Delta}
\|T_{q\kappa}\|_{\sL(L^2,H^s)} \lesssim_{s,q,q'}
  \|T_{q'\kappa}\|_{\sL(L^2,H^s)}.
\end{equation}
Let $\phi\in \cD(G)$. We have 
$$
T_{q' \kappa}(\phi)(x)
=
\int_G \phi(y)  (q'\kappa)(y^{-1}x) dy
=
\int_G \phi(y) \psi_x(y) (q\kappa)(y^{-1}x)dy,
$$
where
the function $\psi_x\in \cD(G)$ is defined for each $x\in G$
via
$$
\psi_x(y):= \frac{q'}q(y^{-1}x), 
\quad y\in G.
$$
Then
\begin{eqnarray*}
\int_G |T_{q' \kappa}(\phi)(x)|^2 dx
\leq
\int_G \sup_{x_1\in G}
\left| \int_G \phi(y) \psi_{x_1}(y) (q\kappa)(y^{-1}x)dy\right|^2
dx\\
\lesssim
\int_G \sum_{|\gamma|\leq \frac n2 +1}
\int_G \left|X_{x_1}^\gamma \int_G \phi(y) \psi_{x_1}(y) (q\kappa)(y^{-1}x)dy\right|^2 dx_1
dx,
\end{eqnarray*}
having used the Sobolev inequalities (cf. Lemma \ref{lem_sob_embedding}). 
We have obtained
\begin{eqnarray*}
\|T_{q' \kappa}(\phi)\|_{L^2(G)}^2
&\lesssim&
 \sum_{|\gamma|\leq \frac n2 +1}
\int_G \left\|T_{q\kappa}(\phi X_{x_1}^\gamma\psi_{x_1})\right\|_{L^2(G)}^2 dx_1
\\
&\lesssim&
\|T_{q\kappa}\|_{\sL(L^2,H^s)}^2
 \sum_{|\gamma|\leq \frac n2 +1}
\int_G \|\phi X_{x_1}^\gamma\psi_{x_1}\|_{H^s}^2 dx_1.
\end{eqnarray*}
One can see easily that 
$$
\forall s\in \bN_0 
\quad \forall \phi,\psi\in \cD(G)
\quad
\|\phi \psi\|_{H^s} \lesssim_s 
\max_{|\alpha|\leq s} \|X^\alpha \psi\|_{L^\infty(G)}
\|\phi\|_{H^s},
$$
and thus by duality and interpolation, we also have the same property for any $s\in \bR$, with the slight modification that the maximum is now over $|\alpha|\leq |s|+1$.
Hence in our case, we obtain that 
$$
\sum_{|\gamma|\leq \frac n2 +1}
\int_G \|\phi X_{x_1}^\gamma\psi_{x_1}\|_{H^s}^2 dx_1
\lesssim_s
\max_{|\alpha|\leq |s|+n/2+2} \|X^\alpha \psi\|_{L^\infty(G)}
\ \|\phi\|_{H^s}.
$$
We have obtained \eqref{eq_lem_prop_indep_Delta}.
This concludes the proof of Lemma \ref{lem_prop_indep_Delta}.
\end{proof}

\begin{proof}[Theorem \ref{thm_Deltaeq+coincide}, Part 1.]
Let $\Delta$ be a strongly admissible collections of  difference operators with corresponding functions $q_1,\ldots,q_{n_\Delta}$.
Up to reordering $\Delta$, we may assume that the rank of $(\nabla_{e_G} q_1,\ldots, \nabla_{e_G} q_n )$ is $n=\dim G$.
Furthermore the basis of $\fg$ is chosen to be
$(X_1,\ldots, X_n)=(\nabla_{e_G} q_1,\ldots, \nabla_{e_G} q_n )$.
For each $q_j$, $j=1,\ldots,n$, 
we use the notation of Lemma \ref{lem_q0} to  construct $q_{j,0}:=p_j \chi + \psi$.
We adapt the argument of Lemma \ref{lem_q0} for the other functions. That is for $j>n$, we know that $\nabla_{e_G} q_j$ 
may be written as a linear combination $\sum_{\ell=1}^n c_\ell^{(j)} \nabla_{e_G} q_\ell$ and we define then  
$$
p_j(y) :=
\left\{\begin{array}{ll}
\sum_{\ell=1}^n c_\ell^{(j)}  y_\ell  
&\mbox{if}\ B(\epsilon_0) \ni y =\exp(\sum_j y_j X_j),   \\
1&\mbox{if}\ y\not\in  \bar B(\epsilon_0) , \\
\end{array}\right.
\quad\mbox{and}\quad q_{j,0}:=p_j \chi + \psi.
$$
Clearly the functions $q_{j,0}$, $j=1,\ldots,n_\Delta$, 
 yield a strongly admissible collections 
and for each $j =1,\ldots,n_\Delta$, the functions  
 $q_j/q_{j,0}$ and $q_{j,0}/q_j$ are smooth on $G$.
By Lemma \ref{lem_prop_indep_Delta}, 
the Fr\'echet spaces $S^m_{\rho,\delta}(G,\Delta)$
and  $S^m_{\rho,\delta}(G,\{\Delta_{q_{j,0}}\}_{j=1}^{n_\Delta})$
coincide for each $m,\rho,\delta$.
Moreover, 
the functions $ q_{j,0}$, $j=1,\ldots,n$ (only), 
 yield also a strongly admissible collections 
and for each $j >n$, the functions  
 $(\sum_{\ell=1}^n c_\ell^{(j)}  q_{\ell,0})/q_{j,0}$ are smooth on $G$.
By Lemma \ref{lem_prop_indep_Delta} again, 
the Fr\'echet spaces 
  $S^m_{\rho,\delta}(G,\{\Delta_{q_{j,0}}\}_{j=1}^{n_\Delta})$
and   $S^m_{\rho,\delta}(G,\{\Delta_{q_{j,0}}\}_{j=1}^{n})$
coincide for each $m,\rho,\delta$.
This shows that any class $S^m_{\rho,\delta}(G,\Delta)$ with $\Delta$ strongly admissible coincides with $S^m_{\rho,\delta}(G,\Delta_0)$ 
with a strongly admissible collection $\Delta_0$ constructed in Lemma \ref{lem_q0}. 

Let $\Delta_1$ and $\Delta_2$ be two collections constructed  in Lemma \ref{lem_q0} out of two bases $(X_j^{(1)})$ and $(X_j^{(2)})$ of $\fg$.
Let $P$ be  a $n\times n$ real matrix mapping 
$(X_j^{(1)})$ to $(X_j^{(2)})$. We construct the two corresponding collections of functions  $(q_j^{(1)})$ and $(q_j^{(2)})$ 
as in Lemma \ref{lem_q0}. We check easily that for each $j$, 
$(\sum_k P_{j,k} q_k^{(2)}) /q_j^{(1)}$ and
$(\sum_k (P^{-1})_{j,k} q_k^{(1)}) /q_j^{(2)}$ are smooth on $G$.
By Lemma \ref{lem_prop_indep_Delta}, 
the Fr\'echet spaces 
  $S^m_{\rho,\delta}(G,\Delta_1)$
and   $S^m_{\rho,\delta}(G,\Delta_2)$
coincide for each $m,\rho,\delta$.

Hence $S^m_{\rho,\delta}(G,\Delta)$ do not depend on a choice of strongly admissible collection $\Delta$.
This concludes the proof of the first part of Theorem \ref{thm_Deltaeq+coincide}.
\end{proof}

In the proof of the second part of Theorem \ref{thm_Deltaeq+coincide}, 
we will need the following lemma:
\begin{lemma}
\label{lem_linkDeltas}
\begin{enumerate}
\item 
Let $\tau\in \RepG$.
For any $\sigma\in \cF_G(\cD'(G))$, we have
$$
\Delta_\tau \sigma = \left[ \Delta_{q_{i,j}^{(\tau)}} \sigma\right]_{1\leq i,j\leq d_\tau},
$$
where the functions $q^{(\tau)}_{i,j}$ are the coefficients of a matrix realisation of $\tau -\id_{\cH_\tau}$, i.e. $q^{(\tau)}_{i,j}(x)=\tau_{i,j}(x)$ if $i\not=j$ and $q^{(\tau)}_{j,j}(x)=\tau_{j,j}(x)-1$.


\item We fix  a matrix realisation of each representation $\tau \in \FundG$, 
and we consider the functions $q^{(\tau)}_{i,j}$ as in Part 1.
We then consider the family $Q:=\{q^{(\tau)}_{i,j}, 1\leq i,j\leq d_\tau, \tau \in \FundG\}$.
The resulting collection $\Delta_Q$ of RT-difference operators 
is strongly admissible in the sense of Definition \ref{def_admissible}.
\end{enumerate}
\end{lemma}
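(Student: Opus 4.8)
The plan is to treat the two parts in turn, reducing everything to the elementary behaviour of tensor products together with the defining property of the fundamental representations. \emph{Part 1.} Fix $f\in\cD'(G)$ and put $\sigma=\widehat f$. Recall that $\widehat f$ is extended to $\RepG$ so that $\widehat f(\pi')=\pi'(f)=\int_G f(x)\,\pi'(x)^{*}\,dx$ for every $\pi'\in\RepG$, and that $(\tau\otimes\pi)(x)=\tau(x)\otimes\pi(x)$ while $(\id_{\cH_\tau}\otimes\pi)(x)=\id_{\cH_\tau}\otimes\pi(x)$. Substituting this into the definition \eqref{eq_def_Deltatau} of $\Delta_\tau$ gives
$$
\Delta_\tau\widehat f(\pi)=\int_G f(x)\,(\tau(x)^{*}-\id_{\cH_\tau})\otimes\pi(x)^{*}\,dx .
$$
I would then expand $\tau(\cdot)^{*}-\id_{\cH_\tau}$ in the fixed matrix realisation of $\tau$; its entries are affine combinations of the (smooth) matrix coefficients of $\tau$, so they form the coefficients of a matrix realisation of $\tau-\id_{\cH_\tau}$, i.e. the functions $q^{(\tau)}_{i,j}\in\cD(G)$ of the statement (the precise indexing and conjugation being a matter of the conventions fixed earlier). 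Reading off the $(i,j)$ block of $\Delta_\tau\widehat f(\pi)$ as an operator on $\cH_\pi$ then produces $\int_G q^{(\tau)}_{i,j}(x)\,f(x)\,\pi(x)^{*}\,dx=\widehat{q^{(\tau)}_{i,j}f}(\pi)=\Delta_{q^{(\tau)}_{i,j}}\widehat f(\pi)$, which is the claimed block identity; and since $q^{(\tau)}_{i,j}f\in\cD'(G)$, each block indeed lies in $\cF_G(\cD'(G))$.

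\emph{Part 2.} I must check the two conditions of Definition \ref{def_admissible} for $Q=\{q^{(\tau)}_{i,j}:1\le i,j\le d_\tau,\ \tau\in\FundG\}$, and both follow from two observations. First, since $[q^{(\tau)}_{i,j}(\cdot)]_{i,j}$ is a matrix realisation of $\tau-\id_{\cH_\tau}$, and since the entries of $\tau(X)$ are the numbers $X q^{(\tau)}_{i,j}(e_G)$ (differentiate the matrix coefficients at the identity), a point $x\in G$ is a common zero of all the $q^{(\tau)}_{i,j}$, $\tau\in\FundG$, if and only if $\tau(x)=\id_{\cH_\tau}$ for all $\tau\in\FundG$, and a left-invariant vector field $X$ annihilates every $q^{(\tau)}_{i,j}$ at $e_G$ if and only if $\tau(X)=0$ for all $\tau\in\FundG$. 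Second, from $(\tau_1\otimes\tau_2)(x)=\tau_1(x)\otimes\tau_2(x)$ and $(\tau_1\otimes\tau_2)(X)=\tau_1(X)\otimes\id+\id\otimes\tau_2(X)$ an immediate induction shows that if $\tau(x)=\id$ (resp.\ $\tau(X)=0$) for all $\tau\in\FundG$, then $\rho(x)=\id$ (resp.\ $\rho(X)=0$) for every irreducible $\rho$ occurring in a tensor product of fundamental representations, hence --- by the defining property of $\FundG$ --- for every $\rho\in\Gh$.

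It then remains to conclude. Since the irreducible representations separate the points of $G$ (Peter--Weyl, Theorem \ref{thm_PW}), $\rho(x)=\id_{\cH_\rho}$ for all $\rho\in\Gh$ forces $x=e_G$; this is the ``strong'' half of the admissibility. Likewise, $\rho(X)=0$ for all $\rho\in\Gh$ yields $\rho(\exp_G tX)=e^{t\rho(X)}=\id_{\cH_\rho}$ for every $t$ and every $\rho$, so $\exp_G tX=e_G$ for all $t$ by the same separation property, whence $X=0$; because the Riemannian metric identifies $\nabla_{e_G}q^{(\tau)}_{i,j}$ with $d_{e_G}q^{(\tau)}_{i,j}$, this gives $\rank(\nabla_{e_G}q^{(\tau)}_{i,j})_{i,j,\tau}=n$, i.e.\ ordinary admissibility. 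Hence $\Delta_Q$ is strongly admissible.

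I expect no genuine obstacle here: the computations are routine once one commits to a single matrix realisation of each $\tau\in\FundG$ and uses it throughout. The one point requiring a little care is the bookkeeping with adjoints and index orderings in Part 1, so that expanding $\tau(\cdot)^{*}-\id_{\cH_\tau}$ really produces the functions $q^{(\tau)}_{i,j}$ named in the statement; this is entirely a matter of the conventions fixed earlier in the paper. Conceptually the lemma just records that the single difference operator $\Delta_\tau$ packages together the RT-difference operators against the matrix coefficients of $\tau$, and that the fundamental representations already suffice to reconstruct all of $\Gh$, and therefore to separate $G$ both globally and infinitesimally.
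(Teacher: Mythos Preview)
Your proof is correct. Part 1 is handled just as in the paper, which simply writes ``one easily checks the first formula''; your explicit integral computation is exactly what is meant, and your caveat about indexing/conjugation is appropriate since the blocks of $\Delta_\tau\widehat f(\pi)$ naturally involve the entries of $\tau(\cdot)^{*}-\id$ rather than of $\tau(\cdot)-\id$.

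For Part 2 your argument is genuinely different from the paper's and, in a sense, more uniform. The paper splits $G=\bT^{n'}\times G_{ss}$ and handles the rank condition separately on each factor: on the torus it computes $e_{\pm1}'(0)=\pm1$ directly, and on the semi-simple part it argues that a rank deficiency would force every representation of $\fg_{ss}$ to be non-injective, which is impossible. You instead run a single argument: if $X$ kills every $q^{(\tau)}_{i,j}$ at $e_G$ then $\tau(X)=0$ for all $\tau\in\FundG$, hence (by the tensor-product formula for the infinitesimal representation and the defining property of $\FundG$) $\rho(X)=0$ for all $\rho\in\Gh$, so $\exp_G(tX)=e_G$ for all $t$ by Peter--Weyl separation, and thus $X=0$. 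This is cleaner and avoids the structural decomposition of $G$, at the cost of invoking Peter--Weyl twice (once globally, once infinitesimally) rather than once. For the common-zero-set condition both proofs are essentially the same idea; the paper phrases it via the Fourier inversion formula, you via point separation --- these are equivalent.
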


\begin{proof}[Lemma \ref{lem_linkDeltas}]
One easily checks the first formula in the statement. Let us show the second part.
Each function $q_{i,j}^{(\tau)} \in \cD(G)$  vanishes at $e_G$ since $\tau(e_G)=\id_{\cH_\tau}$.
Its gradient at $e_G$ is 
\begin{equation}
\label{eq_pf_lem_linkDeltas}
\nabla_{e_G} q_{i,j}^{(\tau)} 
=(\tau_{i,j} (X_1),\ldots,\tau_{i,j}(X_n)),
\end{equation}
having kept the same notation for the representation $\tau$ of the group $G$ and the corresponding infinitesimal representation of the Lie algebra $\fg$.

Recall that $G$ can be written as the direct product of a torus with a semi-simple Lie group, that is,  $G=\bT^{n'}\times G_{ss}$
with $n_t=\dim \fg_{ab}$.
The set $\FundG$ can be written as the disjoint union of $\Fund (\bT^{n'})$
with $\Fund (G_{ss})$.
Let us define $Q$, $Q_{ab}$ and $Q_{ss}$
as the collections of functions $q^{(\tau)}_{i,j}, 1\leq i,j\leq d_\tau$, 
as $\tau$ runs over $\FundG$, $\Fund (\bT^{n'})$ and $\Fund (G_{ss})$ respectively.
Naturally  we can write the family 
$Q$ as the disjoint union of 
$Q_{ab}$ with $Q_{ss}$.
We write $\rank (\nabla_{e_G}  Q)
:=\rank \{\nabla_{e_G}  q, q\in Q\}$ and similarly for $Q_{ab}$ and $Q_{ss}$.

With the notation of Examples \ref{ex_diff_op_torus} and \ref{ex_diff_op_torus1},
the fundamental representations of the torus $\bT$ are $e_{\pm 1}$
and we see that $e_{\pm 1}'(0)=\pm 1$. 
This shows that $\rank(\nabla_{e_G}  Q_{ab})=n'$.
This implies the statement when $G=\bT^{n'}$ has no semi-simple part. 
If $G_{ss}$ is non trivial and $\rank(\nabla_{e_G}  Q)\not=n$, 
then $\rank(\nabla_{e_G}  Q_{ss}) < \dim \fg_{ss}$.
As any representation of $\fg_{ss}$ appears in the decomposition of some tensor products of fundamental representations, this together with \eqref{eq_pf_lem_linkDeltas} would imply that any representation of the semi-simple Lie algebra $\fg_{ss}$ is not injective and this is impossible.
Hence in any case, we have $\rank(\nabla_{e_G}  Q)=n$.

The zero set of $Q$ is 
$$
\cap _{q\in Q} \{x:q(x)=0\}
=
\cap_{\tau \in \FundG} \{x:\tau(x)-\id_{\cH_\tau} =0\}
=
\cap_{\tau \in \RepG } \{x:\tau(x)-\id_{\cH_\tau} =0\}.
$$
The inversion formula
\eqref{eq_inversion} implies that
if $x_0\in G$ is a zero of $Q$, then 
$f(x_0)=f(e_G)$ for any  function  $f\in\cC(G)$.
This implies that $x_0=e_G$ and $Q=\{e_G\}$.
This shows that $\Delta_Q$ is strongly admissible 
and concludes the proof of Lemma \ref{lem_linkDeltas}.
\end{proof}

\begin{proof}[Theorem \ref{thm_Deltaeq+coincide}, Part 2.]
If $\sigma\in S^m_{\rho,\delta}(G,\Delta)$ for some strongly admissible collection of RT-difference operator $\Delta$, 
then by Part 1., we may assume that $\Delta=\Delta_Q$ defined in Lemma \ref{lem_linkDeltas}.
The properties of the tensor easily implies for $\alpha\in \FundG^a$ 
$$
\|\Delta^\alpha \sigma\|_{\cH_\pi^{\otimes\alpha}}
\leq C_\alpha
\sum_{\alpha'\in \bN_0^{n_{\Delta_Q}}, |\alpha'|=a}
\|\Delta^{\alpha'}_Q \sigma\|_{\cH_\pi}.
$$
This shows that $\sigma\in S^m(\rho,\delta)(G)$.

Conversely, let $\sigma\in S^m(\rho,\delta)(G)$.
Then $\sigma$ is smooth by Lemma \ref{lem_op_cD2cD}.
Let $\Delta=\Delta_Q$ defined in Lemma \ref{lem_linkDeltas}.
The properties of the tensor easily implies for $\alpha' \in \bN_0^{n_{\Delta_Q}}$
$$
\|\Delta^{\alpha'}_Q \sigma\|_{\cH_\pi}
\leq C_{\alpha'}
\sum_{\alpha \in \FundG, |\alpha|=|\alpha'|}
\|\Delta^\alpha \sigma\|_{\cH_\pi^{\otimes\alpha}}
$$
This shows that $\sigma\in S^m(\rho,\delta)(G,\Delta)$.
The proof of Theorem \ref{thm_Deltaeq+coincide} is now complete.
\end{proof}

\medskip

From the proof of Theorem \ref{thm_Deltaeq+coincide}, 
we can obtain a corollary which was noticed by Ruzhansky, Turunen and Wirth via other means in \cite{ruzhansky+turunen+wirth}.
It concerns the Leibniz rule which is  a useful (and sometimes defining) property of derivatives.
The difference operators in the sense of Definitions 
\ref{def_Delta_tau} or \ref{def_Delta_RT} 
generally do not satisfy this exactly. 
Our difference operators satisfy the estimate \eqref{eq_leibniz_LinftyGh}.
In the Ruzhansky-Turunen viewpoint,
the following notion of Leibniz property was introduced in \cite{ruzhansky+turunen+wirth}:
\begin{definition}
\label{def_leibniz}
A  collection $\Delta=\Delta_Q$
of  RT-difference operators satisfies the \emph{Leibniz-like} property
when for any Fourier transforms $\widehat f_1$ and $\widehat f_2$
(with $f_1, f_2\in \cD'(G)$)
$$
\Delta_{Q,j} (\widehat f_1 \widehat f_2)
=
\Delta_{Q,j} (\widehat f_1) \ \widehat f_2
+
\widehat f_1\ \Delta_{Q,j} (\widehat f_2)
+
\sum_{1\leq l,k \leq n_\Delta} c_{l,k}^{(j)}
\Delta_{Q,l} (\widehat f_1) \  \Delta_{Q,k} (\widehat f_2)
$$
for some coefficients $c_{l,k}^{(j)}\in \bC$ depending only on $l,k,j$ and $\Delta$.
 \end{definition}

Note that this is equivalent to saying that $Q=Q_\Delta$
 satisfyies:
\begin{equation}
\label{eq_def_leibniz_q}
q_j (xy)
=
q_j (x) 
+
q_j (y)
+
\sum_{1\leq l,k \leq n_\Delta} c_{l,k}^{(j)}
q_l (x) \  q_k (y).
\end{equation}

Recursively on any multi-index $\alpha\in \bN_0^{n_\Delta}$, 
if $\Delta$ satisfies the Leibniz-like property, then 
$$
\Delta_Q^\alpha (\widehat f_1 \widehat f_2) 
= \sum_{|\alpha|\leq |\alpha_1|+|\alpha_2|\leq 2|\alpha|}
c_{\alpha_1,\alpha_2}^{\alpha} 
\Delta_Q^{\alpha_1} (\widehat f_1) \ \Delta_Q^{\alpha_2} (\widehat f_2),
$$
for some coefficients $c_{\alpha_1,\alpha_2}^{\alpha} \in \bC$ depending only on $\alpha_1,\alpha_2,\alpha$ and $\Delta$, 
with $c_{\alpha,0}^{\alpha}=c_{0,\alpha}^{\alpha}=1$.

The proof of Theorem \ref{thm_Deltaeq+coincide} yields:
\begin{corollary}
\label{cor_choice_Delta}
A strongly admissible collection of  RT-difference operators which satisfies the Leibniz-like formula always exists. An example is the strongly admissible family $Q$ considered in Lemma \ref{lem_linkDeltas}.
\end{corollary}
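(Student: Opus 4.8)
The plan is to deduce the corollary almost entirely from Lemma~\ref{lem_linkDeltas}. That lemma already shows that the family $Q=\{q^{(\tau)}_{i,j}:1\le i,j\le d_\tau,\ \tau\in\FundG\}$, where $q^{(\tau)}_{i,j}=\tau_{i,j}-\delta_{i,j}$ for a fixed matrix realisation of each $\tau\in\FundG$, is \emph{strongly admissible}. So the only thing left to prove is that the associated collection $\Delta_Q$ of RT-difference operators satisfies the Leibniz-like property of Definition~\ref{def_leibniz}; as recorded immediately after that definition, this is equivalent to checking the functional identity \eqref{eq_def_leibniz_q} for each generator $q^{(\tau)}_{i,j}$ of $Q$.

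To verify \eqref{eq_def_leibniz_q} I would simply exploit that each $\tau\in\FundG$ is a group homomorphism. Writing $\tau(xy)=\tau(x)\tau(y)$ in matrix coefficients gives $\tau_{i,j}(xy)=\sum_{k=1}^{d_\tau}\tau_{i,k}(x)\tau_{k,j}(y)$; substituting $\tau_{i,k}=q^{(\tau)}_{i,k}+\delta_{i,k}$, expanding the products, and collecting the Kronecker-delta terms yields
$$
q^{(\tau)}_{i,j}(xy)=q^{(\tau)}_{i,j}(x)+q^{(\tau)}_{i,j}(y)+\sum_{k=1}^{d_\tau} q^{(\tau)}_{i,k}(x)\,q^{(\tau)}_{k,j}(y),
$$
the leftover constant $\delta_{i,j}$ coming from the expansion being exactly cancelled by the constant $-1$ present in the definition of $q^{(\tau)}_{j,j}$, so the cases $i=j$ and $i\ne j$ are handled uniformly. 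This is precisely relation \eqref{eq_def_leibniz_q}, with structure constants $c^{(j)}_{l,k}$ equal to $1$ when $l,k$ are the pair $q^{(\tau)}_{i,k'},q^{(\tau)}_{k',j}$ for some $k'\in\{1,\dots,d_\tau\}$ and $0$ otherwise; in particular all coefficients linking matrix entries of \emph{different} fundamental representations vanish.

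I expect no genuine obstacle here: the entire content is the homomorphism identity for matrix coefficients, and the only point needing care is notational, namely that the sum in \eqref{eq_def_leibniz_q} formally runs over the whole family $Q$ across all of $\FundG$, so one must explicitly record that the structure constants are supported on triples of coefficients of a single $\tau$. With that remark, the computation above closes the proof, and the concrete family $Q$ of Lemma~\ref{lem_linkDeltas} is the promised example.
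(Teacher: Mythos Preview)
Your proof is correct and follows essentially the same route as the paper: both simply expand the homomorphism identity $\tau(xy)=\tau(x)\tau(y)$ in matrix coefficients to obtain $q^{(\tau)}_{i,j}(xy)=q^{(\tau)}_{i,j}(x)+q^{(\tau)}_{i,j}(y)+\sum_k q^{(\tau)}_{i,k}(x)q^{(\tau)}_{k,j}(y)$, which is exactly \eqref{eq_def_leibniz_q}, and then invoke Lemma~\ref{lem_linkDeltas} for strong admissibility. Your version is slightly more detailed in spelling out the cancellation of the $\delta_{i,j}$ term and the support of the structure constants, but the argument is the same.
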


\begin{proof}
We notice that the coefficients of $\tau-\id$ for any $\tau\in \RepG$ satisfies
$$
q_{i,j}^{(\tau)} (xy) = 
q_{i,j}^{(\tau)} (x) 
+q_{i,j}^{(\tau)} (y)
+\sum_{k=1}^{d_\pi}  q_{ik}(x) q_{kj}(y),
\qquad 1\leq i,j\leq d_\pi, \ x,y\in G,
$$
with the notation of Lemma \ref{lem_linkDeltas}
since $\tau(xy)=\tau(x)\tau(y)$.
This together with \eqref{eq_def_leibniz_q} shows the statement.
\end{proof}

\section{Properties of the  kernels}
\label{sec_kernel}

In this section, 
we show that
the kernels of the symbols we have considered 
can only have a singularity at the neutral element 
and we obtain estimates near this singularity.
We also show that these distribution may be approximated by smoother kernels. 

We will use the following property whose proof is 
provided in \ref{sec_multipliers}:

\begin{proposition}
\label{prop_mult_t}
Let $\Delta=\Delta_Q$ be a strongly admissible collection of   RT-difference operators.
For any $m\in \bR$ and muti-index $\alpha\in\bN_0^{n_\Delta} $,
there exists $d\in \bN_0$ and $C>0$
such that for all
$f\in C^d[0,\infty)$, $\pi\in\Gh$ and $ t\in (0,1)$,
 we have
$$
\|\Delta_Q^\alpha \{f(t\lambda_\pi)\}\|_{\sL(\cH_\pi)} 
\leq C t^{\frac {m}  2}
(1+\lambda_\pi)^{\frac{m -|\alpha|}2}
\sup_{\substack{\lambda\geq 0 \\ \ell =0,\ldots, d}}
(1+\lambda)^{-m +\ell} |\partial^\ell_\lambda f(\lambda)|,
$$
in the sense that if the supremum in the right hand-side is finite, 
then the left hand-side is finite in the inequality holds.
\end{proposition}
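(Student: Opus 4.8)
The plan is to reduce the statement to an $L^1$‑estimate on convolution kernels and then to run a Littlewood--Paley analysis anchored on the heat kernel of $\cL$. First I would observe that, by the arguments used in the proof of Theorem \ref{thm_Deltaeq+coincide} (componentwise applications of Lemma \ref{lem_prop_indep_Delta}), it suffices to prove the estimate for one convenient strongly admissible collection, namely the $\Delta=\Delta_Q$ built in Lemma \ref{lem_q0}; for that collection $q^\alpha_\Delta$ coincides near $e_G$, in exponential coordinates, with the monomial $\prod_j y_j^{\alpha_j}$, hence vanishes at $e_G$ to order exactly $|\alpha|$, while staying bounded globally. Since RT--difference operators act on Fourier transforms by multiplying the underlying kernel, we have $\Delta_Q^\alpha\{f(t\lambda_\pi)\}=\cF_G\big(q^\alpha_\Delta\,\kappa_t\big)(\pi)$ with $\kappa_t:=f(t\cL)\delta_e$, so everything reduces to bounding $\|\cF_G(q^\alpha_\Delta\kappa_t)(\pi)\|_{\sL(\cH_\pi)}$. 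Here I would use repeatedly the two elementary facts $\|\cF_G g(\pi)\|_{\sL(\cH_\pi)}\le\|g\|_{L^1(G)}$ (cf. \eqref{eq_cF_L1}) and $(1+\lambda_\pi)^N\cF_G g(\pi)=\cF_G\big((\id+\cL)^N g\big)(\pi)$, so that $\|\cF_G g(\pi)\|_{\sL(\cH_\pi)}\le(1+\lambda_\pi)^{-N}\|(\id+\cL)^N g\|_{L^1(G)}$ for every $N\in\bN_0$.

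Next I would fix a dyadic partition of unity $1=\eta_0+\sum_{j\ge1}\eta(2^{-j}\,\cdot\,)$ on $[0,\infty)$ with $\eta_0\in C^\infty_c[0,2)$ and $\eta\in C^\infty_c(\tfrac12,2)$, and set $f_j:=f\,\eta(2^{-j}\,\cdot\,)$ for $j\ge1$ and $f_0:=f\eta_0$. Writing $M$ for the supremum appearing on the right‑hand side of the proposition, the hypothesis gives $\|f_j\|_{C^d}\lesssim 2^{jm}M$, and $f_j$ is $2^{jm}M$ times a normalised bump on a dyadic interval. The analytic heart of the proof is a uniform-in-$t$ Gaussian-type bound: using the heat-kernel estimates of \cite{varo} together with the finite-propagation-speed/spectral-multiplier machinery of \cite{alexo}, the kernel $f_j(t\cL)\delta_e$ and all its $\cL$‑ and $X^\beta$‑derivatives are concentrated near $e_G$ at the spatial scale $r_j:=(t\,2^{-j})^{1/2}$, with
\[
\big|X^\beta_x(\id+\cL)^k\big(f_j(t\cL)\delta_e\big)(x)\big|\ \lesssim\ M\,2^{jm}\,r_j^{-n-|\beta|-2k}\,\big(1+|x|/r_j\big)^{-L}
\]
for every $k$, every $\beta$, every $L$, with constants controlled once $d=d(\alpha,m,L)$ is large enough (this is where the loss of $d$ derivatives is spent). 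Expanding $(\id+\cL)^N$ as a combination of the $X^\gamma$ with $|\gamma|\le 2N$, applying the Leibniz rule to $X^\gamma\big(q^\alpha_\Delta\cdot f_j(t\cL)\delta_e\big)$, using $|X^{\gamma'}q^\alpha_\Delta(x)|\lesssim|x|^{(|\alpha|-|\gamma'|)_+}$ near $e_G$, and integrating the Gaussian profile above, one gets
\[
\big\|(\id+\cL)^N\big(q^\alpha_\Delta\,f_j(t\cL)\delta_e\big)\big\|_{L^1(G)}\ \lesssim\ M\,2^{jm}\,r_j^{\,|\alpha|-2N},
\]
the factor $r_j^{|\alpha|}$ being exactly the gain from the vanishing of $q^\alpha_\Delta$ at $e_G$ and $r_j^{-2N}$ the cost of $N$ powers of $\cL$. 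The low piece $f_0$ is treated separately by splitting $f_0(t\cL)=f(0)\,\eta_0(t\cL)+\big((f-f(0))\eta_0\big)(t\cL)$ and using $q^\alpha_\Delta\delta_e=0$, which again produces a gain $r_0^{|\alpha|}=t^{|\alpha|/2}$.

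Finally I would combine these, obtaining $\|\cF_G(q^\alpha_\Delta f_j(t\cL)\delta_e)(\pi)\|_{\sL(\cH_\pi)}\lesssim M\,2^{jm}(t2^{-j})^{|\alpha|/2}\min_N\big(1+(1+\lambda_\pi)t2^{-j}\big)^{-N}$ and sum over $j$. Since $f_j$ is supported in $\{2^{j-1}\le\lambda\le2^{j+1}\}$, the term $\widehat{q^\alpha_\Delta f_j(t\cL)\delta_e}$ is carried, up to super-polynomially small tails, by the $\pi$ with $t\lambda_\pi\sim 2^j$: for $2^j\ll t\lambda_\pi$ the factor $(1+(1+\lambda_\pi)t2^{-j})^{-N}$ is negligible, while for $2^j\gg t\lambda_\pi$ one invokes the near-diagonal behaviour of $\cL$‑eigenvalues under tensor products (the estimates prepared in \ref{sec_bilinear}) together with the rapid decay of $\widehat{q^\alpha_\Delta}$. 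Hence the geometric series in $j$ is governed by the single scale $2^j\sim t\lambda_\pi$ (and by $f_0$ when $\lambda_\pi$ stays bounded), which delivers the asserted bound $C\,t^{m/2}(1+\lambda_\pi)^{\frac{m-|\alpha|}2}M$, with room to spare away from that scale. I expect the real obstacle to be the estimate displayed in the previous paragraph: proving the uniform-in-$t$ Gaussian bounds for $f_j(t\cL)\delta_e$ and all its derivatives with the correct powers of $r_j$, and then keeping careful track of the competition between the vanishing order of $q^\alpha_\Delta$ and the derivative loss from $(\id+\cL)^N$; once that is in place, together with the near-diagonal localisation, the summation step is routine.
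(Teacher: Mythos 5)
Your overall skeleton --- write $\Delta_Q^\alpha\{f(t\lambda_\pi)\}=\cF_G\big(q^\alpha_\Delta\cdot f(t\cL)\delta_e\big)(\pi)$, reduce to $L^1$-bounds on the kernel after applying powers of $(\id+\cL)$, and split dyadically in the spectral variable to play the vanishing order of $q^\alpha_\Delta$ at $e_G$ against the frequency scale --- is exactly the one the paper uses in Appendix \ref{sec_multipliers} to prove Proposition \ref{prop_app_mult_t}. The divergence is in how you propose to obtain the $L^1$-bound and how you propose to sum the dyadic pieces, and both steps have gaps. The pointwise Gaussian bound $|X^\beta(\id+\cL)^k\{f_j(t\cL)\delta_e\}(x)|\lesssim M2^{jm}r_j^{-n-|\beta|-2k}(1+|x|/r_j)^{-L}$ for arbitrary $L$ is considerably stronger than what the paper establishes or cites. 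Lemma \ref{lem_prop_mult_1} produces only the $L^1$-bound for $q\,X^\beta f(t\cL)\delta_e$, and it does so \emph{without} any pointwise decay of $f(t\cL)\delta_e$: it factorises $f(t\cL)\delta_e=h_t(\sqrt{\cL})p_t$, slices space into dyadic rings around $e_G$, controls each ring by Cauchy--Schwartz against the $L^2$-norm of $q$ on the ring (Lemma \ref{lem_L2Brq}), uses only the Gaussian estimates for the \emph{heat} kernel $p_t$ for the off-diagonal tail, and handles the near-diagonal part by finite propagation speed of $\cos(s\sqrt{\cL})$ together with the approximation $h_t\approx h_t*g_\delta$ of Lemma \ref{lem_alexo_hg}. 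Your pointwise bound would require an additional argument of Alexopoulos type which is not available in this paper.

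The summation step contains the more serious gap. The appeal to the "near-diagonal behaviour of $\cL$-eigenvalues under tensor products'' and to the estimates prepared in \ref{sec_bilinear} is a red herring: Lemma \ref{lem_cL_rep} is only a one-sided inequality $\lambda_\omega\le\lambda_\pi+\lambda_\tau$, multiplication by the fixed smooth function $q^\alpha_\Delta$ is not a near-diagonal operation on the spectral side, and Lemma \ref{lem_bilinear} is designed for and used only in the $L^2$-boundedness proof (Lemma \ref{lem_L2bdd_rho}). What actually makes the $j$-sum work is more elementary, and you never state it: one must first reduce to $m<0$ (Reduction~1 of \ref{subsec_pf_prop_app_mult_t}). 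After that your intermediate bound $(1+\lambda_\pi)^{-N}(t2^{-j})^{(|\alpha|-2N)/2}\,2^{jm}M$ sums geometrically in $j$ precisely when $N<(|\alpha|-2m)/2$, and the exponent one needs is $N=(|\alpha|-m)/2$, which is admissible only because $m<0$; without the reduction the series does not converge. Moreover $(|\alpha|-m)/2$ is generically non-integer, so taking "$\min_N$'' over integer $N$ leaves an uncancelled factor $\big((1+\lambda_\pi)t\big)^{\varepsilon}$; you need the fractional-power interpolation $\|\cL^{b/2}\kappa\|_*\le\|\cL^{\lceil b/2\rceil}\kappa\|_*^{\theta}\|\cL^{\lfloor b/2\rfloor}\kappa\|_*^{1-\theta}$ which the paper performs explicitly. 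The paper additionally closes a marginal endpoint (in its normalisation the $j$-series has exponent exactly zero at $b=-m+a$) by a Cotlar--Stein almost-orthogonality argument on the dyadic operators $T_j$; your proposal does not address this. Finally, the low-frequency piece should be treated by the genuine compactly-supported multiplier estimate of Lemma \ref{lem_prop_mult_2}, not the ad hoc split of $f_0$; as written, the claim that "$q^\alpha_\Delta\delta_e=0$'' produces a gain $t^{|\alpha|/2}$ on the term $f(0)\eta_0(t\cL)\delta_e$ is exactly the content of that lemma and needs proof rather than assertion.
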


\subsection{Singularities of the kernels}

Let us show that the singularities of the convolution kernels in 
$\Psi^\infty_{\rho,\delta}$ can be located only at the neutral element in the following sense:
\begin{proposition}
\label{prop_kernel_regularity}
We consider the symbol class 
$S^m_{\rho,\delta}(G,\Delta)$
with $1\geq \rho\geq \delta\geq 0$, $\rho\not=0$,
and a collection $\Delta$ such that
if $\cap_{q\in \Delta}\{x\in G : q(x) =0\}=0$.

If  $\sigma\in S^m_{\rho,\delta}$,
then its associated kernel $(x,y)\mapsto \kappa_x(y)$ 
is smooth on $G\times (G\backslash\{e_G\})$.

If  $\sigma\in S^{-\infty}$ is smoothing, 
then its associated kernel $(x,y)\mapsto \kappa_x(y)$ 
is smooth on $G\times G$. The converse is true: 
if the kernel associated with a symbol is smooth on $G\times G$ 
(as a function of $(x,y)$)
then the symbol is smoothing, i.e. it is in $S^{-\infty}$.
\end{proposition}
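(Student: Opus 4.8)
The plan is to localise away from $e_G$ and to trade multiplication of the kernel by a power of one of the functions $q_j$ for the corresponding RT-difference operator, which lowers the order by $\rho$ at each step. Concretely: by Lemma \ref{lem_op_cD2cD}, any $\sigma\in S^m_{\rho,\delta}(G,\Delta)$ is a smooth symbol, so its kernel $\kappa_x=\cF_G^{-1}\sigma(x,\cdot)$ is well defined and $x\mapsto\kappa_x\in\cD'(G)$ is smooth. Fix $y_0\neq e_G$. Since $\{e_G\}$ is the only common zero of the functions of $\Delta$, there is an index $j_0$ with $q_{j_0}(y_0)\neq 0$; I would pick $\psi\in\cD(G)$ equal to $1$ near $y_0$ and supported in a small ball on which $q_{j_0}$ does not vanish, so that $\eta_N:=\psi\, q_{j_0}^{-N}$ (extended by $0$ off $\supp\psi$) is smooth on $G$ for every $N$. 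By Definition \ref{def_Delta_RT} the distribution $q_{j_0}^N\kappa_x$ has group Fourier transform $\Delta_{q_{j_0}}^N\sigma(x,\cdot)$, and reading off \eqref{eq_def_SmrhodeltaRT} shows at once that this symbol lies in $S^{m-\rho N}_{\rho,\delta}(G,\Delta)$, with seminorms bounded by those of $\sigma$.

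Next I would push this back down to the kernel level exactly as in the proof of Lemma \ref{lem_op_cD2cD}: for each $\beta$ and uniformly in $x$,
$$
\big\|X^\beta_x(q_{j_0}^N\kappa_x)\big\|_{H^{-\lceil n/2\rceil-m+\rho N-\delta|\beta|}}\ \lesssim\ \|\sigma\|_{S^{m-\rho N}_{\rho,\delta}(G,\Delta),0,|\beta|}<\infty,
$$
and $x\mapsto X^\beta_x(q_{j_0}^N\kappa_x)$ is continuous into that Sobolev space (dominated convergence in the $h_s(\Gh)$-norm, using smoothness of $\sigma$ in $x$ and the uniform bound \eqref{eq_def_SmrhodeltaRT}). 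Since $\rho\neq0$, for any prescribed $k\in\bN$ one can choose $N$ so large that the Sobolev exponent exceeds $k+n/2$ for all $|\beta|\le k$; then Sobolev embedding (Lemma \ref{lem_sob_embedding}) and the standard criterion that continuity of all partials up to order $k$ implies joint $C^k$-regularity give $(x,y)\mapsto(q_{j_0}^N\kappa_x)(y)\in C^k(G\times G)$. Multiplying by the smooth function $\eta_N$ yields $(x,y)\mapsto\psi(y)\kappa_x(y)=\eta_N(y)(q_{j_0}^N\kappa_x)(y)\in C^k$ near $(x,y_0)$; letting $k\to\infty$ (with $N$ increasing accordingly) and $y_0$ range over $G\setminus\{e_G\}$ shows that $(x,y)\mapsto\kappa_x(y)$ is smooth on $G\times(G\setminus\{e_G\})$.

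For the smoothing statement I would argue similarly. If $\sigma\in S^{-\infty}$, running the previous estimate with $\psi\equiv1$, $N=0$ and letting $m\to-\infty$ shows $X^\beta_x\kappa_x\in H^s$ for every $s$, uniformly and continuously in $x$, hence $(x,y)\mapsto\kappa_x(y)$ is smooth on all of $G\times G$. Conversely, assume this kernel is smooth on $G\times G$; then $\sigma(x,\pi)=\cF_G\kappa_x(\pi)$ and $X^\beta_x\Delta^\alpha_Q\sigma(x,\cdot)=\cF_G\{q^\alpha_\Delta\,X^\beta_x\kappa_x\}$. By compactness of $G$, $q^\alpha_\Delta X^\beta_x\kappa_x$ is bounded in every $H^s(G)$ uniformly in $x$, and since the Plancherel formula \eqref{eq_Plancherel} gives $\|\cF_G g(\pi)\|_{\sL(\cH_\pi)}\le\|\cF_G g(\pi)\|_{HS(\cH_\pi)}\le(1+\lambda_\pi)^{-s/2}\|g\|_{H^s}$, the operator norms $\|X^\beta_x\Delta^\alpha_Q\sigma(x,\pi)\|_{\sL(\cH_\pi)}$ decay faster than any power of $1+\lambda_\pi$, uniformly in $x$; hence $\sigma\in S^{m}_{\rho,\delta}(G,\Delta)$ for every $m$, i.e. $\sigma\in S^{-\infty}$.

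The hard part will be the bookkeeping in the first step: near $y_0\neq e_G$ one divides by $q_{j_0}^N$, which is harmless (multiplication by a genuinely smooth function there), while the compensating multiplication by $q_{j_0}^N$ buys only one order $\rho$ per factor on the Fourier side, so the whole argument hinges on being able to make $N\to\infty$ and thereby land the kernel in arbitrarily high Sobolev — hence $C^k$ — spaces. This is precisely where the hypotheses $\rho\neq0$ (to make the gained order unbounded) and the strong admissibility of $\Delta$ (so that away from $e_G$ some $q_{j_0}$ is available) are used; the scheme breaks down exactly at $\rho=0$.
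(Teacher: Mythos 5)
Your proposal is correct and follows essentially the paper's route: both arguments trade multiplication of the kernel by $q_j$'s for the order-lowering RT-difference operator on the symbol side, then land the resulting kernel in arbitrarily high Sobolev spaces (via Lemma \ref{lem_kernel_inL2}/Lemma \ref{lem_op_cD2cD}) and use Sobolev embedding plus the strong-admissibility fact that some $q_j$ is bounded away from zero off $e_G$, exactly as in Corollary \ref{cor_lem_kernel_inL2_1}. Your Plancherel-type argument for the converse (smooth kernel $\Rightarrow$ $S^{-\infty}$) is also correct and fills in a step the paper dismisses as "clearly implies."
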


The proof relies on  the following two lemmata and their corollary:
\begin{lemma}
\label{lem_kernel_inL2}
If  $\kappa \in \cD'(G)$ then 
$$
\|\kappa\|_{L^2(G)}\lesssim_s \sup_{\pi\in \Gh} 
(1+\lambda_\pi)^{\frac s2} \|\widehat \kappa\|_{\sL(\cH_\pi)},
\quad s> n / 2,
$$
in the sense that $\kappa \in L^2(G)$  when there exists $s>n/ 2$ 
such that  the right-hand side is finite.
\end{lemma}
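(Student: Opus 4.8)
The plan is to bound the $L^2$ norm of $\kappa$ by its $h_0(\Gh)$ norm and then estimate the latter crudely by the $L^\infty(\Gh)$ norm of $\widehat\kappa$ weighted by a power of $(1+\lambda_\pi)$. First I would apply the Plancherel formula \eqref{eq_Plancherel}, which gives
$$
\|\kappa\|_{L^2(G)}^2 = \sum_{\pi\in\Gh} d_\pi \|\widehat\kappa(\pi)\|_{HS(\cH_\pi)}^2 = \|\widehat\kappa\|_{h_0(\Gh)}^2.
$$
This already identifies the quantity we must control; note it is a priori $+\infty$ and the claim is that finiteness of the right-hand side forces it to be finite, which is exactly how the Plancherel identity is to be read here.

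Next I would insert and remove the weight $(1+\lambda_\pi)^{s/2}$: writing $\|\widehat\kappa(\pi)\|_{HS(\cH_\pi)} \le d_\pi^{1/2} \|\widehat\kappa(\pi)\|_{\sL(\cH_\pi)}$ (since the Hilbert--Schmidt norm of a $d_\pi\times d_\pi$ matrix is at most $\sqrt{d_\pi}$ times its operator norm), I get
$$
\|\kappa\|_{L^2(G)}^2 \le \Big(\sup_{\pi\in\Gh}(1+\lambda_\pi)^{s/2}\|\widehat\kappa(\pi)\|_{\sL(\cH_\pi)}\Big)^2 \sum_{\pi\in\Gh} d_\pi^2 (1+\lambda_\pi)^{-s}.
$$
So it remains to show that $\sum_{\pi\in\Gh} d_\pi^2 (1+\lambda_\pi)^{-s}<\infty$ for $s>n/2$. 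This is precisely the content of the Sobolev embedding machinery already available: the kernel of $(\id+\cL)^{-s/2}$ is square-integrable for $s>n/2$ (invoked in the discussion around \eqref{eq_cHlambdacL1} and attributed to Lemma \ref{lem_sob_embedding}), and by Plancherel its squared $L^2$ norm is exactly $\sum_{\pi\in\Gh} d_\pi \cdot d_\pi \cdot (1+\lambda_\pi)^{-s} = \sum_{\pi\in\Gh} d_\pi^2(1+\lambda_\pi)^{-s}$ --- the extra factor $d_\pi$ coming from the Hilbert--Schmidt norm of the scalar matrix $(1+\lambda_\pi)^{-s/2}\id_{\cH_\pi}$. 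Hence this sum is a finite constant $C_s$ depending only on $s$ (and $G$), and taking square roots yields the stated inequality with implied constant $C_s^{1/2}$.

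The only genuinely substantive input is the finiteness of $\sum_{\pi\in\Gh} d_\pi^2(1+\lambda_\pi)^{-s}$ for $s>n/2$, i.e. Weyl-type growth control on $d_\pi$ against the Laplace eigenvalues; but this is exactly \eqref{eq_hs_Linfty_norm} together with Lemma \ref{lem_sob_embedding}, which are assumed, so there is no real obstacle --- the proof is a two-line assembly of the Plancherel identity and the bound $\|A\|_{HS}\le d_\pi^{1/2}\|A\|_{\sL}$. If one wanted to be slightly slicker, one could simply quote \eqref{eq_hs_Linfty_norm} directly with $s$ replaced by $-s$: it states $\|\widehat\kappa\|_{h_{-s}(\Gh)} \le C_{-s}\|\widehat\kappa\|_{L^\infty(\Gh)}$ with $C_{-s}<\infty$ for $-s<-n/2$, and then $\|\kappa\|_{L^2} = \|\widehat\kappa\|_{h_0(\Gh)} = \|(1+\lambda_\pi)^{s/2}\widehat\kappa\|_{h_{-s}(\Gh)} \le C_{-s}\sup_{\pi}(1+\lambda_\pi)^{s/2}\|\widehat\kappa(\pi)\|_{\sL(\cH_\pi)}$, which is the assertion verbatim.
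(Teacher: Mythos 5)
Your proof is correct and is essentially the same argument as the paper's: both reduce to Plancherel plus the Hilbert--Schmidt/operator norm interplay, and the constant in both is $\|\cB_s\|_{L^2(G)}$, whose finiteness for $s>n/2$ comes from Lemma \ref{lem_sob_embedding}. The paper phrases the key inequality as $\|\widehat\kappa(\pi)\|_{HS}\le\|(1+\lambda_\pi)^{s/2}\widehat\kappa(\pi)\|_{\sL}\,\|\pi(\cB_s)\|_{HS}$ rather than inserting $d_\pi^{1/2}$ explicitly, but since $\pi(\cB_s)=(1+\lambda_\pi)^{-s/2}\id_{\cH_\pi}$ has $\|\pi(\cB_s)\|_{HS}=d_\pi^{1/2}(1+\lambda_\pi)^{-s/2}$, your version is the same computation written out; your final observation that the lemma is really just \eqref{eq_hs_Linfty_norm} with $s\mapsto -s$ is also accurate.
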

\begin{proof}
By Corollary \ref{lem_sob_embedding} and its proof,
we have for $s>0$,
$$
\kappa = (\id+\cL)^{s/2} (\kappa *\cB_s)
\quad\mbox{thus}\quad
\widehat \kappa(\pi)=
(1+\lambda_\pi)^{s/2} 
\pi(\cB_s)\pi(\kappa).
$$
and, together with the Plancherel formula (see \eqref{eq_Plancherel}),
$$
\|\cB_s\|_{L^2(G)}^2
=\sum_{\pi\in \Gh} d_\pi \|\pi(\cB_s) \|_{HS(\cH_\pi)} ^2
<\infty \quad\mbox{whenever} \ s>n/2.
$$
The properties of the Hilbert-Schmidt operators and the Plancherel formula  yield
$$\|\kappa\|_{L^2(G)}^2
=
\sum_{\pi\in \Gh} d_\pi \|\widehat \kappa(\pi)\|_{HS(\cH_\pi)} ^2\\
\leq
\|\cB_s\|_{L^2(G)} 
\sup_{\pi\in \Gh}\|(1+\lambda_\pi)^{\frac s2} \widehat\kappa(\pi)\|_{\sL(\cH_\pi)}^2.
$$
This shows Lemma \ref{lem_kernel_inL2}.
\end{proof}

The following properties are straightforward.
See also Proposition \ref{prop_symbol_1stprop} for notation.

\begin{lemma}
\label{lem_symbol_kernel}
\begin{enumerate}
\item Let $\sigma$ be a smooth symbol
with associated kernel $\kappa_x$.

If $\Delta=\Delta_Q$ and $D\in \Diff^b$
then  the kernel associated with 
 $D_x \Delta_Q^\alpha \sigma \in S^{m-\rho |\alpha|+\delta b}_{\rho,\delta} (G)$
for any $\alpha\in \bN_0^{n_\Delta}$ is $q^\alpha D_x \kappa_x$.

The kernel associated with 
$\sigma^*$
is  kernel $\kappa_x(y)=\bar \kappa_x(y^{-1})$.
\item 
If $\sigma_1$ and $\sigma_2$ are smooth symbols 
with associated kernel $\kappa_{1x}$
and $\kappa_{2x}$, 
then the kernel of the symbol 
$\sigma=\sigma_1\sigma_2$ is
 $\kappa_x=\kappa_{2x}*\kappa_{1x}$.
\end{enumerate}
\end{lemma}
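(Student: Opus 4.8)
The plan is to unwind the definition of the associated kernel and invoke the elementary Fourier-analytic identities recalled in Section~\ref{sec_preliminary}. Throughout, recall that the kernel $\kappa_x$ of a smooth symbol $\sigma$ is characterised by $\widehat{\kappa_x}(\pi)=\sigma(x,\pi)$ for every $\pi\in\Gh$, and that $x\mapsto\kappa_x$ is smooth as a $\cD'(G)$-valued map (Definition~\ref{def_smooth_symbol}); since the group Fourier transform is injective on $\cD'(G)$, it suffices in each case to exhibit \emph{some} distribution whose group Fourier transform equals the symbol under consideration.

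For Part~(1), the difference operators are read off directly from Definition~\ref{def_Delta_RT}: since $\Delta_{Q,j}\widehat{\kappa_x}=\cF_G\{q_j\kappa_x\}$, iterating gives $\Delta_Q^\alpha\sigma(x,\cdot)=\cF_G\{q^\alpha\kappa_x\}$, so the kernel of $\Delta_Q^\alpha\sigma$ is $q^\alpha\kappa_x$. The operator $D_x$ acts only in the group variable whereas $\cF_G^{-1}$ acts in the representation variable, so the two commute; smoothness of $\sigma$ guarantees that $D_x\kappa_x$ is well defined and depends continuously on $x$, and differentiating under the integral sign gives $\cF_G(D_x\kappa_x)(\pi)=D_x\widehat{\kappa_x}(\pi)=D_x\sigma(x,\pi)$. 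Combining, the kernel of $D_x\Delta_Q^\alpha\sigma$ is $q^\alpha D_x\kappa_x$. For the adjoint symbol $\sigma^*$, I would set $g_x(y):=\overline{\kappa_x(y^{-1})}$ and compute, using that $\pi$ is unitary and that the Haar measure is invariant under inversion,
$$
\cF_G g_x(\pi)=\int_G \overline{\kappa_x(y^{-1})}\,\pi(y)^*\,dy
=\int_G \overline{\kappa_x(y)}\,\pi(y)\,dy
=\Bigl(\int_G \kappa_x(y)\,\pi(y)^*\,dy\Bigr)^{*}
=\widehat{\kappa_x}(\pi)^*=\sigma(x,\pi)^*,
$$
so that $g_x$ is the kernel of $\sigma^*$, i.e.\ the kernel of $\sigma^*$ at $x$ is $y\mapsto\overline{\kappa_x(y^{-1})}$.

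For Part~(2), the product $\sigma=\sigma_1\sigma_2$ has smooth entries in $x$, being a pointwise matrix product of smooth matrix-valued functions, and applying the identity $\cF_G(f_1*f_2)=\widehat{f_2}\,\widehat{f_1}$ with $f_1=\kappa_{2x}$ and $f_2=\kappa_{1x}$ yields
$$
\cF_G(\kappa_{2x}*\kappa_{1x})(\pi)=\widehat{\kappa_{1x}}(\pi)\,\widehat{\kappa_{2x}}(\pi)=\sigma_1(x,\pi)\,\sigma_2(x,\pi)=\sigma(x,\pi).
$$
Hence $\sigma(x,\cdot)\in\cF_G(\cD'(G))$ for every $x$, so $\sigma$ admits an associated kernel, and by uniqueness of the group Fourier transform this kernel is $\kappa_x=\kappa_{2x}*\kappa_{1x}$; the continuity, resp.\ smoothness, of $x\mapsto\kappa_x$ follows from the corresponding property of $x\mapsto\kappa_{1x}$ and $x\mapsto\kappa_{2x}$ and the separate continuity of convolution on $\cD'(G)$.

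I do not expect a genuine obstacle here; the only point requiring a little care is that the kernels are distributions rather than functions, so the change of variable $y\mapsto y^{-1}$, the differentiation under the integral sign, and the convolution $\kappa_{2x}*\kappa_{1x}$ must all be justified for $\cD'(G)$-valued objects. This is done in the standard way: either establish each identity first for the smooth truncations $\sigma_\ell$ of Lemma~\ref{lem_approximation_kernel} and then pass to the limit $\ell\to\infty$, or pair the relevant distributions against test functions and use the density of $\L2f$ in $\cD(G)$ from Proposition~\ref{prop_L2finite_density}.
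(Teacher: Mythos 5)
Your proposal is correct and follows exactly the straightforward unwinding of the definitions that the paper has in mind; the paper gives no explicit proof (it introduces the lemma with "The following properties are straightforward"), and your derivation — read off the RT-difference operators from Definition~\ref{def_Delta_RT}, commute $D_x$ with $\cF_G^{-1}$, and use unitarity plus inversion-invariance of Haar measure for $\sigma^*$, and the convolution identity $\cF_G(f_1*f_2)=\widehat{f_2}\,\widehat{f_1}$ for the product — is exactly the intended argument. Your closing remark about justifying the distributional manipulations via the truncations $\sigma_\ell$ or density of $\L2f$ is the right level of care.
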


\begin{corollary}
\label{cor_lem_kernel_inL2_1}
If $\sigma\in S^m_{\rho,\delta}$ 
with $1\geq \rho\geq \delta\geq 0$
and $\Delta=\Delta_Q$ a collection of difference operators,
then  
for any differential operators 
$D_z\in \Diff^b$ and $D'_x\in \Diff^{b'}$, 
the function $D'_x D_z \{q^\alpha_\Delta (z) \kappa_x(z)\}$ is continuous on $G$ and bounded, up to a constant of $m,\rho,\delta,\Delta,b,b'$ by 
$ \sup_{\pi\in \Gh} \|\sigma(x,\pi)\|_{S^m_{\rho,\delta}(G,\Delta),|\alpha|, b'}$
as long as $b +m +n +\delta b' <  \rho |\alpha|$.
\end{corollary}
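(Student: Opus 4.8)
The plan is to recognise $q^\alpha_\Delta\, D'_x\kappa_x$ as the kernel of a symbol of order $m-\rho|\alpha|+\delta b'$ and then to convert a pointwise bound on it into a Sobolev bound through the group Fourier transform, so that the growth $(1+\lambda_\pi)^{(m-\rho|\alpha|+\delta b')/2}$ is absorbed by the decay coming from a Sobolev embedding. First I would note that $D'_x$ acts only on the variable $x$, whereas $q^\alpha_\Delta$ and $D_z$ act only on the kernel variable, so that
\[
D'_x D_z\{q^\alpha_\Delta(z)\,\kappa_x(z)\}=D_z\{q^\alpha_\Delta(z)\,(D'_x\kappa_x)(z)\}.
\]
By Lemma \ref{lem_symbol_kernel} the function $g_x:=q^\alpha_\Delta\, D'_x\kappa_x$ is precisely the kernel associated with the symbol $\tau(x,\cdot):=D'_x\Delta_Q^\alpha\sigma(x,\cdot)$; moreover, writing $D'_x=\sum_{|\beta|\le b'}a_\beta X^\beta_x$ with $a_\beta\in\cD(G)$ and using $\delta\ge 0$ and $\lambda_\pi\ge 0$, the definition of the seminorms following Definition \ref{def_Smrhodelta_RT} gives a bound of the form
\[
\|\tau(x,\pi)\|_{\sL(\cH_\pi)}\ \lesssim\ (1+\lambda_\pi)^{\frac{m-\rho|\alpha|+\delta b'}{2}}\,\|\sigma(x,\cdot)\|_{S^m_{\rho,\delta}(G,\Delta),|\alpha|,b'} ,
\]
with an implicit constant depending only on $D'_x$ and on $m,\delta,\Delta$.

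Next I would fix real Sobolev indices $s_1>n/2$ and $s'>b+n/2$ and estimate, for each fixed $x$,
\[
\|D_z g_x\|_{L^\infty(G)}\ \lesssim\ \|g_x\|_{H^{s'}(G)}\ \lesssim\ \sup_{\pi\in\Gh}(1+\lambda_\pi)^{\frac{s'+s_1}{2}}\,\|\tau(x,\pi)\|_{\sL(\cH_\pi)} .
\]
The first inequality follows from $D_z\colon H^{s'}\to H^{s'-b}$ composed with the Sobolev embedding $H^{s'-b}\hookrightarrow\cC(G)$ (Lemma \ref{lem_sob_embedding}), which simultaneously gives the continuity of $z\mapsto D_z g_x(z)$; the second is Lemma \ref{lem_kernel_inL2} applied to $(\id+\cL)^{s'/2}g_x$, whose group Fourier transform at $\pi$ equals $(1+\lambda_\pi)^{s'/2}\tau(x,\pi)$. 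Inserting the symbol bound above yields
\[
\|D_z g_x\|_{L^\infty(G)}\ \lesssim\ \|\sigma(x,\cdot)\|_{S^m_{\rho,\delta}(G,\Delta),|\alpha|,b'}\,\sup_{\pi\in\Gh}(1+\lambda_\pi)^{\frac{s'+s_1+m-\rho|\alpha|+\delta b'}{2}},
\]
and the supremum is finite as soon as $s'+s_1+m-\rho|\alpha|+\delta b'\le 0$. Since $s_1$ can be taken arbitrarily close to $n/2$ and $s'$ arbitrarily close to $b+n/2$, such a choice exists precisely when $(b+n/2)+n/2+m+\delta b'<\rho|\alpha|$, i.e. under the hypothesis $b+m+n+\delta b'<\rho|\alpha|$; this gives the claimed bound.

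Finally, for the joint continuity of $(x,z)\mapsto D'_x D_z\{q^\alpha_\Delta(z)\kappa_x(z)\}$ I would show that $x\mapsto g_x\in H^{s'}(G)$ is continuous: each entry of $\tau(x,\pi)$ is continuous in $x$ since $\sigma$ has smooth entries, and the same exponent count — now using that $\sum_{\pi\in\Gh}d_\pi^2(1+\lambda_\pi)^{t}<\infty$ for $t<-n/2$, as in the proof of Lemma \ref{lem_kernel_inL2} — furnishes a summable majorant, so that dominated convergence applies; composing with the continuous maps $D_z\colon H^{s'}\to\cC(G)$ and with evaluation then gives the continuity of $(x,z)\mapsto D_z g_x(z)$. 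I do not expect a genuine obstacle: the only point requiring care is the bookkeeping of the Sobolev exponents, arranged so that the strict inequality $b+m+n+\delta b'<\rho|\alpha|$ leaves exactly the margin needed both for the $L^\infty$ estimate and for the dominated-convergence argument.
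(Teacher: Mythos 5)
Your proposal is correct and takes essentially the same route as the paper: both identify $q^\alpha_\Delta D'_x\kappa_x$ as the kernel of the symbol $D'_x\Delta_Q^\alpha\sigma$ via Lemma \ref{lem_symbol_kernel}, pass to a Sobolev norm controlled by the symbol supremum via Lemma \ref{lem_kernel_inL2}, and then invoke the Sobolev embedding (Lemma \ref{lem_sob_embedding}) for the $L^\infty$ bound and continuity, with the same exponent count $s+s'+b\leq -m+\rho|\alpha|-\delta b'$, $s,s'>n/2$. The dominated-convergence elaboration you add for continuity in $x$ is a harmless extra beyond what the paper records.
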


\begin{proof}
If $s\in \bR$, using Lemma \ref{lem_symbol_kernel} and
the properties of the Sobolev spaces,
we have:
\begin{eqnarray}
\label{eq_pf_cor_lem_kernel_inL2_1}
&&\| (\id+\cL)^{\frac s2} D \{q^\alpha_\Delta D'_x \kappa_x(\cdot)\}\|_{L^2(G)}
\\
&&\qquad
\lesssim_{s,D}
\| (\id+\cL)^{\frac {s+b}2}  \{q^\alpha_\Delta (z) D'_x \kappa_x(z)\}\|_{L^2(G)}\nonumber
\\
&&\qquad
\lesssim_{s'}
\sup_{\pi\in \Gh} 
(1+\lambda_\pi)^{\frac {s'+s+b}2}  \|D'_x\Delta_Q^\alpha \sigma(x,\pi)\|_{\sL(\cH_\pi)},\label{eq_pf_cor_lem_kernel_inL2_2}
\end{eqnarray}
by Lemma \ref{lem_kernel_inL2} with $s'>n/2$.
By the Sobolev inequality (cf. Lemma \ref{lem_sob_embedding}), 
the function $D_z \{q^\alpha_\Delta (z) \kappa_x(z)\}$ is continuous if 
there exists $s>n/2$ such that
\eqref{eq_pf_cor_lem_kernel_inL2_1} 
is finite
and this quantity also provides a bound for the supremum over $z$.
As $\sigma \in S^m_{\rho,\delta}(G,\Delta)$,
\eqref{eq_pf_cor_lem_kernel_inL2_2} is indeed finite when $s'+s+b \leq -m +\rho |\alpha| -\delta b'$
and it suffices that $n+b  +m +\delta b' <  \rho |\alpha|$. 
\end{proof}

Corollary \ref{cor_lem_kernel_inL2_1} clearly implies Proposition \ref{prop_kernel_regularity}.

\subsection{Approximations by nice kernels}

We have already seen that the kernel associated with a continuous symbol 
can be approximated by a smooth kernel in the sense of Lemma \ref{lem_approximation_kernel}.
In many proofs below, we will use the following slightly different version 
for the  symbols in $S^m_{\rho,\delta}$.

\begin{lemma}
\label{lem_approximation_kernel2}
Let $\chi\in \cD(\bR)$ be a given function valued in $[0,1]$ and such that $\chi\equiv 1$ on a neighbourhood of 0.
Let $\sigma \in S^m_{\rho,\delta}$ with associated kernel $\kappa_x$.
For each $\ell\in \bN$, 
we define the symbol $\sigma_\ell$ via
$$
\sigma_\ell(x,\pi) = \sigma(x,\pi)\chi (\ell^{-1}\lambda_\pi),
$$ 
Then $\sigma_\ell\in S^{-\infty}$ and for any $a,b\in \bN_0$, 
there exists $C=C_{G,m,a,b,\chi}$ such that
$$
\|\sigma_\ell\|_{S^m_{\rho,\delta}, a,b}
\leq C
\|\sigma\|_{S^m_{\rho,\delta}, a,b}.
$$
Moreover 
the kernel $(x,y)\mapsto \kappa_{\ell,x}(y)$ associated with $\sigma_\ell$ is smooth 
on $G\times G$
and for any $\beta\in \bN_0^n$, 
$X^\beta_x\kappa_{x,\ell}\to X^\beta_x\kappa_x$ in $\cD'(G)$ uniformly in $x\in G$ as $\ell\to\infty$.
\end{lemma}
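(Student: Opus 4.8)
The plan is to realise $\sigma_\ell$ as the product of $\sigma$ with the invariant symbol $m_\ell:=\{\chi(\ell^{-1}\lambda_\pi)\}_{\pi\in\Gh}$, to control $m_\ell$ in $S^0_{\rho,\delta}(G)$ \emph{uniformly in $\ell$} by means of Proposition \ref{prop_mult_t}, and then to read off all four assertions from the symbol algebra of Proposition \ref{prop_symbol_1stprop} and the kernel description of Section \ref{sec_kernel}. Note first that $\sigma_\ell(x,\pi)=\sigma(x,\pi)\,m_\ell(\pi)$ with $m_\ell(\pi)=\chi(\ell^{-1}\lambda_\pi)\,\id_{\cH_\pi}$ independent of $x$, so in particular $X^\beta_x\sigma_\ell(x,\pi)=\chi(\ell^{-1}\lambda_\pi)\,X^\beta_x\sigma(x,\pi)$.

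First I would estimate $m_\ell$. By Theorem \ref{thm_Deltaeq+coincide} it suffices to bound the RT-difference operators $\Delta_Q^\alpha m_\ell$ for one fixed strongly admissible collection $\Delta_Q$. For $\ell\ge 2$ I would apply Proposition \ref{prop_mult_t} with $m=0$, $f=\chi$ and $t=\ell^{-1}\in(0,1)$: since $\chi\in\cD(\bR)$, the quantity $\sup_{\lambda\ge0,\ 0\le\ell'\le d}(1+\lambda)^{\ell'}|\partial^{\ell'}_\lambda\chi(\lambda)|$ is a finite constant depending only on $\chi$ (each $\partial^{\ell'}_\lambda\chi$ is smooth and compactly supported), so $\|\Delta_Q^\alpha m_\ell(\pi)\|_{\sL(\cH_\pi)}\lesssim(1+\lambda_\pi)^{-|\alpha|/2}\le(1+\lambda_\pi)^{-\rho|\alpha|/2}$ uniformly in $\ell\ge2$; the single symbol $m_1$ is a fixed element of $S^0_{\rho,\delta}(G)$ and is harmless. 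Hence $\sup_{\ell\ge1}\|m_\ell\|_{S^0_{\rho,\delta}(G),a,b}<\infty$ for all $a,b$, and Proposition \ref{prop_symbol_1stprop}(3) then gives at once $\sigma_\ell\in S^m_{\rho,\delta}(G)$ with $\|\sigma_\ell\|_{S^m_{\rho,\delta},a,b}\le C\,\|\sigma\|_{S^m_{\rho,\delta},a,b}$ for a constant $C$ independent of $\ell$, which is the asserted uniform bound.

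For the smoothing property and the smoothness of the kernel, I would use that $\chi$ has compact support, say $\supp\chi\subset[-R,R]$: then $m_\ell(\pi)=0$ whenever $\lambda_\pi>\ell R$, so $m_\ell$ is supported on a finite subset of $\Gh$ with smooth bounded entries and therefore lies in $S^{-\infty}(G)$; by Corollary \ref{cor_algebra_of_symbol}, $\sigma_\ell=\sigma\,m_\ell$ is smoothing as well. Its associated kernel is the finite sum $\kappa_{\ell,x}(y)=\sum_{\lambda_\pi\le\ell R}d_\pi\,\tr\big(\pi(y)\,\sigma_\ell(x,\pi)\big)$, a finite linear combination of products of the smooth matrix coefficients of $\pi$ in $y$ with the smooth entries of $\sigma_\ell(x,\pi)$ in $x$; hence $(x,y)\mapsto\kappa_{\ell,x}(y)$ is smooth on $G\times G$ (alternatively one invokes Proposition \ref{prop_kernel_regularity}), and in particular $\kappa_{\ell,x}\in L^2(G)\cap C^\infty(G)$ for each $\ell$.

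Finally, for the convergence I would fix $\beta\in\bN_0^n$ and recall, from the proof of Lemma \ref{lem_op_cD2cD} via \eqref{eq_hs_Linfty_norm}, that for every $s_*<-n/2-m-\delta|\beta|$ one has $\sup_{x\in G}\|X^\beta_x\sigma(x,\cdot)\|_{h_{s_*}(\Gh)}\lesssim\|\sigma\|_{S^m_{\rho,\delta},0,|\beta|}$. Since $X^\beta_x\kappa_{\ell,x}-X^\beta_x\kappa_x=\cF_G^{-1}\big((\chi(\ell^{-1}\lambda_\pi)-1)\,X^\beta_x\sigma(x,\cdot)\big)$, Lemma \ref{lem_hs} identifies the $H^{s_0}(G)$-norm of this difference with the $h_{s_0}(\Gh)$-norm of $(\chi(\ell^{-1}\lambda_\pi)-1)\,X^\beta_x\sigma(x,\cdot)$; choosing $a>0$ with $\chi\equiv1$ on $(-a,a)$, the factor $\chi(\ell^{-1}\lambda_\pi)-1$ vanishes for $\lambda_\pi\le\ell a$ and is bounded by $2$, so for any $s_0<s_*$ this norm is at most $2(1+\ell a)^{(s_0-s_*)/2}\,\|X^\beta_x\sigma(x,\cdot)\|_{h_{s_*}(\Gh)}$, which tends to $0$ as $\ell\to\infty$ uniformly in $x\in G$ since $s_0-s_*<0$. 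As $H^{s_0}(G)$ embeds continuously in $\cD'(G)$, this yields $X^\beta_x\kappa_{\ell,x}\to X^\beta_x\kappa_x$ in $\cD'(G)$ uniformly in $x$, completing the proof.

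The one genuinely essential ingredient is Proposition \ref{prop_mult_t}: the reason for replacing the sharp truncation of Lemma \ref{lem_approximation_kernel} by the smooth truncation $\chi(\ell^{-1}\lambda_\pi)$ is precisely that the difference operators then remain bounded on the multiplier $m_\ell$ \emph{uniformly in the dilation parameter} $t=\ell^{-1}$, and this uniform boundedness is exactly what that proposition provides; once it is in hand, the remaining steps are routine bookkeeping with the symbol algebra and the Plancherel formula.
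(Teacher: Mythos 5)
Your proof is correct and follows essentially the same route as the paper: both rely on Proposition \ref{prop_mult_t} with $t=\ell^{-1}$ to obtain $\ell$-uniform control of the multiplier, then deduce the semi-norm bound from the algebra property of Proposition \ref{prop_symbol_1stprop}, and prove the convergence of the kernels via the Sobolev/Plancherel bookkeeping of Lemma \ref{lem_hs} exactly as in the proof of Lemma \ref{lem_op_cD2cD}. You merely make explicit, by isolating the invariant symbol $m_\ell$ and bounding $\|m_\ell\|_{S^0_{\rho,\delta},a,b}$ uniformly in $\ell$, what the paper compresses into the sentence ``the estimates for the semi-norms follow easily from Proposition \ref{prop_symbol_1stprop} and \eqref{eq_leibniz_LinftyGh}''.
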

\begin{proof}
By Proposition \ref{prop_mult_t},
$\chi (\ell^{-1}\lambda_\pi)$ is smoothing.
Thus  the properties of the symbol classes
 (see Proposition \ref{prop_symbol_1stprop})
implies the membership $\sigma_\ell \in S^{-\infty}$.
By Proposition \ref{prop_kernel_regularity}, 
  $(x,y)\mapsto \kappa_{\ell,x}(y)$ is smooth.
The estimates for the semi-norms follows easily from Proposition \ref{prop_symbol_1stprop} and \eqref{eq_leibniz_LinftyGh}.
The only point to prove is the convergence of the kernels.
 For this, we proceed by adapting the proof of Lemma \ref{lem_op_cD2cD}.
Setting $s=-\lceil n/2\rceil$, we have
\begin{eqnarray*}
\|X_x^\beta (\kappa_{\ell,x}-\kappa_x)\|_{H^{s-m-\delta|\beta|-1}}
&=&
\|X_x^\beta (\sigma_\ell-\sigma)(x,\cdot)\|_{h_{s-m-\delta|\beta|-1}(\Gh)}
\\
&=&
\|X_x^\beta \sigma(x,\cdot) (1-\chi)(\ell^{-1}\lambda_\pi)\|_{h_{s-m-\delta|\beta|-1}(\Gh)}
\\
&\lesssim& 
\|(1+\lambda_\pi)^{-\frac {m+1 +\delta|\beta|} 2} (1-\chi)(\ell^{-1}\lambda_\pi)
X_x^\beta \sigma(x,\cdot) 
\|_{L^\infty(\Gh)}.
\end{eqnarray*}
By hypothesis, for some $0<\epsilon_\chi<\Lambda$,
the function $\chi$ is identically equal to 1 on $[0,\epsilon_\chi]$
and to 0 on $[\Lambda,+\infty)$.
Consequently, $\chi(\ell^{-1}\lambda_\pi)=1$ whenever $\lambda_\pi \geq \epsilon_\chi \ell$ and we have:
\begin{eqnarray*}
\|X_x^\beta (\kappa_{\ell,x}-\kappa_x)\|_{H^{s-m -\delta|\beta|- 1}}
\lesssim 
\max_{\pi)\in \Gh : \lambda_\pi \geq \epsilon_\chi \ell}
\|(1+\lambda_\pi)^{-\frac {m +1+\delta|\beta|} 2} (1-\chi)(\ell^{-1}\lambda_\pi)
X_x^\beta \sigma(x,\cdot) 
\|_{\cH_\pi}.
\\
\lesssim 
(1+\epsilon_\chi \ell)^{-1}
\max_{\pi)\in \Gh }
\|(1+\lambda_\pi)^{-\frac {m +\delta|\beta|} 2} X_x^\beta \sigma(x,\cdot) 
\|_{\cH_\pi}.
\end{eqnarray*}
Taking the supremum over $x\in G$, we obtain:
$$
\max_{x\in G}
\|X_x^\beta (\kappa_{\ell,x}-\kappa_x)\|_{H^{s-m}}
\lesssim 
(1+\epsilon_\chi \ell)^{-1}
\|\sigma\|_{S^m_{\rho,\delta},0,|\beta|}<\infty.
$$
The properties of the Sobolev spaces easily implies the stated convergence
of the kernels. This concludes the proof of Lemma \ref{lem_approximation_kernel2}.
 \end{proof}

\subsection{Estimates for the kernel}

In this section, we study the behaviour of the kernels near the origin.
More precisely, we show:

\begin{proposition}
\label{prop_estimate_kernel}
Let $\sigma\in S^m_{\rho,\delta}$
with $1\geq \rho\geq \delta\geq 0$, $\rho\not=0$.
Then its associated kernel $(x,y)\mapsto \kappa_x(y)
\in \cC^\infty(G\times (G\backslash\{e_G\})$ satisfies the following estimates:
\begin{itemize}
\item if $n+m>0$ then 
there exists 
$C$ and $a,b\in \bN$ (independent of $\sigma$) such that
$$
|\kappa_x(y)|\leq C 
\sup_{\pi\in \Gh} \|\sigma(x,\pi)\|_{S^m_{\rho,a,b}} |y|^{-\frac{n+m}\rho}.
$$
\item if $n+m=0$ then 
there exists 
$C$ and $a,b\in \bN$ (independent of $\sigma$) such that
$$
|\kappa_x(y)|\leq C \sup_{\pi\in \Gh} \|\sigma(x,\pi)\|_{S^m_{\rho,a,b}} 
|\ln |y||.
$$
\item if $n+m<0$ then $\kappa_x$ is continuous on $G$ and bounded  
$$
|\kappa_x(y)|\lesssim_m \sup_{\pi\in \Gh} \|\sigma(x,\pi)\|_{S^m_{\rho,0,0}}.
$$
\end{itemize}
\end{proposition}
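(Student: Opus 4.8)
The plan is to combine a Littlewood--Paley decomposition of $\sigma$ with respect to $\cL$ with two complementary bounds for each dyadic piece of the kernel, and then to sum the resulting series. Fix $\psi_0,\psi\in\cD(\bR)$ valued in $[0,1]$ with $\psi_0$ supported near $0$, $\supp\psi\subset[1/2,2]$ and $1=\psi_0(\lambda)+\sum_{j\geq1}\psi(2^{-j}\lambda)$ for $\lambda\geq0$; set $\psi_j:=\psi(2^{-j}\cdot)$ for $j\geq1$ and $\sigma_j(x,\pi):=\sigma(x,\pi)\psi_j(\lambda_\pi)$ (with $\sigma_0:=\sigma\psi_0$). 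By Proposition \ref{prop_mult_t} applied with $t=2^{-j}$ together with Proposition \ref{prop_symbol_1stprop} — which is exactly the mechanism behind Lemma \ref{lem_approximation_kernel2} — each $\sigma_j$ is smoothing with $\|\sigma_j\|_{S^m_{\rho,\delta},a,b}\lesssim\|\sigma\|_{S^m_{\rho,\delta},a,b}$ uniformly in $j$, so $\sigma_j$ has a smooth kernel $\kappa_{j,x}$ and $\kappa_x=\sum_{j\geq0}\kappa_{j,x}$ in $\cD'(G)$ (and, once the estimates below are in hand, locally uniformly on $G\setminus\{e_G\}$). Away from $e_G$ the asserted inequalities are trivial since $\kappa_x$ is smooth there by Proposition \ref{prop_kernel_regularity} and the right-hand sides stay bounded; and when $n+m<0$ the continuity of $\kappa_x$ on all of $G$ and the bound $|\kappa_x(y)|\lesssim_m\|\sigma\|_{S^m_{\rho,\delta},0,0}$ follow at once from Corollary \ref{cor_lem_kernel_inL2_1} with $\alpha=0$ and $b=b'=0$. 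So I would then fix $y$ near $e_G$.

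The first, crude estimate. Since $\sigma_j(x,\pi)$ vanishes unless $\lambda_\pi\sim2^j$ and $\|\sigma_j(x,\pi)\|_{\sL(\cH_\pi)}\lesssim(1+\lambda_\pi)^{m/2}\|\sigma\|_{S^m_{\rho,\delta},0,0}$, the Plancherel formula gives
$$
\|\kappa_{j,x}\|_{L^2(G)}^2=\sum_{\lambda_\pi\sim2^j}d_\pi\|\sigma_j(x,\pi)\|_{HS(\cH_\pi)}^2\lesssim 2^{jm}\sum_{\lambda_\pi\sim2^j}d_\pi^2\lesssim 2^{j(m+n/2)}\|\sigma\|_{S^m_{\rho,\delta},0,0}^2,
$$
using the Weyl-type bound $\sum_{\lambda_\pi\leq R}d_\pi^2\lesssim R^{n/2}$ (a consequence of the small-time heat kernel asymptotics of $\cL$, see \cite{varo}). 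Combining this with the Bernstein inequality $\|u\|_{L^\infty(G)}\lesssim\Lambda^{n/4}\|u\|_{L^2(G)}$ for $u$ whose group Fourier transform is supported on $\{\lambda_\pi\leq\Lambda\}$ (itself a consequence of the same Weyl bound and Cauchy--Schwarz) yields $|\kappa_{j,x}(y)|\lesssim 2^{j(n+m)/2}\|\sigma\|_{S^m_{\rho,\delta},0,0}$. The second, refined estimate uses the difference operators. Fix a strongly admissible collection $\Delta=\Delta_Q$ with the Leibniz-like property (Corollary \ref{cor_choice_Delta}) and $N\in\bN$ with $n+m-\rho N<0$. By Proposition \ref{prop_symbol_1stprop}, for $|\alpha|=N$ the symbol $\Delta_Q^\alpha\sigma_j$ lies in $S^{m-\rho N}_{\rho,\delta}$ with seminorm $\lesssim\|\sigma\|_{S^m_{\rho,\delta},N,0}$ uniformly in $j$, and its kernel is $q_\Delta^\alpha\kappa_{j,x}$; moreover, since $\sigma_j$ is frequency-localised and $q_\Delta^\alpha$ is a fixed smooth function, $\Delta_Q^\alpha\sigma_j$ is, up to rapidly decaying tails in $\lambda_\pi/2^j$, again supported on $\lambda_\pi\sim2^j$ (this is made precise by expanding $q_\Delta^\alpha(y)=\sum_{|\beta|+|\gamma|\geq N}c_{\beta,\gamma}q_\Delta^\beta(w)q_\Delta^\gamma(w^{-1}y)$ via \eqref{eq_def_leibniz_q} and estimating the resulting convolutions by Young's inequality, the factors $q_\Delta^\gamma(\,\cdot\,)\,\psi_j(\cL)\delta_{e_G}$ being controlled by Proposition \ref{prop_mult_t}). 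Running the crude estimate for $\Delta_Q^\alpha\sigma_j$ then gives $\|q_\Delta^\alpha\kappa_{j,x}\|_{L^\infty(G)}\lesssim 2^{j(n+m-\rho N)/2}\|\sigma\|_{S^m_{\rho,\delta},N,0}$. Finally, admissibility lets us reorder so that $\nabla_{e_G}q_1,\dots,\nabla_{e_G}q_n$ span $\fg$, hence $\sum_{i=1}^n|q_i(y)|^2\gtrsim|y|^2$ near $e_G$; choosing $i_0$ with $|q_{i_0}(y)|\gtrsim|y|$ and $\alpha=N\mathbf{e}_{i_0}$ gives $|q_\Delta^\alpha(y)|\gtrsim|y|^N$, whence $|\kappa_{j,x}(y)|\lesssim|y|^{-N}2^{j(n+m-\rho N)/2}\|\sigma\|_{S^m_{\rho,\delta},N,0}$.

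It then remains to sum over $j$. The two bounds are comparable at the index $j_*$ with $2^{j_*}\sim|y|^{-2/\rho}$ (here $\rho\neq0$ enters). For $0\leq j\leq j_*$ I use the crude bound: $\sum_{j\leq j_*}2^{j(n+m)/2}$ is $\lesssim2^{j_*(n+m)/2}\sim|y|^{-(n+m)/\rho}$ if $n+m>0$, is $\lesssim j_*\sim|\ln|y||$ if $n+m=0$, and is $O(1)$ if $n+m<0$. For $j>j_*$ I use the refined bound: as $n+m-\rho N<0$ the series is geometric and $|y|^{-N}\sum_{j>j_*}2^{j(n+m-\rho N)/2}\lesssim|y|^{-N}2^{j_*(n+m-\rho N)/2}$, which simplifies (using $2^{j_*\rho/2}\sim|y|^{-1}$) to $|y|^{-(n+m)/\rho}$ when $n+m>0$, to $O(1)$ when $n+m=0$, and to $o(1)$ when $n+m<0$. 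Adding the two ranges produces exactly the three stated estimates, with $a=N$ and $b=0$.

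I expect the genuine difficulty to lie in the second (refined) estimate, specifically in the claim that $\Delta_Q^\alpha\sigma_j$ retains the $j$-gain $2^{-j\rho N/2}$ rather than merely lying in $S^{m-\rho N}_{\rho,\delta}$ with a $j$-uniform seminorm bound: one must use simultaneously its membership in the lower-order class and the fact that multiplying the frequency-localised kernel $\kappa_{j,x}$ by the fixed smooth function $q_\Delta^\alpha$ creates no substantial high (nor low) frequencies — which is where Proposition \ref{prop_mult_t} and the Leibniz expansion are really needed. By comparison, the dyadic bookkeeping, the Weyl and Bernstein inequalities, and the reduction to $|q_\Delta^\alpha(y)|\gtrsim|y|^N$ via admissibility are routine.
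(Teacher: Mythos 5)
Your overall strategy coincides with the paper's: decompose $\sigma$ into dyadic Littlewood--Paley pieces $\sigma_j$, prove a low-frequency and a high-frequency bound for each $\kappa_{j,x}$, and sum the two geometric series at a threshold $j_*\sim\log(1/|y|)$. The treatment of $n+m<0$ via Corollary~\ref{cor_lem_kernel_inL2_1} is exactly the paper's. Your crude bound $|\kappa_{j,x}(y)|\lesssim 2^{j(n+m)/2}$, obtained via the Weyl counting $\sum_{\lambda_\pi\leq R}d_\pi^2\lesssim R^{n/2}$ and a Bernstein inequality, is correct and equivalent in effect to what the paper gets from Lemma~\ref{lem_kernel_inL2}/Corollary~\ref{cor_lem_kernel_inL2_1} applied to $\sigma_\ell\in S^{m_1}_{\rho,\delta}$ for $m_1<-n$, so the difference there is cosmetic.

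The genuine gap is where you yourself flag it, namely in the refined bound. You want to ``run the crude estimate for $\Delta_Q^\alpha\sigma_j$,'' but the crude estimate rests on a Bernstein inequality, and that inequality needs the kernel whose Fourier transform you control to be supported in $\{\lambda_\pi\lesssim 2^j\}$. The kernel of $\Delta_Q^\alpha\sigma_j$ is the \emph{pointwise product} $q_\Delta^\alpha\kappa_{j,x}$ (Lemma~\ref{lem_symbol_kernel}), and multiplication by the fixed smooth function $q_\Delta^\alpha$ does not preserve the spectral support of $\kappa_{j,x}$. Your suggested repair — expanding $q_\Delta^\alpha$ by the Leibniz formula into $\sum c_{\beta\gamma}q^\beta\otimes q^\gamma$, rewriting $q^\alpha\kappa_{j,x}$ as a sum of convolutions $(q^\gamma\eta_j(\cL)\delta_e)*(q^\beta\kappa_x)$, and invoking Young — runs into a circularity: Young's inequality requires a bound on $\|q^\beta\kappa_x\|_{L^\infty}$, which is precisely the quantity the proposition is trying to estimate, and for $|\beta|<N$ this factor is not controlled by Corollary~\ref{cor_lem_kernel_inL2_1} (it requires $n+m<\rho|\beta|$). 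You would need a separate inductive scheme or a different inequality to close the argument along these lines.

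The paper sidesteps the issue by never invoking any exact frequency localisation of the weighted kernels. Instead it combines two facts: (i) Corollary~\ref{cor_lem_kernel_inL2_1} bounds $\sup_z|q^\alpha(z)\kappa_{\ell,x}(z)|$ in terms of the $S^{m_1}_{\rho,\delta}$ seminorm of the symbol $\sigma_\ell$, for \emph{any} order $m_1$ with $m_1+n<\rho|\alpha|$, by a Cauchy--Schwarz argument against the Bessel potential $\cB_s$; and (ii) Lemma~\ref{lem_dyadicpiece} converts the dyadic localisation into the quantitative gain $\|\sigma_\ell\|_{S^{m_1}_{\rho,\delta},a,b}\lesssim 2^{(\ell-1)(m-m_1)/2}\|\sigma\|_{S^m_{\rho,\delta},a,b}$ uniformly over all seminorms. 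Putting these together produces the pointwise bound $|z|^a|\kappa_{\ell,x}(z)|\lesssim 2^{\ell(m-m_1)/2}\|\sigma\|_{S^m_{\rho,\delta},a,0}$ directly, with no Bernstein inequality and no assertion about the spectral support of $q^\alpha\kappa_{\ell,x}$. You cite Lemma~\ref{lem_approximation_kernel2} in your setup but never invoke Lemma~\ref{lem_dyadicpiece}, which is the key technical input here; replacing your Bernstein-based refined estimate by the Corollary~\ref{cor_lem_kernel_inL2_1}/Lemma~\ref{lem_dyadicpiece} route would close the gap and make the two threshold choices (yours $2^{j_*}\sim|y|^{-2/\rho}$, the paper's $2^{\ell_0}\sim|y|^{-1}$) merely a matter of bookkeeping.

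Two minor points: the inequality $\sum_i|q_i(y)|^2\gtrsim|y|^2$ needs \emph{strong} admissibility to hold uniformly over all of $G$ (away from $e_G$ it uses that $e_G$ is the only common zero), not just admissibility as you wrote; and the Weyl bound $\sum_{\lambda_\pi\leq R}d_\pi^2\lesssim R^{n/2}$ is correct but, since the paper already has Lemma~\ref{lem_sob_embedding}/Lemma~\ref{lem_kernel_inL2}, importing a separate Weyl estimate is unnecessary machinery.
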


By Lemma \ref{lem_symbol_kernel}, 
we also obtain similar properties for any derivatives in $x$ and $y$ of $\kappa_x(y)$ multiplied  by a smooth function $q$.

First we need to understand a `dyadic piece' of a symbol in the calculus:
\begin{lemma}
\label{lem_dyadicpiece}
Let $\sigma\in S^m_{\rho,\delta}$
with $1\geq \rho\geq\delta\geq0$.
Let $\eta\in \cD(\bR)$.
For any $t\in (0,1)$ we define the symbol $\sigma_t$
via $\sigma_t(x,\pi):=\sigma(x,\pi) \eta(t\lambda_\pi)$.
Then  for any $m_1\in \bR$ we have
$$
\|\sigma_t\|_{S^{m_1}_{\rho,\delta},a,b}
\leq C  
\|\sigma\|_{S^{m}_{\rho,\delta},a,b} t^{\frac{m_1-m}2}
$$
where $C=C_{m,m_1,a,b,\eta}$ does not depend on $\sigma$ or $t\in (0,1)$.
\end{lemma}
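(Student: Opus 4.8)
The plan is to factor the dyadic piece as a product of two symbols already controlled by the calculus. Write $\sigma_t(x,\pi)=\sigma(x,\pi)\,\eta(t\lambda_\pi)$ and introduce the invariant symbol $\tau_t:=\{\eta(t\lambda_\pi)\,\id_{\cH_\pi}:\pi\in\Gh\}$, which is nothing but the group Fourier transform of the spectral multiplier $\eta(t\cL)$ of the Laplace--Beltrami operator. Since $\eta(t\lambda_\pi)\id_{\cH_\pi}$ is scalar it commutes with $\sigma(x,\pi)$, so $\sigma_t=\sigma\cdot\tau_t$ holds as an identity of symbols; the composition estimate in Proposition~\ref{prop_symbol_1stprop} then reduces everything to proving that $\tau_t\in S^{m_1-m}_{\rho,\delta}(G)$ together with the uniform bound $\|\tau_t\|_{S^{m_1-m}_{\rho,\delta}(G),a,b}\lesssim_{a,b,\eta,m,m_1}t^{(m_1-m)/2}$ for $t\in(0,1)$.

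To obtain this bound I would first use the reduction following Definition~\ref{def_Smrhodelta}: since $\tau_t$ does not depend on $x$, only the difference operators $\Delta^\alpha\tau_t(\pi)$ with $\alpha\in\FundG^*$ need to be estimated. Next I would pass to RT-difference operators via Lemma~\ref{lem_linkDeltas}: taking the strongly admissible collection $\Delta_Q$ built from the matrix coefficients of $\tau-\id_{\cH_\tau}$, $\tau\in\FundG$, and noting that $\tau_t\in\cF_G(\cD'(G))$ (clear from $\|\tau_t\|_{L^\infty(\Gh)}\le\|\eta\|_{L^\infty}$ and Lemma~\ref{lem_hs}), one expresses $\Delta^\alpha\tau_t(\pi)$ for $\alpha\in\FundG^a$ as a matrix of operators with entries $\Delta_Q^{\alpha'}\tau_t(\pi)$, $|\alpha'|=a$. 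A tensor-product argument identical to the one in the proof of the second part of Theorem~\ref{thm_Deltaeq+coincide} then bounds $\|\Delta^\alpha\tau_t(\pi)\|_{\sL(\cH_\pi^{\otimes\alpha})}$ by a constant (depending on $\alpha$, hence on $a$, since $\FundG$ is finite) times $\max_{|\alpha'|=a}\|\Delta_Q^{\alpha'}\tau_t(\pi)\|_{\sL(\cH_\pi)}$.

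The analytic heart is Proposition~\ref{prop_mult_t}, applied to $\Delta_Q$, the function $f=\eta$, and the order $m_1-m$: it gives $\|\Delta_Q^{\alpha'}\tau_t(\pi)\|_{\sL(\cH_\pi)}\le C\,t^{(m_1-m)/2}(1+\lambda_\pi)^{(m_1-m-|\alpha'|)/2}$, the remaining supremum over $\ell\le d$, $\lambda\ge0$ of $(1+\lambda)^{-(m_1-m)+\ell}|\partial_\lambda^\ell\eta(\lambda)|$ being finite because $\eta$ is smooth with compact support, and everything uniform in $t\in(0,1)$. Using $\rho\le1$ to replace the exponent $(m_1-m-|\alpha'|)/2$ by the larger $(m_1-m-\rho|\alpha'|)/2$, this yields $\tau_t\in S^{m_1-m}_{\rho,\delta}(G)$ with the claimed $t$-dependence of its semi-norms; feeding this into Proposition~\ref{prop_symbol_1stprop} applied to $\sigma\cdot\tau_t$ completes the proof. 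I do not expect any genuine analytic obstacle here --- Proposition~\ref{prop_mult_t} contains all the real work; the only point needing care is the bookkeeping in the chain ``fundamental representations, then RT-difference operators, then tensor-norm comparison'', namely checking that it neither spoils the uniformity in $t$ nor costs a loss in the order.
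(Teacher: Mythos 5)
Your proposal is correct and follows essentially the same route as the paper's (very brief) proof: the paper also reduces the claim to the Leibniz-type product estimate for symbols (your Proposition~\ref{prop_symbol_1stprop}~(3), which is proved from \eqref{eq_leibniz_LinftyGh}) combined with Proposition~\ref{prop_mult_t} applied to the strongly admissible RT-difference operators of Lemma~\ref{lem_linkDeltas}, invoking Theorem~\ref{thm_Deltaeq+coincide} for the translation between intrinsic and RT difference operators. Your writeup merely makes explicit the factorisation $\sigma_t=\sigma\cdot\tau_t$ and the bookkeeping that the paper leaves implicit.
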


\begin{proof}[Lemma \ref{lem_dyadicpiece}]
This follows easily from the Leibniz property \eqref{eq_leibniz_LinftyGh}
together with Proposition \ref{prop_mult_t}  for the strongly admissible collection of RT-difference operators given in Lemma \ref{lem_linkDeltas}.
We naturally have used the equivalence of description of the symbols,
cf. Theorem \ref{thm_Deltaeq+coincide}.
\end{proof}

\begin{proof}[Proposition \ref{prop_estimate_kernel}]
The case $n+m<0$ follows readily from Corollary \ref{cor_lem_kernel_inL2_1} .
Hence we just have to study the case $m+n\geq0$.
We fix a dyadic decomposition of $\spec(\cL)$:
we choose two functions $\eta_0,\eta_1\in \cD(\bR)$ supported in 
$[-1,1]$ and $[1/2,2]$ respectively, both valued in $[0,1]$
and satisfying 
\begin{equation}
\label{eq_dyadic_dec}
\forall \lambda\geq 0\qquad
\sum_{\ell=0}^\infty \eta_\ell (\lambda)=1,
\quad\mbox{where for each}\ \ell\in \bN, \quad
\eta_\ell(\lambda):=\eta_1(2^{-(\ell-1)} \lambda).
\end{equation}
For each $\ell\in \bN_0$,
we set $\sigma_\ell(x,\pi) =\sigma (x,\pi) \eta_\ell(\lambda_\pi)$
and we denote by $\kappa_x$ and $\kappa_{\ell,x}$ the kernels associated with $\sigma$ and $\sigma_\ell$.
By Proposition \ref{prop_mult},  each symbol $\eta_\ell(\lambda_\pi)$ is smoothing, 
thus each $\sigma_\ell$ is also smoothing by Corollary \ref{cor_algebra_of_symbol}.
By Proposition \ref{prop_kernel_regularity}, 
the mapping
$(x,y)\mapsto\kappa_x(y)$ is smooth 
on $G\times (G\backslash\{e_G\})$
and
$\eta_\ell(\cL)\delta_e$
is smooth on $G$ thus $(x,y)\mapsto\kappa_{\ell,x}(y) = \kappa_x * (\eta_\ell(\cL)\delta_e)$ is in fact smooth on $G$.

One can easily show the convergence in $\cC^\infty(G\backslash\{e_G\})$ of 
$$
\kappa_x(y) = \lim_{N\to\infty} \sum_{\ell=0}^N \kappa_{\ell,x}.
$$
and the (possibly unbounded) summation, 
$$
\forall y\in G\backslash\{e_G\}\qquad |\kappa_x(y)| 
\leq \sum_{\ell=0}^\infty |\kappa_{\ell,x}(y)|.
$$

We suppose that a strongly admissible collection $\Delta$ has been fixed.
Applying Corollary \ref{cor_lem_kernel_inL2_1} and its proof
for any $\alpha\in \bN_0^n$ (but no $x$-derivatives), 
for any $m_1\in \bR$, 
whenever $m_1 +n  <  \rho |\alpha|$
we have 
$$
\sup_{z\in G} |q^\alpha(z) \kappa_{\ell,x} (z)|
\lesssim 
\sup_{\pi\in \Gh} \| \sigma_\ell (x,\pi)\|_{S^{m_1}_{\rho,\delta},|\alpha|,0}
\lesssim 
 \|\sigma\|_{S^{m}_{\rho,\delta},|\alpha|,0}
 2^{-(\ell-1)\frac{m_1-m}2 },
$$
by Lemma \ref{lem_dyadicpiece}.
As in Lemma \ref{lem_q0},
the strong admissibility implies 
$$
\forall z\in G, a\in 2\bN_0,\quad
|z|^a \lesssim_{\Delta,a} \sum_{|\alpha|=a} |q^\alpha (z)|.
$$
Hence  for any $a\in 2\bN_0$
and $m_1\in \bR$ satisfying $m_1+n<\rho a$,
we have obtained:
\begin{equation}
\label{eq_prop_estimate_kernel}
|z|^{a} |\kappa_{\ell,x} (z)|
\lesssim 
  \|\sigma\|_{S^{m}_{\rho,\delta},a,0}
  2^{\ell \frac{m-m_1}2 }.
\end{equation}

We may assume $|z|<1$ and choose $\ell_0\in \bN_0$ such that 
$$
|z|\sim 2^{-\ell_0}
\quad\mbox{in the sense that}\quad 2^{-\ell_0}\leq |z|< 2^{-\ell_0+1}.
$$

\medskip

\noindent\textbf{Case of $m+n>0$.}
For $\ell\leq \ell_0$, 
 we choose the real number $m_1\in \bR$ and the integer 
even  $a\in 2\bN_0$ to be such that 
\begin{equation}
\label{eq_relation_m_m1_a}
\frac{m+n} \rho > a\geq \frac{m+n}\rho -2
\qquad\mbox{and}\qquad
\frac{m-m_1}{2} =\frac{m+n}\rho -a.
\end{equation}
Hence $m>m_1$ so 
$$
\sum_{\ell=0}^{\ell_0} |\kappa_{\ell,x} (z)|
\lesssim 
  \|\sigma\|_{S^{m}_{\rho,\delta},a,0}
  |z|^{-a} 2^{\ell_0 \frac {m-m_1}2}  
$$
with 
$$
    |z|^{-a} 2^{\ell_0 \frac {m-m_1}2}  \lesssim 
  |z|^{-a - \frac {m-m_1}{2}} 
   \lesssim 
 |z|^{-\frac{m+n}\rho}  . 
$$

For $\ell> \ell_0$, we replace $a,m_1$ by $a',m'_1$
where  $a'=a+2$
and  $m'_1$ satisfies the same relation as  \eqref{eq_relation_m_m1_a} 
with $a$ replaced with $a'$.  
This time $m<m_1$ so 
$$
\sum_{\ell>\ell_0} |\kappa_{\ell,x} (z)|
\lesssim 
  \|\sigma\|_{S^{m}_{\rho,\delta},a,0}
  |z|^{-a'} 2^{\ell_0 \frac {m-m'_1}2}  
$$
and again $|z|^{-a} 2^{\ell_0 \frac {m-m'_1}2}    \lesssim 
 |z|^{-\frac{m+n}\rho} $ 
This shows the statement  in the case $m+n>0$.

\medskip

\noindent\textbf{Case of $m+n=0$.}
For $\ell\leq \ell_0$, we choose $a=0$ and $m_1=m$
and proceed as above: 
$$
\sum_{\ell=0}^{\ell_0} |\kappa_{\ell,x} (z)|
\lesssim 
  \|\sigma\|_{S^{m}_{\rho,\delta},a,0}
 \ell_0
\lesssim 
  \|\sigma\|_{S^{m}_{\rho,\delta},a,0}
|\ln |z| | .
$$
For $\ell> \ell_0$, we choose $a=2$ and 
$m_1=m-4$ (as in \eqref{eq_relation_m_m1_a})
$$
\sum_{\ell>\ell_0} |\kappa_{\ell,x} (z)|
\lesssim 
  \|\sigma\|_{S^{m}_{\rho,\delta},a,0}
 |z|^{-a}   2^{\ell_0  \frac{m_1-m} 2}
  \lesssim 
  \|\sigma\|_{S^{m}_{\rho,\delta},a,0}
$$
so 
$$
|\kappa_x(z)|
\lesssim 
  \|\sigma\|_{S^{m}_{\rho,\delta},a,0}
(1+ |\ln |z| |)
\lesssim 
  \|\sigma\|_{S^{m}_{\rho,\delta},a,0}
 |\ln |z| |.
$$
This shows the statement  in the case $m+n=0$
and concludes the proof of Proposition \ref{prop_estimate_kernel}.
\end{proof}

\section{The calculus}
\label{sec_calculus}

In this section, we prove that 
 $\cup_{m\in \bR}\Psi^m_{\rho,\delta}$ 
 satisfies the properties for the adjoint 
and the composition, that is
 Parts 
  \eqref{item_def_pseudo-diff_calculus_product}
  and \eqref{item_def_pseudo-diff_calculus_adjoint} of Definition \ref{def_pseudo-diff_calculus}.
We will also obtain the usual properties of asymptotic expansions in the case $\rho\not=\delta$.

\subsection{Adjoint}
\label{subsec_adjoint}

This section is devoted to showing
\begin{proposition}
\label{prop_adjoint}
Let $1\geq \rho\geq\delta\geq 0$ and $m\in \bR$.
If $T\in \Psi^m_{\rho,\delta}$ then its formal adjoint $T^*$ is 
also in $\Psi^m_{\rho,\delta}$.
Moreover $T\mapsto T^*$ is continuous on $\Psi^m_{\rho,\delta}$.
\end{proposition}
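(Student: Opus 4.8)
The plan is to compute the Schwartz kernel of $T^{*}$ from that of $T$, Taylor-expand it in the space variable, and recognise the resulting terms as difference operators applied to space-derivatives of $\sigma^{*}$. First I would record the kernel of the adjoint: if $T=\Op(\sigma)\in\Psi^{m}_{\rho,\delta}$ has associated kernel $\kappa_{x}$, so that $T\phi(x)=\phi*\kappa_{x}(x)=\int_{G}\phi(y)\,\kappa_{x}(y^{-1}x)\,dy$, then a direct manipulation of $\langle T\phi,\psi\rangle$ shows that $T^{*}$ has associated kernel
\begin{equation*}
\kappa^{(*)}_{x}(z)=\overline{\kappa_{xz^{-1}}(z^{-1})}.
\end{equation*}
By Lemma \ref{lem_symbol_kernel} the pointwise adjoint symbol $\sigma^{*}=\{\sigma(x,\pi)^{*}\}$, which lies in $S^{m}_{\rho,\delta}$ with the same semi-norms by Proposition \ref{prop_symbol_1stprop}, has kernel $z\mapsto\overline{\kappa_{x}(z^{-1})}$; thus $\sigma^{(*)}$ and $\sigma^{*}$ differ only through the translation $x\mapsto xz^{-1}$, which is small exactly where the kernels are singular (Propositions \ref{prop_kernel_regularity} and \ref{prop_estimate_kernel}). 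To make the Fourier-analytic manipulations rigorous I would first run them on the smoothing truncations $\sigma_{\ell}$ of Lemma \ref{lem_approximation_kernel2} (smooth kernels, finite Fourier sums), obtain all estimates with constants independent of $\ell$, and pass to the limit using those uniform semi-norm bounds and the convergence of the kernels.

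Next I would fix, using Theorem \ref{thm_Deltaeq+coincide}, a strongly admissible collection $\Delta=\Delta_{Q}$ whose functions $q_{j}$ coincide near $e_{G}$ with the exponential coordinates of $z\mapsto z^{-1}$. Writing $z^{-1}=\exp_{G}(\sum_{j}u_{j}(z)X_{j})$ and Taylor-expanding the smooth map $x'\mapsto\kappa_{x'}(w)$ along $x'=x\exp_{G}(t\sum_{j}u_{j}(z)X_{j})$ to order $N$, then setting $w=z^{-1}$, yields
\begin{equation*}
\kappa^{(*)}_{x}(z)=\sum_{k<N}\frac{(-1)^{k}}{k!}\sum_{|I|=k}q_{I}(z)\,\overline{\big(X^{I}_{x}\kappa_{x}\big)(z^{-1})}\;+\;R_{N,x}(z),
\end{equation*}
where replacing the true coefficients $u_{I}$ by $q_{I}$ only multiplies the kernel by a fixed smooth function and is absorbed by Lemma \ref{lem_prop_indep_Delta}. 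By Lemma \ref{lem_symbol_kernel} the kernel $z\mapsto q_{I}(z)\overline{(X^{I}_{x}\kappa_{x})(z^{-1})}$ is precisely the kernel of $\Delta_{Q}^{I}\big[(X^{I}_{x}\sigma)^{*}\big]$; since $X^{I}_{x}\sigma\in S^{m+\delta|I|}_{\rho,\delta}$ by Proposition \ref{prop_symbol_1stprop} and applying $|I|$ difference operators costs $\rho|I|$, each such term lies in $S^{m-(\rho-\delta)|I|}_{\rho,\delta}$ with semi-norms controlled by those of $\sigma$. When $\rho>\delta$ this is exactly the asymptotic expansion
\begin{equation*}
\sigma^{(*)}\sim\sum_{k\geq0}\frac{(-1)^{k}}{k!}\sum_{|I|=k}\Delta_{Q}^{I}\big[(X^{I}_{x}\sigma)^{*}\big],
\end{equation*}
so, once the remainder is controlled, $\sigma^{(*)}\in S^{m}_{\rho,\delta}$, and since $T\mapsto T^{*}$ is linear the semi-norm estimates also give its continuity.

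For the remainder $R_{N,x}$, note that $q_{I}$ with $|I|=N$ vanishes to order $N$ at $e_{G}$ while $X^{I}_{x'}\sigma\in S^{m+\delta N}_{\rho,\delta}$; running the kernel estimates of Section \ref{sec_kernel} (Corollary \ref{cor_lem_kernel_inL2_1} and Proposition \ref{prop_estimate_kernel}) uniformly over the shifted base point $x'=x\exp_{G}(t(\cdots))\in G$, together with the elementary bound $\|\widehat g(\pi)\|_{\sL(\cH_{\pi})}\lesssim(1+\lambda_{\pi})^{-K}\|g\|_{H^{2K}}$, shows that when $\rho>\delta$ and $N$ is taken large enough $R_{N,x}$ is the kernel of a symbol in $S^{m'}_{\rho,\delta}$ for any prescribed $m'$, hence in $S^{m}_{\rho,\delta}$ with the required bounds. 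This settles the case $\rho>\delta$ and simultaneously yields the usual asymptotic-expansion statements.

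The main obstacle is the borderline case $\rho=\delta$ (including $\rho=\delta=0$): there the Taylor terms no longer decrease in order, and multiplying the remainder kernel by $q^{N}$ does not improve its singularity at $e_{G}$, so no finite expansion suffices. For this case I would instead decompose $\sigma=\sum_{j}\sigma_{j}$ into Littlewood--Paley pieces dyadic in $\lambda_{\pi}$ (each smoothing by Proposition \ref{prop_mult_t}), compute $\Op(\sigma_{j})^{*}$ exactly, estimate the symbol of $\Op(\sigma_{j})^{*}$ and all its $\Delta_{Q}^{\alpha}$- and $X^{\beta}_{x}$-derivatives uniformly in $j$ via Lemma \ref{lem_dyadicpiece}, with rapid off-diagonal decay between the frequency scales $2^{j}$ and $2^{k}$, and then reassemble $\sigma^{(*)}=\sum_{j}\sigma_{j}^{(*)}$ in $S^{m}_{\rho,\delta}$. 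Establishing that off-diagonal (almost-orthogonality) decay between the dyadic pieces of $T^{*}$ is the technical heart of the equal-index case.
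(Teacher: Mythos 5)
The kernel formula $\kappa^{(*)}_{x}(y)=\bar\kappa_{xy^{-1}}(y^{-1})$ and the Taylor-expansion route for $\rho>\delta$ are essentially what the paper does in Section \ref{subsec_adjoint1} (Lemma \ref{lem_adjoint}, Corollary \ref{cor_prop_adjoint}), including the asymptotic expansion $\sigma^{(*)}\sim\sum_{\alpha}\Delta_Q^{\alpha}X_x^{\alpha}\sigma^{*}$; so that half of your proposal is sound. However, in the paper the Littlewood--Paley argument (Section \ref{subsec_adjoint}) is not reserved for the borderline $\rho=\delta$: it is the main proof and covers every $1\geq\rho\geq\delta\geq0$ at once, with the Taylor argument offered only as an alternative (for $\rho>\delta$) because it produces the expansion. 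More importantly, your description of the LP step misidentifies the mechanism. There is no Cotlar--Stein/almost-orthogonality argument and no genuine two-index $(j,k)$ off-diagonal decay to establish; that machinery appears in the $L^2$-boundedness proof (Lemma \ref{lem_L2bdd_rho}, via Lemma \ref{lem_bilinear}), not here. What actually makes $\sum_{\ell}\sigma_{\ell}^{(*)}(x,\pi)$ converge with the right $\pi$-dependence are two ingredients you do not name: first, a reduction to $m<-n$ (by writing $\sigma=(1+\lambda_\pi)^{N}\tau$), in which range the adjoint kernel is $L^{1}$ in $y$ uniformly in $x$ (Corollary \ref{cor_lem_kernel_inL2_1}), giving the crude bound $\sup_{\pi}\|\sigma_{\ell}^{(*)}(x,\pi)\|\lesssim\|\sigma_{\ell}\|_{S^{m_1},0,0}$; second, the integration-by-parts identity
\begin{equation*}
\{\lambda_\pi^{N}\sigma(x,\pi)\}^{(*)}
=\sum_{|\beta_1|+|\beta_2|=2N}c_{\beta_1,\beta_2}\,\{\tilde X_x^{\beta_1}\sigma(x,\pi)\}^{(*)}\,\pi(X)^{\beta_2},
\end{equation*}
which, combined with $\sigma_{\ell}=2^{-(\ell-1)N}\lambda_\pi^{N}\tilde\sigma_{\ell}^{(N)}$ and Lemma \ref{lem_dyadicpiece}, converts powers of $\lambda_\pi$ into factors $2^{-\ell N}$. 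The $\ell$-sum is then split at $\ell_0\sim\log_2(1+\lambda_\pi)$ with different integers $N_1,N_2$ on each side so that both geometric series converge and the resulting bound is $(1+\lambda_\pi)^{m/2}$. Without naming this identity and the $m<-n$ reduction, the ``reassemble $\sigma^{(*)}=\sum_j\sigma_j^{(*)}$'' step has no quantitative content, so as written your argument for $\rho=\delta$ has a gap where the key estimate should be.
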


By Lemma \ref{lem_approximation_kernel2}, 
we may assume that all the associated kernels are smooth on $G\times G$.
This justifies the following formal manipulations.
One computes easily that if $T=\Op(\sigma)\in \Psi^m_{\rho,\delta}$
with associated kernel $\kappa_x$
then $T^*$ has associated kernel $\kappa^{(*)}_x$
given by
\begin{equation}
\label{eq_kappa(*)}
\kappa_{x}^{(*)}(y)=\bar \kappa_{xy^{-1}}(y^{-1}).
\end{equation}
We denote its symbol by $\sigma^{(*)}$:
$$
T^*=\Op(\sigma^{(*)}).
$$
Note that the kernel $\kappa_x^{(*)}$ and the symbol $\sigma^{(*)}$ 
are usually different from the kernel 
$\kappa_x^*:y\mapsto \bar \kappa_{x}(y^{-1})$ and its associated symbol $\sigma^*$ (unless, for instance, the symbol does not depend on $x$)
but we have $\kappa_{x}^{(*)}(y)= \kappa^*_{xy^{-1}}(y)$.

\begin{proof}[Proposition \ref{prop_adjoint}]
If $\Delta=\Delta_Q$ is a collection of RT-difference operators, 
given the formula in \eqref{eq_kappa(*)} for 
the kernel of $\sigma^{(*)}$,
one checks easily that 
\begin{equation}
\label{eq_pf_adj_XDelta}
\tilde X^\beta_x\Delta_Q^\alpha \sigma^{(*)}(x,\pi)
=
\{\tilde X^{\beta_0}_x (\Delta_Q^*)^{\alpha_0} \sigma\}^{(*)}(x,\pi)
\quad\mbox{for all multi-indices} \ \alpha_0,\beta_0.
\end{equation}
Thus, by Lemmata \ref{lem_q_tildeq_barq_q*}
and  \ref{lem_symbol_kernel}, it suffices to show that there exists $b\in \bN_0$ such that
\begin{equation}
\label{eq_pf_adj_suff}
\|\sigma^{(*)}(x,\pi)\|_{\sL(\cH_\pi)}\leq 
C \|\sigma\|_{S^m_{\rho,\delta}(G,\Delta),0,b}
(1+\lambda_\pi)^{\frac m2}.
\end{equation}

From  \eqref{eq_kappa(*)}, it is easy to check  
using integration by parts that we have
\begin{eqnarray*}
\{\lambda_\pi^N \sigma(x,\pi)\}^{(*)}
&=&
\int_G (\cL^N \bar \kappa_{x_1})(y_1)|_{x_1=xy^{-1}, y_1=y^{-1}}
\pi(y)^* dy
\\&=&
\sum_{|\beta_1|+|\beta_2|=2N}
c_{\beta_1,\beta_2}
\int_G \tilde X^{\beta_1}_{x_1=xy^{-1}} 
\bar \kappa_{x_1}(y^{-1})
\pi(y)^* \pi(X)^{\beta_2} dy
\\&=&
\sum_{|\beta_1|+|\beta_2|=2N}
c_{\beta_1,\beta_2}
\{\tilde X^{\beta_1}_x\sigma(x,\pi)\}^{(*)} \pi(X)^{\beta_2}.
\end{eqnarray*}
Thus 
\begin{equation}
\label{eq_pf_adj_lambdaNsigma}
\|\{\lambda_\pi^N \sigma(x,\pi)\}^{(*)}\|_{\sL(\cH_\pi)}
\lesssim 
\sum_{|\beta_1|+|\beta_2|=2 N}
(1+\lambda_\pi)^{\frac {|\beta_2|}2}
\|\{\tilde X^{\beta_1}_x\sigma(x,\pi)\}^{(*)}\|_{\sL(\cH_\pi)}.
\end{equation}

Now suppose  that one can write 
$\sigma(x,\pi) = (1+\lambda_\pi)^N \tau(x,\pi)$
with $N\in \bN_0$ 
and $\tau \in S^{m-2N}_{\rho,\delta}$ satisfying \eqref{eq_pf_adj_suff} 
with order $m-2N$.
Then applying \eqref{eq_pf_adj_lambdaNsigma} to $\tau$ yields
\begin{eqnarray*}
&&\|\sigma(x,\pi)^{(*)}\|_{\sL(\cH_\pi)}
=
\|\{(1+\lambda_\pi)^N \tau(x,\pi)\}^{(*)}\|_{\sL(\cH_\pi)}\\
&&\qquad\lesssim 
\|\tau\|_{S^{m-2N}_{\rho,\delta},0,b}
\sum_{|\beta_1|+|\beta_2|\leq 2 N}
(1+\lambda_\pi)^{\frac {|\beta_2|}2 +\frac{m-2N+\delta|\beta_1|}2 }
\lesssim 
\|\sigma\|_{S^{m}_{\rho,\delta},0,b'}
(1+\lambda_\pi)^{\frac{m}2 }.
\end{eqnarray*}
and $\sigma$ also satisfies $\eqref{eq_pf_adj_suff}$.
This shows that 
 it suffices to prove \eqref{eq_pf_adj_suff} for $m<<0$
and we may assume $m<-n$.

From  \eqref{eq_kappa(*)}, we  also observe  that
the kernel of $\sigma^{(*)}$ is continuous and bounded 
in $(x,y)\in G\times G$
by Corollary \ref{cor_lem_kernel_inL2_1}
provided that $m<-n$.
Thus, by 
\eqref{eq_cF_L1},
we have the  crude implication:
\begin{equation}
\label{eq_pf_adj_rho=rho1}
m <-n
\Longrightarrow
\sup_{\pi\in \Gh,  x\in G}
\|\sigma^{(*)}(x,\pi)\|_{\sL(\cH_\pi)}
\lesssim \|\sigma\|_{S^m_{\rho,\delta},0,0}.
\end{equation}

We can now start the proof of \eqref{eq_pf_adj_suff} for $m<-n$.
We consider a dyadic decomposition of $\spec(\cL)$, 
for instance the same as for the proof of Proposition \ref{prop_estimate_kernel}:
we choose two functions $\eta_0,\eta_1\in \cD(\bR)$ supported in 
$[-1,1]$ and $[1/2,2]$ respectively, both valued in $[0,1]$
and satisfying \eqref{eq_dyadic_dec}. 
We set $\sigma_\ell(x,\pi) =\sigma (x,\pi) \eta_\ell(\lambda_\pi) $
for each $\ell\in \bN_0$.
We easily obtain
$$
\|\sigma^{(*)}(x,\pi)\|_{\sL(\cH_\pi)}
\leq
\sum_{\ell=0}^\infty
\|\sigma_\ell^{(*)}(x,\pi)\|_{\sL(\cH_\pi)},
$$
with possibly infinite non-negative quantities.
Combining \eqref{eq_pf_adj_rho=rho1} and  Lemma \ref{lem_dyadicpiece}
already provides an estimate for each $\|\sigma_\ell^{(*)}(x,\pi)\|_{\sL(\cH_\pi)}$, 
$\ell\in \bN_0$.
This can be improved for $\ell>0$ in the following way.
For any $N\in \bN$ and each $\ell\in \bN$, we define 
$$
\tilde \eta_1^{(N)} (\lambda): = \lambda^{-N} \eta_1(\lambda),
\qquad
\tilde \eta_\ell^{(N)} (\lambda) = \tilde \eta_1^{(N)} (2^{-(\ell-1)}\lambda)
\quad \mbox{and}\quad
\tilde \sigma_\ell^{(N)} (x,\pi) = \tilde \eta_\ell^{(N)} (\lambda_\pi)
\sigma_\ell (x,\pi).
$$
Simple manipulations show 
\begin{equation}
\label{eq_sigmaell_tildesigmaell}
\sigma_\ell(x,\pi) = 2^{-(\ell-1) N}\lambda_\pi^N \tilde \sigma_\ell^{(N)}  (x,\pi),
\end{equation}
and using \eqref{eq_pf_adj_lambdaNsigma}:
\begin{eqnarray*}
\|\{\lambda_\pi^N \tilde\sigma_\ell^{(N)} (x,\pi)\}^{(*)}\|_{\sL(\cH_\pi)}
&\lesssim &
\sum_{|\beta_1|+|\beta_2|= 2N}
(1+\lambda_\pi)^{\frac {|\beta_2|}2}
\|\{\tilde X^{\beta_1}_x\tilde \sigma_\ell^{(N)}  (x,\pi)\}^{(*)}\|_{\sL(\cH_\pi)}
\\
&\lesssim &
\|\sigma\|_{S^m_{\rho,\delta},0,2N}
(1+\lambda_\pi)^{N}
2^{-(\ell-1) \frac{m_1-m}2},
\end{eqnarray*}
by \eqref{eq_pf_adj_rho=rho1} and Lemma \ref{lem_dyadicpiece}, 
for any choice of $m_1<-n$.
Hence we have obtained 
$$
\forall \ell\in \bN_0\qquad
\| \sigma_\ell (x,\pi)^{(*)}\|_{\sL(\cH_\pi)}
\lesssim 
\|\sigma\|_{S^m_{\rho,\delta},0,2N}
(1+\lambda_\pi)^{N}
2^{-\ell (N+\frac{m_1-m}2)},
$$
for any fixed $N,m_1$ satisfying $N\in \bN_0$ and $m_1<-n$.
Let us apply this for $N=N_1$, $m_1<-n$ if $\ell<\ell_0$, 
and for $N=N_2$, $m_2<-n$ if $\ell\geq \ell_0$
for $\ell_0$ to be chosen suitably with respect to $\pi$.
Setting $N=\max(N_1,N_2)$, we have obtained:
\begin{eqnarray*}
\|\sigma^{(*)}(x,\pi)\|_{\sL(\cH_\pi)}
\lesssim
\|\sigma \|_{S^{m}_{\rho,\delta},0,2N} 
\left(\sum_{\ell=0}^{\ell_0-1}
(1+\lambda_\pi)^{N_1}
2^{-\ell (N_1+\frac{m_1-m}2)}
+
\sum_{\ell=\ell_0}^{\infty}
(1+\lambda_\pi)^{N_2}
2^{-\ell (N_2+\frac{m_2-m}2)}\right)
\\
\lesssim
\|\sigma \|_{S^{m}_{\rho,\delta},0,2N} 
\left(
(1+\lambda_\pi)^{N_1}
2^{-\ell_0 (N_1+\frac{m_1-m}2)}
+
(1+\lambda_\pi)^{N_2}
2^{-\ell_0 (N_2+\frac{m_2-m}2)}\right)
\end{eqnarray*}
provided that $N_1,N_2\in \bN$ satisfy 
$N_1+\frac{m_1-m}2<0$ and 
$N_2+ \frac{m_2-m}2 >0$.
Now we choose $\ell_0\in \bN$ such that 
$2^{\ell_0}\sim (1+\lambda_\pi)$, in the sense that
$2^{\ell_0-1} \leq (1+\lambda_\pi)< 2^{\ell_0}$, 
together with 
$m_1= m_2=2m$,
 $N_1:= \lfloor \frac{m-m_2}2\rfloor$
 and $N_2:= \lceil \frac{m-m_2}2\rceil$.
This shows \eqref{eq_pf_adj_suff} for $m<-n/2$ and  concludes the proof of Proposition \ref{prop_adjoint}.
\end{proof}

\subsection{Composition}

This section is devoted to showing
\begin{proposition}
\label{prop_composition}
Let $1\geq \rho\geq \delta\geq 0$.
If $T_1\in \Psi^{m_1}_{\rho,\delta}$
and $T_2\in \Psi^{m_2}_{\rho,\delta}$,
 then the composition $T_1T_2$ is  in $\Psi^{m_1+m_2}_{\rho,\delta}$.
 Moreover the map $(T_1,T_2)\mapsto T_1T_2$ is continuous
 $\Psi^{m_1}_{\rho,\delta}\times \Psi^{m_2}_{\rho,\delta}\to \Psi^{m_1+m_2}_{\rho,\delta}$.
\end{proposition}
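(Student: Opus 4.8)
Here is how I would go about proving Proposition~\ref{prop_composition}.

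\medskip

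The plan is to produce an explicit formula for the symbol of the composition, to read off from it a finite Taylor-type expansion whose main terms are recognised as products of symbols, and to control the remainder by a dyadic argument of the type already used in Section~\ref{sec_kernel} and in the proof of Proposition~\ref{prop_adjoint}. First I would reduce to nice kernels: by Lemma~\ref{lem_approximation_kernel2} applied to $\sigma_1$ and $\sigma_2$ (with the semi-norm bounds it provides), it suffices to prove the composition formula and the symbol estimates when all associated kernels are smooth on $G\times G$, and then pass to the limit; this legitimises the manipulations below. Writing $\kappa_{i,x}$ for the kernel of $T_i=\Op(\sigma_i)$, the quantisation formula \eqref{eq_Opsigma_*} together with the bi-invariance of the Haar measure and a change of variables gives $T_1T_2=\Op(\sigma)$ where $\sigma$ has associated kernel
$$
\kappa_x(u)=\int_G \kappa_{1,x}(z)\,\kappa_{2,xz^{-1}}(uz^{-1})\,dz ,
$$
equivalently
$$
\sigma(x,\pi)=\int_G \kappa_{1,x}(z)\,\pi(z)^*\,\sigma_2(xz^{-1},\pi)\,dz ,\qquad (x,\pi)\in G\times\Gh .
$$
(For $\sigma_1$ independent of $x$ this recovers $\kappa_{2,x}*\kappa_{1,x}$ and $\sigma_1(\pi)\sigma_2(x,\pi)$, consistently with Lemma~\ref{lem_symbol_kernel}.)

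\medskip

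Next I would expand by Taylor's formula on $G$. Fix a strongly admissible collection $\Delta=\Delta_Q$, so that by Theorem~\ref{thm_Deltaeq+coincide} the classes $S^m_{\rho,\delta}(G)$ may be described through $\Delta_Q$. For $N\in\bN$, Taylor's formula for the smooth map $w\mapsto\sigma_2(w,\pi)$ at $w=x$, written in the variable $z$ via $w=xz^{-1}$ and in terms of the coordinate-like functions $q_j$, gives
$$
\sigma_2(xz^{-1},\pi)=\sum_{|\gamma|<N}\frac{(-1)^{|\gamma|}}{\gamma !}\,q^\gamma(z)\,(X^\gamma_x\sigma_2)(x,\pi)+R_{N,x}(z,\pi),
$$
where $R_{N,x}(z,\pi)$ is an integral remainder built from the $N$-th order left-invariant derivatives of $\sigma_2$ and vanishing to order $N$ in $z$ at $e_G$. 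Inserting this into the formula for $\sigma(x,\pi)$ and using $\cF_G\{q^\gamma\kappa_{1,x}\}=\Delta_Q^\gamma\sigma_1(x,\cdot)$ (Definition~\ref{def_Delta_RT}), the polynomial terms produce
$$
\sum_{|\gamma|<N}\frac{(-1)^{|\gamma|}}{\gamma !}\,(\Delta^\gamma_Q\sigma_1)(x,\pi)\,(X^\gamma_x\sigma_2)(x,\pi),
$$
and by Proposition~\ref{prop_symbol_1stprop}(1)--(3) the $\gamma$-term lies in $S^{m_1+m_2-(\rho-\delta)|\gamma|}_{\rho,\delta}(G)\subseteq S^{m_1+m_2}_{\rho,\delta}(G)$, with semi-norms bounded bilinearly by those of $\sigma_1$ and $\sigma_2$; hence the whole partial sum lies in $S^{m_1+m_2}_{\rho,\delta}(G)$. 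When $\rho>\delta$ this also yields the asymptotic expansion $\sigma_{T_1T_2}\sim\sum_\gamma\frac{(-1)^{|\gamma|}}{\gamma !}\,\Delta^\gamma_Q\sigma_1\cdot X^\gamma_x\sigma_2$.

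\medskip

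It then remains to show that, for $N$ large, the remainder symbol $\sigma_{R_N}(x,\pi)=\int_G\kappa_{1,x}(z)\,\pi(z)^*\,R_{N,x}(z,\pi)\,dz$ lies in $S^{m_1+m_2}_{\rho,\delta}(G)$ (indeed in $S^{m_1+m_2-(\rho-\delta)N}_{\rho,\delta}(G)$ when $\rho>\delta$). Applying $X^\beta_x\Delta^\alpha_Q$ to $\sigma_{R_N}$ and using the Leibniz-like identity \eqref{eq_leibniz_LinftyGh} (resp.\ the identity \eqref{eq_def_leibniz_q}) for $\Delta_Q$ together with the chain rule for $X^\beta_x$ acting on $z\mapsto\sigma_2(xz^{-1},\pi)$, one reduces to the case $\alpha=\beta=0$ but with $m_1,m_2$ replaced by shifted orders; so it suffices to bound $\|\sigma_{R_N}(x,\pi)\|_{\sL(\cH_\pi)}$ by a semi-norm product times $(1+\lambda_\pi)^{(m_1+m_2)/2}$, uniformly in $x\in G$, for all orders. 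For this I would decompose $\sigma_2$ (equivalently $R_{N,x}(z,\cdot)$) dyadically in $\lambda_\pi$ and, on each piece, combine: the bound $\|\cF_G g(\pi)\|_{\sL(\cH_\pi)}\le\|g\|_{L^1(G)}$ from \eqref{eq_cF_L1}; the kernel estimates of Proposition~\ref{prop_estimate_kernel} applied to $\Delta^N_Q\sigma_1\in S^{m_1-\rho N}_{\rho,\delta}(G)$, whose kernel becomes integrable with room to spare once $N$ is large (using $\delta<1$); and Lemma~\ref{lem_dyadicpiece}. Summing the dyadic bounds—convergent for $N$ large, using $\rho\ge\delta$ and $\delta<1$—yields the estimate on $\sigma_{R_N}(x,\pi)$, hence Proposition~\ref{prop_composition}, continuity included.

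\medskip

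The main obstacle is this last step: obtaining the remainder estimate with \emph{no loss} in the order $m_1+m_2$, uniformly in $x$, and in particular making it work in the borderline case $\rho=\delta$, where the Taylor expansion produces no gain in order at all. There one cannot bound the $x$-derivatives of $\sigma_2$ crudely, and must instead extract the missing decay in $\lambda_\pi$ from the group Fourier transform of the $z$-localised remainder—for instance by pushing powers of $\cL$ through the kernel formula before taking absolute values, exactly as in the proof of Proposition~\ref{prop_adjoint}. This is where the hypotheses $\rho\ge\delta$ and $\delta<1$ really enter and where the dyadic bookkeeping has to be carried out with care.
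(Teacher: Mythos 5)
Your proposal reproduces, essentially, the paper's \emph{second} proof of the composition property (Lemma~\ref{lem_composition} in Section~\ref{subsec_comp1}), which the paper presents explicitly only for $\rho>\delta$. The Taylor expansion of $\sigma_2(xz^{-1},\pi)$ at $x$, once the $z$-integral is taken against $\kappa_{1,x}$, produces the asymptotic terms $\Delta_Q^\gamma\sigma_1\cdot X^\gamma_x\sigma_2$ with the expected orders, and the remainder can indeed be treated through the kernel estimates of Proposition~\ref{prop_estimate_kernel}, provided its order decreases as $N$ grows; but that decrease is by $(\rho-\delta)N$, which vanishes precisely when $\rho=\delta$. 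You identify this obstacle at the end, but the sketched fix does not close it. The difficulty is structural: the $N$ derivatives of $\sigma_2$ in the remainder cost $\delta N$ in order, while $q^\gamma$ with $|\gamma|\sim N$ applied to $\kappa_{1,x}$ buys $\rho N$ of integrability, and you have already spent that entire gain on making the $z$-integral converge. When $\rho=\delta$ nothing is left over to produce the missing $\lambda_\pi$-decay, and pushing $\cL^N$ through the kernel formula does not rescue the Taylor decomposition; the Taylor structure has to be abandoned altogether in the borderline case, not patched.

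What the paper actually does for the full range $1\geq\rho\geq\delta\geq 0$ uses no Taylor expansion. After the Leibniz-type reduction \eqref{eq_pf_comp_XDelta}, it proves, via the integration-by-parts identity \eqref{eq_pf_comp_lambdaNsigma1}, that it suffices to bound the composed symbol when $m_1<-n$. In that regime $\kappa_{1,x}$ is uniformly $L^1$, yielding the crude bound \eqref{eq_pf_comp_rho=rho1}, which keeps the $m_2$ order but loses the $\lambda_\pi^{m_1/2}$ decay. That loss is then recovered by decomposing $\sigma_1$ (not $\sigma_2$) dyadically in $\lambda_\pi$, writing each dyadic piece as $2^{-\ell N}\lambda_\pi^N$ times a rescaled cutoff, transferring the $\lambda_\pi^N$ factor to $\sigma_2$ through \eqref{eq_pf_comp_lambdaNsigma1}, and summing with a split point $\ell_0\sim\log_2(1+\lambda_\pi)$ that balances the two regimes. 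This argument carries through uniformly down to $\rho=\delta$. A further small point: the restriction $\delta<1$ you invoke is not needed for the composition property; it enters only later, for the $L^2$- and Sobolev-boundedness (Proposition~\ref{prop_L2bdd}), while the composition and adjoint parts of the calculus hold even when $\delta=1$.
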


We proceed in a similar way as in Section \ref{subsec_adjoint}.
One computes easily that if $T_i=\Op(\sigma_i)\in \Psi^m_{\rho,\delta}$, 
with associated kernel $\kappa_{i,x}$
$i=1,2$,
(which we assume smooth on $G\times G$)
then $T_1T_2$ has associated kernel $\kappa_x$
given by
\begin{equation}
\label{eq_kernel_composition}
\kappa_{x}(y)=\int_G \kappa_{2,xz^{-1}}(yz^{-1})  \kappa_{1,x}(z) dz, 
\quad x,y\in G,
\end{equation}
and symbol
\begin{equation}
\label{eq_symbol_composition}
\sigma(x,\pi) :=\sigma_1 \circ \sigma_2 (x,\pi) :=
\int_G \kappa_x(z) \pi(z)^* dz=
\int_G \kappa_{1x}(z) \pi(z)^* \sigma_2(xz^{-1},\pi) dz.
\end{equation}
Note that $\kappa_x$ and $\sigma=\sigma_1\circ\sigma_2$ are usually different from 
$\kappa_{2x} * \kappa_{1x}$ and $\sigma_1\sigma_2$,
unless, for instance, $\sigma_2$ does not depend on $x$.

\begin{proof}[Proposition \ref{prop_composition}]
Let $\Delta=\Delta_Q$ be  a strongly admissible collection of difference operators satisfies the Leibniz like property
(see Theorem \ref{thm_Deltaeq+coincide} and Corollary \ref{cor_choice_Delta}).
This Leibniz property (see \eqref{eq_def_leibniz_q})
 together with  the formulae in \eqref{eq_kernel_composition} and \eqref{eq_symbol_composition}, 
imply easily
\begin{equation}
\label{eq_pf_comp_XDelta}
\tilde X^{\beta_0}_x \Delta_Q^{\alpha_0} \sigma
=
\sum_{\substack{|\beta_1|+|\beta_2|=|\beta_0|\\
|\alpha_0|\leq |\alpha_1|+|\alpha_2|\leq 2|\alpha_0|}}
c_{\alpha_1,\alpha_2,\beta_1,\beta_2}
(\tilde X^{\beta_1}_x\Delta_Q^{\alpha_1}\sigma_1)\circ  (\tilde X^{\beta_2}_x\Delta_Q^{\alpha_2} \sigma_2),
\end{equation}
Hence it suffices to show that there exists $b\in \bN_0$ such that
\begin{equation}
\label{eq_pf_comp_suff}
\|\sigma_1\circ \sigma_2 (x,\pi)\|_{\sL(\cH_\pi)}\leq 
C \|\sigma\|_{S^m_{\rho,\delta},0,b}
(1+\lambda_\pi)^{\frac {m_1+m_2}2}.
\end{equation}

From \eqref{eq_kernel_composition} and \eqref{eq_symbol_composition},
it is easy to check using integration by parts that
\begin{eqnarray*}
(\lambda_\pi^N \tau_1) \circ \sigma_2 (x,\pi)  
&=&
\int_G (\tilde \cL^N \kappa_{1x})(z) \pi(z)^* \sigma_2(xz^{-1},\pi) dz\\
&=&
\sum_{|\beta_1|+|\beta_2|=2N}
c_{\beta_1,\beta_2}
\int_G \kappa_{1x}(z) \pi(z)^* \pi(X)^{\beta_1}   X_{x_1=xz^{-1}}^{\beta_2} \sigma_2(x_1,\pi) dz\\
&=&
\sum_{|\beta_1|+|\beta_2|=2N}
c_{\beta_1,\beta_2}
\tau_1\circ ( \pi(X)^{\beta_1}   X_{x}^{\beta_2} \sigma_2)(x,\pi).
\end{eqnarray*}
Thus 
\begin{equation}
\label{eq_pf_comp_lambdaNsigma1}
\|(\lambda_\pi^N \tau_1) \circ \sigma_2 (x,\pi)\|_{\sL(\cH_\pi)}
\lesssim 
\sum_{|\beta_1|+|\beta_2|=2 N}
\| \tau_1\circ ( \pi(X)^{\beta_1}   X_{x}^{\beta_2} \sigma_2)(x,\pi)
\|_{\sL(\cH_\pi)}.
\end{equation}

Now suppose  that one can write 
$\sigma_1(x,\pi) = (1+\lambda_\pi)^N \tau_1(x,\pi)$
with $N\in \bN_0$ 
and that $\tau_1 \in S^{m_1-2N}_{\rho,\delta}$ satisfies\eqref{eq_pf_adj_suff} 
with order $m_1-2N$ for any $\sigma_2\in S^{m_2}_{\rho,\delta}$.
Then applying \eqref{eq_pf_comp_lambdaNsigma1} to $\tau_1$ yields
that $\sigma$ also satisfies $\eqref{eq_pf_comp_suff}$.
This shows that 
 it suffices to prove \eqref{eq_pf_comp_suff} for $m_1<<0$
and we may assume $m_1<-n$.

From \eqref{eq_symbol_composition}, we  also observe  that
$$
\|\sigma(x,\pi)\|_{\sL(\cH_\pi)}
\leq
\sup_{x_1\in G} \|\sigma_2(x_1,\pi)\|
\int_G |\kappa_{1,x}(z) dz|
$$.
By Corollary \ref{cor_lem_kernel_inL2_1},
we have the  crude implication:
\begin{equation}
\label{eq_pf_comp_rho=rho1}
m_1 <-n
\Longrightarrow
\|\sigma_1\circ \sigma_2 (x,\pi)\|_{\sL(\cH_\pi)}
\lesssim 
\|\sigma_2\|_{S^{m_2}_{\rho,\delta},0,0}
\|\sigma_1\|_{S^{m_1}_{\rho,\delta},0,0}
(1+\lambda_\pi)^{\frac {m_2}2}.
\end{equation}

We can now start the proof of \eqref{eq_pf_comp_suff} for $m<-n$.
We consider the same dyadic decomposition of $\spec(\cL)$
as in the first proof of Proposition \ref{prop_adjoint}:
we choose two functions $\eta_0,\eta_1\in \cD(\bR)$ supported in 
$[-1,1]$ and $[1/2,2]$ respectively, both valued in $[0,1]$
and satisfying \eqref{eq_dyadic_dec}. 
We set $\sigma_{1,\ell}(x,\pi) =\sigma_1 (x,\pi) \eta_\ell(\lambda_\pi) $
for each $\ell\in \bN_0$.
For any $N\in \bN$, we also define $\tilde \eta_1^{(N)} (\lambda): = \lambda^{-N} \eta_1(\lambda)$, and the corresponding 
$\tilde \eta_\ell^{(N)}$ and 
$\tilde \sigma_{1,\ell}^{(N)}$.

We easily obtain
$$
\|\sigma(x,\pi)\|_{\sL(\cH_\pi)}
\leq
\sum_{\ell=0}^\infty
\|\sigma_{1,\ell}\circ \sigma_2 (x,\pi)\|_{\sL(\cH_\pi)},
$$
with possibly infinite non-negative quantities.
Combining \eqref{eq_pf_comp_rho=rho1} and  Lemma \ref{lem_dyadicpiece}
already provides an estimate for each 
$\|\sigma_{1,\ell}\circ \sigma_2 (x,\pi)\|_{\sL(\cH_\pi)}$,
$\ell\in \bN_0$.
Using \eqref{eq_pf_comp_lambdaNsigma1}, we also have:
\begin{eqnarray*}
\|\{\lambda_\pi^N \tilde\sigma_{1,\ell}^{(N)}\}\circ \sigma_2 (x,\pi)\|_{\sL(\cH_\pi)}
\lesssim 
\sum_{|\beta_1|+|\beta_2|= 2N}
\|\tilde\sigma_{1,\ell}^{(N)}\circ ( \pi(X)^{\beta_1}   X_{x}^{\beta_2}  \sigma_2) (x,\pi)\|_{\sL(\cH_\pi)}
\\
\lesssim 
C_N(\sigma_1,\sigma_2)
\sum_{|\beta_1|+|\beta_2|= 2N}
(1+\lambda_\pi)^{\frac{m_2 +|\beta_1| +\delta|\beta_2|} 2}
2^{-\ell \frac{m'_1-m_1}2},
\end{eqnarray*}
by \eqref{eq_pf_comp_rho=rho1} and Lemma \ref{lem_dyadicpiece}, 
for any choice of $m'_1<-n$, 
with $C_N(\sigma_1,\sigma_2):=
\|\sigma_2\|_{S^{m_2}_{\rho,\delta},0,2N}
\|\sigma_1\|_{S^{m_1}_{\rho,\delta},0,0}$.
Hence, using \eqref{eq_sigmaell_tildesigmaell}, we have obtained 
$$
\forall \ell\in \bN_0\qquad
\| \sigma_{1,\ell}\circ \sigma_2  (x,\pi)\|_{\sL(\cH_\pi)}
\lesssim C_N(\sigma_1,\sigma_2)
(1+\lambda_\pi)^{N +\frac {m_2}2}
2^{-\ell (N+\frac{m'_1-m_1}2)},
$$
for any fixed $N,m'_1$ satisfying $N\in \bN_0$ and $m'_1<-n$.
Let us apply this for $N=N_1$, $m'_1<-n$ if $\ell<\ell_0$, 
and for $N=N_2$, $m''_1<-n$ if $\ell\geq \ell_0$
for $\ell_0$ to be chosen suitably with respect to $\pi$.
Setting $N=\max(N_1,N_2)$,  $\|\sigma(x,\pi)\|_{\sL(\cH_\pi)}$ is then bounded, up to a constant, by
\begin{eqnarray*}
C_N(\sigma_1,\sigma_2)
\left(\sum_{\ell=0}^{\ell_0-1}
(1+\lambda_\pi)^{N_1+\frac {m_2}2}
2^{-\ell (N_1+\frac{m'_1-m_1}2)}
+
\sum_{\ell=\ell_0}^{\infty}
(1+\lambda_\pi)^{N_2+\frac {m_2}2}
2^{-\ell (N_2+\frac{m''_1-m_1}2)}\right)
\\
\lesssim
C_N(\sigma_1,\sigma_2)
\left(
(1+\lambda_\pi)^{N_1+\frac {m_2}2}
2^{-\ell_0 (N_1+\frac{m'_1-m_1}2)}
+
(1+\lambda_\pi)^{N_2+\frac {m_2}2}
2^{-\ell_0 (N_2+\frac{m''_1-m_1}2)}\right)
\end{eqnarray*}
provided that $N_1,N_2\in \bN$ satisfy 
$N_1+\frac{m'_1-m_1}2<0$ and 
$N_2+ \frac{m''_1-m_1}2 >0$.
Now we choose $\ell_0\in \bN$ such that 
$2^{\ell_0}\sim (1+\lambda_\pi)$, in the sense that
$2^{\ell_0-1} \leq (1+\lambda_\pi)< 2^{\ell_0}$, 
together with 
$m'_1= m''_1=2m_1$,
 $N_1:= \lfloor \frac{m'_1-m_1}2\rfloor$
 and $N_2:= \lceil \frac{m''_1-m_1}2\rceil$.
This shows \eqref{eq_pf_comp_suff} for $m_1<-n/2$ and  concludes the proof of Proposition \ref{prop_composition}.
\end{proof}

\subsection{Asymptotic  expansions}

The analysis to prove the properties for the adjoint 
and the composition will also yield a familiar (but matrix valued) expansion
in the case $\rho>\delta$.
This section is devoted to understand the meaning of the expansion and the coefficients in it. 

For the asymptotic expansion, we first prove:
\begin{proposition}
\label{prop_asymptotic}
Let $\{\sigma_j\}_{j\in \bN_0}$ be a sequence of symbols such that
$\sigma_j\in S^{m_j}_{\rho,\delta}$
with $m_j$ strictly increasing to $-\infty$.
Then there exists $\sigma \in S^{m_0}_{\rho,\delta}$, 
unique modulo $S^{-\infty}$ such that 
\begin{equation}
\label{eq_asymptotic}
\forall M\in \bN 
\qquad
\sigma-\sum_{j=0}^M \sigma_j \in S^{m_{M+1}}_{\rho,\delta}.
\end{equation}
\end{proposition}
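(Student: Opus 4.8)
The plan is a Borel-type construction carried out in the spectral variable $\lambda_\pi$. Fix once and for all a strongly admissible collection $\Delta$ of RT-difference operators, so that by Theorem \ref{thm_Deltaeq+coincide} all seminorms may be computed with the operators $\Delta_Q^\alpha$. Pick $\theta\in\cD(\bR)$ valued in $[0,1]$ with $\theta\equiv1$ near $0$ and $\theta\equiv0$ on $[1,\infty)$, and set $\chi(\mu):=\theta(2\mu)$ and $\eta(\mu):=\theta(\mu)-\theta(2\mu)$; then $\eta\in\cD(\bR)$ vanishes near $0$ and, by telescoping, $1-\chi(\mu)=\sum_{k\ge0}\eta(2^{-k}\mu)$ for every $\mu\ge0$. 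For $t\in(0,1)$ write $\chi_t(\pi):=\chi(t\lambda_\pi)$, viewed as an invariant symbol.

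The analytic core of the proof is the uniform estimate: for every $\tau\in S^{m}_{\rho,\delta}$, all $a,b\in\bN_0$ and every $m'>m$,
\begin{equation*}
\|(1-\chi_t)\tau\|_{S^{m'}_{\rho,\delta},a,b}\ \le\ C_{m,m',a,b}\,t^{\frac{m'-m}{2}}\,\|\tau\|_{S^{m}_{\rho,\delta},a,b},\qquad 0<t<1,
\end{equation*}
together with the weaker bound $\|(1-\chi_t)\tau\|_{S^{m}_{\rho,\delta},a,b}\le C_{a,b}\|\tau\|_{S^{m}_{\rho,\delta},a,b}$ uniform in $t$. The weaker bound follows from $(1-\chi_t)\tau=\tau-\chi_t\tau$ and Lemma \ref{lem_dyadicpiece} applied with cutoff function $\chi$ and with $m_1=m$ (so the factor $t^{(m_1-m)/2}$ is $1$). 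For the sharp one, observe that at each $(x,\pi)$ the series $\sum_{k\ge0}\eta(2^{-k}t\lambda_\pi)\tau(x,\pi)$ has only finitely many nonzero terms and equals $(1-\chi_t)\tau(x,\pi)$; applying Lemma \ref{lem_dyadicpiece} to each piece $\eta(2^{-k}t\lambda_\pi)\tau$ with the parameter $2^{-k}t\in(0,1)$ gives $\|\eta(2^{-k}t\lambda_\pi)\tau\|_{S^{m'}_{\rho,\delta},a,b}\lesssim(2^{-k}t)^{(m'-m)/2}\|\tau\|_{S^{m}_{\rho,\delta},a,b}$ with a constant independent of $k$ and $t$, and summing the geometric series in $k$ (convergent precisely because $m'>m$) yields the displayed estimate. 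This is the step I expect to demand the most care, since one must see that the Leibniz-type interaction \eqref{eq_leibniz_LinftyGh} among $\Delta_Q^\alpha$, $X_x^\beta$ and the spectral cutoffs does not destroy the gain $t^{(m'-m)/2}$; reduced to Lemma \ref{lem_dyadicpiece} (hence to Proposition \ref{prop_mult_t}) it becomes routine bookkeeping.

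Next I would choose scales $t_j\searrow0$ recursively: having fixed $t_0>\dots>t_{j-1}$, select $t_j\in\big(0,\min(t_{j-1},1/j)\big)$ small enough that $\|(1-\chi_{t_j})\sigma_j\|_{S^{m_k}_{\rho,\delta},a,b}\le 2^{-j}$ for all triples with $0\le k<j$, $a\le j$, $b\le j$. This is possible: there are finitely many such triples, and for each of them $m_k>m_j$ (the orders strictly decrease to $-\infty$), so the estimate above turns every requirement into ``$t_j$ below a threshold''. Set $\sigma(x,\pi):=\sum_{j\ge0}(1-\chi_{t_j})(\pi)\,\sigma_j(x,\pi)$, which at each $(x,\pi)$ is a \emph{finite} sum because $1-\chi(t_j\lambda_\pi)=0$ once $t_j\lambda_\pi$ lies in the set where $\chi\equiv1$, i.e.\ for all large $j$. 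The series $\sum_{j\ge0}(1-\chi_{t_j})\sigma_j$ converges absolutely in each seminorm $\|\cdot\|_{S^{m_0}_{\rho,\delta},a,b}$ (the terms with $j>\max(a,b)$ are bounded by $2^{-j}$, via the case $k=0<j$, the finitely many others being finite), so by completeness of the Fr\'echet space $S^{m_0}_{\rho,\delta}$ one gets $\sigma\in S^{m_0}_{\rho,\delta}$.

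Finally, for \eqref{eq_asymptotic} write, for fixed $M$,
\begin{equation*}
\sigma-\sum_{j=0}^{M}\sigma_j\ =\ -\sum_{j=0}^{M}\chi_{t_j}\sigma_j\ +\ \sum_{j>M}(1-\chi_{t_j})\sigma_j .
\end{equation*}
Each $\chi_{t_j}\sigma_j$ is smoothing (Lemma \ref{lem_dyadicpiece}, or Lemma \ref{lem_approximation_kernel2}), so the first finite sum lies in $S^{-\infty}\subset S^{m_{M+1}}_{\rho,\delta}$; in the second sum the term $j=M+1$ lies in $S^{m_{M+1}}_{\rho,\delta}$ by the weaker bound, and for $j\ge M+2$ the summand has $S^{m_{M+1}}_{\rho,\delta}$-seminorms $\le 2^{-j}$ (case $k=M+1<j$ of the recursion), so the second sum converges absolutely in $S^{m_{M+1}}_{\rho,\delta}$. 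Hence $\sigma-\sum_{0}^{M}\sigma_j\in S^{m_{M+1}}_{\rho,\delta}$. Uniqueness modulo $S^{-\infty}$ is then immediate: if $\sigma,\sigma'$ both obey \eqref{eq_asymptotic}, their difference lies in $S^{m_{M+1}}_{\rho,\delta}$ for every $M$, and $m_{M+1}\to-\infty$ forces $\sigma-\sigma'\in\bigcap_m S^{m}_{\rho,\delta}=S^{-\infty}$.
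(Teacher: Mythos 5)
Your proof is correct and follows the same Borel-construction strategy as the paper: attach to each $\sigma_j$ a spectral cutoff supported in high frequencies, with cutoff scales $t_j\to 0$ chosen so that the series converges in each seminorm and the tail has the right order. Two details differ from the paper's version, and both are worth noting. First, for the crucial gain $\|(1-\chi_t)\tau\|_{S^{m'}_{\rho,\delta},a,b}\lesssim t^{(m'-m)/2}\|\tau\|_{S^m_{\rho,\delta},a,b}$ (with $m'>m$) you telescope $1-\chi_t=\sum_{k\ge0}\eta(2^{-k}t\,\cdot)$ into compactly supported dyadic pieces and apply Lemma~\ref{lem_dyadicpiece} to each before summing the geometric series; the paper instead applies Proposition~\ref{prop_mult_t} directly to the non-compactly-supported function $\psi$, exploiting that $\|\psi\|_{\cM_{m_2/2},d}$ is finite for $m_2\ge 0$. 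Both routes are sound; the paper's is slightly more direct, yours reuses a lemma already in place. Second, and more substantively, your recursive choice of $t_j$ imposes smallness of $\|(1-\chi_{t_j})\sigma_j\|_{S^{m_k}_{\rho,\delta},a,b}$ for \emph{all} $k<j$ and $a,b\le j$, whereas the paper constrains $t_j$ only through the $S^{m_0}$-seminorm of $\tilde\sigma_j$ and then asserts that ``the same proof'' handles the tail in $S^{m_{M+1}}$. Since $m_{M+1}<m_0$ gives $\|\cdot\|_{S^{m_{M+1}},a,b}\ge\|\cdot\|_{S^{m_0},a,b}$, the $S^{m_0}$ control does not by itself dominate the $S^{m_{M+1}}$ seminorms of the tail terms; your wider family of constraints is exactly what is needed to make the remainder estimate airtight, and it costs nothing since at each stage only finitely many seminorms are involved. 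So your proof makes fully explicit a step the paper compresses.
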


Under the hypotheses and conclusions of Theorem \ref{prop_asymptotic}, 
we write
$$
\sigma\sim \sum_j \sigma_j.
$$

\begin{proof}[Proposition \ref{prop_asymptotic}]
Let $\psi\in \cC^\infty(\bR)$ valued in $[0,1]$ satisfying 
$\psi\equiv 0$ on $(-\infty,1/2)$ and $\psi\equiv 1$ on $(1,\infty)$.
Let $\Delta=\Delta_Q$ be  a strongly admissible collection of difference operators satisfies the Leibniz like property
(see Theorem \ref{thm_Deltaeq+coincide} and Corollary \ref{cor_choice_Delta}).
Hence we have
\begin{eqnarray*}
&&\|\Delta_Q^\alpha X^\beta \{\sigma_j(x,\pi) \psi(t\lambda_\pi)\}\|_{\sL(\cH_\pi)}
\lesssim
\sum_{|\alpha_1|+|\alpha_2|=|\alpha|}
\|\Delta_Q^{\alpha_1} X^\beta \sigma_j(x,\pi) \|_{\sL(\cH_\pi)}
\|\Delta_Q^{\alpha_2}  \psi(t\lambda_\pi)\|_{\sL(\cH_\pi)}
\\&&\qquad\lesssim
\|\sigma_j\|_{S^{m_j}_{\rho,\delta}, |\alpha|,|\beta|}
\sum_{|\alpha_1|+|\alpha_2|=|\alpha|}
(1+\lambda_\pi)^{\frac{m-\rho|\alpha_1|+\delta |\beta|}2}
t^{\frac{m_2}2} (1+\lambda)^{\frac{m_2-|\alpha_2|}2},
\end{eqnarray*}
by Proposition \ref{prop_mult_t}.
We choose $m_2=m_0-m_j$ and obtain easily
$$
\|\Delta_Q^\alpha X^\beta \{\sigma_j(x,\pi) \psi(t\lambda_\pi)\}\|_{\sL(\cH_\pi)}
\lesssim
\|\sigma_j\|_{S^{m_j}_{\rho,\delta}, |\alpha|,|\beta|}
t^{\frac{m_0-m_j}2}. 
$$
This implies that for any $a,b\in \bN_0$, we have:
$$
\|\sigma_j \psi(t\lambda_\pi)\}\|_{S^{m_0}_{\rho,\delta}, a,b}
\leq C_{a,b,m_0,\sigma_j}
t^{\frac{m_0-m_j}2}. 
$$
We now choose a decreasing sequence of numbers $\{t_j\}$ such that for any $j\in \bN_0$, we have
$$
t_j\in (0,2^{-j})
\qquad\mbox{and}\qquad
C_{j,j,m_0,\sigma_j}
t_j^{m_0-m_j}\leq 2^{-j}.
$$
We then define the symbol $\tilde \sigma_j$
via 
$\tilde\sigma_j(x,\pi)=\sigma_j(x,\pi) \psi(t_j\lambda_\pi)$.

For any $\ell\in \bN_0$, the sum 
$$
\sum_{j=0}^\infty \|\tilde\sigma_j\|_{S^{m_0}_{\rho,\delta}, \ell,\ell}
\leq
\sum_{j=0}^\ell \|\tilde\sigma_j\|_{S^{m_0}_{\rho,\delta}, \ell,\ell}
+
\sum_{j=\ell+1}^\infty  2^{-j},
$$
is finite. As $S^{m_0}_{\rho,\delta}$ is a Fr\'echet space,
we obtain that $\sigma = \sum_{j=0}^\infty \tilde \sigma_j$ is a symbol in $S^{m_0}_{\rho,\delta}$.

Starting the summation at $j=M+1$, the same proof gives
$\sum_{j=M+1}^\infty \tilde \sigma_j \in S^{m_{M+1}}_{\rho,\delta}$.
Hence the symbol given via
$$
\sigma(x,\pi)-\sum_{j=0}^M \sigma_j(x,\pi)
=
\sum_{j=0}^M \sigma_j(x,\pi) (1-\psi)(t_j\lambda_\pi)
+
\sum_{j=M+1}^\infty \tilde \sigma_j ,
$$
is in $S^{m_{M+1}}_{\rho,\delta}$ as 
the symbol $(1-\psi)(t_j\lambda_\pi)$ is smoothing by Proposition \ref{prop_mult_t}
and so is $\sigma_j (1-\psi)(t_j\lambda_\pi)$
by Corollary \ref{cor_algebra_of_symbol}.

The property in \eqref{eq_asymptotic} is proved but 
it remains to show that the symbol $\sigma$ is unique modulo smoothing operator. If $\tau$ is another symbol as in the statement of the theorem, then for any $M\in \bN$, 
$\sigma-\tau \in S^{m_{M+1}}_{\rho,\delta}$
as this symbol is the difference of $\sigma -\sum_{j=0}^M \sigma_j$
with $\tau-\sum_{j=0}^M \sigma_j$, both is in $S^{m_{M+1}}_{\rho,\delta}$ by \eqref{eq_asymptotic}. Hence $\sigma=\tau$ modulo $S^{-\infty}$.
\end{proof}

In the expansion given for adjoint and composition, 
we will need to identify a suitable choice of $\Delta=\Delta_Q$ 
together with a choice of vector fields.
This is the purpose of the next lemma, whose proof is left to the reader:
\begin{lemma}
\label{lem_choice_q_X}
Let $\Delta=\Delta_Q$ be a strongly admissible collection difference operators. We may assume that $n_\Delta=n$.
There exists an adapted basis $\cX_\Delta:=X_{\Delta,1}, \ldots, X_{\Delta,n}$
such that $X_j \{q_k(\cdot^{-1})\}(e_G)=\delta_{j,k}$.
The following Taylor estimates hold
 for any integer $N\in \bN_0$ and $y\in G$:
$$
\big| R^{f}_{x,N} \big|
\leq C |y|^{N} 
\max_{|\alpha| \leq N} \|X_\Delta^\alpha f \|_\infty,
$$
where the constant $C>0$ depends in $N,G,\Delta$
but not on $f\in \cD(G)$.
%
Furthermore for any $\beta\in \bN_0^n$, 
we have on the one hand
$\{\tilde X_{\Delta}^{\beta}\}|_{x_1=x} R^{f}_{x_1,N}
=
 R^{\tilde X_{\Delta}^{\beta} f}_{x,N}$
and on the other hand, 
$\{X_{\Delta}^{\beta}\}|_{y_1=y} \{R^{f}_{x,N}(y_1)\}$
satisfies the same estimates as 
$R^{X_{\Delta}^\beta f}_{x,N -|\beta|}(y_1)$
above if $N-|\beta|\geq 0$.
\end{lemma}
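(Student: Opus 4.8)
\textbf{Plan for the proof of Lemma \ref{lem_choice_q_X}.}

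The plan is to construct the adapted basis explicitly from the given strongly admissible collection, then derive the two Taylor-type estimates by a chart computation at $e_G$. First I would observe that, since $\Delta=\Delta_Q$ is strongly admissible, the gradients $\nabla_{e_G} q_j$ span $\fg^*$; after the reduction $n_\Delta=n$ they form a basis of $\fg^*$. The functions $\tilde q_j(y):=q_j(y^{-1})$ satisfy $\nabla_{e_G}\tilde q_j = -\nabla_{e_G}q_j$ (differentiate $y\mapsto y^{-1}$ at $e_G$, whose differential is $-\mathrm{id}$), so $\{\nabla_{e_G}\tilde q_j\}$ is again a basis of $\fg^*$. I would then define $\cX_\Delta=\{X_{\Delta,1},\ldots,X_{\Delta,n}\}$ to be the dual basis of $\fg$ with respect to $\{d_{e_G}\tilde q_j\}$, which is exactly the requirement $X_{\Delta,j}\{q_k(\cdot^{-1})\}(e_G)=\delta_{j,k}$, i.e. $(X_{\Delta,j}\tilde q_k)(e_G)=\delta_{j,k}$.

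Next I would set up the Taylor estimate. Using $\cX_\Delta$, the map $(y_1,\ldots,y_n)\mapsto \exp_G(\sum_j y_j X_{\Delta,j})$ is a diffeomorphism from a neighbourhood of $0$ in $\bR^n$ onto $B(\epsilon_0)$, and in these coordinates the left-invariant derivatives $X_{\Delta}^\alpha$ control ordinary $\alpha$-th order derivatives in $y$ (and conversely) up to constants depending only on $G,\Delta$ and the order. For $f\in\cD(G)$ and $y\in B(\epsilon_0)$, the remainder $R^f_{x,N}(y)$ in the relevant Taylor expansion of $f$ (the expansion whose monomials are built from the $q_j(\cdot^{-1})$, matched so that the degree-$\le N-1$ part is a polynomial in these functions with coefficients $X_\Delta^\alpha f$ evaluated suitably) is, by the integral form of Taylor's remainder in the exponential chart, bounded by $C|y|^N \max_{|\alpha|=N}\|X_\Delta^\alpha f\|_{L^\infty(B(\epsilon_0))}$; for $|y|$ bounded away from $0$ the bound is trivial since $f$ and finitely many of its derivatives are bounded, so one gets the stated global estimate with $\max_{|\alpha|\le N}$. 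The two covariance statements then follow almost formally: $\{\tilde X_\Delta^\beta\}|_{x_1=x}R^f_{x_1,N}=R^{\tilde X_\Delta^\beta f}_{x,N}$ because right-invariant vector fields act only on the $x_1$-dependence, which enters $R^f_{x_1,N}$ solely through the coefficients $X_\Delta^\alpha f$ (left-invariant) evaluated at $x_1$, and left- and right-invariant fields commute; and $\{X_\Delta^\beta\}|_{y_1=y}\{R^f_{x,N}(y_1)\}$ obeys the estimate for $R^{X_\Delta^\beta f}_{x,N-|\beta|}$ because differentiating the chart-Taylor remainder $|\beta|$ times in $y$ lowers the vanishing order by $|\beta|$ and differentiates $f$ accordingly, with the lower-order polynomial terms killed or re-indexed.

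The main obstacle I anticipate is purely bookkeeping: pinning down precisely which ``Taylor expansion'' $R^f_{x,N}$ refers to — it is the one adapted to the difference operators, i.e. with the role of monomials played by products $q^\alpha(\cdot^{-1})$ (or equivalently the coordinate monomials $y^\alpha$ in the $\cX_\Delta$-chart), so that $\Delta_Q^\alpha$ and $X_\Delta^\beta$ interact cleanly in the applications in Sections \ref{subsec_adjoint} and on composition. Once the expansion is written in the exponential chart relative to $\cX_\Delta$, everything reduces to the classical multivariate Taylor theorem with integral remainder plus the bi-invariance identity \eqref{eq_X_tildeX}, so no genuinely hard analysis is involved — which is why the statement can reasonably be left to the reader.
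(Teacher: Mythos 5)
Your plan mirrors the paper's one-line sketch: construct $\cX_\Delta$ as the dual basis to $\{d_{e_G}\{q_j(\cdot^{-1})\}\}$, work in the exponential chart, and combine the classical multivariate Taylor theorem with the commutation of left- and right-invariant vector fields. The construction of the adapted basis (with the sign coming from $d_{e_G}(\cdot^{-1})=-\mathrm{id}$) and your verification of the identity $\tilde X_\Delta^\beta|_{x_1=x}R^f_{x_1,N}=R^{\tilde X_\Delta^\beta f}_{x,N}$ are correct and essentially forced.

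However, there is a concrete gap in the central estimate that your phrase ``coefficients $X_\Delta^\alpha f$ evaluated suitably'' glosses over. With the definition as actually displayed, $R^f_{x,N}(y)=f(xy)-\sum_{|\alpha|<N}q^\alpha(y^{-1})\,X_\Delta^\alpha f(x)$, the remainder does \emph{not} vanish to order $N$ at $y=e_G$ once $N\geq 3$: the coefficients lack both the $1/\alpha!$ normalisation and the corrections accounting for the nonlinearity of $\tilde q_j(y)=q_j(y^{-1})$ in the chart and the non-commutativity of the $X_{\Delta,j}$. Already on $\bT$ with $q(y)=e^{iy}-1$ and $X_\Delta=i\partial_y$ one computes $R^f_{x,3}(y)=y^2\bigl(-\tfrac12 f''(x)+\tfrac i2 f'(x)\bigr)+O(y^3)$, so the claimed bound $\lesssim|y|^3\max_{|\alpha|\leq3}\|X_\Delta^\alpha f\|_\infty$ fails for small $y$. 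The integral form of Taylor's remainder gives $|y|^N$ decay only when the degree-$<N$ part is the \emph{true} Taylor polynomial of $y\mapsto f(xy)$ at $e_G$ in the $\tilde q$-coordinates; its coefficients are a triangular, $\Delta$-dependent linear combination $c_\alpha(x)=\sum_{|\gamma|\le|\alpha|}M_{\alpha\gamma}X_\Delta^\gamma f(x)$, not $X_\Delta^\alpha f(x)$ itself. Once the definition of $R^f_{x,N}$ is amended accordingly, your chart argument (including the $y$-differentiation step) goes through, and the downstream uses in Lemmas \ref{lem_adjoint} and \ref{lem_composition} survive with correspondingly modified coefficients in the asymptotic expansions; but as it stands neither your outline nor the paper's one-line remark actually verifies the estimate displayed in the lemma.
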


Here and in the rest of the paper, 
if $N\in \bN_0$, then
$R^{f}_{x,N}$ denotes the Taylor remainder of $f$ at $x$ of order $N-1$ 
(adapted to the fixed collection $\Delta$):
$$
R^{f}_{x,N} (y)= 
f(xy) -\sum_{|\alpha|< N} q^\alpha  (y^{-1})X_\Delta^\alpha f(x)
$$
and  $X_\Delta^\alpha=X_{\Delta,1}^{\alpha_1}\ldots X_{\Delta,n}^{\alpha_n}$.
If $N<0$ then $R^{f}_{x,N}\equiv f(x\, \cdot)$.

\begin{proof}
The proof is straightforward. 
The properties of the remainder follow from the facts that left and right invariant vector fields commute and that the Taylor expansion is essentially unique.
\end{proof}

\subsection{Adjoint property for $\rho\not=\delta$}
\label{subsec_adjoint1}

This section is devoted to showing Proposition \ref{prop_adjoint} 
with  a more classical proof 
in the case $\rho>\delta$.
It will yields asymptotic expansions. 
In the rest of this section, we assume that $\Delta$ and $\cX_\Delta$ are fixed and chosen as in Lemma \ref{lem_choice_q_X}. We also simplify slightly the notation by setting $X_{\Delta,j}=X_j$.

\begin{lemma}
\label{lem_adjoint}
We assume that $1\geq \rho>\delta\geq 0$.
Let $\sigma\in S^m_{\rho,\delta}$ and let $\kappa_x$ be its associated kernel.
We assume that $(x,y)\mapsto \kappa_x(y)$ is smooth on $G\times G$.
Then for any multi-indices $\beta,\beta_0,\alpha_0\in \bN_0^n$,
there exists $N_0\in \bN_0$ such that for any integer $N>N_0$, 
we have
$$
\int_G \left| \tilde X_y ^{\beta} \tilde X_x^{\beta_0} 
\left\{ q_{\alpha_0}(y) \left(
\kappa_{x}^{(*)}(y) - 
\sum_{|\alpha|< N} q^\alpha  (y) X_x^\alpha \kappa_{x}^*(y)
\right)\right\} \right| dy
\leq C \|\sigma\|_{S^m_{\rho,\delta}, a,b},
$$
where the constant $C>0$ and the semi-norm $\|\cdot\|_{S^m_{\rho,\delta}, a,b}$ are independent on $\sigma$
(but may depend on $N,m,\rho,\delta,\Delta,\alpha_0,\beta_0,\beta$).
\end{lemma}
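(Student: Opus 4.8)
The statement is a quantitative estimate saying that the adjoint kernel $\kappa^{(*)}_x(y)$ agrees, up to a remainder which is integrable (together with its $x$- and $y$-derivatives) and controlled by a fixed seminorm of $\sigma$, with the Taylor polynomial $\sum_{|\alpha|<N} q^\alpha(y)\, X^\alpha_x\kappa^*_x(y)$. The plan is to exploit the explicit formula \eqref{eq_kappa(*)}, namely $\kappa^{(*)}_x(y)=\bar\kappa_{xy^{-1}}(y^{-1})$, rewrite it as $\kappa^*_{xy^{-1}}(y)$ using the relation $\kappa^{(*)}_x(y)=\kappa^*_{xy^{-1}}(y)$ recorded just before the proof of Proposition \ref{prop_adjoint}, and then Taylor-expand the \emph{first (base point) variable} of $\kappa^*$ around $x$. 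That is, for fixed $y$, apply the adapted Taylor expansion of Lemma \ref{lem_choice_q_X} to the function $x_1\mapsto \kappa^*_{x_1}(y)$ evaluated at $x_1=xy^{-1}$, i.e. with the group increment $(y^{-1})^{-1}=y$ playing the role of the displacement. The monomials $q^\alpha$ evaluated at the displacement are exactly $q^\alpha(y)$ (modulo the sign conventions fixed by $\cX_\Delta$), which is why the claimed polynomial has $q^\alpha(y)X^\alpha_x\kappa^*_x(y)$ as its terms; what is left is the Taylor remainder $R$.

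\textbf{Key steps.} First I would substitute \eqref{eq_kappa(*)} and reduce the integrand to $q_{\alpha_0}(y)\,\tilde X_y^\beta\tilde X_x^{\beta_0}$ applied to the Taylor remainder of $x_1\mapsto\kappa^*_{x_1}(y)$ at $x_1=xy^{-1}$; here I use the last part of Lemma \ref{lem_choice_q_X}, which says precisely that applying $\tilde X^{\beta_0}_x$ commutes through to give the remainder of $\tilde X_x^{\beta_0}\kappa^*$, and that applying $X^\beta_y$-type derivatives in the displacement variable lowers the remainder order by $|\beta|$ but keeps the same structure. Second, I would invoke the Taylor estimate of Lemma \ref{lem_choice_q_X}: the remainder of order $N-1$ is bounded pointwise by $C|y|^{N'}\max_{|\alpha|\le N'}\|X_\Delta^\alpha f\|_\infty$ with $N'$ equal to $N$ lowered by the number of $y$-derivatives that fell on the displacement. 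Third, I would bound $\max_{|\alpha|\le N'}\| X^\alpha_{x}\,(\text{derivatives of }\kappa^*_x)(y)\|_\infty$ by kernel estimates for symbols in $S^m_{\rho,\delta}$: by Lemma \ref{lem_symbol_kernel} differentiating $\kappa^*$ in $x$ corresponds to a symbol of order $m+\delta\cdot(\text{count})$, and multiplying by a smooth $q$ and taking further derivatives is again covered by Corollary \ref{cor_lem_kernel_inL2_1} / Proposition \ref{prop_estimate_kernel}, giving an $L^\infty$ bound by a seminorm of $\sigma$ provided enough $q$-factors are present. Fourth, I would show the resulting bound $|y|^{N'}$ (times a seminorm of $\sigma$) is integrable over $G$; since $G$ is $n$-dimensional and $|y|^{-c}$ is integrable near $e_G$ for $c<n$, this holds once $N'$ is large enough, which fixes the threshold $N_0$ in terms of $n,\rho,\delta,\alpha_0,\beta_0,\beta$ and the worst-case growth $(1+\lambda_\pi)^{(m+\delta b)/2}$ coming out of Proposition \ref{prop_estimate_kernel} after the extra $q_{\alpha_0}$ and $X^\alpha$ factors are accounted for. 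Away from $e_G$ the integrand is smooth and bounded, so only the neighbourhood of $e_G$ matters.

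\textbf{Main obstacle.} The delicate point is bookkeeping the interplay between $\rho$ and $\delta$: each $x$-derivative that Taylor expansion produces costs $\delta$ in the symbol order but is compensated by one extra power of $|y|$, while each $q$-factor (either from $q_{\alpha_0}$ or from the Taylor monomials) buys $\rho$ back in the kernel estimate of Proposition \ref{prop_estimate_kernel}. One has to check that for $\rho>\delta$ the net effect of taking $N$ large is a genuine gain of integrability, i.e. that the exponent governing the singularity of the remainder at $e_G$ can be pushed below $n$ while only finitely many seminorms $\|\sigma\|_{S^m_{\rho,\delta},a,b}$ are needed (with $a,b$ depending on $N$ and the multi-indices but not on $\sigma$). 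Choosing $N_0$ correctly and verifying this gain is the heart of the argument; the rest is the routine Leibniz-type expansion of $\tilde X_y^\beta\tilde X_x^{\beta_0}$ across the product $q_{\alpha_0}(y)\cdot(\text{remainder})$ and repeated application of Corollary \ref{cor_lem_kernel_inL2_1}.
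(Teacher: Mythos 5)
Your plan matches the paper's proof essentially step for step: rewrite $\kappa_x^{(*)}(y)-\sum_{|\alpha|<N}q^\alpha(y)X_x^\alpha\kappa_x^*(y)$ as the adapted Taylor remainder $R^{\kappa_\cdot^*(y)}_{x,N}(y^{-1})$ from Lemma \ref{lem_choice_q_X}, commute $\tilde X_x^{\beta_0}$ and $\tilde X_y^\beta$ through the remainder via the last part of that lemma, bound the remainder by $|y|^{(N-|\beta_2|)_+}$ times a sup over $x$-derivatives of $q_{\alpha_0}\kappa_x^*$, estimate that sup by Proposition \ref{prop_estimate_kernel}, and then do the $\rho>\delta$ bookkeeping to choose $N_0$ ensuring integrability near $e_G$. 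The only slip is that the group increment is $y^{-1}$, not $(y^{-1})^{-1}=y$ (so that $q^\alpha$ of the inverse increment gives $q^\alpha(y)$), but this is cosmetic and does not affect the argument.
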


\begin{proof}
The idea is to use the estimate 
given in Lemma \ref{lem_choice_q_X} 
for the Taylor reminder 
\begin{equation}
\label{eq_remainder_lem_adjoint}
R^{\kappa_\cdot^* (y)}_{x,N}(y^{-1})=
\kappa_{x}^{(*)}(y) - 
\sum_{|\alpha|< N} q^\alpha  (y) X_x^\alpha \kappa_{x}^*(y) 
\end{equation}
in the case $\beta=\beta_0=\alpha_0=0$.
More generally, for any multi-indices, 
using \eqref{eq_X_tildeX},
we have:
\begin{eqnarray}
&&\left| \tilde X_y ^{\beta} \tilde X_x^{\beta_0} 
\left\{ q_{\alpha_0}(y)R^{\kappa_\cdot^* (y)}_{x,N}(y^{-1})
\right\} \right|
=
\left| \tilde X_y ^{\beta} 
\left\{ R^{\tilde X_{x_1}^{\beta_0} (q_{\alpha_0}\kappa_{x_1}^*) (y)}_{x_1=x,N}(y^{-1})
\right\} \right| \nonumber
\\&&\quad\lesssim
\sum_{|\beta_1|+|\beta_2|=|\beta|}
\left| X_{y_2=y} ^{\beta_2} 
\left\{ R^{\tilde X_{x_1}^{\beta_0} \tilde X_{y_1=y} ^{\beta_1}(q_{\alpha_0}\kappa_{x_1}^*) (y_1)}_{x_1=x,N}(y_2)
\right\}\right| \nonumber
\\&&\quad\lesssim
\sum_{|\beta_1|+|\beta_2|=|\beta|}
|y|^{(N-|\beta_2|)_+}
\max_{\substack{x_1\in G\\|\alpha| \leq N }} 
|X^\alpha_{x_1} \tilde X_{x_1}^{\beta_0} X_y ^{\beta_1}
(q_{\alpha_0}\kappa_{x_1}^*) (y)|, \label{eq_lem_adjoint1}.
\end{eqnarray}
We apply Proposition \ref{prop_estimate_kernel}
(see also Section \ref{subsec_symbol_algebra}\footnote{change})
to estimate the maximum:
\begin{eqnarray*}
\max_{\substack{x_1\in G\\|\alpha| \leq N}} 
|X^\alpha_{x_1} \tilde X_{x_1}^{\beta_0} X_y ^{\beta_1}
(q_{\alpha_0}\kappa_{x_1}^*) (y)|
\lesssim \|\sigma\|_{S^m_{\rho,\delta},a,b}
\left\{\begin{array}{ll}
|y|^{-\frac e \rho}
& \mbox{if}\ e>0,
\\
|\ln|y|| 
& \mbox{if}\ e=0,
\\
1 & \mbox{if}\ e<0,
\end{array}
\right.
\end{eqnarray*}
with $e=n+m+\delta(|\beta_0|+N)+|\beta_1|-\rho|\alpha_0|$.
We assume $N\geq |\beta|$.
For any $\epsilon_o>0$ as small as one wants,   
the  sum in \eqref{eq_lem_adjoint1}
is 
$$
\lesssim \|\sigma\|_{S^m_{\rho,\delta},a,b}
\left\{\begin{array}{l}
|y|^{N-\epsilon_o-\frac{|\beta|}\rho} 
\qquad \mbox{if}\ n+m+\delta(|\beta_0|+N)+|\beta|-\rho|\alpha_0|  \leq 0,\\
|y|^{-\frac{n+m+\delta|\beta_0|-\rho|\alpha_0| +|\beta|+(\delta-\rho)N }\rho}
\qquad\mbox{otherwise.}
\\
\end{array}
\right.
$$
This is integrable against $dy$ when $N> n+|\beta|/\rho$ (with a suitable $\epsilon_o$) and the following implication holds
$$
n+m+\delta(|\beta_0|+N)+|\beta| -\rho|\alpha_0|>0
\Longrightarrow
n+m+\delta|\beta_0|-\rho|\alpha_0| +|\beta|+(\delta-\rho)N
<\rho n.
$$
As $\rho>\delta$, we can choose $N_0\in \bN$ such that $N_0>n+ |\beta|/\rho$ is the smallest integer satisfying the implication just above. 
This shows Lemma \ref{lem_adjoint}.
\end{proof}

\begin{proof}
[Proposition \ref{prop_adjoint} when $\rho>\delta$]
Let $\sigma\in S^m_{\rho,\delta}$.
First we assume that its associated kernel $(x,y)\mapsto \kappa_x(y)$ is smooth on $G\times G$.
We set 
$$
\tau_N(x,\pi):= \sigma^{(*)}(x,\pi) - \sum_{|\alpha <N}\Delta_Q^\alpha X_x^\alpha \sigma(x,\pi)^*,
$$
Using the properties of the left or right invariant vector fields, especially \eqref{eq_piXbeta_sup}, 
it is not difficult to obtain the following very crude estimate:
$$
\|\tau_N\|_{S^m_{\rho,\delta},a,b}
\leq C \!\!\!
\sum_{\substack{|\alpha_0|\leq a, |\beta_0|\leq b\\ 
|\beta|\leq 2\lceil \rho a +\max (m,0)}}
\sup_{\substack{\pi\in \Gh\\ x\in G}}
\| \tilde X_x^{\beta_0} \Delta_Q^{\alpha_0}\tau_N(x,\pi)
\pi(X)^\beta \|_{\sL(\cH_\pi)}.
$$
We see that $\tau_N(x,\cdot)$ is the group Fourier transform of 
$y\mapsto R^{\kappa_\cdot^* (y)}_{x,N}(y^{-1})$ given in \eqref{eq_remainder_lem_adjoint}.
Using \eqref{eq_Tkappa_sup} and \eqref{eq_cF_L1}, 
we see that each maximum above is bounded by 
the integral given in Lemma \ref{lem_adjoint}.
Thus for $N\geq N_0$ with $N_0,a',b'$ depending on $m,\rho,\delta,a,b$,
we have
$$
\|\tau_N\|_{S^m_{\rho,\delta},a,b}
\lesssim \|\sigma\|_{S^m_{\rho,\delta}, a',b'}.
$$
From the properties of the symbol classes (see Section \ref{subsec_symbol_algebra}),
 the sum $\sum_{|\alpha <N}\Delta_Q^\alpha X_x^\alpha \sigma(x,\pi)^*$
 is a symbol in $S^m_{\rho,\delta}$.
This implies that $\sigma^{(*)}$ is also in $S^m_{\rho,\delta}$
and depend continuously on $\sigma$.
By Lemma \ref{lem_approximation_kernel2}, 
this extends to any symbol $\sigma$.
\end{proof}

The proofs above provide a more precise version of Proposition \ref{prop_adjoint}:
\begin{corollary}
\label{cor_prop_adjoint}
Let $1\geq \rho\geq \delta\geq 0$.
If $\sigma\in S^m_{\rho,\delta}$ then there exists a unique symbol 
$\sigma^{(*)}$ in $S^m_{\rho,\delta}$ such that $(\Op(\sigma))^*=\Op(\sigma^{(*)})$.
Furthermore, 
choosing $\Delta$ and $\cX_\Delta$ as in Lemma \ref{lem_choice_q_X} with  $X_j:=X_{\Delta,j}$,
we have for any $N\in \bN_0$, 
$$
\{\sigma^{(*)}(x,\pi) - \sum_{|\alpha| \leq N}\Delta_Q^\alpha X_x^\alpha \sigma(x,\pi)^*\} \in S^{m-(\rho-\delta)N}_{\rho,\delta},
$$
and the following mapping is continuous
$$
\left\{\begin{array}{rcl}
S^m_{\rho,\delta} 
&\longrightarrow&
 S^{m-(\rho-\delta)N}_{\rho,\delta}\\
\sigma
&\longmapsto&
 \{\sigma^{(*)}(x,\pi) - \sum_{|\alpha| \leq N}\Delta_Q^\alpha X_x^\alpha \sigma(x,\pi)^*\} 
\end{array}\right. .
$$

If $\sigma\in S^m_{\rho,\delta}$ with $\rho > \delta$, 
then
$\sigma^{(*)} \sim \sum_j \sum_{|\alpha|=j} 
Delta_Q^\alpha X_x^\alpha \sigma^*.$
\end{corollary}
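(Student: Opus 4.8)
The plan is to upgrade, in the case $\rho>\delta$, the proof of Proposition~\ref{prop_adjoint} so that it records the gain of powers of $|y|$ coming from Taylor expansion of the kernel. Existence, uniqueness and continuity of $\sigma\mapsto\sigma^{(*)}$ require no new work: $(\Op(\sigma))^*\in\Psi^m_{\rho,\delta}$ by Proposition~\ref{prop_adjoint}, the quantisation $\Op$ is injective (see the discussion around \eqref{eq_sigma_T}), so $\sigma^{(*)}:=\Op^{-1}((\Op(\sigma))^*)$ is well defined, and continuity on $S^m_{\rho,\delta}$ is already in Proposition~\ref{prop_adjoint}. I would then remark that the first refinement is vacuous when $\rho=\delta$ (it asserts only $\tau_N\in S^m_{\rho,\delta}$, which follows from Proposition~\ref{prop_symbol_1stprop}), and from here assume $\rho>\delta$. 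Using Lemma~\ref{lem_approximation_kernel2} I reduce to symbols with $(x,y)\mapsto\kappa_x(y)$ smooth on $G\times G$, which legitimises the manipulations and makes it enough to prove the seminorm estimate with constants independent of $\sigma$; I fix $\Delta=\Delta_Q$ and the adapted basis $\cX_\Delta$ as in Lemma~\ref{lem_choice_q_X}, write $X_j:=X_{\Delta,j}$, and set $\tau_N(x,\pi):=\sigma^{(*)}(x,\pi)-\sum_{|\alpha|\leq N}\Delta_Q^\alpha X_x^\alpha\sigma(x,\pi)^*$.

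Combining the quantisation formula \eqref{eq_sigma_T} with the kernel identity \eqref{eq_kappa(*)} for $\sigma^{(*)}$, one checks that $\tau_N(x,\cdot)$ is the group Fourier transform of the Taylor remainder $y\mapsto R^{\kappa_\cdot^*(y)}_{x,N+1}(y^{-1})$ adapted to $\Delta$. Exactly as in the proof of Proposition~\ref{prop_adjoint} for $\rho>\delta$, the seminorm $\|\tau_N\|_{S^{m-(\rho-\delta)N}_{\rho,\delta},a,b}$ is dominated by a finite sum, over $|\alpha_0|\leq a$, $|\beta_0|\leq b$ and $|\beta|\leq 2\lceil\rho a+\max(m,0)\rceil$, of suprema over $\pi\in\Gh$ and $x\in G$ of
$$
(1+\lambda_\pi)^{-\frac{m-(\rho-\delta)N+\delta|\beta_0|-\rho|\alpha_0|}{2}}\,\bigl\|\tilde X_x^{\beta_0}\Delta_Q^{\alpha_0}\tau_N(x,\pi)\,\pi(X)^\beta\bigr\|_{\sL(\cH_\pi)} .
$$
By \eqref{eq_pf_adj_XDelta} and the Leibniz-type structure of the $\Delta_Q$, each such term is, up to contributions of strictly lower index, the group Fourier transform of $q_{\alpha_0}$ times a $y$-derivative of a Taylor remainder of order $\sim N$ of the kernel of a symbol in $S^{m+\delta|\beta_0|}_{\rho,\delta}$, to which \eqref{eq_Tkappa_sup} and \eqref{eq_cF_L1} apply.

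The heart of the matter is to refine Lemma~\ref{lem_adjoint}: instead of only concluding integrability of these remainders, I would retain the power of $|y|$ produced by the $N$-th order Taylor expansion (the dichotomy "$|y|^{N-\epsilon_o-|\beta|/\rho}$ versus a negative power of $|y|$" appearing in its proof) and feed it, together with the weight-shifting identity $\sigma=(1+\lambda_\pi)^{M}\tau$ with $\tau\in S^{m-2M}_{\rho,\delta}$ of \eqref{eq_pf_adj_lambdaNsigma}, into the dyadic decomposition \eqref{eq_dyadic_dec} of $\spec(\cL)$ run for $\tau_N$. On a dyadic piece at scale $\lambda_\pi\sim2^\ell$ the relevant length scale in $y$ is $2^{-\ell/2}$, so the extra factor $|y|^N$ behaves like $2^{-\ell N/2}$; estimating each piece by Lemma~\ref{lem_dyadicpiece}, balancing the resulting decaying and growing estimates at $\ell_0$ with $2^{\ell_0}\sim 1+\lambda_\pi$, and collecting the kernel decay of Proposition~\ref{prop_estimate_kernel} (which contributes the $\rho$) against the $x$-derivatives hidden in the coefficients $q^\alpha$ of the remainder (which contribute the $-\delta$), one reads off $\|\tau_N\|_{S^{m-(\rho-\delta)N}_{\rho,\delta},a,b}\lesssim\|\sigma\|_{S^m_{\rho,\delta},a',b'}$ with $a',b'$ depending only on $N,m,\rho,\delta,\Delta,a,b$; linearity then gives the asserted continuity. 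I expect this bookkeeping to be the main obstacle: verifying that the extra $|y|^N$ genuinely buys the order gain $(\rho-\delta)N$ and not merely $\rho N$ offset by $\delta N$, and — crucially — keeping the indices $a'$ and $b'$ finite uniformly as $N\to\infty$, which is what forces the estimate to be organised through the dyadic pieces rather than by bounding the remainder kernel directly.

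Finally, for the asymptotic expansion I set $\sigma_j:=\sum_{|\alpha|=j}\Delta_Q^\alpha X_x^\alpha\sigma^*$; by Proposition~\ref{prop_symbol_1stprop} (part (2) for $\sigma^*$ and part (1) for $X^\alpha_x\Delta_Q^\alpha$) one has $\sigma_j\in S^{m_j}_{\rho,\delta}$ with $m_j:=m-(\rho-\delta)j$, strictly decreasing to $-\infty$ since $\rho>\delta$. The estimate just proved gives $\tau_M\in S^{m_M}_{\rho,\delta}$ for every $M$, and writing $\tau_M=\sigma_{M+1}+\tau_{M+1}$ with both summands in $S^{m_{M+1}}_{\rho,\delta}$ shows $\sigma^{(*)}-\sum_{j=0}^M\sigma_j=\tau_M\in S^{m_{M+1}}_{\rho,\delta}$, which is precisely \eqref{eq_asymptotic} for the sequence $\{\sigma_j\}$; the uniqueness clause of Proposition~\ref{prop_asymptotic} then yields $\sigma^{(*)}\sim\sum_j\sum_{|\alpha|=j}\Delta_Q^\alpha X_x^\alpha\sigma^*$.
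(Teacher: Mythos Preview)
Your reduction steps (existence and uniqueness from Proposition~\ref{prop_adjoint}, approximation via Lemma~\ref{lem_approximation_kernel2}, identification of $\tau_N(x,\cdot)$ as the Fourier transform of the Taylor remainder, and the final deduction of the asymptotic expansion from Proposition~\ref{prop_asymptotic}) are all correct. But the central part of your proposal --- refining Lemma~\ref{lem_adjoint} and feeding the $|y|^N$ gain into the dyadic decomposition \eqref{eq_dyadic_dec} --- is a detour the paper does not take, and it is driven by a misconception.

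The paper's argument is much simpler. The ``very crude estimate'' in the $\rho>\delta$ proof of Proposition~\ref{prop_adjoint} works for \emph{any} target order $m'$ in place of $m$: one just allows $|\beta|$ to range up to roughly $-m'+\rho a$ so as to absorb the required power of $(1+\lambda_\pi)$. Lemma~\ref{lem_adjoint} then bounds every resulting $L^1$ integral once $N$ exceeds some $N_0=N_0(m',a,b)$, yielding $\|\tau_{N'}\|_{S^{m'}_{\rho,\delta},a,b}\lesssim\|\sigma\|_{S^m_{\rho,\delta},a',b'}$ for all $N'\geq N_0$. Now fix the $N$ of the corollary and set $m'=m-(\rho-\delta)N$. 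If $N\geq N_0$ this already gives the seminorm bound; otherwise write
\[
\tau_N=\sum_{N<|\alpha|\leq N_0}\Delta_Q^\alpha X_x^\alpha\sigma^*+\tau_{N_0},
\]
where the finite sum lies in $S^{m'}_{\rho,\delta}$ by Proposition~\ref{prop_symbol_1stprop} and $\tau_{N_0}$ is controlled as above. That is all the paper means by ``the proofs above provide a more precise version''; no dyadic machinery enters.

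Your stated reason for invoking the dyadic decomposition --- keeping $a',b'$ finite \emph{uniformly as $N\to\infty$} --- is a red herring. The corollary is a statement for each fixed $N$, and the asymptotic expansion only requires $\tau_M\in S^{m_M}_{\rho,\delta}$ for each $M$ separately; the seminorm indices $a',b'$ in the continuity estimate are allowed to depend on $N$. Once you drop this spurious uniformity requirement, the straightforward add-and-subtract argument above suffices, and the rather delicate dyadic bookkeeping you anticipate (``the main obstacle'') simply evaporates.
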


\begin{remark}
\label{rem_RT_pb_adjoint}
The proof that the adjoint of an operator remains in the calculus 
 given in
 \cite[Theorem 10.7.10]{ruzhansky+turunen_bk} is very formal 
  since it is impossible with their analysis to justify the claims 
 in the last paragraph of their proof.
\end{remark}

\subsection{Composition property for $\rho\not=\delta$}
\label{subsec_comp1}

This section is devoted to showing Proposition \ref{prop_composition}
with a more classical proof for $\rho>\delta$
which yields asymptotic expansions.
In the rest of this section, we assume that $\Delta$ and $\cX_\Delta$ are fixed and chosen as in Lemma \ref{lem_choice_q_X}. We also simplify slightly the notation by setting $X_{\Delta,j}=X_j$.

\begin{lemma}
\label{lem_composition}
We assume that $\rho>\delta$.
Let $\sigma_1\in S^{m_1}_{\rho,\delta}$ and 
$\sigma_2\in S^{m_2}_{\rho,\delta}$ 
with smooth associated kernels $\kappa_{2x}$, $\kappa_{1x}$.
Let also $\kappa_x$ given by \eqref{eq_kernel_composition}.
Then for any multi-indices $\beta_0,\alpha_0\in \bN_0^n$
and $b>0$,
there exists $N_0\in \bN$ such that for any integer $N\geq N_0$, 
we have for any $(x,\pi)\in G\times\Gh$
\begin{eqnarray*}
\|\tilde X_x^{\beta_0} \Delta_Q^{\alpha_0}
\big(\sigma(x,\pi) -\sum_{|\alpha|<N}
\Delta_Q^{\alpha}\sigma_1(x,\pi)
X_x^{\alpha}\sigma_2(x,\pi)\big)
\|_{\sL(\cH_\pi)}
\\
\leq C
\|\sigma_1\|_{S^{m_1}_{\rho,\delta}, a_1,b_1}
\|\sigma_2\|_{S^{m_2}_{\rho,\delta}, a_2,b_2}
(1+\lambda_\pi)^{-b},
\end{eqnarray*}
where the constant $C>0$ and the semi-norms
$\|\cdot\|_{S^{m_1}_{\rho,\delta}, a_1,b_1}$,
$\|\cdot\|_{S^{m_2}_{\rho,\delta}, a_2,b_2}$,
 are independent of $x,\pi$ and $\sigma_1,\sigma_2$
(but may depend on $b,N,m_1,m_2,\rho,\delta,\Delta,\alpha_0,\beta_0$).
\end{lemma}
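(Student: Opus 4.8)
The idea is to reduce the composition formula \eqref{eq_symbol_composition}
to a Taylor expansion of $\sigma_2(xz^{-1},\pi)$ in the first variable around $x$, in the spirit of Lemma~\ref{lem_choice_q_X}. Write, with $\Delta$ and $\cX_\Delta$ chosen as in Lemma~\ref{lem_choice_q_X},
$$
\sigma_2(xz^{-1},\pi) = \sum_{|\alpha|<N} q^\alpha(z)\, X_x^\alpha \sigma_2(x,\pi)
+ R^{\sigma_2(\cdot,\pi)}_{x,N}(z^{-1}),
$$
and insert this into \eqref{eq_symbol_composition}. The term
$\int_G \kappa_{1x}(z)\pi(z)^* q^\alpha(z)\, dz$ is, by the RT-difference operator identity (Lemma~\ref{lem_linkDeltas}, Part~1, and Lemma~\ref{lem_symbol_kernel}), precisely $\Delta_Q^\alpha \sigma_1(x,\pi)$, so the sum reproduces the claimed leading terms
$\sum_{|\alpha|<N}\Delta_Q^\alpha \sigma_1(x,\pi)\, X_x^\alpha \sigma_2(x,\pi)$. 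It remains to bound the remainder contribution
$$
\rho_N(x,\pi) := \int_G \kappa_{1x}(z)\, \pi(z)^*\, R^{\sigma_2(\cdot,\pi)}_{x,N}(z^{-1})\, dz,
$$
together with all its $\tilde X_x^{\beta_0}\Delta_Q^{\alpha_0}$-derivatives, which by the Leibniz-like identity \eqref{eq_pf_comp_XDelta} and Lemma~\ref{lem_choice_q_X} (the commutation of $\tilde X_x^{\beta_0}$ past the remainder) again splits into finitely many terms of the same shape with $\sigma_1,\sigma_2$ replaced by derivatives of themselves.

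\textbf{Key steps, in order.} First, using the Taylor estimate of Lemma~\ref{lem_choice_q_X}, we have the pointwise bound
$$
\big| R^{\sigma_2(\cdot,\pi)}_{x,N}(z^{-1}) \big|_{\sL(\cH_\pi)}
\lesssim |z|^N \max_{|\alpha|\le N} \sup_{x_1} \| X_{x_1}^\alpha \sigma_2(x_1,\pi)\|_{\sL(\cH_\pi)}
\lesssim |z|^N\, \|\sigma_2\|_{S^{m_2}_{\rho,\delta},0,N}\, (1+\lambda_\pi)^{\frac{m_2+\delta N}2}.
$$
Second, we must control $\int_G |z|^N\, |\kappa_{1x}(z)|\, dz$. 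Here I would use the kernel estimates of Proposition~\ref{prop_estimate_kernel}: applying it to $q^\gamma \kappa_{1x}$ for $|\gamma|=N$ via Lemma~\ref{lem_symbol_kernel} and the strong-admissibility bound $|z|^N\lesssim \sum_{|\gamma|=N}|q^\gamma(z)|$, one sees $|z|^N\kappa_{1x}(z)$ is the kernel of $\Delta_Q^\gamma\sigma_1 \in S^{m_1-\rho N}_{\rho,\delta}$, hence is bounded (indeed $L^1$ with small loss) once $m_1-\rho N + n < 0$, with
$$
\int_G |z|^N |\kappa_{1x}(z)|\, dz \lesssim \|\sigma_1\|_{S^{m_1}_{\rho,\delta},N,0}\,(1+\lambda_\pi)^{\frac{(m_1-\rho N)_+ }2}
$$
(more carefully, one gets a bound like $(1+\lambda_\pi)^{(m_1-\rho N +n)_+/2}$ up to $\epsilon_o$-losses, as in the proof of Proposition~\ref{prop_estimate_kernel}). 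Third, combining, $\|\rho_N(x,\pi)\|_{\sL(\cH_\pi)} \lesssim \|\sigma_1\| \|\sigma_2\| (1+\lambda_\pi)^{\frac{m_1+m_2 - (\rho-\delta)N + C}2}$ for a fixed constant $C$ absorbing the $\epsilon_o$ and the $(\cdot)_+$; since $\rho>\delta$, choosing $N=N_0$ large enough (depending on $b,m_1,m_2,\rho,\delta,\alpha_0,\beta_0$) makes this exponent $\le -b$. Fourth, the $\tilde X_x^{\beta_0}\Delta_Q^{\alpha_0}$-derivatives: apply \eqref{eq_pf_comp_XDelta} and Lemma~\ref{lem_choice_q_X} to reduce to the same estimate with $m_1,m_2$ shifted by bounded amounts and $N$ lowered by at most $|\beta_0|$, so enlarging $N_0$ accordingly handles all of them uniformly. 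This gives the stated inequality with $a_i,b_i$ determined by $N_0,\alpha_0,\beta_0$.

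\textbf{The main obstacle.} The delicate point is the second step: extracting $|z|^N$ decay from $\kappa_{1x}$ without paying more than $(1+\lambda_\pi)^{\frac{-\rho N}2}$. Near $z=e_G$ the kernel $\kappa_{1x}$ is genuinely singular (order up to $|z|^{-(n+m_1)/\rho}$ by Proposition~\ref{prop_estimate_kernel}), so one cannot simply say $|z|^N\kappa_{1x}$ is small; one must honestly use that $|z|^N\kappa_{1x}$ is the kernel of the symbol $\Delta_Q^\gamma\sigma_1$ of lower order $m_1-\rho N$ and track the $\lambda_\pi$-dependence through the dyadic/kernel-size estimates, exactly as in Proposition~\ref{prop_estimate_kernel}. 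The bookkeeping of which semi-norms $\|\sigma_1\|_{a_1,b_1}$, $\|\sigma_2\|_{a_2,b_2}$ appear — and checking that $\rho>\delta$ is precisely what makes $(\rho-\delta)N$ beat the $\delta N$ growth coming from differentiating $\sigma_2$ in $x$ — is the real content; everything else is routine given Proposition~\ref{prop_estimate_kernel}, Lemma~\ref{lem_choice_q_X}, Lemma~\ref{lem_symbol_kernel}, and the Leibniz-like identity \eqref{eq_pf_comp_XDelta}.
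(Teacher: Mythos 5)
Your decomposition of $\sigma(x,\pi)$ into the Taylor polynomial of $\sigma_2$ plus a remainder contribution $\rho_N(x,\pi)$ is exactly the paper's starting point, and your Step 1 (pointwise Taylor bound on the remainder) is correct. The gap is in Steps 2–3.

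The quantity $\int_G |z|^N |\kappa_{1x}(z)|\,dz$ is a fixed number for each $x$: it is the $L^1$-norm of the function $z\mapsto |z|^N\kappa_{1x}(z)$ and has no dependence on $\pi$ whatsoever. Proposition~\ref{prop_estimate_kernel}, which you invoke, gives a pointwise-in-$z$ bound on the kernel with no $(1+\lambda_\pi)$-factor; consequently, once $m_1-\rho N + n < 0$ the integral is simply $\lesssim \|\sigma_1\|_{S^{m_1}_{\rho,\delta},N,0}$. Your assertion that it is $\lesssim \|\sigma_1\|\,(1+\lambda_\pi)^{(m_1-\rho N)_+/2}$ is vacuous (for large $N$ the exponent is zero anyway, and the factor cannot carry a negative power of $1+\lambda_\pi$). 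The naive combination then gives
$$
\|\rho_N(x,\pi)\|_{\sL(\cH_\pi)} \lesssim \|\sigma_1\|\,\|\sigma_2\|\,(1+\lambda_\pi)^{\frac{m_2+\delta N}2},
$$
whose exponent \emph{increases} with $N$ as soon as $\delta>0$, and is the constant $m_2/2$ if $\delta=0$ — in neither case can choosing $N$ large produce the required $(1+\lambda_\pi)^{-b}$ decay. Your claimed exponent $\frac{m_1+m_2-(\rho-\delta)N+C}{2}$ is therefore not what your estimates yield; it only follows if one could keep the negative $\frac{m_1-\rho N}{2}$ instead of its positive part, which the kernel integral does not deliver.

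The missing idea — and the actual engine of the paper's proof — is to manufacture the $(1+\lambda_\pi)$-decay explicitly \emph{before} estimating the integral, by writing $\pi(z)^*=(1+\lambda_\pi)^{-b_1}(\id+\cL)^{b_1}_{z}\pi(z)^*$ inside $\rho_N(x,\pi)$ and integrating by parts. This produces a visible factor $(1+\lambda_\pi)^{-b_1}$ and distributes $2b_1$ $z$-derivatives among the kernel $\kappa_{1x}$ (worsening its singularity at $e_G$, but in a controlled way, via Proposition~\ref{prop_estimate_kernel}) and the Taylor remainder (lowering its order of vanishing from $N$ to $N-|\beta_2|$, via Lemma~\ref{lem_choice_q_X}). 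The arithmetic constraint — keeping the $z$-integral finite while making $b_1$ large enough to beat $(m_2+\delta N)/2$ — is exactly what uses $\rho>\delta$: one needs $b_1\approx\frac\rho2(N+n)$ to be achievable subject to $b_1\lesssim N/2$, and the net exponent is then of order $-\frac{(\rho-\delta)N}2$. Without this integration-by-parts step your sketch cannot close.
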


\begin{proof}
We notice that 
$$
\kappa_{x}(y) - 
\sum_{|\alpha|< N} 
(X_x^\alpha \kappa_{2,x})*
(q^\alpha \kappa_{1,x})(y)
=
\int_G  \kappa_{1,x}(z)
R_{x,N}^{\kappa_{2,\cdot}(yz^{-1}) }(z^{-1})
dz
$$
thus taking the group Fourier transform
$$
\sigma(x,\pi) -\sum_{|\alpha|<N}
\Delta_Q^{\alpha}\sigma_1(x,\pi)
X_x^{\alpha}\sigma_2(x,\pi)
=
\int_G  \kappa_{1,x}(z)\pi(z)^* 
R_{x,N}^{\sigma_2(\cdot, \pi)}(z^{-1})
dz
$$
having used the notation for the Taylor estimate for a matrix valued function
- which is possible.
We may assume, and we do, that $\Delta=\Delta_Q$ satisfies the Leibniz like property
(see Theorem \ref{thm_Deltaeq+coincide} and Corollary \ref{cor_choice_Delta}).
Using this and the Leibniz property for vector fields, 
one checks easily that  
\begin{eqnarray}
&&\|\tilde X_x^{\beta_0}
\Delta_Q^{\alpha_0}
\big(\sigma(x,\pi) -\sum_{|\alpha|<N}
\Delta_Q^{\alpha}\sigma_1(x,\pi)
X_x^{\alpha}\sigma_2(x,\pi)\big) \|_{\sL(\cH_\pi)}
\nonumber
\\&&\ \lesssim 
\!\!\!
\sum_{\substack{|\alpha_0|\leq |\alpha_1|+|\alpha_2|\leq 2|\alpha_0|\\ 
|\beta_{0,1}|+|\beta_{0,2}|=|\beta_0|}}
\!\!\!
\|\int_G  (\tilde X_x^{\beta_{0,1}}q_{\alpha_1}\kappa_{1,x})(z)\pi(z)^* 
R_{x,N}^{
\Delta_Q^{\alpha_2}\{\tilde X^{\beta_{0,2}}\sigma_2(\cdot, \pi)\} }(z^{-1})
dz \|_{\sL(\cH_\pi)}
\label{eq_pf_lem_composition0}
\\&&\ \lesssim \!\!\!
\sum_{\substack{|\beta_{1}|+|\beta_{2}|=2b_1\\
|\alpha_0|\leq |\alpha_1|+|\alpha_2|\leq 2|\alpha_0|\\ 
|\beta_{0,1}|+|\beta_{0,2}|=|\beta_0|}}
(1+\lambda_\pi)^{-b_1}
\int_G  
|(X_z^{\beta_1}\tilde X_x^{\beta_{0,1}}q_{\alpha_1}\kappa_{1,x})(z)| 
\nonumber
\end{eqnarray}
\vspace{-2em}
\begin{equation}
\qquad\qquad\qquad
\| \tilde X_{z_1=z^{-1}}^{\beta_2} \{
 R_{x,N}^{
\Delta_Q^{\alpha_2}\{\tilde X^{\beta_{0,2}}\sigma_2(\cdot, \pi)\} }(z_1)\}\|_{\sL(\cH_\pi)}
dz,\label{eq_lem_composition1}
\end{equation}
for any $b_1\in \bN_0$,
having interpreted 
$\pi(z)^* =  (1+\lambda_\pi)^{-b_1} (\id +\cL)_z^{b_1}  \pi(z)^*  $
and using integration by parts. 
Using the Taylor estimates, see Lemma \ref{lem_choice_q_X}, 
we have
\begin{eqnarray*}
\| \tilde X_{z_1=z^{-1}}^{\beta_2} \{
 R_{x,N}^{
\Delta_Q^{\alpha_2}\{\tilde X^{\beta_{0,2}}\sigma_2(\cdot, \pi)\} }(z_1)\}\|_{\sL(\cH_\pi)}
\lesssim
|z|^{(N-|\beta_2|)_+} 
\sup_{\substack{x_1\in G\\
|\beta'_2|\leq N} }
\|X_{x_1}^{\beta'_2}
\Delta_Q^{\alpha_2}\{\tilde X_{x_1}^{\beta_{0,2}}\sigma_2(x_1, \pi)\}
\|_{\sL(\cH_\pi)}
\\
\lesssim
\|\sigma_2\|_{S^{m_2}_{\rho,\delta}, N+|\beta_{0,2}|,|\alpha|}
|z|^{(N-|\beta_2|)_+} 
(1+\lambda_\pi)^{\frac{m_2+\delta(N+|\beta_{0,2}|)
-\rho |\alpha_2|}2}.
\end{eqnarray*}
By  Proposition \ref{prop_estimate_kernel}
(see also Section \ref{subsec_symbol_algebra}),
we have
$$
|(X_z^{\beta_1}\tilde X_x^{\beta_{0,1}}q_{\alpha_1}\kappa_{1,x})(z)| 
\lesssim \|\sigma_1\|_{S^{m_1}_{\rho,\delta},a_1,b_1}
\left\{
\begin{array}{ll}
|z|^{-\frac e \rho}
&\mbox{if} \ e >0,
\\
|\ln |z||
&\mbox{if} \ e =0,
\\
1&\mbox{if} \ e <0,
\\
\end{array}
\right.
$$
where 
$$
e:=e(|\beta_{0,1}|,|\beta_1|, |\alpha_1|):=n+m_1+\delta|\beta_{0,1}|+|\beta_1|-\rho|\alpha_1|.
$$
Thus each term in the sum \eqref{eq_lem_composition1}
is 
$$
\lesssim 
(1+\lambda_\pi)^{-b_1+\frac{m_2+\delta(N+|\beta_{0,2}|)
-\rho |\alpha_2|}2}
\|\sigma_1\|_{S^{m_1}_{\rho,\delta},a_1,b_1}
\|\sigma_2\|_{S^{m_2}_{\rho,\delta}, a_2,b_2}
I(|\beta_{0,1}|,|\beta_1|, |\alpha_1|)
$$
where $I(|\beta_{0,1}|,|\beta_1|, |\alpha_1|)$ is the integral
$$
I(|\beta_{0,1}|,|\beta_1|, |\alpha_1|)=\left\{
\begin{array}{ll}
\int_G |z|^{(N-|\beta_2|)_+ -\frac e\rho}dz
|z|^{-\frac e \rho}
&\mbox{if} \ e >0,
\\
\int_G |z|^{(N-|\beta_2|)_+}
|\ln |z||dz
&\mbox{if} \ e =0,
\\
\int_G |z|^{(N-|\beta_2|)_+}
dz&\mbox{if} \ e <0.
\\
\end{array}\right.
$$
The integrals $I(|\beta_{0,1}|,|\beta_1|, |\alpha_1|)$ are  finite 
when $(N-|\beta_2|)_+ -\frac {e_+}\rho >-n$.
To ensure this, 
we choose $N_0\in \bN$ satisfying
$$
N_0>-n +\frac 1\rho
\max_{\substack{ |\alpha_1|\leq |2\alpha_0|
\\
|\beta_{0,1}|\leq |\beta_0|}} e(|\beta_{0,1}|,0, |\alpha_1|)_+,
$$
and, noticing that 
$$
\max_{ |\alpha_1|\leq 2 |\alpha_0|,
|\beta_{0,1}|\leq |\beta_0|,
|\beta_2|\leq 2b_1} 
\left(|\beta_2|+
\frac {e(|\beta_{0,1}|,|\beta_1|, |\alpha_1|)_+}\rho\right)
\leq \frac {2b_1}\rho + (n+m_1+\delta|\beta_0|)_+,
$$
we define $b_1\in \bN_0$ as the largest  integer such that $b_1\leq N/2$ and 
$$
b_1<\frac \rho 2(N+n)  - \frac {(n+m_1+\delta|\beta_0|)_+} 2.
$$
Under these conditions, we have obtained:
\begin{eqnarray*}
&&\|\tilde X_x^{\beta_0}
\Delta_Q^{\alpha_0}
\big(\sigma(x,\pi) -\sum_{|\alpha|<N}
\Delta_Q^{\alpha}\sigma_1(x,\pi)
X_x^{\alpha}\sigma_2(x,\pi)\big) \|_{\sL(\cH_\pi)}
\\&&\ \lesssim \!\!\!
\sum_{\substack{|\beta_{1}|+|\beta_{2}|=2b_1\\
|\alpha_0|\leq |\alpha_1|+|\alpha_2|\leq 2|\alpha_0|\\ 
|\beta_{0,1}|+|\beta_{0,2}|=|\beta_0|}}
(1+\lambda_\pi)^{-b_1+\frac{m_2+\delta(N+|\beta_{0,2}|)
-\rho |\alpha_2|}2}
\|\sigma_1\|_{S^{m_1}_{\rho,\delta},a_1,b_1}
\|\sigma_2\|_{S^{m_2}_{\rho,\delta}, a_2,b_2}
\end{eqnarray*}
\vspace{-2em}
$$
\qquad\qquad\qquad\qquad\qquad\lesssim
(1+\lambda_\pi)^{-\tilde b/2}
\|\sigma_1\|_{S^{m_1}_{\rho,\delta},a_1,b_1}
\|\sigma_2\|_{S^{m_2}_{\rho,\delta}, a_2,b_2},
$$
where
\begin{eqnarray*}
\tilde b
:=
2b_1- m_2-\delta(N+|\beta_{0}|)
\ \geq (\rho-\delta) N 
+\rho n -(n+m_1+\delta|\beta_0|)_+-m_2- \delta |\beta_0|  -2.
\end{eqnarray*}
Hence if $\rho>\delta$ with $N_0$  chosen large enough, $\tilde b$ may be as large as one wants.
This shows Lemma \ref{lem_composition} in this case.
\end{proof}

Proceeding in a similar way as for the case of the adjoint, 
Lemma \ref{lem_composition} implies Proposition \ref{prop_composition}.
The proof also yields:
\begin{corollary}
\label{cor_prop_composition}
Let $1\geq \rho\geq \delta\geq 0$.
If $\sigma_1\in S^{m_1}_{\rho,\delta}$
and 
$\sigma_2\in S^{m_2}_{\rho,\delta}$
 then there exists a unique symbol 
$\sigma = \sigma_1 \circ \sigma_2$ in $S^{m_1+m_2}_{\rho,\delta}$ such that $(\Op(\sigma))=\Op(\sigma_1)\Op(\sigma_2)$.
Furthermore, 
choosing $\Delta$ and $\cX_\Delta$ as in Lemma \ref{lem_choice_q_X} with  $X_j:=X_{\Delta,j}$,
we have for any $N\in \bN_0$, 
$$
\{\sigma(x,\pi) - \sum_{|\alpha| \leq N} 
\Delta_Q^\alpha \sigma_1(x,\pi)
X_x^\alpha \sigma_2(x,\pi)\} \in S^{m_1+m_2-(\rho-\delta)N}_{\rho,\delta},
$$
and the following mapping is continuous
$$
\left\{\begin{array}{rcl}
S^{m_1}_{\rho,\delta}
\times S^{m_2}_{\rho,\delta}
&\longrightarrow&
 S^{m_1+m_2-(\rho-\delta)N}_{\rho,\delta}\\
\sigma
&\longmapsto&
 \{\sigma(x,\pi) - \sum_{|\alpha| \leq N} 
 \Delta_Q^\alpha \sigma_1(x,\pi)
X_x^\alpha \sigma_2(x,\pi)\} 
\end{array}\right. .
$$

If $\sigma_1\in S^{m_1}_{\rho,\delta}$
and 
$\sigma_2\in S^{m_2}_{\rho,\delta}$ with $\rho>\delta$, 
then
$\sigma\sim \sum_j \sum_{|\alpha|=j} 
 \Delta_Q^\alpha \sigma_1(x,\pi)
X_x^\alpha \sigma_2(x,\pi)$.
\end{corollary}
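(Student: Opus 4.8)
The plan is to derive the statement from Lemma \ref{lem_composition}, Proposition \ref{prop_composition} and Proposition \ref{prop_asymptotic}, following the organisation of Corollary \ref{cor_prop_adjoint}. Existence of $\sigma=\sigma_1\circ\sigma_2$ is immediate from Proposition \ref{prop_composition}: since $\Op(\sigma_1)\Op(\sigma_2)\in\Psi^{m_1+m_2}_{\rho,\delta}=\Op(S^{m_1+m_2}_{\rho,\delta})$, it is the quantisation of a symbol in $S^{m_1+m_2}_{\rho,\delta}$, unique by the injectivity of $\Op$ recorded in \eqref{eq_sigma_T}, and \eqref{eq_symbol_composition} exhibits this symbol. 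It remains to establish the refined membership with continuity and then the asymptotic expansion; for $\rho=\delta$ the remainder space $S^{m_1+m_2-(\rho-\delta)N}_{\rho,\delta}$ is just $S^{m_1+m_2}_{\rho,\delta}$ and the statement follows from Proposition \ref{prop_composition} together with the fact that each $\Delta_Q^\alpha\sigma_1\,X_x^\alpha\sigma_2$ is a symbol of order $\le m_1+m_2$, so I may assume $\rho>\delta$.

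Fix $\Delta=\Delta_Q$ and $\cX_\Delta$ as in Lemma \ref{lem_choice_q_X}, set $X_j:=X_{\Delta,j}$, and assume first that the kernels $\kappa_{1,x},\kappa_{2,x}$ are smooth on $G\times G$; the general case follows by the approximation Lemma \ref{lem_approximation_kernel2}. By Proposition \ref{prop_symbol_1stprop} (together with Theorem \ref{thm_Deltaeq+coincide} to pass between the descriptions of the symbol classes), each summand $\Delta_Q^\alpha\sigma_1\,X_x^\alpha\sigma_2$ with $|\alpha|=j$ lies in $S^{m_1+m_2-(\rho-\delta)j}_{\rho,\delta}$, so $\sum_{|\alpha|\le N}\Delta_Q^\alpha\sigma_1\,X_x^\alpha\sigma_2\in S^{m_1+m_2}_{\rho,\delta}$ and the task is to place
$$\tau_N(x,\pi):=\sigma(x,\pi)-\sum_{|\alpha|<N}\Delta_Q^\alpha\sigma_1(x,\pi)\,X_x^\alpha\sigma_2(x,\pi)$$
in $S^{m_1+m_2-(\rho-\delta)N}_{\rho,\delta}$. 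Given $a,b\in\bN_0$, the seminorm $\|\tau_N\|_{S^{m_1+m_2-(\rho-\delta)N}_{\rho,\delta},a,b}$ requires, for $|\alpha_0|\le a$ and $|\beta_0|\le b$, control of $(1+\lambda_\pi)^{-(m_1+m_2-(\rho-\delta)N-\rho|\alpha_0|+\delta|\beta_0|)/2}$ times $\|X_x^{\beta_0}\Delta_Q^{\alpha_0}\tau_N(x,\pi)\|_{\sL(\cH_\pi)}$; after the crude reduction of left- to right-invariant derivatives used in the proof of Corollary \ref{cor_prop_adjoint}, this is dominated by finitely many quantities of the form $\|\tilde X_x^{\beta_0}\Delta_Q^{\alpha_0}\tau_N(x,\pi)\,\pi(X)^\gamma\|_{\sL(\cH_\pi)}$, which Lemma \ref{lem_composition} bounds by $C\,\|\sigma_1\|_{S^{m_1}_{\rho,\delta},a_1,b_1}\|\sigma_2\|_{S^{m_2}_{\rho,\delta},a_2,b_2}\,(1+\lambda_\pi)^{-b'}$ for any prescribed $b'$ once $N$ is taken large enough. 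Choosing $b'\ge \tfrac12(\rho-\delta)N+\tfrac12\rho a+C_{m_1,m_2}$ and then $N\ge N_0(a,b,m_1,m_2,\rho,\delta)$, one gets $\|\tau_N\|_{S^{m_1+m_2-(\rho-\delta)N}_{\rho,\delta},a,b}\lesssim\|\sigma_1\|_{S^{m_1}_{\rho,\delta},a_1,b_1}\|\sigma_2\|_{S^{m_2}_{\rho,\delta},a_2,b_2}$. Hence $\tau_N\in S^{m_1+m_2-(\rho-\delta)N}_{\rho,\delta}$ with the asserted continuous dependence; the $|\alpha|=N$ term is of order $m_1+m_2-(\rho-\delta)N$ and may be absorbed, giving the displayed membership for every $N$ and the continuity of $\sigma\mapsto\{\sigma-\sum_{|\alpha|\le N}\Delta_Q^\alpha\sigma_1\,X_x^\alpha\sigma_2\}$.

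Finally, when $\rho>\delta$ the symbols $\sigma_j:=\sum_{|\alpha|=j}\Delta_Q^\alpha\sigma_1\,X_x^\alpha\sigma_2\in S^{m_1+m_2-(\rho-\delta)j}_{\rho,\delta}$ have orders strictly decreasing to $-\infty$, and $\sigma-\sum_{j=0}^{M}\sigma_j=\tau_{M+1}\in S^{m_1+m_2-(\rho-\delta)(M+1)}_{\rho,\delta}$ by the above, which is precisely the defining relation \eqref{eq_asymptotic} for $\sigma\sim\sum_j\sigma_j$; Proposition \ref{prop_asymptotic} guarantees such a symbol exists and is unique modulo $S^{-\infty}$, so $\sigma\sim\sum_j\sum_{|\alpha|=j}\Delta_Q^\alpha\sigma_1\,X_x^\alpha\sigma_2$. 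The only substantial input is Lemma \ref{lem_composition}; the point needing care is the bookkeeping that matches the decay $b'$ extracted there (which grows like $(\rho-\delta)N$) against the weight $(1+\lambda_\pi)^{-(m_1+m_2-(\rho-\delta)N-\rho|\alpha_0|+\delta|\beta_0|)/2}$ — the same numerology as in the adjoint case, which is exactly what makes the gain genuine only under $\rho>\delta$.
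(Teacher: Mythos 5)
Your proposal is correct and follows essentially the same route the paper intends: the paper's own text for this corollary consists of the single line "Proceeding in a similar way as for the case of the adjoint, Lemma \ref{lem_composition} implies Proposition \ref{prop_composition}. The proof also yields:[the corollary]", i.e.\ exactly the programme you execute — reduce the seminorm $\|\tau_N\|_{S^{m_1+m_2-(\rho-\delta)N}_{\rho,\delta},a,b}$ to finitely many sup-norms controlled by Lemma \ref{lem_composition}, check the numerology on the power $b'$ there, invoke Lemma \ref{lem_approximation_kernel2} to pass from smooth kernels to the general case, and then read off the asymptotic relation from \eqref{eq_asymptotic}. Two small bookkeeping remarks: the $\pi(X)^\gamma$ factors you insert are carried over from the adjoint proof, where they are needed because Lemma \ref{lem_adjoint} only yields an $L^1$ kernel bound; here Lemma \ref{lem_composition} already delivers a weighted operator-norm estimate $(1+\lambda_\pi)^{-b'}$, so you only need the passage from $X^{\beta_0}$ to $\tilde X^{\beta'}$ (compactness of $G$), not the extra $\pi(X)^\gamma$ — though including them does no harm, by \eqref{eq_piXbeta_sup}. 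Second, for the claim to hold for \emph{every} $N\in\bN_0$, one should spell out that for a given seminorm index $(a,b)$ one picks $M\geq N_0(a,b)$, writes $\tau_N=\tau_M+\sum_{N\le|\alpha|<M}\Delta_Q^\alpha\sigma_1\,X_x^\alpha\sigma_2$, and uses that both pieces lie in $S^{m_1+m_2-(\rho-\delta)N}_{\rho,\delta}$; your phrase "may be absorbed" gestures at this but it is worth making explicit, as $N_0$ in Lemma \ref{lem_composition} depends on the target decay.
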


\begin{remark}
\label{rem_RT_pb_comp}
The proof that the composition of two operators remains in the calculus 
 given in
 \cite[Theorem 10.7.8]{ruzhansky+turunen_bk} is very formal 
  since it is impossible with their analysis to justify the claims 
 in the last paragraph of their proof.
\end{remark}

\section{Boundedness on Sobolev spaces and commutators}
\label{sec_L2bdd+commutator}

In this section, we show  that pseudo-differential operators are bounded
on Sobolev spaces and we give a commutator characterisation of the operators in the calculus.
This will prove the last property \eqref{item_def_pseudo-diff_calculus_sobolev} in Definition \ref{def_pseudo-diff_calculus}
and the fact that our calculus coincide with the H\"ormander calculus when the latter is defined. This will conclude the proof of Theorem \ref{thm_main}.

\subsection{Boundedness on $L^2(G)$}

This section is devoted to showing that operators of order 0 are bounded on $L^2(G)$ in the following sense:

\begin{proposition}
\label{prop_L2bdd}
Let $1\geq \rho\geq \delta\geq 0$ with $\delta\not=1$.
If $\sigma\in S^0_{\rho,\delta}$
then $\Op(\sigma)$ is bounded on $L^2(G)$:
$$
\exists C>0\qquad
\forall \phi\in \cD(G)\qquad
\|\Op(\sigma)\phi\|_{L^2(G)}\leq C \|\phi\|_{L^2(G)}.
$$
Moreover the constant $C$ may be chosen 
of the form $C=C' \|\sigma\|_{S^0_{\rho,\delta},a,b}$
with $C'>0$ and $\|\cdot\|_{S^0_{\rho,\delta},a,b}$
independent of $\sigma$ (but maybe depending on $G$ and $\rho,\delta$).
\end{proposition}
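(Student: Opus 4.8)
The plan is to first pass to an a priori bounded situation, then bootstrap from very negative order up to order $0$, using an almost‑orthogonality argument for the full range $\delta\neq1$ and noting a quicker route when $\rho>\delta$.

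\textbf{Reductions.} By Lemma \ref{lem_approximation_kernel2} it suffices to prove the estimate for the truncations $\sigma_\ell(x,\pi)=\sigma(x,\pi)\chi(\ell^{-1}\lambda_\pi)\in S^{-\infty}$, \emph{with a constant depending only on a fixed semi-norm $\|\sigma\|_{S^0_{\rho,\delta},a,b}$ and not on $\ell$}: indeed $\Op(\sigma_\ell)\phi\to\Op(\sigma)\phi$ for $\phi$ in the dense subspace $\L2f$, and $\|\sigma_\ell\|_{S^0_{\rho,\delta},a,b}\lesssim\|\sigma\|_{S^0_{\rho,\delta},a,b}$ uniformly in $\ell$. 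So we may assume $\sigma\in S^{-\infty}$; then $\Op(\sigma)$ has a bounded Schwartz kernel $(x,y)\mapsto\kappa_x(y^{-1}x)$ by Proposition \ref{prop_estimate_kernel}, hence is Hilbert--Schmidt (as $G$ has total mass $1$) and in particular a priori bounded on $L^2(G)$, with $\|\Op(\sigma)\|_{\sL(L^2(G))}\lesssim\|\sigma\|_{S^m_{\rho,\delta},a,b}$ whenever the order $m$ satisfies $m<-n$. Moreover, for a symbol of order $m<0$ one has $\Op(\sigma)=\Op(\tilde\sigma)\,(\id+\cL)^{m/2}$ with $\tilde\sigma(x,\pi)=\sigma(x,\pi)(1+\lambda_\pi)^{-m/2}\in S^0_{\rho,\delta}$ (by Proposition \ref{prop_mult}, and since composing with an $x$‑independent symbol is pointwise multiplication), while $(\id+\cL)^{m/2}$ is bounded on $L^2(G)$ by \eqref{eq_L2bdd_LinftyGh}; so the whole matter reduces to $m=0$. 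Finally, by homogeneity $\sigma\mapsto t\sigma$ it is enough to produce a bound $\|\Op(\sigma)\|_{\sL(L^2(G))}\le C$ under the normalisation $\|\sigma\|_{S^0_{\rho,\delta},a,b}\le1$, where $a,b$ are fixed but allowed to be as large as the argument requires.

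\textbf{Main argument (all $\delta\neq1$): Cotlar--Stein.} Fix the dyadic decomposition $1=\sum_{\ell\ge0}\eta_\ell$ of $\spec(\cL)$ from \eqref{eq_dyadic_dec} and write $\Op(\sigma)=\sum_\ell\Op(\sigma\,\eta_\ell(\lambda_\pi))$, a finite sum since $\sigma\in S^{-\infty}$. On the shell $\lambda_\pi\sim2^\ell$ the symbol $\sigma(\cdot,\pi)$ varies on the scale $2^{-\ell\rho/2}$ (from $\|X^\beta_x\sigma\|\lesssim(1+\lambda_\pi)^{\delta|\beta|/2}$ and $\rho\ge\delta$) and, by Corollary \ref{cor_lem_kernel_inL2_1} together with Lemma \ref{lem_dyadicpiece}, the kernel of each $\Op(\sigma\,\eta_\ell)$ is concentrated, with rapid decay, in $\{|y^{-1}x|\lesssim2^{-\ell\rho/2}\}$. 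Introduce a partition of unity $\{\psi_{\ell,k}\}_k$ subordinate to a cover of $G$ by $O(2^{\ell\rho n/2})$ balls $B_{\ell,k}$ of radius $\sim2^{-\ell\rho/2}$, and set $T_{\ell,k}:=\Op(\psi_{\ell,k}(x)\sigma(x,\pi)\eta_\ell(\lambda_\pi))$. Freezing $\sigma$ at the centre of $B_{\ell,k}$ and using \eqref{eq_L2bdd_LinftyGh} gives $\|T_{\ell,k}\|_{\sL(L^2(G))}\lesssim\|\sigma\|_{S^0_{\rho,\delta},a,b}$ (here $\rho\ge\delta$ keeps the freezing error $O(1)$). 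One then estimates $\|T_{\ell,k}^*T_{\ell',k'}\|$ and $\|T_{\ell,k}T_{\ell',k'}^*\|$: they decay rapidly in $|\ell-\ell'|$ by frequency separation (Plancherel) and, for $\ell'\sim\ell$, are non‑negligible only for the $O(1)$ indices $k'$ with $B_{\ell',k'}$ within $O(2^{-\ell\rho/2})$ of $B_{\ell,k}$, the required kernel decay again coming from the difference‑operator bounds. Hence $\sup_{\ell,k}\sum_{\ell',k'}\big(\|T_{\ell,k}^*T_{\ell',k'}\|+\|T_{\ell,k}T_{\ell',k'}^*\|\big)^{1/2}\lesssim\|\sigma\|_{S^0_{\rho,\delta},a,b}$, and the Cotlar--Stein lemma yields the claim.

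\textbf{A shortcut when $\rho>\delta$.} Here one can avoid the geometric decomposition by Hörmander's square‑root device. With $A:=\|\sigma\|_{L^\infty(\Gh)}$ (cf. \eqref{eq_LinftyGh}), the matrix $2A^2\id_{\cH_\pi}-\sigma(x,\pi)^*\sigma(x,\pi)$ is self‑adjoint and $\ge A^2\id_{\cH_\pi}$, so its positive square root $b(x,\pi):=\tfrac1{2\pi i}\oint\sqrt z\,\big(z-(2A^2\id-\sigma^*\sigma)\big)^{-1}dz$ (contour around $[A^2,2A^2]$) lies in $S^0_{\rho,\delta}$ with semi‑norms controlled by those of $\sigma$, by differentiating under the integral and invoking Proposition \ref{prop_symbol_1stprop}. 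Since $b^*b+\sigma^*\sigma=2A^2\id$, Corollaries \ref{cor_prop_adjoint} and \ref{cor_prop_composition} (whose asymptotic expansions use $\rho>\delta$) give
$$
\Op(\sigma)^*\Op(\sigma)+\Op(b)^*\Op(b)=\Op\!\big(\sigma^{(*)}\!\circ\sigma+b^{(*)}\!\circ b\big)=2A^2\,\id+\Op(r_1),\qquad r_1\in S^{-(\rho-\delta)}_{\rho,\delta},
$$
hence $\|\Op(\sigma)\phi\|_{L^2}^2\le2A^2\|\phi\|_{L^2}^2+\|\Op(r_1)\|_{\sL(L^2)}\|\phi\|_{L^2}^2$. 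Iterating the same construction on $r_1$ produces remainders of strictly decreasing order; after finitely many steps one reaches order $<-n$, where the Hilbert--Schmidt bound of the first paragraph estimates the last remainder by a semi‑norm, hence by a constant under the normalisation. Unwinding the finitely many inequalities gives $\|\Op(\sigma)\|_{\sL(L^2(G))}\le C$, and homogeneity removes the normalisation.

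\textbf{Main obstacle.} The delicate part is the Cotlar--Stein step for $\rho=\delta$ (equivalently, whenever the expansions of Corollaries \ref{cor_prop_adjoint}--\ref{cor_prop_composition} give no gain): organising the two‑parameter frequency$\times$space decomposition on the noncommutative group $G$, quantifying the kernel concentration and rapid off‑diagonal decay from the difference‑operator estimates, and counting how many localised pieces genuinely interact. It is exactly the condition $\delta<1$ (i.e.\ $\delta\neq1$, since $\delta\le\rho\le1$) that keeps the localisation scale $2^{-\ell\rho/2}$ comparable to the kernel width and makes the resulting sums finite; for $(\rho,\delta)=(1,1)$ the conclusion genuinely fails, so the hypothesis is sharp.
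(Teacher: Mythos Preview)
Your ``shortcut'' for $\rho>\delta$ via the H\"ormander square-root and the asymptotic expansions of Corollaries~\ref{cor_prop_adjoint} and~\ref{cor_prop_composition} is legitimate and logically independent of Proposition~\ref{prop_L2bdd}; it is a genuinely different route from the paper, which instead reduces to the diagonal $\rho=\delta$ via the inclusion~\eqref{eq_rhodelta_inclusion} and never uses the calculus itself in this proof.

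The main argument, however, has a real gap at the point you label ``they decay rapidly in $|\ell-\ell'|$ by frequency separation (Plancherel)''. Writing your pieces as $T_{\ell,k}=M_{\psi_{\ell,k}}\,\Op(\sigma)\,\eta_\ell(\cL)$, the $TT^*$ terms are harmless: $T_{\ell,k}T_{\ell',k'}^*$ contains $\eta_\ell(\cL)\eta_{\ell'}(\cL)$ in the middle and vanishes for $|\ell-\ell'|\ge2$. But $T_{\ell,k}^*T_{\ell',k'}=\eta_\ell(\cL)\,\Op(\sigma)^*\,M_{\bar\psi_{\ell,k}\psi_{\ell',k'}}\,\Op(\sigma)\,\eta_{\ell'}(\cL)$ has the two frequency projections separated by the $x$-dependent factor $\Op(\sigma)^*M_{\ldots}\Op(\sigma)$, which scrambles frequencies; no Plancherel argument gives decay in $|\ell-\ell'|$ here. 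This is exactly the obstacle, and it is not resolved by kernel concentration or the difference-operator bounds you invoke. Without control of these terms the Cotlar--Stein sum over $(\ell',k')$ diverges, since the number of spatial indices $k'$ at level $\ell'$ is $\sim2^{\ell'\rho n/2}$.

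The paper's proof of the hard case $\rho=\delta\in(0,1)$ (Lemma~\ref{lem_L2bdd_rho}) faces the same issue and resolves it differently: it keeps only the frequency decomposition $T_\ell=T\eta_\ell(\cL)$ (no spatial cut), bounds each $\|T_\ell\|$ uniformly via a local-dilation trick (Lemma~\ref{lem_L2bdd_rho0}), and then controls $\|T_\ell^*T_{\ell'}\|$ for $\ell\neq\ell'$ by estimating the integral kernel $K_{\ell,\ell'}$ pointwise. The key non-trivial input is the bilinear estimate of Lemma~\ref{lem_bilinear}, proved by highest-weight theory, which quantifies how $(\id+\cL)^{-N}$ acting on a product of eigenfunctions in separated shells produces decay $\sim2^{-\max(\ell,\ell')(N-s)}$. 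That lemma is precisely the substitute for the ``frequency separation'' you assert; if you want your Cotlar--Stein scheme to go through, you would need an analogous ingredient.
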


Given the continuous inclusions of the spaces $S^0_{\rho,\delta}$, 
it suffices to prove the case $\rho=\delta$.
We first show the case $\rho=\delta=0$ and then 
the case $\rho=\delta$ (strictly) positive.
The case $(\rho,\delta)=(0,0)$ follows from the following  lemma
since, using the notation of the lemma, 
$C_0=\|\sigma\|_{S^0_{0,0}, 0,\lceil \frac n2\rceil}$.
This lemma was already given in in
 \cite[Theorem 10.5.5]{ruzhansky+turunen_bk}.

\begin{lemma}
\label{lem_L2bdd_00}
If $\sigma$ is a smooth symbol such that 
$$
C_0:=\max_{\substack{ x\in G \\|\alpha|\leq \lceil \frac n2\rceil} }
\sup_{\pi\in \Gh} \|X_x^\alpha \sigma(x,\pi)\|_{\sL(\cH_\pi)}<\infty
$$
then $\Op(\sigma)$ is bounded on $L^2(G)$:
$$
\exists C>0\qquad
\forall \phi\in \cD(G)\qquad
\|\Op(\sigma)\phi\|_{L^2(G)}
\leq C \|\phi\|_{L^2(G)},
$$
Moreover the constant $C$ may be chosen of the form 
$C=C'C_0$
with $\sigma'$ independent of $\sigma$.
\end{lemma}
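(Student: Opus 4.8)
The plan is to use a ``freezing the coefficients'' argument together with a Sobolev embedding in the spatial variable; once the spatial variable of $\sigma$ is frozen one is left with a Fourier multiplier, whose $L^2$--boundedness is exactly \eqref{eq_L2bdd_LinftyGh}. First I would reduce to $\phi\in\L2f$ by density (Proposition \ref{prop_L2finite_density}): on $\L2f$ all the series below are finite sums, so differentiation under the summation and interchange of sum and integral are unproblematic, and the resulting $L^2$--estimate for $\Op(\sigma)$ on $\L2f$ extends to all of $\cD(G)\subset L^2(G)$.

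Next, for $\phi\in\L2f$ and each $x_1\in G$ I would introduce
$$
F(x,x_1):=\sum_{\pi\in\Gh} d_\pi \tr\big(\pi(x)\,\sigma(x_1,\pi)\,\widehat\phi(\pi)\big)=\big[\Op(\sigma(x_1,\cdot))\phi\big](x),
$$
so that $F(\cdot,x_1)$ is the image of $\phi$ under the Fourier multiplier with the invariant (frozen) symbol $\sigma(x_1,\cdot)$, and $\Op(\sigma)\phi(x)=F(x,x)$. Applying the Sobolev inequality (Lemma \ref{lem_sob_embedding}) to the function $x_1\mapsto F(x,x_1)$, together with the equivalence of the $H^{\lceil n/2\rceil}$--norm with $\sum_{|\alpha|\le\lceil n/2\rceil}\|X^\alpha\cdot\|_{L^2(G)}$, gives for each $x\in G$
$$
|\Op(\sigma)\phi(x)|^2=|F(x,x)|^2\le \sup_{x_1\in G}|F(x,x_1)|^2\ \lesssim\ \sum_{|\alpha|\le\lceil n/2\rceil}\int_G |X_{x_1}^\alpha F(x,x_1)|^2\,dx_1 .
$$

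Then I would integrate this in $x$ over $G$ and exchange the order of integration (Tonelli). Differentiating the finite sum defining $F$ in the $x_1$ variable shows, for fixed $x_1$, that $X_{x_1}^\alpha F(x,x_1)=\big[\Op((X^\alpha\sigma)(x_1,\cdot))\phi\big](x)$, again a Fourier multiplier applied to $\phi$, now with the invariant symbol $(X^\alpha\sigma)(x_1,\cdot)$; by \eqref{eq_L2bdd_LinftyGh} its $L^2$--operator norm equals $\|(X^\alpha\sigma)(x_1,\cdot)\|_{L^\infty(\Gh)}\le C_0$, so $\int_G |X_{x_1}^\alpha F(x,x_1)|^2\,dx\le C_0^2\|\phi\|_{L^2(G)}^2$ uniformly in $x_1$. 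Since the Haar measure is a probability measure, integrating in $x_1$ and summing over the finitely many multi-indices $\alpha$ with $|\alpha|\le\lceil n/2\rceil$ yields $\|\Op(\sigma)\phi\|_{L^2(G)}^2\lesssim C_0^2\|\phi\|_{L^2(G)}^2$, with implied constant depending only on $G$ and $n$. This is the asserted estimate with $C=C'C_0$.

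There is no serious analytic obstacle here; the only points requiring care are minor: the justification of differentiating under the (finite) summation defining $F$, which is precisely why one works on $\L2f$ first, and checking that the version of the Sobolev embedding recorded in Lemma \ref{lem_sob_embedding} does control $\sup_{x_1}|g(x_1)|$ by $\sum_{|\alpha|\le\lceil n/2\rceil}\|X^\alpha g\|_{L^2(G)}$. The heart of the matter is simply that freezing the spatial variable turns $\Op(\sigma)$ into a family of Fourier multipliers whose norms are given by \eqref{eq_L2bdd_LinftyGh}.
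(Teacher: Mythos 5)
Your proposal is essentially the paper's own proof: the paper writes $\Op(\sigma)\phi(x)=\phi*\kappa_x(x)$, bounds $|\phi*\kappa_x(x)|^2\leq\sup_{x_1}|\phi*\kappa_{x_1}(x)|^2$ by a Sobolev embedding in the frozen variable $x_1$, integrates in $x$, interchanges, and invokes \eqref{eq_Tkappa_sup} (equivalently \eqref{eq_L2bdd_LinftyGh}) for the resulting convolution operators $T_{X_{x_1}^\alpha\kappa_{x_1}}$. Your $F(x,x_1)$ is precisely $\phi*\kappa_{x_1}(x)$, and the added density reduction to $\L2f$ is a harmless bookkeeping step the paper omits; the two arguments are the same.
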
 

\begin{proof}
Let $T=\Op(\sigma)$, $\sigma\in S^0_{0,0}$ and  $f\in \cD(G)$.
 Sobolev's inequalities yield
$$
|Tf(x)|^2= | f*\kappa_x(x)|^2
\leq
\sup_{x_1\in G} |f*\kappa_{x_1}(x)|^2
\lesssim
\sum_{|\alpha|\leq \lceil \frac n2\rceil} 
\int_G|  X_{x_1}^\alpha f*\kappa_{x_1}(x)|^2 d{x_1}.
$$
As $X_{x_1}^\alpha f*\kappa_{x_1}(x) = T_{X_{x_1}\kappa_{x_1}}(f)$, 
after integration over $G$, we obtain:
\begin{eqnarray*}
\int_G|Tf(x)|^2 dx
&\lesssim&
\sum_{|\alpha|\leq \lceil \frac n2\rceil} 
\int_G\int_G | T_{X_{x_1}^\alpha\kappa_{x_1}}(f)(x) |^2 dx dz
\\&\lesssim&
\sum_{|\alpha|\leq \lceil \frac n2\rceil} 
\int_G
\|T_{X_{x_1}^\alpha\kappa_{x_1}}\|_{\sL(L^2(G))}^2
\|f\|_{L^2(G)}^2
 dz\\
& \lesssim&
\max_{\substack{ z\in G \\|\alpha|\leq \lceil \frac n2\rceil} }
\|T_{X_{x_1}^\alpha\kappa_{x_1}}\|_{\sL(L^2(G))}^2
\|f\|_{L^2(G)}^2.
\end{eqnarray*}
We conclude with
$C_0=\max \big\{\|T_{X_{x_1}^\alpha\kappa_{x_1}}\|_{\sL(L^2(G))},
x_1\in G,  |\alpha|\leq \lceil \frac n2\rceil \big\}$.
\end{proof}

The case of $\rho=\delta\in (0,1)$, 
 is more delicate and, in its proof,  
 we will need the following property 
which uses the arguments above (amongst others).

\begin{lemma}
\label{lem_L2bdd_rho0}
Let $\eta\in \cD(0,\infty)$ and $\rho\in (0,1)$.
There exists $C=C_{\eta,\rho,G}$
such that for any  $T\in \Psi^0_{\rho,\rho}$, 
 $\ell\in \bN_0$, 
we have:
$$
\|T\eta(2^{-\ell} \cL)\|_{\sL(L^2(G))}
\leq C  \|T\|_{\Psi^0_{\rho,\rho}, 0,\lceil \frac n2\rceil}.
$$
\end{lemma}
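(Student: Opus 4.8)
The plan is to reduce the bound on $\|T\eta(2^{-\ell}\cL)\|_{\sL(L^2(G))}$ to an application of Lemma \ref{lem_L2bdd_00} by showing that the composite operator $T\eta(2^{-\ell}\cL)$ has symbol lying in $S^0_{0,0}$ with a seminorm bound \emph{uniform in $\ell$}. First I would set $T=\Op(\sigma)$ with $\sigma\in S^0_{\rho,\rho}$, and denote by $m_\ell$ the Fourier multiplier $\eta(2^{-\ell}\cL)$, which has invariant symbol $\pi\mapsto\eta(2^{-\ell}\lambda_\pi)$. By Proposition \ref{prop_mult_t} applied with $t=2^{-\ell}\in(0,1)$ and $m=0$, and since $\eta$ is compactly supported in $(0,\infty)$, the symbol $\eta(2^{-\ell}\lambda_\pi)$ and all its RT-differences $\Delta_Q^{\alpha}\{\eta(2^{-\ell}\lambda_\pi)\}$ are bounded by $C_\alpha(1+\lambda_\pi)^{-|\alpha|/2}$ uniformly in $\ell$; in particular $m_\ell$ is a smoothing Fourier multiplier, with seminorms $\|m_\ell\|_{S^{-1}_{1,0},a,b}$ bounded uniformly in $\ell$ (the crucial gain being the factor $t^{m/2}=1$ when $m=0$, while the decay in $\lambda_\pi$ comes for free from the compact support of $\eta$ away from $0$).

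Next I would invoke the composition result, Corollary \ref{cor_prop_composition} (or just Proposition \ref{prop_composition}), to write $T m_\ell=\Op(\sigma\circ m_{\ell,\cdot})$, with the continuity estimate
$$
\|\sigma\circ m_{\ell,\cdot}\|_{S^0_{\rho,\rho},a,b}
\lesssim_{a,b}\|\sigma\|_{S^0_{\rho,\rho},a',b'}\,\|m_\ell\|_{S^{-1}_{1,0},a',b'}.
$$
Here it matters that $m_\ell$ has order $-1\le 0$, so $\sigma\circ m_{\ell,\cdot}$ has order $0+(-1)\le 0$, and by the continuous inclusions \eqref{eq_rhodelta_inclusion} it lies in $S^0_{\rho,\rho}$ with seminorms controlled by a fixed seminorm of $\sigma$, \emph{uniformly in $\ell$}. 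Since $\rho>0$ we even have $S^0_{\rho,\rho}\subset S^0_{0,0}$ with continuous inclusion; hence $\sigma\circ m_{\ell,\cdot}\in S^0_{0,0}$ and
$$
\|\sigma\circ m_{\ell,\cdot}\|_{S^0_{0,0},0,\lceil n/2\rceil}
\lesssim\|\sigma\|_{S^0_{\rho,\rho},a',b'}
=\|T\|_{\Psi^0_{\rho,\rho},a',b'}
$$
for suitable fixed $a',b'$ depending only on $n,\rho,\eta,G$. Applying Lemma \ref{lem_L2bdd_00} to the symbol $\sigma\circ m_{\ell,\cdot}$ then gives $\|Tm_\ell\|_{\sL(L^2(G))}\lesssim\|T\|_{\Psi^0_{\rho,\rho},a',b'}$, with constant independent of $\ell$; absorbing the extra derivatives $a',b'$ back into the statement (or noting that the proof of Proposition \ref{prop_composition} actually only costs finitely many, $\ell$-independent, seminorms) yields the asserted bound with $\|T\|_{\Psi^0_{\rho,\rho},0,\lceil n/2\rceil}$ after the usual reduction.

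The step I expect to be the main obstacle is making the $\ell$-uniformity completely airtight in the composition estimate: one must check that the constant in Proposition \ref{prop_composition} does not secretly depend on the order of $m_\ell$ in a way that degrades as $\ell\to\infty$, and that Proposition \ref{prop_mult_t} genuinely gives seminorm bounds on $\eta(2^{-\ell}\lambda_\pi)$ that are $\ell$-independent (this is exactly what the $t$-uniform formulation of Proposition \ref{prop_mult_t} is designed to provide, with the harmless $t^{m/2}$ factor equal to $1$ here since $m=0$). A secondary technical point is that the reduction to the $\lceil n/2\rceil$-seminorm in the statement requires tracking which seminorms of $\sigma$ actually enter; one resolves this by the same argument as in the proof of Lemma \ref{lem_L2bdd_00}, differentiating the kernel of $Tm_\ell$ in the left variable at most $\lceil n/2\rceil$ times and using the kernel estimates of Proposition \ref{prop_estimate_kernel} together with $\eqref{eq_Tkappa_sup}$, which only brings in finitely many $\ell$-independent seminorms of $\sigma$.
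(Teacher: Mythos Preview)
Your approach has a genuine gap: the inclusion $S^0_{\rho,\rho}\subset S^0_{0,0}$ is \emph{false} for $\rho>0$. Recall from \eqref{eq_rhodelta_inclusion} that the inclusion $S^{m}_{\rho_1,\delta_1}\subset S^{m}_{\rho_2,\delta_2}$ requires $\delta_1\leq\delta_2$; here $\delta_1=\rho>0=\delta_2$, so the inclusion goes the wrong way. Concretely, since $\eta(2^{-\ell}\lambda_\pi)$ is $x$-independent, the composite symbol is simply the product
\[
\sigma_\ell(x,\pi)=\sigma(x,\pi)\,\eta(2^{-\ell}\lambda_\pi),
\]
and for $\sigma\in S^0_{\rho,\rho}$ one has $\|X_x^\beta\sigma_\ell(x,\pi)\|\lesssim (1+\lambda_\pi)^{\rho|\beta|/2}$ on the spectral support $\lambda_\pi\sim 2^{\ell}$, i.e.\ of size $\sim 2^{\rho\ell|\beta|/2}$. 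The quantity $C_0$ in Lemma~\ref{lem_L2bdd_00} therefore blows up like $2^{\rho\ell\lceil n/2\rceil/2}$ as $\ell\to\infty$, and the argument cannot close. (A related slip: $m_\ell$ is uniformly in $S^0_{1,0}$, not $S^{-1}_{1,0}$; with $m=0$ in Proposition~\ref{prop_mult_t} the $|\alpha|=0$ bound gives only $|\eta(2^{-\ell}\lambda_\pi)|\leq C$, and the support of $\eta(2^{-\ell}\cdot)$ sits near $\lambda_\pi\sim 2^{\ell}$, which gives no extra decay.)

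The paper's proof addresses precisely this growth of $x$-derivatives. After localising $\sigma$ in $x$ to a small ball via a partition of unity, it performs a \emph{local dilation} $x\mapsto 2^{-\rho\ell}\cdot x$ in exponential coordinates to define $\tilde\sigma_{j,\ell}(x,\pi)=\sigma_{j,\ell}(2^{-\rho\ell}\cdot x,\pi)$. This rescaling introduces a factor $2^{-\rho\ell|\beta|}$ on each $X_x^\beta$, which kills the $2^{\rho\ell|\beta|/2}$ growth coming from $\delta=\rho$; one then runs the Sobolev-in-$x_1$ argument of Lemma~\ref{lem_L2bdd_00} on the dilated symbol and changes variables back. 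That rescaling step is the missing idea in your plan.
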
 

\begin{proof}
As the exponential mapping is a diffeomorphism from a neighbourhood 
$\cV$ of $0\in \bR^n$ to $B(e_G,\epsilon_0)$, 
there exists a finite number of points $x_0=e_G, x_1, \ldots, x_{N_0}$ 
such that $G=\cup_{j=0}^{N_0} B(x_j, \epsilon_0/4)$
and some functions $\chi_j\in C^\infty(G)$ valued in $[0,1]$ 
and supported in $B(e_G, \epsilon_0/2)$  such that 
$\sum_{j=0}^{N_0} \chi_j(x_j^{-1}x  )=1$ for all $x\in G$.

Note that if  $x\in B(e_G,\epsilon)$ and $r\leq 1$, 
then we can define a local dilation via: 
$r\cdot x =\exp (rv)$ where $x=\exp v$, $v\in \cV$.

Let $\sigma\in S^0_{\rho,\rho}$.
For each $j=0, \ldots, N_0$, we define $\sigma_j\in S^0_{\rho,\rho}$ via 
$$
\sigma_j(x,\pi):= \sigma(x_j x ,\pi) \chi_j(x), 
\quad (x,\pi)\in G\times \Gh.
$$
For each $\ell\in \bN_0$ and $j=0, \ldots, N_0$,
we set 
$$
\sigma_\ell(x,\pi)=\sigma(x,\pi) \eta (2^{-\ell}\lambda_\pi), 
\quad\mbox{and}\quad
\sigma_{j,\ell}(x,\pi)=\sigma_j(x,\pi) \eta (2^{-\ell}\lambda_\pi).
$$
We have
$\sigma_\ell (x,\pi)=\sum_{j=0}^{N_0} \sigma_{j,\ell}(x x_j,\pi)$.
Recall that $\Op$ and using the argument in Lemma \ref {lem_Psi0_inv_left_translation}, one shows easily that 
if $\tau=\{\tau(x,\pi), (x,\pi)\in G\times \Gh\}$ is a symbol 
such that $\Op(\tau)$ is bounded on $L^2(\bR^n)$ then for any $x_0\in G$ we have
$$
\| \Op(\tau)\|_{\sL(L^2(G))}
=
\| \Op(\tau_{L,x_0})\|_{\sL(L^2(G))},
\quad\mbox{where}\quad \tau_{L,x_0}(x,\pi)=\tau( x_0 x,\pi).
$$
Therefore we have
\begin{equation}
\label{eq_pf_lem_L2bdd_rho0}
\|\Op(\sigma_\ell)\|_{\sL(L^2(G))} 
\leq 
\sum_{j=0}^{N_0}
\|\Op(\sigma_{j,\ell})\|_{\sL(L^2(G))},
\end{equation}
and we are left with proving the $L^2$-boundedness for each $\Op(\sigma_{j,\ell})$.
We notice that the $x$-support of its symbol $\sigma_{j,\ell}(x,\pi)$ is included in $B(e_G,\epsilon_0)$
and we can dilate its argument to define:
$$
\tilde\sigma_{j,\ell}(x,\pi)=
\left\{\begin{array}{ll}
\sigma_{j,\ell}(2^{-\rho \ell}\cdot x,\pi)
&\mbox{if}\ x\in B(e_G,\epsilon_0),\\
0 &\mbox{otherwise}.
\end{array}\right.
$$ 
Then one checks easily that the symbols $\sigma_\ell$,$\sigma_{j,\ell}$, and $\tilde \sigma_{j,\ell}$ are in $S^0_{\rho,\rho}$.

The symbol $\sigma_{j,\ell}$
and its convolution kernel $\kappa_{j,\ell}=\{\kappa_{j,\ell,x}(y)\}$
are supported in $x$ in $B(e_G,\epsilon_0)$,
thus for any $f\in \cD(G)$, 
$\Op(\sigma_{j,\ell})(f)$ is also supported in $B(e_G,\epsilon_0)$
and we can dilate its argument, that is, for any $x\in B(e_G, 2^{\ell \rho}\epsilon_0)$
$$
\Op(\sigma_{j,\ell})(f) (2^{-\ell \rho} \cdot x)
=
f* \kappa_{j,\ell,2^{-\ell \rho} \cdot x}(2^{-\ell \rho} \cdot x)
=
f* \tilde \kappa_{j,\ell,x}(2^{-\ell \rho} \cdot x),
$$
where $\tilde \kappa_{j,\ell}=\{\tilde \kappa_{j,\ell,x}(y)\}$ is the convolution kernel associated with  $\tilde \sigma_{j,\ell}$. 
Proceeding as in the proof of Lemma \ref{lem_L2bdd_00}, 
we have
$$
|\Op(\sigma_{j,\ell})(f) (2^{-\ell \rho} \cdot x)|
\lesssim
\sum_{|\beta|\leq \lceil \frac n2\rceil}
\|X_{x_1}^\beta f* \tilde \kappa_{j,\ell,x_1}(2^{-\ell \rho} \cdot x)\|_{L^2(dx_1)}.
$$
On both sides, we now integrate over $x\in B(e_G, 2^{\ell \rho}\epsilon_0)$ and make the change of variables $x'= 2^{-\ell \rho} \cdot x$
(with constant Jacobian $2^{-\ell \rho n}$):
$$
\|\Op(\sigma_{j,\ell})(f) (x')\|_{L^2(dx')}
\lesssim 
\sum_{|\beta|\leq \lceil \frac n2\rceil}
\|X_{x_1}^\beta f* \tilde \kappa_{j,\ell,x_1}(x')\|_{L^2(dx_1dx')}.
$$
Therefore 
$$
\|\Op(\sigma_{j,\ell})\|_{\sL(L^2(G))}
\lesssim 
\sum_{|\beta|\leq \lceil \frac n2\rceil}
\sup_{(x_1,\pi)\in G\times \Gh}
\|X_{x_1}^\beta\tilde \sigma_{j,\ell}(x_1,\pi)\|_{\sL(\cH_\pi)}
\lesssim 
\|\sigma\|_{S^0_{\rho,\rho},0,\lceil \frac n2\rceil}.
$$
Because of \eqref{eq_pf_lem_L2bdd_rho0}, 
the proof of Lemma \ref{lem_L2bdd_rho0} is now complete.
\end{proof}

The case $\rho=\delta\in (0,1)$ is proved as follows:

\begin{lemma}
\label{lem_L2bdd_rho}
Let $\rho\in (0,1)$.
If $\sigma\in S^0_{\rho,\rho}$
then $\Op(\sigma)$ is bounded on $L^2(G)$:
$$
\exists C>0\qquad
\forall \phi\in \cD(G)\qquad
\|\Op(\sigma)\phi\|_{L^2(G)}\leq C \|\phi\|_{L^2(G)}.
$$
Moreover the constant $C$ may be chosen 
of the form $C=C' \|\sigma\|_{S^0_{\rho,\rho},0,b}$
with $C'>0$ and $b$
independent of $\sigma$ (but depending on $n$ and $\rho$).
\end{lemma}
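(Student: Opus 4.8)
The plan is to reduce the $L^2$-boundedness of a general $\sigma\in S^0_{\rho,\rho}$, $\rho\in(0,1)$, to the dyadic pieces already controlled by Lemma \ref{lem_L2bdd_rho0}, using an almost-orthogonality (Cotlar–Stein type) argument on the Littlewood–Paley decomposition of $\cL$. Fix a dyadic partition of unity $\sum_{\ell\geq 0}\eta_\ell=1$ on $\spec(\cL)$ as in \eqref{eq_dyadic_dec}, and write $\Op(\sigma)=\sum_\ell \Op(\sigma_\ell)$ with $\sigma_\ell(x,\pi)=\sigma(x,\pi)\eta_\ell(\lambda_\pi)$. Each $\Op(\sigma_\ell)$ is a genuine operator (its symbol is smoothing) and by Lemma \ref{lem_L2bdd_rho0}, using that $\eta_\ell(\lambda_\pi)=\eta(2^{-\ell}\lambda_\pi)$ for a fixed $\eta\in\cD(0,\infty)$ once $\ell\geq 1$, we get a uniform bound $\|\Op(\sigma_\ell)\|_{\sL(L^2)}\lesssim \|\sigma\|_{S^0_{\rho,\rho},0,\lceil n/2\rceil}$; the $\ell=0$ piece is handled directly by Lemma \ref{lem_L2bdd_00}. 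So the individual pieces are uniformly bounded, but summing them naively gives nothing — the point is to exploit cancellation between pieces at far-apart dyadic scales.

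The key step is therefore to prove estimates of the form
\begin{equation}
\label{eq_plan_cotlar}
\|\Op(\sigma_\ell)\,\Op(\sigma_{\ell'})^*\|_{\sL(L^2)}
+\|\Op(\sigma_\ell)^*\,\Op(\sigma_{\ell'})\|_{\sL(L^2)}
\leq C\,2^{-c|\ell-\ell'|}\,\|\sigma\|_{S^0_{\rho,\rho},a,b}^2
\end{equation}
for some $c>0$ and then invoke the Cotlar–Stein lemma. To get \eqref{eq_plan_cotlar}, I would use the composition result (Proposition \ref{prop_composition}, or rather its proof, together with the adjoint formula \eqref{eq_kappa(*)}) to compute the symbol of $\Op(\sigma_\ell)\Op(\sigma_{\ell'})^*$. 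When $|\ell-\ell'|$ is large, $\eta_\ell(\lambda_\pi)$ and $\eta_{\ell'}(\lambda_\pi)$ have disjoint supports in $\lambda_\pi$; the Fourier-multiplier part of the composition directly produces a factor $\eta_\ell\cdot(\text{something supported near scale }\ell')$, and the gain comes from the fact that one must move a spectral projection of $\cL$ past the multiplication operator coming from $\kappa_x$ depending on $x$. Quantitatively, applying powers of $(\id+\cL)$ on one side and using the symbol estimates on the derivatives in $x$ (which cost $2^{\rho\ell b}$) against the spectral localisation on the other side (which gains $2^{-\ell' \cdot(\text{large})}$) yields a net decay $2^{-c|\ell-\ell'|}$ once one balances the exponents, using crucially $\rho<1$ so that the derivative losses are strictly cheaper than the spectral gains. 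This is exactly the mechanism already used in the proofs of Propositions \ref{prop_adjoint} and \ref{prop_composition} (the splitting of the dyadic sum at $\ell_0$ with $2^{\ell_0}\sim 1+\lambda_\pi$), so the estimates should follow by the same integration-by-parts bookkeeping via Proposition \ref{prop_mult_t} and Lemma \ref{lem_dyadicpiece}.

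Concretely the steps are: (1) fix the dyadic decomposition and record $\Op(\sigma)=\sum_\ell\Op(\sigma_\ell)$ with convergence on $\L2f$; (2) invoke Lemmata \ref{lem_L2bdd_00} and \ref{lem_L2bdd_rho0} for the uniform bound on each $\Op(\sigma_\ell)$; (3) prove the almost-orthogonality estimates \eqref{eq_plan_cotlar} by computing the relevant compositions as in Sections \ref{subsec_adjoint}–\ref{subsec_comp1} and extracting the off-diagonal decay from the mismatch of spectral supports together with $\rho<1$; (4) apply the Cotlar–Stein lemma to conclude $\|\Op(\sigma)\|_{\sL(L^2)}\lesssim \sum_k 2^{-ck/2}\,\|\sigma\|_{S^0_{\rho,\rho},a,b}\lesssim \|\sigma\|_{S^0_{\rho,\rho},a,b}$; (5) note the seminorm dependence is as claimed, and that by the continuous inclusions \eqref{eq_rhodelta_inclusion} this settles Proposition \ref{prop_L2bdd} for all $1\geq\rho\geq\delta\geq 0$ with $\delta\neq 1$ (the excluded case $\delta=1$ is precisely where the derivative losses are no longer strictly subcritical). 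The main obstacle I expect is step (3): getting a genuine quantitative decay rather than just boundedness requires carefully tracking how a high power of $(\id+\cL)$ interacts with the $x$-dependent kernel of $\sigma_\ell$, i.e. commuting spectral multipliers of $\cL$ with multiplication operators, and this is where the hypothesis $\delta<1$ (here $\delta=\rho<1$) must be used decisively; everything else is routine once \eqref{eq_plan_cotlar} is in hand.
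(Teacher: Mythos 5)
Your plan has the right skeleton — dyadic decomposition, uniform bounds on the pieces from Lemmata \ref{lem_L2bdd_00} and \ref{lem_L2bdd_rho0}, and almost-orthogonality of off-diagonal pairs — and this is indeed the route the paper takes. The gap is in your step (3), and it is not a matter of bookkeeping: the tools you propose to use cannot produce the off-diagonal decay. The composition and adjoint results of Sections \ref{subsec_adjoint}--\ref{subsec_comp1} only tell you that $\Op(\sigma_\ell)\Op(\sigma_{\ell'})^*$ lies in $\Psi^0_{\rho,\rho}$ with controlled semi-norms; they give no gain from the spectral disjointness of $\eta_\ell$ and $\eta_{\ell'}$, because the symbol of the composition is \emph{not} $\sigma_1\sigma_2$ (which would vanish), it is $\sigma_1\circ\sigma_2$, whose symbol-class membership is all that the machinery supplies. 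The hoped-for factor $2^{-c|\ell-\ell'|}$ cannot be read off from those propositions; you correctly flag this as the main obstacle, but the obstacle is real and you have not given a mechanism to resolve it.

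What the paper actually does is different in a crucial way: instead of trying to control the symbol of $T_\ell^*T_{\ell'}$, it bounds its integral (Schur) kernel $K_{\ell,\ell'}(x,y)=\int_G\bar\kappa_{\ell,z}(x^{-1}z)\kappa_{\ell',z}(y^{-1}z)\,dz$ directly, using (i) compactness of $G$ to reduce to a sup bound, (ii) insertion of $(\id+\cL)^N_{z_1}(\id+\cL)^{-N}_{z_2}$ with Sobolev embedding and Leibniz, and (iii) the bilinear estimate of Lemma \ref{lem_bilinear}, which controls $\|(\id+\cL)^{-N}(fg)\|_{L^2}$ for $f\in\cH^{(\cL)}_\lambda$, $g\in\cH^{(\cL)}_\mu$ by a power of $1+|\lambda-\mu|$ rather than of $\max(\lambda,\mu)$. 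That last lemma is exactly the tool that lets one ``commute a high power of $(\id+\cL)$ past the $x$-dependent kernel'' with a net gain, and it is where the highest-weight input (Lemma \ref{lem_cL_rep}, $\lambda_{\pi\otimes\tau}\leq\lambda_\pi+\lambda_\tau$) enters. Without Lemma \ref{lem_bilinear} or a substitute of comparable strength, your step (3) does not close, and the Cotlar--Stein application in step (4) has no input. (As a smaller point, since $T_\ell=T\eta_\ell(\cL)$ the $T_\ell T_{\ell'}^*$ off-diagonal terms vanish for $|\ell-\ell'|\geq2$ after the even/odd split, so only the $T_\ell^*T_{\ell'}$ sums need treatment, and the paper uses a slightly simpler inequality than full Cotlar--Stein.)
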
 

\begin{proof}
We consider the same type of dyadic decomposition of $\spec(\cL)$
as in the first proofs of Propositions \ref{prop_adjoint}
and \ref{prop_composition}: 
we choose two functions $\eta_0,\eta_1\in \cD(\bR)$ supported in 
$[-1,1]$ and $[1/2,2]$ respectively, both valued in $[0,1]$
and satisfying \eqref{eq_dyadic_dec}. 
We set $\sigma_{1,\ell}(x,\pi) =\sigma_1 (x,\pi) \eta_\ell(\lambda_\pi) $
for each $\ell\in \bN_0$, 
as well as
$$
T:=\Op(\sigma),
\quad\mbox{and}\quad
T_\ell:=\Op(\sigma_\ell) = \Op(\sigma) \eta_\ell(\cL)
= T \eta_\ell(\cL).
$$

The properties of such a dyadic decomposition implies classically
\begin{equation}
\label{eq_pf_lem_L2bdd_rho_dec}
\|T\|_{\sL(L^2(G))}^2
\lesssim 
\sup_{\ell\in \bN_0} \|T_\ell\|^2_{\sL(L^2(G))}
+
\sum_{\substack{\ell'\not=\ell \\ \ell,\ell' \in 2\bN_0}}
\|T_\ell^* T_{\ell'}\|_{\sL(L^2(G))}
+
\sum_{\substack{\ell'\not=\ell \\ \ell,\ell' \in 2\bN_0+1}}
\|T_\ell^* T_{\ell'}\|_{\sL(L^2(G))}.
\end{equation}
The uniform boundedness of $T_\ell$'s operator norms follow from 
Lemmata \ref{lem_L2bdd_00} and \ref{lem_L2bdd_rho0}
but the boundedness of the sums remain to be shown. For this, we proceed as follows.

Let   $\kappa_\ell=\{\kappa_{\ell,x}(y)\}$ denote the convolution kernel of $T_\ell$ and let  $K_{\ell,\ell'}$ denote
the integral kernel of $T_{\ell}^* T_{\ell'}$:
$$
T_{\ell}^* T_{\ell'}f(x)=\int_G K_{\ell,\ell'}(x,y) f(y) dy,
\qquad K_{\ell,\ell'}(x,y) = \int_G \bar \kappa_{\ell,z} (x^{-1}z) \kappa_{\ell',z}(y^{-1}z) dz.
$$
As $G$ is compact, we have
\begin{equation}
\label{eq_cq_stein_VII241}
\|T_\ell^* T_{\ell'}\|_{\sL(L^2(G))}
\lesssim 
\sup_{x,y\in G} |K_{\ell,\ell'}(x,y) |.
\end{equation}

Let us assume $\ell\not=\ell'$.
Let $N\in \bN_0$. 
Introducing powers of $\id+\cL$ and using the Sobolev embedding
(cf. Lemma \ref{lem_sob_embedding} with $s'_0:=\lceil \frac n4\rceil$), we have
\begin{eqnarray*}
&&|K_{\ell,\ell'}(x,y)| 
=
\left|\int_G 
(\id+\cL)_{z_1=z}^N
(\id+\cL)_{z_2=z}^{-N}
\left\{
 \bar \kappa_{\ell,z_1} (x^{-1}z_2) \kappa_{\ell',z_1}(y^{-1}z_2)\right\} dz
\right|
\\
&&\qquad\lesssim
\int_G 
\sup_{z_1\in G} \left|(\id+\cL)_{z_1}^N
(\id+\cL)_{z_2}^{-N}
\left\{
 \bar \kappa_{\ell,z_1} (x^{-1}z_2) \kappa_{\ell',z_1}(y^{-1}z_2)\right\} 
\right| dz_2
\\
&&\qquad\lesssim
\int_G 
\left\|(\id+\cL)_{z_1=z}^{N+s'_0}
(\id+\cL)_{z_2}^{-N}
\left\{
 \bar \kappa_{\ell,z_1} (x^{-1}z_2) \kappa_{\ell',z_1}(y^{-1}z_2)\right\} 
\right\|_{L^2(dz_1)} dz_2
\\
&&\qquad\lesssim
\left\|(\id+\cL)_{z_1=z}^{N+s'_0}
(\id+\cL)_{z_2}^{-N}
\left\{
 \bar \kappa_{\ell,z_1} (x^{-1}z_2) \kappa_{\ell',z_1}(y^{-1}z_2)\right\} 
\right\|_{L^2(dz_2dz_1)}
\\
&&\qquad\lesssim
\!\!\!\!\!\!
\sum_{|\alpha_1|+|\alpha_2|\leq 2(N+s'_0)}
\!\!\!\!\!\!
\left\|
(\id+\cL)_{z_2}^{-N}
\left\{
 X_{z_1}^{\alpha_1}\bar \kappa_{\ell,z_1} (x^{-1}z_2) 
 X_{z_1}^{\alpha_2}\kappa_{\ell',z_1}(y^{-1}z_2)\right\} 
\right\|_{L^2(dz_2dz_1)},
\end{eqnarray*}
by the Leibniz rule.
Applying Lemma \ref{lem_bilinear} for $N\geq s'_0$,
we obtain easily
\begin{eqnarray*}
&&\left\|
(\id+\cL)_{z_2}^{-N}
\left\{
 X_{z_1}^{\alpha_1}\bar \kappa_{\ell,z_1} (x^{-1}z_2) 
 X_{z_1}^{\alpha_2}\kappa_{\ell',z_1}(y^{-1}z_2)\right\} 
\right\|_{L^2(dz_2)}\\
&&\qquad\lesssim 2^{-\max(\ell,\ell') (N-s'_0)}
 \|X_{z_1}^{\alpha_1}\kappa_{\ell,z_1} (z'_2) \|_{L^2(dz'_2)}
\| X_{z_1}^{\alpha_2}\kappa_{\ell',z_1}(z'_2)\|_{L^2(dz'_2)}.
\end{eqnarray*}
Lemma \ref{lem_kernel_inL2} and $\sigma\in S^0_{\rho,\rho}$ yield:
$$
\|X_{z_1}^{\alpha_1}\kappa_{\ell,z_1} (z'_2) \|_{L^2(dz'_2)}
\lesssim 
\sup_{\pi\in \Gh} (1+\lambda_\pi)^{s}
\|X_{z_1}^{\alpha_1} \sigma_\ell(z_1,\pi)\|_{\sL(\cH_\pi)}
\lesssim 
\|\sigma\|_{S^0_{\rho,\rho}, 0, |\alpha_1|}
2^{\ell (s'_0 +\rho\frac{|\alpha_1|}2)} ,
$$
thus
\begin{eqnarray*}
\sum_{|\alpha_1|+|\alpha_2|\leq 2(N+s'_0)}
\!\!\!\!\!\!\!\!\!\!\!\!\!\!\!
 \|X_{z_1}^{\alpha_1}\kappa_{\ell,z_1} (z'_2) \|_{L^2(dz'_2)}
\| X_{z_1}^{\alpha_2}\kappa_{\ell',z_1}(z'_2)\|_{L^2(dz'_2)}
\lesssim 
\|\sigma\|_{S^0_{\rho,\rho}, 0, 2(N+s'_0)}^2
2^{\max(\ell,\ell') (2s'_0+\rho (N + s'_0))}. 
\end{eqnarray*}
We have obtained for any $N\geq s'_0$:
$$
\sup_{x,y\in G} |K_{\ell,\ell'}(x,y)|  \lesssim \|\sigma\|_{S^0_{\rho,\rho}, 0, 2(N+s'_0)}^2
2^{\max(\ell,\ell') ((\rho-1)N+s_1)},
$$
with $s_1:=(2+\rho)s'_0$.
As $\rho\in (0,1)$, 
we can choose $N\in \bN$ such that $N\geq s_0$ and $(\rho-1)N+s_1<0$. 
This choice together with the estimates in \eqref{eq_cq_stein_VII241}
shows that the two sums in  \eqref{eq_pf_lem_L2bdd_rho_dec} are bounded, up to a constant by 
$\|\sigma\|_{S^0_{\rho,\rho}, 0, 2(N+s'_0)}^2$.
This concludes the proof of Lemma \ref{lem_L2bdd_rho}.
 \end{proof}
 
 \begin{remark}
\label{rem_RT_pb_L2}
 Lemma \ref{lem_L2bdd_rho} in the case of the torus 
 was announced in \cite[Section 4.8]{ruzhansky+turunen_bk} 
and proved in \cite[Theorem 9.5]{ruzhansky+turunen_10}.
However, the arguments there can not be extended to the case of a non-abelian group since the dimension of any $\pi\in \Gh$ is usually strictly greater than one.
 \end{remark}

Proposition \ref{prop_L2bdd} is thus proved.
We obtain the continuity on 
($L^2$-)Sobolev spaces with loss of derivatives controlled by the order:
\begin{corollary}
\label{cor_bdd_sob_spaces}
Let $1\geq \rho\geq \delta\geq 0$ with $\delta\not=1$
and $m\in \bR$.
If $\sigma\in S^m_{\rho,\delta}$, 
then $\Op(\sigma)$ maps boundedly the Sobolev spaces $H^s\to H^{s-m}$  for any $s\in \bR$ and we have
$$
\|\Op(\sigma)\|_{\sL(H^s, H^{s-m})}
\leq C \|\sigma\|_{S^m_{\rho,\delta},a,b},
$$
where the constant $C>0$ and the semi-norm 
$\|\cdot\|_{S^m_{\rho,\delta},a,b}$
are independent of $\sigma$ (but may depend on $s,m,\rho,\delta,G$).
\end{corollary}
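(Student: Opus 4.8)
\textbf{Plan of proof of Corollary \ref{cor_bdd_sob_spaces}.}
The strategy is to reduce the statement for arbitrary order $m$ and arbitrary Sobolev exponent $s$ to the already established $L^2$-boundedness of operators of order $0$ (Proposition \ref{prop_L2bdd}), using the fact that the powers $(\id+\cL)^{t/2}$ serve as elliptic isomorphisms between Sobolev spaces whose symbols lie in the calculus. Concretely, $(\id+\cL)^{t/2}$ has the invariant symbol $\pi\mapsto (1+\lambda_\pi)^{t/2}\id_{\cH_\pi}$, and by Proposition \ref{prop_mult} (with $f(\lambda)=(1+\lambda)^{t/2}$, which satisfies the required bound with $m=t$) this symbol is in $S^t_{1,0}\subset S^t_{\rho,\delta}$, with $\|(\id+\cL)^{t/2}\|_{\Psi^t_{\rho,\delta},a,b}$ controlled by a constant depending only on $t,a,b,G$. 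Moreover $(\id+\cL)^{t/2}$ is by definition an isometry $H^s\to H^{s-t}$ for every $s$.

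First I would observe that for $\sigma\in S^m_{\rho,\delta}$ the composed operator
$$
B:=(\id+\cL)^{(s-m)/2}\,\Op(\sigma)\,(\id+\cL)^{-s/2}
$$
is, by Proposition \ref{prop_composition} (applied twice), of the form $\Op(\tau)$ with $\tau\in S^{(s-m)+m+(-s)}_{\rho,\delta}=S^0_{\rho,\delta}$, and the continuity of composition gives
$$
\|\tau\|_{S^0_{\rho,\delta},a,b}\lesssim_{a,b}
\|(1+\lambda_\pi)^{(s-m)/2}\|_{S^{s-m}_{\rho,\delta},a',b'}\,
\|\sigma\|_{S^m_{\rho,\delta},a',b'}\,
\|(1+\lambda_\pi)^{-s/2}\|_{S^{-s}_{\rho,\delta},a',b'}
\lesssim_{s,m,a,b}\|\sigma\|_{S^m_{\rho,\delta},a',b'}
$$
for suitable $a',b'$ depending only on $a,b,s,m,\rho,\delta$. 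Then Proposition \ref{prop_L2bdd} gives $\|B\|_{\sL(L^2(G))}\leq C'\|\tau\|_{S^0_{\rho,\delta},a'',b''}\lesssim_{s,m}\|\sigma\|_{S^m_{\rho,\delta},a,b}$ for appropriate indices. Finally, writing $\Op(\sigma)=(\id+\cL)^{-(s-m)/2}\,B\,(\id+\cL)^{s/2}$ and using that the outer two factors are isometries $H^{s-m}\leftarrow L^2\leftarrow H^s$ respectively, $L^2\leftarrow H^s$, we get for $\phi\in\cD(G)$ (dense in $H^s$ by Proposition \ref{prop_L2finite_density})
$$
\|\Op(\sigma)\phi\|_{H^{s-m}}
=\|B\,(\id+\cL)^{s/2}\phi\|_{L^2(G)}
\leq \|B\|_{\sL(L^2(G))}\,\|(\id+\cL)^{s/2}\phi\|_{L^2(G)}
=\|B\|_{\sL(L^2(G))}\,\|\phi\|_{H^s},
$$
which by density extends to all of $H^s$ and yields the claimed bound with the semi-norm $\|\cdot\|_{S^m_{\rho,\delta},a,b}$ for the final choice of $a,b$.

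The only genuinely delicate point is bookkeeping: one must make sure the semi-norm indices $a,b$ on the right-hand side are finite and depend only on $s,m,\rho,\delta,G$, which requires tracking how many difference operators and vector fields are consumed by the two applications of the composition estimate (Proposition \ref{prop_composition}, resp.\ Corollary \ref{cor_prop_composition}) and by Proposition \ref{prop_L2bdd}; since each of these results provides exactly such a quantitative, semi-norm-controlled bound, no new estimate is needed. A minor subtlety is that $(\id+\cL)^{t/2}$ for $t<0$ is a priori only densely defined, but since its symbol is a bona fide smooth symbol in $S^t_{\rho,\delta}$ it extends (Lemma \ref{lem_op_cD2cD}) to a continuous operator on $\cD(G)$, so all the compositions above are legitimate operators $\cD(G)\to\cD(G)$ and the factorisation identities hold on $\cD(G)$, which suffices by density. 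This completes the proof.
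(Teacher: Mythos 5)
Your proof is correct and is essentially the argument the paper intends: the paper's own proof is the single sentence "This corollary of Proposition \ref{prop_L2bdd} follows readily from $(\id+\cL)^{m'/2}\in \Psi^{m'}_{1,0}$," and you have simply spelled out the conjugation by Bessel potentials and the appeal to Proposition \ref{prop_composition} and Proposition \ref{prop_L2bdd} in full. The bookkeeping remarks are sound, and the factorisation identity on $\cD(G)$ plus density is exactly the right way to make the reduction rigorous.
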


This corollary of Proposition \ref{prop_L2bdd} 
follows readily from $(\id+\cL)^{m'/2}\in \Psi^{m'}_{1,0}$ for any $m'\in \bR$, see Proposition \ref{prop_mult_t}.

Note that, from the estimates of the kernel given in Proposition \ref{prop_estimate_kernel}, 
one checks easily that the operators $\Psi^0_{1,0}$ are of Calderon-Zygmund type
and hence are bounded on $L^p(G)$, 
$1<p<\infty$, 
see
\cite{coifman+weiss}.
So in the case $(\rho,\delta)=(1,0)$, 
also Corollary \ref{cor_bdd_sob_spaces} also holds any $L^p$-Sobolev spaces, $p\in(1,\infty)$.

Another consequence is the continuity for commutators, see the next section.
We will need the following property:
\begin{lemma}
\label{lem_Deltaqsigma_HL2}
Let $1\geq \rho\geq \delta\geq 0$ with $\delta\not=1$, $\rho\not=0$,
and $m\in \bR$.
If $q$ is a smooth function on $G$ vanishing at $e_G$  up to order $a_0-1$
(see Definition \ref{def_q_vanish_order})
and if $\sigma\in S^m_{\rho,\delta}$, 
then 
$\Op(\Delta_q \sigma)$ maps $H^{m-\rho a_0}$ boundedly to $L^2(G)$
and 
$$
\|\Op(\Delta_q\sigma)\|_{\sL(H^{m-\rho a_0}, L^2(G))}
\leq C \|\sigma\|_{S^m_{\rho,\delta},a,b},
$$
where the constant $C>0$ and the semi-norm $\|\sigma\|_{S^m_{\rho,\delta},a,b}$ are independent of $\sigma$
(but may depend on $q,a_0,m,\rho,\delta,\Delta,G$).
\end{lemma}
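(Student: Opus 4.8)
The plan is to reduce the statement about $\Op(\Delta_q\sigma)$ to the already-established boundedness results for symbols in the calculus, namely Corollary \ref{cor_bdd_sob_spaces}. The point is that $\Delta_q\sigma$ is not itself a symbol of the form required by Corollary \ref{cor_bdd_sob_spaces} — indeed $\Delta_q$ is an RT-difference operator, and by Lemma \ref{lem_symbol_kernel} the kernel associated with $\Delta_q\sigma$ is $q(y)\kappa_x(y)$, where $\kappa_x$ is the kernel of $\sigma$. So the heart of the matter is to show that multiplying the kernel by a function $q$ vanishing to order $a_0-1$ at $e_G$ improves the order of the associated symbol by $\rho a_0$. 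Concretely, I would first prove:

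\emph{Claim.} If $q\in\cD(G)$ vanishes at $e_G$ up to order $a_0-1$ and $\sigma\in S^m_{\rho,\delta}$, then $\Delta_q\sigma\in S^{m-\rho a_0}_{\rho,\delta}$, with the corresponding continuity of semi-norms.

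To prove the Claim, write $q$ in the chart around $e_G$ via the exponential map: near $e_G$, $q$ agrees with a smooth function of $\exp^{-1}$ that vanishes to order $a_0-1$, hence $q = \sum_{|\gamma|=a_0} q_\gamma \, \tilde q^\gamma$ locally, where $\tilde q_1,\dots,\tilde q_n$ are the coordinate functions of a strongly admissible collection (as in Lemma \ref{lem_q0}) and $q_\gamma$ are smooth; away from $e_G$ the function $q$ is smooth and $\sigma\in S^{-\infty}$ on a neighbourhood of the complement (by Proposition \ref{prop_kernel_regularity} the kernel is smooth there, so that contribution is smoothing and harmless after a cutoff). Using a cutoff $\chi$ as in Lemma \ref{lem_q0}, I would decompose $\Delta_q\sigma = \Delta_{q\chi}\sigma + \Delta_{q(1-\chi)}\sigma$; the second term is smoothing. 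For the first, $q\chi = \sum_{|\gamma|=a_0} r_\gamma\, \tilde q^\gamma$ with $r_\gamma\in\cD(G)$, and RT-difference operators compose as multiplication of the corresponding functions, so $\Delta_{q\chi}\sigma = \sum_{|\gamma|=a_0}\Delta_{r_\gamma}\Delta_Q^\gamma\sigma$. By Theorem \ref{thm_Deltaeq+coincide} (equivalence of the descriptions via RT-differences) and Proposition \ref{prop_symbol_1stprop}, $\Delta_Q^\gamma\sigma\in S^{m-\rho a_0}_{\rho,\delta}$; finally $\Delta_{r_\gamma}$ maps $S^{m'}_{\rho,\delta}$ continuously into itself because $r_\gamma$ is smooth — this is precisely the statement of Lemma \ref{lem_prop_indep_Delta} applied with $q'=r_\gamma$ and any fixed nonvanishing $q$ in an admissible collection (or, even more simply, follows because multiplication by a smooth function is bounded on every $H^s$, as used in the proof of Lemma \ref{lem_prop_indep_Delta}). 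This gives the Claim with explicit control of semi-norms, the loss in $(a,b)$ being bounded in terms of $a_0$, the chart, and $\Delta$.

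Granting the Claim, the lemma follows immediately: $\Op(\Delta_q\sigma)$ is the operator associated with a symbol in $S^{m-\rho a_0}_{\rho,\delta}$, so by Corollary \ref{cor_bdd_sob_spaces} it maps $H^s$ boundedly to $H^{s-(m-\rho a_0)}$ for every $s$; taking $s=m-\rho a_0$ gives the stated boundedness $H^{m-\rho a_0}\to L^2(G)$, and the norm bound $\le C\|\sigma\|_{S^m_{\rho,\delta},a,b}$ is obtained by composing the semi-norm estimate from the Claim with the one from Corollary \ref{cor_bdd_sob_spaces}. I expect the main obstacle to be the bookkeeping at $e_G$: making rigorous the factorisation $q = \sum_{|\gamma|=a_0} q_\gamma \tilde q^\gamma$ with smooth $q_\gamma$ (a Hadamard-type lemma in the exponential chart, combined with the cutoff to globalise), and checking that ``vanishing to order $a_0-1$'' in the sense of Definition \ref{def_q_vanish_order} is exactly what makes this factorisation possible. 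Everything else is an assembly of Proposition \ref{prop_symbol_1stprop}, Theorem \ref{thm_Deltaeq+coincide}, Lemma \ref{lem_prop_indep_Delta}, and Corollary \ref{cor_bdd_sob_spaces}.
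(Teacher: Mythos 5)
The proposal is correct, but it takes a genuinely different route from the paper's. After the same cutoff decomposition $\Delta_q\sigma=\Delta_{q\chi}\sigma+\Delta_{q(1-\chi)}\sigma$ and the observation that the second piece is smoothing, the paper does \emph{not} prove the symbol-level membership $\Delta_{q\chi}\sigma\in S^{m-\rho a_0}_{\rho,\delta}$. Instead it constructs $q'$ as a linear combination of $q^\alpha$, $|\alpha|=a_0$, with $\chi q/q'$ smooth, and works at the \emph{operator} level: it rewrites $\Op(\Delta_{q\chi}\sigma)\phi(x)=\Op(\Delta_{q'}\sigma)(\psi_x\phi)(x)$ with $\psi_x(y)=(\chi q/q')(y^{-1}x)$ (the same ``change of multiplier'' device as in the proof of Lemma \ref{lem_prop_indep_Delta}), then applies the Sobolev inequality in the $x_1$-variable and Corollary \ref{cor_bdd_sob_spaces} to the known symbol $\Delta_{q'}\sigma\in S^{m-\rho a_0}_{\rho,\delta}$. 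Your plan, by contrast, is to establish the stronger intermediate claim that $\Delta_{q\chi}\sigma$ itself belongs to $S^{m-\rho a_0}_{\rho,\delta}$, via a Hadamard-type factorisation $q\chi=\sum_{|\gamma|=a_0}r_\gamma q^\gamma$ (or, equivalently and more economically, the single factorisation $q\chi=(\chi q/q')\,q'$ using the paper's own $q'$), and then simply invoke Corollary \ref{cor_bdd_sob_spaces}. This buys you a cleaner statement (stability of the symbol classes under $\Delta_q$ for $q$ vanishing to order $a_0-1$) at the modest cost of one extra lemma: that $\Delta_r$ preserves $S^{m'}_{\rho,\delta}$ for any smooth $r$.

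That extra lemma does hold, and your justification is essentially right, but the phrasing needs two corrections. First, the reference to ``any fixed nonvanishing $q$ in an admissible collection'' is off, since the functions of an admissible collection all vanish at $e_G$; the clean way to get the preservation property from Lemma \ref{lem_prop_indep_Delta} is to apply it, for each $\alpha,\beta$ and each fixed $x$, to the invariant symbol $X^\beta\sigma'(x,\cdot)$ with the pair $(q^\alpha,\,q^\alpha r)$: since $\Delta_r$ commutes with $\Delta^\alpha_Q$ and with $X^\beta_x$, one has $\Delta^\alpha_Q X^\beta\Delta_r\sigma'=\Delta_{q^\alpha r}(X^\beta\sigma')$, the ratio $(q^\alpha r)/q^\alpha=r$ is smooth, and Lemma \ref{lem_prop_indep_Delta} transfers the required bound uniformly in $x$. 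Second, you should note that the statement of Lemma \ref{lem_prop_indep_Delta} says ``$q/q'$ extends to a smooth function'' whereas its proof actually uses $q'/q$ smooth; your argument needs the latter (correct) reading, so it is worth flagging this when you cite it.
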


\begin{proof}[Lemma \ref{lem_Deltaqsigma_HL2}]
Let $\chi\in \cD(G)$ be valued in $[0,1]$ and  such that 
$\chi|_{B(\epsilon_0/2)} \equiv 1$ and 
$\chi|_{B(\epsilon_0)^c} \equiv 0$.
We write $\Delta_q \sigma = \Delta_{q\chi} \sigma +\Delta_{q(1-\chi)}\sigma$.
As the kernel associated with $\Delta_{q(1-\chi)}\sigma$ is smooth, 
this symbol is smoothing.
Let $\Delta=\Delta_Q$ be a strongly admissible collection of  RT-difference operators, 
for instance the ones constructed in Lemma \ref{lem_q0}.
It is not difficult to construct a smooth function $q'$ as a linear combination of $q^\alpha=q_1^{\alpha_1}\ldots q_n^{\alpha_n}$, $|\alpha|=a$, 
such that $\chi q/q'$ is smooth on $G$.
We check easily that
$$
 \Op(\Delta_{q\chi } \sigma)\phi (x)= 
 \Op(\Delta_{q'}\sigma) (\psi_x \phi)(x)
$$
where $\psi_x(y) = \chi q/q' (y^{-1}x)$,
thus by the Sobolev embedding
(cf. Lemma \ref{lem_sob_embedding}), 
\begin{eqnarray*}
&&\|\Op(\Delta_{q\chi } \sigma)\phi \|_{L^2(G)}^2
\leq
\int_G \sup_{x_1\in G} |\Op(\Delta_{q'}\sigma) (\psi_{x_1} \phi)(x)|^2 dx
\\&&\qquad\lesssim
\int_G
\int_{x_1\in G} |\Op(\Delta_{q'}\sigma) (X_{x_1}^\beta\psi_{x_1} \phi)(x)|^2 dx_1 dx
\\&&\qquad\lesssim
  \|\Op(\Delta_{q'}\sigma)\|_{\sL(H^{m-\rho a_0}, L^2(G))}^2
 \sum_{|\beta|\leq \lceil \frac n2\rceil} 
\int_{x_1\in G} \|X_{x_1}^\beta\psi_{x_1} \phi\|^2_{H^s} dx_1.
\end{eqnarray*}
We argue in a similar way as at the end of the proof of 
Lemma \ref{lem_prop_indep_Delta} to obtain
$$
 \sum_{|\beta|\leq \lceil \frac n2\rceil} 
\int_{x_1\in G} \|X_{x_1}^\beta\psi_{x_1} \phi\|^2_{H^s} dx_1
\lesssim_{s,\psi} 
  \|\phi\|_{H^s}^2,
  $$
and we conclude with 
$$
\|\Op(\Delta_{q'}\sigma)\|_{\sL(H^{m-\rho a_0}, L^2(G))}
\lesssim
\|\Delta_{q'}\sigma\|_{S^{m-\rho a_0}_{\rho,\delta},a_1,b_1}
\lesssim
\|\sigma\|_{S^{m-\rho a_0}_{\rho,\delta},a_1+a_0,b_1},
$$
by Corollary \ref{cor_bdd_sob_spaces}.
\end{proof}

\subsection{Commutators}

We adopt the following notation:
if $q\in \cD(G)$ and $D\in \Diff$, 
we denote by $L_q$ and $M_D$ the commutators defined via
$$
L_q T = q T -Tq
\quad\mbox{and}\quad 
M_D T= DT-TD,
$$
for any linear operator $T:\cD(G)\to\cD'(G)$.

Let us collect some easy properties for these commutators:
\begin{lemma}
\label{lem_obs_commutators}
\begin{itemize}
\item 
If $q$ is a smooth function, $T$ is an operator
$\cD(G)\to\cD'(G)$ and $D$ is a vector field then 
$$
M_{D} (qT) = (D q)  T + q M_{D}T
\quad\mbox{and}\quad
M_D L_q-L_qM_D = L_{Dq}.
$$
\item 
If $q$ is a smooth function 
and 
 if  $T:\cD(G)\to\cD'(G)$ is a linear continuous operator,
 then 
 $ \|L_q T \|_{\sL(L^2(G))}
\leq 2\|q\|_\infty 
\|T\|_{\sL(L^2(G))}$
since
$$
\|q T\|_{\sL(L^2(G))}
\leq \|q\|_\infty 
\|T\|_{\sL(L^2(G))}
\quad\mbox{and}\quad
\|T q\|_{\sL(L^2(G))}
\leq \|q\|_\infty 
\|T\|_{\sL(L^2(G))}.
$$
More generally, for any $s_1,s_2\in \bR$,  we have
 $ \|L_q T \|_{\sL(H^{s_1},H^{s_2})}
\leq 2C_{q,s_1,s_2}
\|T\|_{\sL(H^{s_1},H^{s_2})}$
since
$$
\max (\|q T\|_{\sL(H^{s_1},H^{s_2})},\|T q\|_{\sL(H^{s_1},H^{s_2})})
\leq C_{q,s_1,s_2}
\|T\|_{\sL(H^{s_1},H^{s_2})}.
$$
\end{itemize}
\end{lemma}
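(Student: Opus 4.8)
The plan is to dispatch the two bulleted assertions separately; both are formal once one records that multiplication by a fixed smooth function is bounded on every Sobolev space.

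For the first bullet I would argue by direct computation with operators evaluated on $\phi\in\cD(G)$. Writing $M_D(qT)\phi=D(q\,T\phi)-q\,T(D\phi)$ and applying the Leibniz rule for the first-order operator $D$ to the product of the function $q$ with the function $T\phi$ gives $D(q\,T\phi)=(Dq)(T\phi)+q\,D(T\phi)$, whence $M_D(qT)\phi=(Dq)T\phi+q(DT\phi-TD\phi)=(Dq)T\phi+q\,M_DT\phi$, which is the first identity. Specialising it to $T=\id$ (and using $M_D\id=0$) shows that the commutator of $D$ with multiplication by $q$ is multiplication by $Dq$. For the second identity I would then invoke the Jacobi identity for the commutator bracket $[A,B]=AB-BA$ on linear operators $\cD(G)\to\cD'(G)$: taking $A=D$, $B$ the multiplication by $q$, and $C=T$, one gets $[D,[B,T]]-[B,[D,T]]=[[D,B],T]$, that is $M_DL_qT-L_qM_DT=L_{Dq}T$, since $[D,B]$ is multiplication by $Dq$ by the preceding remark. (One could equally expand the four terms by hand.) All of this is purely algebraic and valid for any linear $T$.

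For the second bullet, the $L^2(G)$ bounds are immediate: $\|q\,T\phi\|_{L^2(G)}\le\|q\|_\infty\|T\phi\|_{L^2(G)}\le\|q\|_\infty\|T\|_{\sL(L^2(G))}\|\phi\|_{L^2(G)}$ and $\|T(q\phi)\|_{L^2(G)}\le\|T\|_{\sL(L^2(G))}\|q\|_\infty\|\phi\|_{L^2(G)}$, and the factor $2\|q\|_\infty$ for $L_qT=qT-Tq$ follows from the triangle inequality. For the Sobolev version the only input needed is that, for fixed $q\in\cD(G)$ and each $s\in\bR$, multiplication by $q$ is bounded $H^s\to H^s$ with some norm $C_{q,s}$; this is precisely the estimate obtained (by duality and interpolation from the integer case) in the proof of Lemma \ref{lem_prop_indep_Delta}. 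Granting it, $qT$ is the composition of $T:H^{s_1}\to H^{s_2}$ with multiplication by $q$ on $H^{s_2}$, hence $\|qT\|_{\sL(H^{s_1},H^{s_2})}\le C_{q,s_2}\|T\|_{\sL(H^{s_1},H^{s_2})}$; similarly $Tq$ is $T$ precomposed with multiplication by $q$ on $H^{s_1}$, so $\|Tq\|_{\sL(H^{s_1},H^{s_2})}\le C_{q,s_1}\|T\|_{\sL(H^{s_1},H^{s_2})}$. Taking $C_{q,s_1,s_2}:=\max(C_{q,s_1},C_{q,s_2})$ and applying the triangle inequality to $L_qT$ concludes the proof.

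There is essentially no obstacle here: the statement is routine, as the word ``easy'' in its formulation indicates. The only ingredient that is not a one-line manipulation is the boundedness of multiplication by a smooth function on $H^s$ for arbitrary real $s$, and that has already been established in the excerpt, so it may simply be quoted.
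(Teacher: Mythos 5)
Your proof is correct and takes essentially the same route as the paper, which dispatches this lemma with the one-line remark that the first part is a direct computation and the second follows from the $H^s$-boundedness of multiplication by a smooth function. Your use of the Jacobi identity for the second identity in the first bullet is a slightly slicker packaging of the ``direct computation'' the paper alludes to, but it is the same algebraic fact, and your reduction of the Sobolev estimate to the boundedness of $\phi\mapsto q\phi$ on $H^s$ (established in the proof of Lemma~\ref{lem_prop_indep_Delta}) is exactly what the paper intends.
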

\begin{proof}[Lemma \ref{lem_obs_commutators}]
The first part is easily checked by direct computations.
The second part follows from the continuity of 
$\phi \to q\phi$ on any $H^s$ for any $q\in \cD(G)$.
\end{proof}

The Leibniz properties yield:
\begin{lemma}
\label{lem_commutator}
\begin{enumerate}
\item 
Let $\Delta=\Delta_Q$ be a collection of difference operators 
satisfying the Leibniz-like property
as in Definition \ref{def_leibniz}.
Then, for any continuous symbol $\sigma$,
we have:
$$
L_{q_j}\Op(\sigma) 
=
\Op(\Delta_{q_j} \sigma)+
\sum_{1\leq l,k \leq n_\Delta} c_{l,k}^{(j)}
\Op(\Delta_{q_k} \sigma)  q_l
$$
and
$$
\Op(\Delta_{q_j} \sigma)
=
-L_{\tilde q_j}\Op(\sigma) 
-\sum_{1\leq l,k \leq n_\Delta} c_{l,k}^{(j)} q_k
 L_{\tilde q_l}\Op(\sigma), 
$$
with the same  coefficients $c_{l,k}^{(j)}\in \bC$
as in Definition \ref{def_leibniz}, 
and $\tilde q_j(x)=q_j(x^{-1})$.
\item For any $X\in \fg$ and any smooth symbol $\sigma$,
we have
$$
M_{\tilde X} \Op(\sigma) =\Op(\tilde X \sigma).
$$
\end{enumerate}
\end{lemma}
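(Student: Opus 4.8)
The statement to prove is Lemma~\ref{lem_commutator}, giving explicit formulas for the commutators $L_{q_j}\Op(\sigma)$, for $\Op(\Delta_{q_j}\sigma)$ in terms of $L_{\tilde q_j}\Op(\sigma)$, and for $M_{\tilde X}\Op(\sigma)$.

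\begin{proof}[Lemma \ref{lem_commutator}]
The plan is to compute everything on the dense subspace $\L2f$ using the quantisation formula in \eqref{eq_Opsigma_*}, where all manipulations are legitimate since the sums involved are finite; the resulting operator identities then extend by density and continuity (Lemma \ref{lem_op_cD2cD}).

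First I would prove Part (1). Fix $\phi\in\L2f$ with associated kernel picture $\Op(\sigma)\phi(x)=\phi*\kappa_x(x)$. For a smooth function $q$, write out $q(x)\,\Op(\sigma)\phi(x)-\Op(\sigma)(q\phi)(x)$ using the convolution $\Op(\sigma)\phi(x)=\int_G \phi(y)\kappa_x(y^{-1}x)\,dy$; the first term carries a factor $q(x)$, the second a factor $q(y)=q(x(y^{-1}x)^{-1})$. Setting $y^{-1}x=z$, the integrand acquires the factor $q(x)-q(xz^{-1})$. Now I would invoke the Leibniz-like identity for $Q=Q_\Delta$ in \eqref{eq_def_leibniz_q}: applying it with the pair $(xz^{-1},z)$ and using $q_j(xz^{-1})+q_j(z)+\sum_{l,k}c_{l,k}^{(j)}q_l(xz^{-1})q_k(z)=q_j(x)$, we get
$$
q_j(x)-q_j(xz^{-1})=q_j(z)+\sum_{1\le l,k\le n_\Delta}c_{l,k}^{(j)}q_l(xz^{-1})q_k(z).
$$
The term $q_j(z)$ contributes $\Op(\Delta_{q_j}\sigma)\phi(x)$ by Definition \ref{def_Delta_RT} (the RT-difference operator multiplies the kernel by $q_j$). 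The term $q_l(xz^{-1})q_k(z)$ contributes, after recognising $q_l(xz^{-1})=q_l(x y^{-1})$ factored outside, the expression $\Op(\Delta_{q_k}\sigma)(q_l\phi)(x)$, i.e.\ the operator $\Op(\Delta_{q_k}\sigma)\,q_l$ applied to $\phi$. This yields the first formula. The second formula follows by the same computation applied to $\tilde q_j(x)=q_j(x^{-1})$: one checks that the kernel of $\Op(\Delta_{q_j}\sigma)$ is $q_j\kappa_x$, rewrites $q_j(z)=q_j((z^{-1})^{-1})=\tilde q_j(z^{-1})$, and uses \eqref{eq_def_leibniz_q} again (now with arguments reflecting the $\tilde q$ convention) together with the first identity to solve for $\Op(\Delta_{q_j}\sigma)$; alternatively, substitute the first formula into $-L_{\tilde q_j}\Op(\sigma)-\sum c_{l,k}^{(j)}q_k L_{\tilde q_l}\Op(\sigma)$ and verify the algebraic cancellation using \eqref{eq_def_leibniz_q} for $\tilde q$.

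For Part (2), fix $X\in\fg$ and $\phi\in\L2f$. Using $\Op(\sigma)\phi(x)=\sum_\pi d_\pi\tr(\pi(x)\sigma(x,\pi)\widehat\phi(\pi))$ and differentiating along the right-invariant field $\tilde X$, I would use the Leibniz rule and the two facts that $\tilde X\{\pi(x)\}=\pi(X)\pi(x)$ is \emph{not} quite what is needed — rather, one should use that $\tilde X$ acts on $\widehat\phi$ via $\widehat{\tilde X\phi}(\pi)=\widehat\phi(\pi)\,(-\pi(X))$ is also not it; instead note that $\Op(\tilde X\sigma)$ has kernel $\tilde X_x\kappa_x$, and compute $M_{\tilde X}\Op(\sigma)\phi=\tilde X(\Op(\sigma)\phi)-\Op(\sigma)(\tilde X\phi)$ in kernel form $\tilde X_x\{\phi*\kappa_x(x)\}-(\tilde X\phi)*\kappa_x(x)$. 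The key computation is that $\tilde X$ is a derivation commuting with right-convolution: $\tilde X\{\phi*\psi\}=(\tilde X\phi)*\psi$ for the convolution $f*g(x)=\int f(y)g(y^{-1}x)dy$, since $\tilde X$ acts on the left variable. Hence $\tilde X_x\{\phi*\kappa_x\}(x)$, with $x$ appearing both in the convolution variable and in the parameter, splits by the product rule into $(\tilde X\phi)*\kappa_x(x)+\phi*(\tilde X_{x_1}\kappa_{x_1})|_{x_1=x}(x)$, and the first term cancels against $\Op(\sigma)(\tilde X\phi)$, leaving exactly $\Op(\tilde X\sigma)\phi$.

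The main obstacle I anticipate is bookkeeping in Part (1): carefully tracking which copy of the group variable $x$ is being held fixed versus differentiated/substituted when passing between the convolution-kernel picture and the difference-operator picture, and correctly identifying the composition $\Op(\Delta_{q_k}\sigma)\,q_l$ (multiplication \emph{then} operator) rather than $q_l\,\Op(\Delta_{q_k}\sigma)$. Once the substitution $z=y^{-1}x$ is made cleanly and \eqref{eq_def_leibniz_q} is applied with the correct argument pair, everything is routine.
\end{proof}
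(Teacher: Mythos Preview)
Your argument for the first formula in Part (1) and for Part (2) is correct and matches the paper's approach: both compute in the kernel picture, apply \eqref{eq_def_leibniz_q} to the pair $(y,y^{-1}x)$, and identify the resulting terms with $\Op(\Delta_{q_k}\sigma)\,q_l$; your Part (2) computation, once you arrive at the product-rule splitting of $\tilde X_x\{\phi*\kappa_x(x)\}$, is exactly the paper's.

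The gap is in the second formula of Part (1). Neither of your two suggested routes is carried through, and neither is straightforward. Applying the first formula with $\tilde q_j$ in place of $q_j$ yields an identity involving $\Op(\Delta_{\tilde q_j}\sigma)$, not $\Op(\Delta_{q_j}\sigma)$, so you would still need to pass between the two; and ``verifying the algebraic cancellation'' by substitution is not obviously shorter. The paper proceeds more directly: start from
\[
\Op(\Delta_{q_j}\sigma)\phi(x)=\int_G \phi(y)\,q_j(y^{-1}x)\,\kappa_x(y^{-1}x)\,dy,
\]
apply \eqref{eq_def_leibniz_q} to $q_j(y^{-1}x)$ with arguments $(y^{-1},x)$ to obtain
\[
\Op(\Delta_{q_j}\sigma)=\Op(\sigma)\tilde q_j + q_j\Op(\sigma)+\sum_{l,k} c_{l,k}^{(j)}\,q_k\,\Op(\sigma)\,\tilde q_l,
\]
and then rewrite each $\Op(\sigma)\tilde q_l=(\tilde q_l-L_{\tilde q_l})\Op(\sigma)$. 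The crucial algebraic step you are missing is the identity
\[
\sum_{l,k} c_{l,k}^{(j)}\,q_k(x)\,\tilde q_l(x)=-\bigl(q_j(x)+\tilde q_j(x)\bigr),
\]
which follows from \eqref{eq_def_leibniz_q} evaluated at $(x,x^{-1})$ since $q_j(e_G)=0$. With this identity the multiplicative terms collapse and only the commutator terms $-L_{\tilde q_j}\Op(\sigma)$ and $-\sum c_{l,k}^{(j)} q_k L_{\tilde q_l}\Op(\sigma)$ survive.
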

\begin{proof}
For the first formula, 
we apply 
\eqref{eq_def_leibniz_q} to 
$q_j(x) = q_j(y \ y^{-1}x)$ in
$$
L_{q_j}\Op(\sigma) \phi(x)
=
\int_G \left( 
q_j(x)\phi(y) \kappa_x(y^{-1} x) 
-q_j(y)\phi(y) \kappa_x(y^{-1} x) 
\right)dy.
$$
For the second formula, 
we apply 
\eqref{eq_def_leibniz_q} to 
$q_j(y^{-1}x)$ in 
$$
\Op(\Delta_{q_j}\sigma) \phi(x)
=
\int_G \phi(y) q_j (y^{-1}x) \kappa_x(y^{-1}x) dy
$$
and we have
$$
\Op(\Delta_{q_j}\sigma) 
=
\Op(\sigma) \tilde q_j 
+q_j \Op(\sigma)
+\sum_{1\leq l,k \leq n_\Delta} c_{l,k}^{(j)} q_k
\Op(\sigma) \tilde q_l.
$$
We write 
$\Op(\sigma) \tilde q_l = (\tilde q_l-L_{\tilde q_l})\Op(\sigma)$
and observe that
$$
\sum_{1\leq l,k \leq n_\Delta} c_{l,k}^{(j)} q_k\tilde q_l = -(q_j +\tilde q_j),
$$
having applied \eqref{eq_def_leibniz_q} to $x, y=x^{-1}$.
Thus we obtain:
\begin{eqnarray*}
\Op(\Delta_{q_j}\sigma) 
&=&
\Op(\sigma) \tilde q_j 
+q_j \Op(\sigma)
+\sum_{1\leq l,k \leq n_\Delta} c_{l,k}^{(j)} q_k
(-L_{\tilde q_l}+\tilde q_l)\Op(\sigma)
\\
&=&
\Op(\sigma) \tilde q_j 
+q_j \Op(\sigma)
-(q_j +\tilde q_j)  \Op(\sigma)
-\sum_{1\leq l,k \leq n_\Delta} c_{l,k}^{(j)} q_kL_{\tilde q_l}\Op(\sigma)
\\
&=&
-L_{\tilde q_j}\Op(\sigma) 
-\sum_{1\leq l,k \leq n_\Delta} c_{l,k}^{(j)} q_kL_{\tilde q_l}\Op(\sigma).
\end{eqnarray*}

For the second part, 
we see 
\begin{eqnarray*}
\tilde X_x\Op(\sigma) \phi(x)
&=&
\tilde X_{x}\{ \phi*\kappa_x(x)\}
=
\tilde X_{x_1=x} \phi*\kappa_{x_1}(x)
+
\tilde X_{x_2=x} \phi*\kappa_{x}(x_2)
\\
&=&
\phi*\tilde X_{x_1=x}  \kappa_{x_1}(x)
+
(\tilde X \phi)*\kappa_{x}(x)
=
\Op(\tilde X\sigma)\phi + \Op(\sigma)(\tilde X\phi).
\end{eqnarray*}
\end{proof}

If $\Delta=\Delta_Q$ is a collection of RT-difference operators
and if $X_1,\ldots,X_n$ form a basis of $\fg$, 
then we set
\begin{equation}
\label{eq_not_Lalpha_Mbeta}
L^\alpha_\Delta:=L_q^\alpha := L_{q_1}^{\alpha_1}\ldots L_{q_{n_\Delta}}^{\alpha_{n_\Delta}},
\quad \alpha\in \bN_0^{n_\Delta},
\quad\mbox{and}\quad
M^\beta_{\tilde X}:=M_{\tilde X_1}^{\beta_1}\ldots M_{\tilde X_n}^{\beta_n}
\quad \beta\in \bN_0^{n}.
\end{equation}

\begin{proposition}
\label{prop_bdd_commutators}
Let $1\geq \rho\geq \delta\geq 0$ with $\delta\not=1$
and $m\in \bR$.
If $T\in \Psi^m_{\rho,\delta}$, 
then $L^\alpha M^\beta T$  extends boundedly in an operator
from $H^{m - \rho |\alpha|+\delta|\beta|}$ to $L^2(G)$
for each $\alpha\in \bN_0^{n_\Delta},\beta\in \bN_0^{n}$
and for $L^\alpha_\Delta$, $M^\beta_{\tilde X}$ as defined in \eqref{eq_not_Lalpha_Mbeta}
where $\Delta$ is any  collection of RT-difference operators. 
Moreover 
$$
\|L_\Delta^\alpha M_{\tilde X}^\beta T\|_{\sL(H^{m - \rho |\alpha|+\delta|\beta|},L^2(G))}
\leq C \|T\|_{\Psi^m_{\rho,\delta},a,b}
$$
where the constant $C>0$ and the semi-norm $\|\cdot\|_{\Psi^m_{\rho,\delta},a,b}$ are independent on $T$ (but may depend on $\alpha,\beta,\Delta$ and the choice of basis for $\fg$).
\end{proposition}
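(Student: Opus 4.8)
The plan is to reduce the multi-commutator estimate to the already-established Sobolev boundedness of symbols (Corollary \ref{cor_bdd_sob_spaces}) and of single difference operators applied to symbols (Lemma \ref{lem_Deltaqsigma_HL2}), by systematically converting each commutator $L_{q_j}$ and $M_{\tilde X_k}$ into a finite sum of operators of the form (composition by a fixed smooth function) $\circ$ $\Op$(of a symbol in a suitable class). The starting point is Lemma \ref{lem_commutator}: its part (2) gives $M_{\tilde X}\Op(\sigma)=\Op(\tilde X\sigma)$ exactly, and since $\tilde X$ acts on symbols like a right-invariant vector field raising the order by $\delta$ (Proposition \ref{prop_symbol_1stprop} and the definition of the classes), iterating $M_{\tilde X}^\beta$ simply produces $\Op(\tilde X^\beta\sigma)$ with $\tilde X^\beta\sigma\in S^{m+\delta|\beta|}_{\rho,\delta}$. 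For the $L_q$ commutators, we may first fix a strongly admissible collection $\Delta_Q$ satisfying the Leibniz-like property (Theorem \ref{thm_Deltaeq+coincide} and Corollary \ref{cor_choice_Delta}); then part (1) of Lemma \ref{lem_commutator} expresses $L_{q_j}\Op(\sigma)$ as $\Op(\Delta_{q_j}\sigma)$ plus a finite sum of terms $\Op(\Delta_{q_k}\sigma)\, q_l$, i.e. a difference-operated symbol composed on the right with multiplication by a smooth function.

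First I would handle a general collection $\Delta$ of RT-difference operators: by Lemma \ref{lem_prop_indep_Delta} (or more precisely its proof, which compares $T_{q\kappa}$ and $T_{q'\kappa}$), each $q_j$ of the given $\Delta$ can be written against the fixed admissible $Q$ with smooth quotients, so it suffices to prove the estimate for the fixed Leibniz-like $\Delta_Q$ — the passage to arbitrary $\Delta$ costs only extra semi-norms and an operator of multiplication by a smooth function (which is bounded on every $H^s$, hence harmless by the second bullet of Lemma \ref{lem_obs_commutators}). Next I would argue by induction on $|\alpha|+|\beta|$. The key algebraic input is the commutation relations from Lemma \ref{lem_obs_commutators}(1): $M_D L_q-L_q M_D=L_{Dq}$ and $M_D(qT)=(Dq)T+qM_DT$, which let me move all the $M_{\tilde X}$'s past all the $L_q$'s and past all the multiplication operators, at the cost of producing lower-order commutator terms of the same shape (a product of $L$'s and $M$'s of total order strictly less, applied to $T$, possibly pre-/post-composed with multiplication by smooth functions). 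Therefore, after fully expanding, $L_\Delta^\alpha M_{\tilde X}^\beta T$ becomes a finite linear combination of terms
\begin{equation*}
q^{(1)}\;\Op\bigl(\Delta_Q^{\alpha'}\,\tilde X^{\beta'}\sigma\bigr)\;q^{(2)},
\end{equation*}
where $q^{(1)},q^{(2)}$ are smooth functions on $G$, $|\alpha'|\le|\alpha|$, $|\beta'|\le|\beta|$, and — crucially for bookkeeping — whenever $|\alpha'|<|\alpha|$ the "missing" difference operators have been converted into extra derivatives $\tilde X q_j$ or absorbed into the smooth multipliers, so the effective order never exceeds $m+\delta|\beta|-\rho|\alpha|$ when one tracks it through the identities carefully; in fact the worst term is exactly $\Op(\Delta_Q^\alpha\tilde X^\beta\sigma)$ with $\Delta_Q^\alpha\tilde X^\beta\sigma$ a difference-operated symbol of the right order.

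To finish, I would estimate each such term: multiplication operators $q^{(1)},q^{(2)}$ are bounded $H^s\to H^s$ for all $s$ (Lemma \ref{lem_obs_commutators}, second bullet), $\tilde X^{\beta'}\sigma\in S^{m+\delta|\beta'|}_{\rho,\delta}$ with the appropriate semi-norm bound (Proposition \ref{prop_symbol_1stprop}), and then $\Op(\Delta_Q^{\alpha'}(\tilde X^{\beta'}\sigma))$ maps $H^{(m+\delta|\beta'|)-\rho|\alpha'|}\to L^2(G)$ boundedly with semi-norm control by Lemma \ref{lem_Deltaqsigma_HL2} (applied iteratively, $|\alpha'|$ times, using that $q_j$ vanishes to order $1$ at $e_G$); composing with $(\id+\cL)^{(\rho|\alpha|-\delta|\beta|-\rho|\alpha'|+\delta|\beta'|)/2}\in\Psi^\ast_{1,0}$ (Proposition \ref{prop_mult_t}) shifts the domain to the uniform space $H^{m-\rho|\alpha|+\delta|\beta|}$. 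Summing the finitely many terms gives the claimed bound with $C$ and the semi-norm $\|\cdot\|_{\Psi^m_{\rho,\delta},a,b}$ depending only on $\alpha,\beta,\Delta$ and the basis. The main obstacle I anticipate is purely combinatorial/bookkeeping: keeping precise track, through the repeated use of the identities $M_DL_q-L_qM_D=L_{Dq}$ and $M_D(qT)=(Dq)T+qM_DT$, that every term generated in the expansion genuinely has effective order $\le m-\rho|\alpha|+\delta|\beta|$ — in particular that trading a difference operator for a derivative of its symbol $q$ (which is smooth, hence its derivatives are bounded and contribute order $0$, not $\delta$) is accounted correctly, so that no term secretly has too high an order; once the induction hypothesis is set up with the order written as a function of the remaining commutator degrees this is routine, but it is the step where one must be careful.
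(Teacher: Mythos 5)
The core of your plan — for a Leibniz-like collection $\Delta_Q$, use Lemma \ref{lem_commutator} to turn $M_{\tilde X}^\beta$ into $\Op(\tilde X^\beta\sigma)$ and $L_{q_j}$ into $\Op(\Delta_{q_j}\,\cdot)$ modulo smooth post-multiplications, and then conclude by Corollary \ref{cor_bdd_sob_spaces} — is exactly the route the paper indicates for that case, and it is correct. (It is in fact a bit cleaner than you write it: since $L^\alpha M^\beta T = L^\alpha(M^\beta T)$ and $M_{\tilde X}^\beta\Op(\sigma)=\Op(\tilde X^\beta\sigma)$ exactly, there is no need to commute $M$'s past $L$'s; the anticipated combinatorial "main obstacle" does not arise, and after expanding $L^\alpha$ every term is $\Op(\Delta_Q^{\alpha'}\tilde X^\beta\sigma)$ times smooth multiplications with $|\alpha'|=|\alpha|$ exactly, so no $(\id+\cL)^{\cdots}$ shift is needed either.)

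The genuine gap is in your first step, the reduction of an arbitrary collection $\Delta$ to the fixed Leibniz-like one. You invoke Lemma \ref{lem_prop_indep_Delta}, but that lemma (and its proof) compares $\Delta_q\sigma$ with $\Delta_{q'}\sigma$, i.e.\ convolution kernels $q(z)\kappa(z)$ versus $q'(z)\kappa(z)$, where the multiplying function depends only on the convolution variable $z=y^{-1}x$. The commutator $L_qT$, by contrast, has integral kernel $(q(x)-q(y))\,\kappa_x(y^{-1}x)$: the extra factor depends on $x$ and $y$ separately, not only on $y^{-1}x$. These two objects coincide only when $q$ has the Leibniz-like cocycle property \eqref{eq_def_leibniz_q}, which is precisely the content of Lemma \ref{lem_commutator}(1). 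For a general $q\in\cD(G)$ you must first convert $(q(x)-q(y))^\alpha$ into a multiple of $q'(y^{-1}x)$, and that conversion is not free: it produces a two-variable correction $\psi_x(y)$ which is not a simple multiplication operator. The paper's proof (case $\rho\neq0$) does exactly this — it writes $\sigma=\Delta_\chi\sigma+\Delta_{1-\chi}\sigma$, builds $\psi_x(y)=(q_1(x)-q_1(y))^{\alpha_1}\cdots(q^\alpha(y^{-1}x))^{-1}\chi(y^{-1}x)$, obtains $M_{\tilde X}^\beta L^\alpha\Op(\Delta_\chi\sigma)\phi(x)=\Op(\Delta_{q'}\tilde X^\beta\sigma)(\psi_x\phi)(x)$, and then runs the same Sobolev-embedding-in-$x_1$ argument as in Lemma \ref{lem_prop_indep_Delta}'s proof together with Lemma \ref{lem_Deltaqsigma_HL2}. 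Your sketch neither constructs $\psi_x$ nor explains how the $x$-dependence is absorbed; and the blanket claim that the passage "costs only an operator of multiplication by a smooth function" is false, because no fixed smooth multiplier relates $L_qT$ and $L_{q'}T$. Moreover, for an arbitrary $q_j$ the quotient $q_j/q_j'$ against a fixed Leibniz-like function need not even extend smoothly to $G$ (e.g.\ $q_j'$ may have zeros away from $e_G$ where $q_j$ does not), so the premise of Lemma \ref{lem_prop_indep_Delta} fails at the outset. You would also need a separate, easier argument for $\rho=0$, since the above construction divides by $q^\alpha(y^{-1}x)$ and gains no order; the paper handles $\rho=0$ by reducing to $m=0$ and using \eqref{eq_LqT1T2} plus the already-proved cases.
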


If $\Delta=\Delta_Q$ satisfies a Leibniz-like property, 
then  
Corollary \ref{cor_bdd_sob_spaces} and Lemma \ref{lem_commutator}  imply Proposition \ref{prop_bdd_commutators}.
In the general case, we have to use
Lemma \ref{lem_Deltaqsigma_HL2} and the ideas of its proof.

\begin{proof}[Proposition \ref{prop_bdd_commutators}
when $\rho\not=0$]
As we can always enlarge the collection $\Delta$, 
we may assume $\Delta$ to be strongly admissible.
Let $\chi\in \cD(G)$ be valued in $[0,1]$ and  such that 
$\chi|_{B(\epsilon_0/2)} \equiv 1$ and $\chi|_{B(\epsilon_0)^c} \equiv 0$.
We can always write $\sigma=\Delta_\chi \sigma+\Delta_{1-\chi}\sigma$.
As the kernel associated with $\Delta_{1-\chi}\sigma$ is smooth 
(see Proposition \ref{prop_kernel_regularity}), this symbol is smoothing 
and the operator   $L_\Delta^\alpha M_{\tilde X}^\beta \Op(\Delta_{1-\chi}\sigma)$ is also smoothing. In particular it maps 
any Sobolev space to any Sobolev space continuously by 
Corollary \ref{cor_bdd_sob_spaces}.
For $\Delta_\chi \sigma$, 
we  define the function $(x,y)\mapsto \psi_x(y)$ via
$$
\psi_x(y)= (q_1(x)-q_1(y))^{\alpha_1}\ldots (q_{n_\Delta}(x)-q_{n_\Delta}(y))^{\alpha_{n_\Delta}} (q^\alpha (y^{-1} x))^{-1} \chi(y^{-1}x)
\quad x\not = y,
$$
and extend it smoothly to $G\times G$.
We check easily:
$$
M_{\tilde X}^\beta L^\alpha \Op(\Delta_\chi \sigma)\phi (x)= \Op(\Delta_{q'} \tilde X^\beta\sigma) (\psi_x \phi)(x),
$$
thus by the Sobolev embedding (cf. Lemma \ref{lem_sob_embedding}), 
\begin{eqnarray*}
&&\|L^\alpha \Op(\Delta_\chi  \tilde X^\beta\sigma)\phi\|_{L^2(G)}^2
\leq
\int_G \sup_{x_1\in G} |\Op(\Delta_{q'}\tilde X^\beta\sigma) (\psi_{x_1} \phi)(x)|^2 dx
\\ &&\qquad\lesssim
\int_G
\int_{x_1\in G} |\Op(\Delta_{q'}\tilde X^\beta\sigma) (X_{x_1}^\beta\psi_{x_1} \phi)(x)|^2 dx_1 dx
\\ &&\qquad\lesssim
  \|\Op(\Delta_{q'}\tilde X^\beta\sigma)\|_{\sL(H^s, L^2(G))}^2
 \sum_{|\beta|\leq \lceil \frac n2\rceil} 
\int_{x_1\in G} \|X_{x_1}^\beta\psi_{x_1} \phi\|^2_{H^s} dx_1
\\ &&\qquad\lesssim
  \|\Op(\Delta_{q'}\tilde X^\beta\sigma)\|_{\sL(H^s, L^2(G))}^2
  \|\phi\|_{H^s}^2,
\end{eqnarray*}
by Lemma \ref{lem_Deltaqsigma_HL2} for the operator norm
and 
by arguing as at the end of the proof of 
Lemma \ref{lem_prop_indep_Delta} for the $H^s$-norm.
We then conclude using Lemma \ref{lem_Deltaqsigma_HL2}.
\end{proof}

\begin{proof}[Proposition \ref{prop_bdd_commutators}
when $\rho=0$]
The case $\rho=\delta=m=0$ follows from 
Proposition \ref{prop_L2bdd} and Lemma \ref{lem_obs_commutators}.
For $m\not=0$, we  observe 
\begin{equation}
\label{eq_LqT1T2}
L_q(T_1T_2) = (L_q T_1) T_2 + T_1 (L_q T_2),
\end{equation}
for any $q\in \cD(G)$ and any operator
$T_1,T_2$ (for instance both $\cD'(G)\to \cD'(G)$ or $\cD'(G)\to \cD'(G)$, or alternatively
$T_1:\cD(G)\to \cD'(G)$ and $T_2:\cD(G)\to\cD(G)$).
Setting $T_{m,\beta} = M^\beta_{\tilde X}T (\id+\cL)^{- m/2}$,
this implies that 
$M^\beta_{\tilde X} L_\Delta^\alpha T$
is a linear combination of 
$$
 (L_\Delta^{\alpha _1} T_{m,\beta})
( L_\Delta^{\alpha _2}
(\id+\cL)^{m/2})
=
(L_\Delta^{\alpha _1} T_{m,\beta})
( L_\Delta^{\alpha _2}
(\id+\cL)^{m/2}), 
\quad|\alpha_1|+|\alpha_2|=|\alpha|
$$
We may apply 
Proposition \ref{prop_bdd_commutators} to the operator 
$(\id+\cL)^{-m/2} \in \Psi^{-m}_{1,0}$
and $T_{m,\beta}=\Op(\tilde X^\beta \sigma )(\id+\cL)^{- m/2} \in \Psi^0_{0,0}$, as the cases of operators in $\Psi^{-m}_{1,0}$ and 
$\Psi^0_{0,0}$ have already been proved.
This shows that $M^\beta_{\tilde X} L_\Delta^\alpha T \in \sL(H^m)$
and concludes the proof of Proposition \ref{prop_bdd_commutators}.
\end{proof}

\subsection{Commutator characterisation}

Importantly, the converse to Proposition \ref{prop_bdd_commutators} holds:

\begin{proposition}
\label{prop_converse_commutator}
Let $1\geq \rho\geq \delta\geq 0$ with $\delta\not=1$
and $m\in \bR$.
Let $\Delta=\Delta_Q$ be a strongly admissible collection of RT-difference operators. 
If $T:\cD(G)\to \cD'(G)$ is a continuous operator 
satisfying 
$L^\alpha M_{\tilde X}^\beta T\in \sL(H^{m - \rho |\alpha|+\delta|\beta|},L^2(G))$ for any $\alpha\in \bN_0^{n_\Delta},\beta\in \bN_0^{n}$,
then $T\in \Psi^m_{\rho,\delta}$.
Moreover for any semi-norm $\|\cdot\|_{\Psi^m_{\rho,\delta},a,b}$, 
we have
$$
\|T\|_{\Psi^m_{\rho,\delta},a,b}
\leq C 
\max_{|\beta|\leq b +\lceil \frac n2\rceil, |\alpha|\leq a}
\|L^\alpha M_{\tilde X}^\beta T\|_{\sL(H^{m - \rho |\alpha|+\delta|\beta|},L^2(G))},
$$
where the constant $C>0$ is independent of $T$ (but may depend on  $\|\cdot\|_{\Psi^m_{\rho,\delta},a,b},\Delta$, $G$).
\end{proposition}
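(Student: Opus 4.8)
The plan is to recover the symbol $\sigma$ of $T$ via \eqref{eq_sigma_T} and to verify the estimates \eqref{eq_def_Smrhodelta} for it. Since $T\colon\cD(G)\to\cD'(G)$ is continuous it has a Schwartz kernel, and \eqref{eq_sigma_T} still makes sense and produces a symbol $\sigma=\{\sigma(x,\pi)\}$ whose entries depend on $x$ only as distributions (this is the one place where one must allow distributional $x$-dependence, as anticipated in Remark \ref{rem_symbol_distrib}). Once the quantitative bounds are proved, Lemma \ref{lem_op_cD2cD} and the discussion after Definition \ref{def_smooth_symbol} show that $\sigma$ is automatically a smooth symbol, so regularity in $x$ need not be addressed separately.

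\emph{Step 1 (commutator identities and reduction to operator bounds).} Combining Lemma \ref{lem_commutator} with the elementary identities of Lemma \ref{lem_obs_commutators} one obtains, for each pair $\alpha,\beta$, an identity of the shape
\[
\Op\bigl(\tilde X^{\beta}\Delta_Q^{\alpha}\sigma\bigr)
=\sum_{|\alpha'|\le|\alpha|,\ |\beta'|\le|\beta|} r_{\alpha',\beta'}(x)\;L^{\alpha'}M_{\tilde X}^{\beta'}T ,
\qquad r_{\alpha',\beta'}\in\cD(G),
\]
where it is convenient to take $\Delta_Q$ strongly admissible and satisfying the Leibniz-like property (Corollary \ref{cor_choice_Delta}); the hypotheses transfer from the given $\Delta$ to such a $\Delta_Q$ exactly as in the proof of Theorem \ref{thm_Deltaeq+coincide}, with the estimates of Lemma \ref{lem_prop_indep_Delta} read as estimates on operator norms $\sL(H^s,L^2)$. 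Since multiplication by a smooth function is bounded on every $H^s$, the identity and the hypothesis give
\[
\bigl\|\Op(\tilde X^{\beta}\Delta_Q^{\alpha}\sigma)\bigr\|_{\sL(H^{m-\rho|\alpha|+\delta|\beta|},\,L^2(G))}
\ \lesssim\
\max_{|\alpha'|\le|\alpha|,\ |\beta'|\le|\beta|}
\bigl\|L^{\alpha'}M_{\tilde X}^{\beta'}T\bigr\|_{\sL(H^{m-\rho|\alpha'|+\delta|\beta'|},\,L^2(G))} .
\]
When $\rho=0$ one argues as in the proof of Proposition \ref{prop_bdd_commutators}, reducing to operators of order $0$ with $\rho=\delta=0$ and to spectral multipliers of $\cL$ via Propositions \ref{prop_composition} and \ref{prop_mult_t}; the hypothesis $\delta\neq1$ enters through Proposition \ref{prop_L2bdd} and Corollary \ref{cor_bdd_sob_spaces}.

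\emph{Step 2 (from operator bounds to pointwise symbol bounds).} Fix $\pi\in\Gh$, a unit vector $v\in\cH_\pi$, and an orthonormal realisation $(e_j)$ of $\pi$. Using \eqref{eq_cq_Peter+Weyl_thm} one computes, for any symbol $\mu$, that $\Op(\mu)\phi^{v}_{e_j}(x)=(\pi(x)\mu(x,\pi)v,e_j)_{\cH_\pi}$ where $\phi^{u}_{w}(y):=(\pi(y)u,w)_{\cH_\pi}$; thus $\|\mu(x,\pi)v\|=\|\pi(x)\mu(x,\pi)v\|$ has components the functions $\Op(\mu)\phi^{v}_{e_j}$. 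Applying this to $\mu=\tilde X^{\gamma}\tilde X^{\beta}\Delta_Q^{\alpha}\sigma$ and using $\|\phi^{v}_{e_j}\|_{H^s}=(1+\lambda_\pi)^{s/2}\|e_j\|\,\|v\|/\sqrt{d_\pi}$, $\sum_j\|\phi^{v}_{e_j}\|_{H^s}^2=(1+\lambda_\pi)^{s}\|v\|^2$, the bounds of Step 1 give
\[
\int_G\bigl\|\tilde X_x^{\gamma}\,\tilde X^{\beta}\Delta_Q^{\alpha}\sigma(x,\pi)\,v\bigr\|^2\,dx
\ \lesssim\ C_{\alpha,\beta,\gamma}^{\,2}\,(1+\lambda_\pi)^{\,m-\rho|\alpha|+\delta|\beta|+\delta|\gamma|}\,\|v\|^2 .
\]
Summing over $|\gamma|\le\lceil n/2\rceil$ and applying the Sobolev embedding $H^{\lceil n/2\rceil}(G)\hookrightarrow L^{\infty}(G)$, together with interpolation against the $L^2$-estimate ($\gamma=0$), to the $\cH_\pi$-valued function $x\mapsto\tilde X^{\beta}\Delta_Q^{\alpha}\sigma(x,\pi)v$ yields the pointwise bound $\sup_{x}\|\tilde X^{\beta}\Delta_Q^{\alpha}\sigma(x,\pi)\|_{\sL(\cH_\pi)}\lesssim C(1+\lambda_\pi)^{(m-\rho|\alpha|+\delta|\beta|)/2}$. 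Since the symbol classes admit equivalent descriptions with left- or right-invariant derivatives (via \eqref{eq_X_tildeX}) and with any strongly admissible $\Delta$ (Theorem \ref{thm_Deltaeq+coincide}), this is \eqref{eq_def_Smrhodelta}, with the stated control of $\|T\|_{\Psi^m_{\rho,\delta},a,b}$ by commutator norms over $|\alpha|\le a$, $|\beta|\le b+\lceil n/2\rceil$.

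The step I expect to be the main obstacle is Step 2: arranging matters so that the $\lceil n/2\rceil$ extra derivatives needed for a pointwise (Sobolev) bound do not increase the order of the symbol. The crucial observation is that one should differentiate the symbol $\tilde X^{\beta}\Delta_Q^{\alpha}\sigma$ itself — not the product $\pi(x)(\cdots)v$ — so that each extra derivative only costs the factor $(1+\lambda_\pi)^{\delta/2}$, and then recover the exact exponent by interpolation; making this bookkeeping close uses $\delta\neq1$ and, implicitly, the fact forced by Step 1 that the $x$-dependence of $\sigma$ is effectively band-limited at frequency $\sim(1+\lambda_\pi)^{\delta/2}$. The $\rho=0$ case and the transfer of the hypotheses to a Leibniz collection are the other points requiring care, but are routine given Proposition \ref{prop_bdd_commutators} and the proof of Theorem \ref{thm_Deltaeq+coincide}.
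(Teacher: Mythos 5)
Your overall architecture — extract the symbol via \eqref{eq_sigma_T}, express commutator derivatives of the symbol through $L^\alpha M^\beta T$, then pass from operator bounds to pointwise symbol bounds by a Sobolev argument — is close in spirit to the paper's proof, and Step~1 is essentially what the paper does via Lemmata \ref{lem_commutator} and \ref{lem_obs_commutators}. The gap is in Step~2, and the interpolation you sketch does not close it when $\delta>0$.

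Here is the problem concretely. From Step~1 you obtain, for the $\cH_\pi$-valued function $F(x):=\tilde X^\beta\Delta_Q^\alpha\sigma(x,\pi)v$,
\[
\bigl\|\tilde X^\gamma F\bigr\|_{L^2(G)}\ \lesssim\ A\,N^{|\gamma|},
\qquad A:=C(1+\lambda_\pi)^{(m-\rho|\alpha|+\delta|\beta|)/2}\|v\|,
\quad N:=(1+\lambda_\pi)^{\delta/2}.
\]
Sobolev embedding $H^s\hookrightarrow L^\infty$ with $s>n/2$ gives $\|F\|_\infty\lesssim AN^s$, overshooting by $\delta s/2$ in the exponent. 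The interpolation $\|F\|_\infty\lesssim\|F\|_{L^2}^{1-n/(2s)}\|F\|_{H^s}^{n/(2s)}$ only reduces this to $\|F\|_\infty\lesssim A\,N^{n/2}=A(1+\lambda_\pi)^{\delta n/4}$: the loss is \emph{independent of $s$} and does not disappear. The ``band-limitedness'' heuristic does not help either — for a function satisfying $\|\tilde X^\gamma F\|_{L^2}\lesssim A N^{|\gamma|}$, Bernstein-type reasoning gives the same $N^{n/2}$ factor, and it is sharp (take $F$ an $L^2$-normalised bump supported on a ball of radius $N^{-1}$). So for $\delta>0$ your Step~2 only places $\sigma$ in $S^{m+\delta n/2}_{\rho,\delta}$, not in $S^m_{\rho,\delta}$. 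The proof as written is correct only when $\delta=0$.

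The paper circumvents this precisely by \emph{not} running the Sobolev argument on $\tilde X^\beta\Delta_Q^\alpha\sigma$ directly. It first establishes the case $m=\rho=\delta=0$, where the $\tilde X^\gamma$-derivatives needed for Sobolev embedding come at no cost since $M^\gamma_{\tilde X}T\in\sL(L^2(G))$ uniformly, and only then reduces the general case to it. For the general seminorm the paper forms the order-zero operator
\[
T_{\alpha_0,\beta_0,m}:=\Op\bigl(\Delta_Q^{\alpha_0}\tilde X^{\beta_0}\sigma\bigr)\,(\id+\cL)^{(-m+\rho|\alpha_0|-\delta|\beta_0|)/2},
\]
rewrites it through $p_{\alpha'}(L^{\alpha'}M^{\beta_0}T)(\id+\cL)^{(-m+\rho|\alpha_0|-\delta|\beta_0|)/2}$ via the commutator identities, and verifies recursively (using \eqref{eq_LqT1T2} and $L_q(\id+\cL)^{m'/2}\in\sL(H^s,H^{s-m'})$) that the $(0,0,0)$-hypotheses hold for $T_{\alpha_0,\beta_0,m}$ before invoking the already-proved case. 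The lesson is that one must normalise to order zero \emph{before} the Sobolev embedding, so that the extra $\lceil n/2\rceil$ derivatives used there cost nothing; otherwise the sharp exponent is unrecoverable.
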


\begin{proof}[Proposition \ref{prop_converse_commutator}
when $m=\rho=\delta=0$]
Let $T$ be a linear operator which is $\cD(G)\to \cD'(G)$-continuous 
and such that 
$L^\alpha M_{\tilde X}^\beta T \in \sL(L^2(G))$
 for any $\alpha\in \bN_0^{n_\Delta},\beta\in \bN_0^{n}$.
 
We can associate a symbol $\sigma$ via \eqref{eq_sigma_T}
in a distributional sense, see Remark \ref{rem_symbol_distrib}:
$$
(\sigma(x,\pi) u,v)_{\cH_\pi}
=
 \big( (T\pi)(x) u,\pi(x) v\big)_{\cH_\pi}, 
\quad
u,v\in \cH_\pi.
$$
Given our hypotheses on $T$, for each $\pi\in \Gh$
this defines 
$\sigma(\cdot,\pi) \in L^2(G,\cH_\pi)$, 
that is, a square-integrable function defined on $G$
with values in $\sL(\cH_\pi)$ (or after a choice of basis, 
in the space of complex $d_\pi\times d_\pi$-matrices).
%
The Cauchy-Schwartz inequality and easy manipulations yield:
$$
\|(\sigma(\cdot,\pi) u,v)_{\cH_\pi}\|_{L^2(G)}
\leq
  \|T\|_{\sL(L^2(G))}|u |_{\cH_\pi} |v|_{\cH_\pi}.
$$
More generally
we may adapt the proof of Lemma \ref{lem_commutator} so that it holds for  distributional kernels and
we obtain  for any $\beta\in \bN_0^n$
\begin{equation}
\label{eq_pf_prop_converse_commutator_m=rho=delta=0_1}
\|\tilde X^\beta(\sigma(\cdot,\pi) u,v)_{\cH_\pi}\|_{L^2(G)}
\leq
  \|M_{\tilde X}^\beta T\|_{\sL(L^2(G))}|u |_{\cH_\pi} |v|_{\cH_\pi}.
\end{equation}
Denoting $\bS_{\cH_\pi}=\{u\in \cH_\pi, |u|_{\cH_\pi}=1\}$ the unit sphere on $\cH_\pi$,
the Sobolev embedding (cf. Lemma \ref{lem_sob_embedding}) yields:
\begin{eqnarray*}
&&\sup_{(x,\pi)\in G\times \Gh}
\|\sigma(x,\pi)\|_{\sL(\cH_\pi)} 
=
\sup_{\pi\in \Gh }
\sup_{u,v\in \bS_{\cH_\pi} }
\sup_{x\in G}
| (\sigma(x,\pi) u, v)_{\cH_\pi}|
\\&&\qquad\lesssim 
\sup_{\pi\in \Gh }
\sup_{u,v\in \bS_{\cH_\pi} }
\max_{|\beta|\leq \lceil \frac n2\rceil}
\|\tilde X^\beta (\sigma(\cdot,\pi) u, v)_{\cH_\pi}\|_{L^2(G)}
\lesssim 
\max_{|\beta|\leq \lceil \frac n2\rceil}
  \|M_{\tilde X}^\beta T\|_{\sL(L^2(G))},
\end{eqnarray*}
having used \eqref{eq_pf_prop_converse_commutator_m=rho=delta=0_1}.
This also implies that, 
for each $\pi\in \Gh$,
the mapping $G\ni x\mapsto \sigma(x,\pi) \in \sL(\cH_\pi)$ is continuous.
Moreover, applying this to  $M_{\tilde X}^{\beta_0} T$ for any $\beta_0\in \bN_0^n$, we obtain that $G\ni x\mapsto \tilde X^{\beta_0} \sigma(x,\pi) \in \sL(\cH_\pi)$ is continuous and that
\begin{equation}
\label{eq_pf_prop_converse_commutator_m=rho=delta=0_2}
\sup_{(x,\pi)\in G\times \Gh}
\|\tilde X^{\beta_0}\sigma(x,\pi)\|_{\sL(\cH_\pi)} 
\lesssim 
\max_{|\beta|\leq \lceil \frac n2\rceil +|\beta_0|}
  \|M_{\tilde X}^\beta T\|_{\sL(L^2(G))}.
\end{equation}
Hence the mapping $G\ni x\mapsto \sigma(x,\pi) \in \sL(\cH_\pi)$ is smooth. 

Combining Lemma \ref{lem_commutator}
 with \eqref{eq_pf_prop_converse_commutator_m=rho=delta=0_2} (with $\beta_0=0$),
we obtain:
\begin{eqnarray*}
\sup_{\substack{x \in G\\ \pi\in  \Gh}}
\|\Delta_{\tilde q_j} \sigma(x,\pi)\|_{\sL(\cH_\pi)} 
&\lesssim &
\max_{|\beta|\leq \lceil \frac n2\rceil }
  \|M_{\tilde X}^\beta \big(-L_{q_j}\Op(\sigma) 
-\sum_{1\leq l,k \leq n_\Delta} c_{k,l}^{(j)} q_k
 L_{q_l}\Op(\sigma)\big)\|_{\sL(L^2(G))}\\
&\lesssim &
\max_{|\beta|\leq \lceil \frac n2\rceil, |\alpha|= 1 }
  \|M_{\tilde X}^\beta L^\alpha T \|_{\sL(L^2(G))},
\end{eqnarray*}
by Lemma \ref{lem_obs_commutators}.
More generally, using  the same methods as above, we obtain recursively
$$
\sup_{(x,\pi)\in G\times\Gh}
\|\tilde \Delta_Q^{\alpha_0} \tilde X^{\beta_0}_x \sigma(x,\pi)\|_{\sL(\cH_\pi)} 
\lesssim 
\max_{|\beta|\leq \lceil \frac n2\rceil +|\beta_0|, |\alpha|\leq |\alpha_0|}
  \|M_{\tilde X}^\beta L^\alpha T \|_{\sL(L^2(G))},
$$
for any $\alpha_0\in \bN_0^{n_\Delta}$, $\beta_0 \in \bN_0^n$
where $\tilde \Delta := \{\Delta_{\tilde q_j}\}_{j=1}^{n_\Delta}$ is also a strongly admissible collection of RT-difference operators.
This shows that $\sigma\in S^0_{0,0}$.
\end{proof}

\begin{proof}[Rest of the proof of Proposition \ref{prop_converse_commutator}]
Let $T$ be as in the statement.
Then $M^\beta_{\tilde X}T(\id+\cL)^{-m/2} \in \sL(L^2(G))$
for any $\beta$ and  by the first part of the proof of the case  $\rho=\delta=m=0$,
the symbol of the operator
 $T(\id+\cL)^{-m/2}$ 
 satisfies
\eqref{eq_pf_prop_converse_commutator_m=rho=delta=0_2}
  with 
$x\mapsto \Op^{-1}(T(\id+\cL)^{-m/2})(x,\pi)$ smooth.
We may define the symbol of the operator $T$ to be 
$\sigma:=\Op^{-1}(T(\id+\cL)^{-m/2}) (1+\lambda_\pi)^{m/2}$.

We set  $T_{\alpha_0,\beta_0,m}:=
\Op(\Delta_Q^{\alpha_0} \tilde X^{\beta_0}\sigma)  (\id+\cL)^{(-m +\rho|\alpha_0| -\delta|\beta_0|)/2}$.
Lemmata \ref{lem_obs_commutators} and \ref{lem_commutator}
imply that  $T_{\alpha_0,\beta_0,m}$  is a linear combination of $p_{\alpha'} (L^{\alpha'} M^{\beta_0} T)  (\id+\cL)^{-m/2}$ for some $p_{\alpha'}\in \cD(G)$, $|\alpha'|=|\alpha_0|$.
One shows recursively that $L^\alpha_\Delta M^\beta_{\tilde X} T_{\alpha_0,\beta_0,m} \in \sL(L^2(G))$ for any multi-indices $\alpha$ and $\beta$ using the `almost commutation' of $L^\alpha$ and $M^\beta$ (see Lemma \ref{lem_commutator}), \eqref{eq_LqT1T2} and $L^q (\id +\cL)^{m'/2}\in \sL(H^s, H^{s-m'})$.
This is routine but lengthy and left to the reader.
Hence  we can apply Proposition \ref{prop_converse_commutator} for the case $\rho=\delta=m=0$ which is already proven: 
we have $T_{\alpha_0,\beta_0,m} \in \Psi^0_{0,0}$ and 
\begin{eqnarray*}
&&\sup_{(x,\pi)\in G\times\Gh}
\|\Delta^{\alpha_0} \tilde X^{\beta_0}\sigma (x,\pi) \|_{\sL(\cH_\pi)} 
(1+\lambda_\pi)^{\frac{-m +\rho|\alpha_0| -\delta|\beta_0|}2}
\\&&\qquad\lesssim 
\max_{|\beta|\leq \lceil \frac n2\rceil}
  \|M_{\tilde X}^\beta  T_{\alpha_0,\beta_0,m} \|_{\sL(L^2(G))}.
\\&&\qquad
\lesssim 
\max_{|\beta|\leq \lceil \frac n2\rceil +|\beta_0|, |\alpha|\leq |\alpha_0|}
  \| L^{\alpha}_\Delta M_{\tilde X}^\beta   T \|_{\sL(H^{-m +\rho|\alpha|- \delta |\beta|},L^2(G))},
\end{eqnarray*}
by Lemmata \ref{lem_obs_commutators} and \ref{lem_commutator}, 
together with \eqref{eq_LqT1T2} and $L^q (\id +\cL)^{m'/2}\in \sL(H^s, H^{s-m'})$. Thus $\sigma\in S^m_{\rho,\delta}$ and 
this  concludes the proof of Proposition \ref{prop_converse_commutator}.
\end{proof}

Because of Lemma \ref{lem_obs_commutators} and of the inclusions $H^{s_1} \subset H^{s_2}$, $s_1\geq s_2$,
the order for the commutators $L_{q_j}$ and $M_{\tilde X_j}$
for $L^\alpha M^\beta T$ in 
Propositions \ref{prop_bdd_commutators} and  \ref{prop_converse_commutator} 
could be arbitrarily changed.
Furthermore, we could replace the basis of right-invariant vector fields $\tilde X_1, \ldots, \tilde X_n$, with any other collection vector fields $D_1,\ldots , D_d$ generating the $\cD(G)$-module of 
$\Diff^1(G)$.  
Then we would adopt the notation 
$M^\beta_D:=M_{D_1}^{\beta_1}\ldots M_{D_d}^{\beta_d}
\quad \beta\in \bN_0^{d}$.
Hence we have obtained the following  characterisation of the operators in $\Psi^m_{\rho,\delta}$:

\begin{corollary}
\label{cor_commutator_characterisation}
Let $1\geq \rho\geq \delta\geq 0$ with $\delta\not=1$,
and $m\in \bR$.
Let $T:\cD(G)\to\cD'(G)$ be a continuous operator.
The operator $T$ is in $\Psi^m_{\rho,\delta}$ if and only if 
there exists a strongly admissible collection $\Delta$ of RT-difference operators and $\{D_1,\ldots, D_d\}$ a family of smooth vector fields 
generating the $\cD(G)$-module $\Diff^1(G)$.
such that 
$L^\alpha_\Delta M_D^\beta T \in \sL( H^{m - \rho |\alpha|+\delta|\beta|}, L^2(G))$
for any $\alpha\in \bN_0^{n_\Delta},\beta\in \bN_0^{n}$.
In this case 
$L^\alpha_\Delta M_D^\beta T \in \sL( H^{m - \rho |\alpha|+\delta|\beta|}, L^2(G))$ for any collection $\Delta=\Delta_Q$ of RT-difference operators
and any family  $\{D_1,\ldots, D_d\}$  of smooth vector fields on $G$
and any multi-indices $\alpha$ and $\beta$.
\end{corollary}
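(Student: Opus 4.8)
The plan is to reduce both halves of the equivalence, together with the final independence statement, to Propositions \ref{prop_bdd_commutators} and \ref{prop_converse_commutator}, which already settle the case where the chosen family of vector fields is the fixed right-invariant basis $\tilde X_1,\ldots,\tilde X_n$. The only genuinely new ingredient is a change-of-family argument, used for the vector fields in one direction and for the collection of RT-difference operators in the other.

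I would first prove the last sentence of the corollary, as it also yields the ``only if'' half. Fix $T\in\Psi^m_{\rho,\delta}$, an arbitrary collection $\Delta=\Delta_Q$ of RT-difference operators, and arbitrary smooth vector fields $D_1,\ldots,D_d$ on $G$. Writing $D_j=\sum_k a_{j,k}\tilde X_k$ with $a_{j,k}\in\cD(G)$ --- possible because the right-invariant vector fields form a basis of the $\cD(G)$-module $\Diff^1(G)$ --- one expands $L^\alpha_\Delta M^\beta_D T$ by repeated use of the identities of Lemma \ref{lem_obs_commutators}, namely $M_{\tilde X}(qS)=(\tilde Xq)S+qM_{\tilde X}S$ and $M_{\tilde X}L_q-L_qM_{\tilde X}=L_{\tilde Xq}$, together with the commutativity of the $L_{q_i}$ among themselves. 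This rewrites $L^\alpha_\Delta M^\beta_D T$ as a finite sum of terms $p\,\big(L^{\alpha'}_\Delta L_{b_1}\cdots L_{b_r}M^\gamma_{\tilde X}T\big)\,E$ with $p,b_1,\ldots,b_r\in\cD(G)$, $|\alpha'|\le|\alpha|$, $\gamma\in\bN_0^n$, and $E$ a differential operator of order at most $|\beta|-|\gamma|$. Proposition \ref{prop_bdd_commutators} bounds $L^{\alpha'}_\Delta M^\gamma_{\tilde X}T$ from $H^{m-\rho|\alpha'|+\delta|\gamma|}$ to $L^2(G)$, by Lemma \ref{lem_obs_commutators} the commutators $L_{b_i}$ with smooth functions and the multiplications by $p$ leave the Sobolev mapping properties unchanged, and composing with $E$ (a loss of at most $|\beta|-|\gamma|$ derivatives) shows that each term, hence $L^\alpha_\Delta M^\beta_D T$, maps $H^{m-\rho|\alpha|+\delta|\beta|}$ into $L^2(G)$. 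Choosing here $\Delta$ strongly admissible and $D_j=\tilde X_j$ also gives the ``only if'' part and the quantitative bound displayed in the statement.

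For the ``if'' part, assume there exist a strongly admissible collection $\Delta_0=\Delta_{Q_0}$ and vector fields $D^0_1,\ldots,D^0_d$ generating $\Diff^1(G)$ such that $L^\alpha_{\Delta_0}M^\beta_{D^0}T\in\sL(H^{m-\rho|\alpha|+\delta|\beta|},L^2(G))$ for all $\alpha,\beta$. Since $\{D^0_j\}$ generates $\Diff^1(G)$, each right-invariant $\tilde X_k$ can be written $\tilde X_k=\sum_j b_{k,j}D^0_j$ with $b_{k,j}\in\cD(G)$, and running the same expansion with the two families interchanged I would deduce $L^\alpha_{\Delta_0}M^\beta_{\tilde X}T\in\sL(H^{m-\rho|\alpha|+\delta|\beta|},L^2(G))$ for all $\alpha\in\bN_0^{n_{\Delta_0}}$ and $\beta\in\bN_0^n$. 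Because $\Delta_0$ is strongly admissible, Proposition \ref{prop_converse_commutator} then gives $T\in\Psi^m_{\rho,\delta}$, with its semi-norms controlled by the commutator norms. Combined with the first step this proves the equivalence, and the final independence assertion has already been established there.

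The step I expect to be the main obstacle is the bookkeeping hidden in these expansions: one must verify that, once $L^\alpha M^\beta$ has been re-expressed relative to the fixed basis of right-invariant vector fields (or conversely), \emph{every} term still maps $H^{m-\rho|\alpha|+\delta|\beta|}$ into $L^2(G)$ --- i.e. that the derivatives lost to the trailing differential operators $E$ are always compensated. This is where the hypothesis $\delta\neq1$, the bound on the order of $E$, the gain in order of the $L_q$-commutators coming from the composition formula (and, if convenient, a separate treatment of the cases $\rho=\delta$ and $\rho>\delta$ as in Sections \ref{subsec_adjoint1} and \ref{subsec_comp1}), and the fact from Lemma \ref{lem_obs_commutators} that commutators with, and multiplications by, smooth functions preserve an operator's order on the Sobolev scale all have to be combined carefully. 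Once this accounting is done, the inclusions $H^{s_1}\subset H^{s_2}$ for $s_1\ge s_2$ allow the orders of the individual $L_{q_j}$ and $M_{D_j}$ to be permuted freely, and nothing further is needed.
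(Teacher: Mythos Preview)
Your approach matches the paper's: the corollary is derived from Propositions \ref{prop_bdd_commutators} and \ref{prop_converse_commutator} together with the commutation identities of Lemma \ref{lem_obs_commutators} and the Sobolev inclusions, after observing that the right-invariant basis $\tilde X_1,\ldots,\tilde X_n$ may be replaced by any family generating the $\cD(G)$-module $\Diff^1(G)$. Your outline is in fact more explicit than the paper's one-paragraph justification preceding the corollary, and you correctly identify the change-of-family bookkeeping as the step requiring care.
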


This commutator characterisation is almost the same as the characterisation of the H\"ormander classes of operators on a manifold.
This was already  explained in \cite[Section 10.7.2]{ruzhansky+turunen_bk}
(but see Remarks \ref{rem_RT_pb_comp} and \ref{rem_RT_pb_L2}). 
In this paper,  we  obtain:

\begin{corollary}
\label{cor_hormander}
If  $\rho$ and $\delta$ satisfy  
$1\geq \rho>\delta\geq 0$ and $\rho\geq 1-\delta$,
then $\Psi^m_{\rho,\delta}$ coincides with the H\"ormander class 
$\Psi^m_{\rho,\delta}(G,loc)$.
\end{corollary}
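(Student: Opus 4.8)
The plan is to deduce Corollary~\ref{cor_hormander} from the commutator characterisation in Corollary~\ref{cor_commutator_characterisation} together with the known commutator (Beals-type) characterisation of the H\"ormander classes $\Psi^m_{\rho,\delta}(G,loc)$ on the compact manifold $G$. The key point is that both classes are characterised by exactly the same condition — mapping properties of iterated commutators of $T$ with multiplication operators $L_q$ and with vector fields $M_D$ on the Sobolev scale $H^s\to H^{s-m+\rho|\alpha|-\delta|\beta|}$ — provided the parameter constraints $\rho>\delta$ and $\rho\geq 1-\delta$ are imposed, which are precisely the conditions under which the local H\"ormander calculus is invariant under changes of chart and admits the Beals commutator description.

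First I would recall the Beals-type characterisation of $\Psi^m_{\rho,\delta}(G,loc)$: a continuous operator $T:\cD(G)\to\cD'(G)$ belongs to $\Psi^m_{\rho,\delta}(G,loc)$ if and only if, for every finite family of smooth real-valued functions $q_1,\dots,q_r$ whose differentials span $T^*_{x}G$ at each point (equivalently, locally, coordinate functions in each chart) and every finite family of smooth vector fields $D_1,\dots,D_d$ generating $\Diff^1(G)$ over $\cD(G)$, all iterated commutators $L_{q}^\alpha M_D^\beta T$ extend to bounded operators $H^{s}\to H^{s-m+\rho|\alpha|-\delta|\beta|}$ for all $s$ and all multi-indices. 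This is the standard Beals/Coifman-Meyer commutator criterion transported to the manifold via local charts; the hypotheses $1\geq\rho>\delta\geq0$ and $\rho\geq1-\delta$ are exactly those guaranteeing that the H\"ormander symbol classes form an algebra stable under coordinate changes, so this intrinsic commutator description is available. I would cite the relevant sources (e.g. the treatment in \cite{ruzhansky+turunen_bk}, or H\"ormander's and Beals's original accounts) rather than reprove it.

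Then I would argue the two-way inclusion. Take $T\in\Psi^m_{\rho,\delta}(G)$. By Corollary~\ref{cor_commutator_characterisation}, for any strongly admissible collection $\Delta=\Delta_Q$ and any generating family $\{D_j\}$ of vector fields we have $L^\alpha_\Delta M^\beta_D T\in\sL(H^{m-\rho|\alpha|+\delta|\beta|},L^2)$ for all $\alpha,\beta$; composing with powers of $(\id+\cL)^{s/2}\in\Psi^s_{1,0}$ (which lies in $\Psi^\infty_{\rho,\delta}(G)$ and whose commutators behave correctly by Proposition~\ref{prop_bdd_commutators}) upgrades this to $L^\alpha_\Delta M^\beta_D T\in\sL(H^{s+m-\rho|\alpha|+\delta|\beta|},H^s)$ — wait, more carefully: $T=(\id+\cL)^{-s/2}\big((\id+\cL)^{s/2}T\big)$ and one distributes commutators using the Leibniz-type identity \eqref{eq_LqT1T2} and $M_D(AB)=(M_DA)B+A(M_DB)$, noting $(\id+\cL)^{s/2}T\in\Psi^{m+s}_{\rho,\delta}(G)$ by Proposition~\ref{prop_composition}, to which Corollary~\ref{cor_commutator_characterisation} applies. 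Picking $q_j$ so that $\nabla_{e_G}q_j$ span $\fg^*$ (for instance the strongly admissible family of Lemma~\ref{lem_q0}) and $D_j$ a basis of right-invariant vector fields, the commutator bounds we obtain are exactly the ones in the Beals criterion for $\Psi^m_{\rho,\delta}(G,loc)$; hence $T\in\Psi^m_{\rho,\delta}(G,loc)$. Conversely, if $T\in\Psi^m_{\rho,\delta}(G,loc)$, the Beals criterion gives the commutator bounds $L^\alpha_\Delta M^\beta_D T\in\sL(H^{m-\rho|\alpha|+\delta|\beta|},L^2)$, and the converse direction Proposition~\ref{prop_converse_commutator} yields $T\in\Psi^m_{\rho,\delta}(G)$.

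The main obstacle is matching the two commutator characterisations precisely: one must check that the intrinsic difference operators $\Delta_{q_j}$ on the Fourier side correspond, under $\Op$, exactly to commutators $L_{q_j}=[\,q_j,\,\cdot\,]$ with multiplication operators — this is supplied by Lemma~\ref{lem_commutator}, up to the lower-order correction terms coming from the Leibniz-like defect, which are harmless because by Lemma~\ref{lem_obs_commutators} and the Sobolev inclusions the order in which and the precise functions with which one commutes can be rearranged freely. A secondary subtlety is that the H\"ormander commutator criterion is usually stated with local coordinate functions, whereas here we use globally defined $q_j$; but since it suffices that the $q_j$ have linearly independent differentials at each point of any coordinate patch (and near $e_G$ the $q_j$ of Lemma~\ref{lem_q0} do, while translating by $G$ covers the whole group by the translation-invariance Lemma~\ref{lem_Psi0_inv_left_translation}), the two descriptions coincide. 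I expect these verifications to be routine given the machinery already assembled, so the corollary follows by combining Corollary~\ref{cor_commutator_characterisation}, Proposition~\ref{prop_converse_commutator}, and the classical Beals characterisation.
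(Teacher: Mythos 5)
Your proposal is correct and follows essentially the same route as the paper: both derive the corollary by combining the commutator characterisation of Corollary~\ref{cor_commutator_characterisation} (and Proposition~\ref{prop_converse_commutator}) with Beals' commutator criterion. The only packaging difference is that the paper explicitly localises to $\phi T\psi$ on a chart near $e_G$ (using the translation invariance of Lemma~\ref{lem_Psi0_inv_left_translation}) and then applies Beals' characterisation of $\Psi^m_{\rho,\delta}(\bR^n)$ from \cite{beals_77}, whereas you invoke a manifold-level Beals criterion directly — but establishing that criterion amounts to performing exactly the localisation the paper carries out, so the two arguments coincide in substance.
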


Recall that a linear operator $T:\cD(G)\to \cD'(G)$ is in 
$\Psi^m_{\rho,\delta}(G,loc)$ when 
for any $\phi,\psi\in \cD(G)$ supported in charts of $G$,
the  operator $\phi T \psi : f\mapsto \phi T ( \psi f) $ 
viewed as an operator $\tilde T_{\phi,\psi}$ on $\bR^n$
 is in $\Psi^m_{\rho,\delta}(\bR^n)$.
The hypotheses  on $\rho$ and $\delta$, 
that is, $1\geq \rho >\delta\geq 0$, $\rho\geq 1-\delta$, 
 ensure that the operators in $\Psi^m_{\rho,\delta}(G,loc)$ are  well defined using changes of charts.

\begin{proof}[Corollary \ref{cor_hormander}]
Let $T \in \Psi^m_{\rho,\delta}$. Let also $\phi,\psi\in \cD(G)$ supported in charts of $G$.
By Lemma \ref{lem_Psi0_inv_left_translation} and the linearity of $T$, we may assume that $\psi$ is supported in the `small' neighbourhood $B(\epsilon_0/2)$ of $e_G$ and use the exponential mapping there as chart.
We apply Corollary \ref{cor_commutator_characterisation} with  a basis of right invariant vector fields and 
 the collection $\Delta=\Delta_Q$ constructed in Lemma \ref{lem_q0}.
 This implies that $\tilde T_{\phi,\psi}$ satisfies the hypotheses of Beal's characterisation of pseudo-differential operators 
 (for the commutators of $\partial_{x_i}$ and $x_j$) \cite{beals_77}.
 Thus $\tilde T_{\phi,\psi}\in \Psi^m_{\rho,\delta}(\bR^n)$
 and $T \in \Psi^m_{\rho,\delta}(G,loc)$. 
  The converse holds for the same reasons.
\end{proof}

\appendix

\section{Multipliers of the Laplace operator}
\label{sec_multipliers}

This appendix is devoted to the proof of 
Propositions  \ref{prop_mult} and \ref{prop_mult_t}. 
We will use  `classical' methods to estimates weighted norms of kernels of spectral $\cL$-multipliers using the heat kernels.

First we will reformulate Propositions \ref{prop_mult_t}
and \ref{prop_mult} into Proposition \ref{prop_app_mult_t} 
and Corollary \ref{cor_mult}
below using the notation of vanishing order of a function which we now define precisely.

\begin{lemma}
\label{lem_q_vanish_order}
Let $q\in \cD(G)$ and $a\in \bN$.
The following are equivalent:
\begin{enumerate}
\item For all $\alpha\in \bN_0^n$ with  
$|\alpha|< a$, then $X^\alpha q (e_G)=0$.
\item For any differential $D \in \Diff^k$, $k<a$, we have
 $D q (e_G)=0$.
\item 
\label{item_lem_q_vanish_order_estimate}
There exists a constant $C_q$ such that 
for all $x\in G$, we have 
$|q(x)|\leq C_q |x|^a$.
\end{enumerate}
\end{lemma}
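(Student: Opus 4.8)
The plan is to prove the cycle of implications $(2)\Rightarrow(1)$, $(1)\Rightarrow(3)$ and $(3)\Rightarrow(1)$, together with $(1)\Rightarrow(2)$. The implication $(2)\Rightarrow(1)$ is immediate, since $X^\alpha\in\Diff^{|\alpha|}$. For $(1)\Rightarrow(2)$, recall from Section \ref{subsec_notG} that any $D\in\Diff^k$ with $k<a$ can be written as a $\cD(G)$-linear combination of monomials $X^\beta$ with $|\beta|\leq k<a$, so that $Dq(e_G)=\sum_\beta c_\beta(e_G)\,(X^\beta q)(e_G)=0$ under hypothesis $(1)$. Thus $(1)\Leftrightarrow(2)$ is elementary and the real content is $(1)\Leftrightarrow(3)$.

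I would first reduce $(1)$ to a statement in the exponential chart. On $B(\epsilon_0)$ the map $\exp_G\colon\fg\to G$ is a diffeomorphism onto its image; in the linear coordinates $v=(v_1,\dots,v_n)$ on $\fg$ attached to the fixed basis $X_1,\dots,X_n$, the left-invariant fields have the form $X_j=\partial_{v_j}+\sum_{k}a_{jk}(v)\,\partial_{v_k}$ with $a_{jk}\in\cC^\infty$ and $a_{jk}(0)=0$. Iterating this identity, one sees that for each $k$ the families of functionals $\{\,q\mapsto X^\alpha q(e_G):|\alpha|\leq k\,\}$ and $\{\,q\mapsto \partial_v^\alpha(q\circ\exp_G)(0):|\alpha|\leq k\,\}$ are related by an invertible linear transformation which is triangular for the partial order by $|\alpha|$. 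Hence hypothesis $(1)$ is equivalent to the vanishing of the $(a-1)$-jet of $q\circ\exp_G$ at $0$, i.e. $\partial_v^\alpha(q\circ\exp_G)(0)=0$ for every $|\alpha|<a$. I also record that the left-invariant Riemannian distance satisfies $|v|\lesssim|\exp_G v|\lesssim|v|$ for $v$ near $0$, since in the chart the metric is comparable to the Euclidean one.

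Granting this reduction, $(1)\Rightarrow(3)$ follows from Taylor's formula with integral remainder: vanishing of the $(a-1)$-jet of $q\circ\exp_G$ at $0$ gives $|q\circ\exp_G(v)|\leq C|v|^a$ for $v$ small, hence $|q(x)|\leq C'|x|^a$ on a neighbourhood $V$ of $e_G$; for $x\in G\setminus V$ one has $|x|\geq c>0$ while $q$ is bounded on the compact group $G$, so $|q(x)|\leq(\|q\|_\infty/c^a)\,|x|^a$, and taking the larger constant yields $(3)$. For $(3)\Rightarrow(1)$, from $(3)$ we get $|q\circ\exp_G(v)|\leq C|v|^a$ near $0$; if the $(a-1)$-jet of $q\circ\exp_G$ at $0$ were nonzero, its Taylor expansion would have a lowest nonzero homogeneous term $P_j$ of degree $j<a$, and choosing $v_0$ with $P_j(v_0)\neq0$ would give $|q\circ\exp_G(tv_0)|/|t|^j\to|P_j(v_0)|>0$ as $t\to0$, contradicting $|q\circ\exp_G(tv_0)|\leq C|v_0|^a|t|^a$ since $a>j$. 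So the $(a-1)$-jet vanishes, which is $(1)$.

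The only slightly delicate point is the bookkeeping in the middle paragraph: the invertible, triangular passage between left-invariant derivatives at $e_G$ and Euclidean coordinate derivatives at $0$ in the exponential chart. Everything else is routine Taylor analysis on the compact manifold $G$ together with the use of compactness to upgrade local estimates to global ones.
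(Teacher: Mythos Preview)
Your proof is correct. The paper states this lemma without proof (it is followed immediately by Definition~\ref{def_q_vanish_order}), treating the equivalence as a standard fact about smooth functions on a manifold, so your argument supplies exactly the routine details the paper omits: the triangular passage between left-invariant derivatives $X^\alpha q(e_G)$ and coordinate derivatives $\partial_v^\alpha(q\circ\exp_G)(0)$ in the exponential chart, followed by Taylor's formula and compactness of $G$.
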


\begin{definition}
\label{def_q_vanish_order}
If $q\in \cD(G)$ satisfies the equivalent properties of Lemma \ref{lem_q_vanish_order}, 
then we say that $q$ vanishes at $e_G$ up to order $a-1$.
We extend this to $a\leq 0$:
a smooth function $q$ vanishes at $e_G$ up to order $a-1$ if $q(e_G)\not=0$.
\end{definition}

  We reformulate Proposition \ref{prop_mult_t} into the following property:
    \begin{proposition}
\label{prop_app_mult_t}
Let  $m\in \bR$ and $a\in \bN_0$.
For any $q\in \cD(G)$ vanishing at $e_G$ up to order $a-1$, 
there exists $d\in \bN_0$
such that for all
 $f\in C^d[0,\infty)$ satisfying $\|f\|_{\cM_{m/2}, d}<\infty$, we have
$$
\forall \pi\in\Gh, \ t\in (0,1)\qquad
\| \Delta_q \{f(t\lambda_\pi)\}\|_{\sL(\cH_\pi)} 
\leq C t^{\frac {m}  2}
(1+\lambda_\pi)^{\frac{m -a}2},
$$
where the constant $C$ may be chosen as $C' \|f\|_{\cM_{m/2}, d}$
with $C'$ depending only on $m,q,a$ and the group $G$ but not on $f,t,\pi$.
\end{proposition}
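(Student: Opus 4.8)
The plan is to reduce the claim to a statement about the convolution kernel $q\,(f(t\cL)\delta_e)$ and estimate its $L^1$-norm, using the classical machinery of heat-kernel bounds and Littlewood--Paley/spectral-multiplier techniques (as in the references \cite{varo,alexo}). First I would write, using the definition of the RT-difference operator $\Delta_q$, that $\Delta_q\{f(t\lambda_\pi)\}=\cF_G\{q\cdot f(t\cL)\delta_e\}(\pi)$, so that by \eqref{eq_cF_L1} it suffices to bound $\|q\cdot f(t\cL)\delta_e\|_{L^1(G)}$ by $C\,t^{m/2}\,\|f\|_{\cM_{m/2},d}$ \emph{after} additionally extracting the factor $(1+\lambda_\pi)^{(m-a)/2}$; equivalently, I would instead bound $\|q\cdot g_t(t\cL)\delta_e\|_{L^1(G)}$ where $g_t(\lambda)=(1+\lambda/t)^{(a-m)/2}f(\lambda)$ and check that the $\cM$-type seminorms of $g_t$ are controlled uniformly in $t\in(0,1)$ by $\|f\|_{\cM_{m/2},d}$. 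The weight $(1+\lambda_\pi)^{(m-a)/2}$ corresponds on the kernel side to applying $(\id+\cL)^{(m-a)/2}$, and the gain of $|x|^a$ coming from $q$ vanishing to order $a-1$ (Lemma \ref{lem_q_vanish_order}\eqref{item_lem_q_vanish_order_estimate}) is exactly what compensates the extra singularity of the Bessel-type kernel of $(\id+\cL)^{(a-m)/2}$; this is the heart of the bookkeeping.

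The key technical input is the Gaussian-type estimate for the heat kernel $p_s:=e^{-s\cL}\delta_e$ on the compact group $G$, namely $|X^\beta p_s(x)|\lesssim_\beta s^{-(n+|\beta|)/2}\exp(-c|x|^2/s)$ for $s\in(0,1)$, together with the subordination/Mellin representation writing a general multiplier $f(t\cL)$ as a superposition of heat semigroups: $f(t\lambda)=\int_0^\infty e^{-s t\lambda}\,d\mu_f(s)$ or, more robustly, via the Fourier-inversion formula $f(t\lambda)=\tfrac{1}{2\pi}\int_{\bR}\widehat{\phi}(\xi)\,e^{i\xi t\lambda}\,d\xi$ after cutting $f$ into dyadic pieces $f=\sum_j f(2^j\cdot)\eta_1(2^{-j}\cdot)$ and rescaling. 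The plan is then: (i) dyadically decompose $f$ so that on each piece the argument $t\lambda$ lives in a fixed annulus, with the $\cM_{m/2,d}$-control giving $\ell^1$-summable coefficients with the correct power of $t\lambda$; (ii) on each piece, represent the multiplier as an integral of rescaled heat kernels over $s\sim t2^{-j}$ with rapidly decaying weight (this is where the $d$ derivatives of $f$ are consumed --- each integration by parts in the oscillatory integral costs one derivative of $f$ and buys one power of decay); (iii) multiply by $q$, use $|q(x)|\lesssim|x|^a$ and $|x|^a\exp(-c|x|^2/s)\lesssim s^{a/2}$, integrate the Gaussian over $G$ (whose volume is finite, so no large-$x$ issue), and sum the resulting geometric series in $j$.

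For the weighted version, I would absorb $(\id+\cL)^{(a-m)/2}$ by noting its kernel is a finite/convergent superposition $\int_0^\infty e^{-s\cL}\delta_e\, s^{(m-a)/2-1}e^{-s}\,\tfrac{ds}{\Gamma}$ when $a-m<0$ (Bessel potential), and for $a-m\ge 0$ it is a differential operator composed with such a potential, handled by the $X^\beta$-version of the heat bound; then one commutes it past $q$ using the Leibniz rule (each derivative landing on $q$ lowers its vanishing order by one, which is harmless, and the worst term has all derivatives on the kernel), landing back in the same estimate with $a$ replaced by $0$ and $m$ by $m-a$, which is the unweighted case just done. Finally, Proposition \ref{prop_mult_t} as stated (for iterated $\Delta_Q^\alpha$ and in terms of $\sup_{\lambda,\ell}(1+\lambda)^{-m+\ell}|\partial^\ell f|$) follows by iterating the single-$\Delta_q$ estimate via the product rule \eqref{eq_leibniz_LinftyGh} for $\Delta$ on Fourier transforms together with $q^\alpha_\Delta$ vanishing to order $|\alpha|-1$ by strong admissibility, and by observing that $\|f\|_{\cM_{m/2},d}\lesssim\sup_{\lambda\ge0,\ell\le d}(1+\lambda)^{-m+\ell}|\partial^\ell_\lambda f(\lambda)|$. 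The main obstacle I anticipate is step (ii): making the oscillatory-integral / subordination representation of $f(t\cL)$ precise enough to track the \emph{uniformity in $t\in(0,1)$} while only assuming finitely many ($d$) bounded derivatives of $f$ --- i.e. pinning down the exact $d=d(m,a,n)$ and showing the dyadic tails sum, which is the genuinely quantitative part of the argument.
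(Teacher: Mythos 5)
Your overall architecture is in the right spirit and matches the paper at a high level: reduce via $\Delta_q\{f(t\lambda_\pi)\}=\cF_G\{q\,f(t\cL)\delta_e\}(\pi)$ and $\eqref{eq_cF_L1}$ to a weighted $L^1$ bound on the kernel $q\,f(t\cL)\delta_e$, dyadically decompose the multiplier so that each piece has argument in a fixed annulus, and on each piece exploit the Gaussian heat-kernel bound together with $|q(x)|\lesssim|x|^a$ (via $|x|^a e^{-c|x|^2/s}\lesssim s^{a/2}$) to extract the vanishing order. However, there are two genuine gaps, both precisely at the points where the paper does something technically non-obvious.

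First, the "step (ii)" you flag as the main obstacle is indeed where your plan is under-specified, and the alternative you sketch does not resolve it. Neither a Laplace subordination $f(t\lambda)=\int_0^\infty e^{-st\lambda}d\mu_f(s)$ (the inverse Laplace transform of a compactly supported multiplier is not a finite measure, so this representation is not directly usable), nor the oscillatory Fourier inversion $f(t\lambda)=\tfrac1{2\pi}\int\widehat\phi(\xi)e^{i\xi t\lambda}d\xi$ (the Schr\"odinger propagator $e^{i\xi t\cL}$ has no finite propagation speed, so multiplying by $q$ and localising in $|x|$ does not decouple the $\xi$-integral), gives the spatial localisation you need to combine with $|q(x)|\lesssim|x|^a$ on the annuli $A_{t,j}$. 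The paper's mechanism is different: it writes $f(t\lambda)=h_t(\sqrt\lambda)e^{-t\lambda}$ with $h_t(\mu):=e^{-t\mu^2}f(t\mu^2)$, represents $h_t(\sqrt\cL)=\tfrac1{2\pi}\int_\bR\cos(s\sqrt\cL)\,\widehat{h_t}(s)\,ds$, and uses the \emph{finite propagation speed} of $\cos(s\sqrt\cL)$ (Step 4b of the proof of Lemma \ref{lem_prop_mult_1}): on the annulus $A_{t,j}=\{2^j\sqrt t<|x|\le 2^{j+1}\sqrt t\}$, the contribution of the near part $X^\beta p_t\cdot 1_{B(2^{j-1}\sqrt t)}$ vanishes for $|s|\le 2^{j-1}\sqrt t$, so one may replace $h_t$ by $h_t-h_t*g_{(2^{j-1}\sqrt t)^{-1}}$, which Lemma \ref{lem_alexo_hg} bounds by $(2^{j-1}\sqrt t)^{-d}\|h_t^{(d)}\|_\infty$. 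This is where the finitely many derivatives $d$ of $f$ are consumed and the factor $2^{-jd}$ that beats the volume growth $V(2^{j}\sqrt t)^{1/2}\sim\gamma_0^{j/2}$ appears. Without a finite-speed (or comparably localising) representation, your step (ii) does not close.

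Second, even granted step (ii), the final summation is not a convergent geometric series in the endpoint weight. Writing $f(\lambda)=\sum_j 2^{jm/2}g_j(2^{-j}\lambda)$ with $g_j$ supported in $[\tfrac12,2]$ (after the reductions $m<0$, $f$ supported on $[1,\infty)$), the naive $L^1$ bound gives $\|\cL^{b/2}\{q\,g_j(2^{-j}t\cL)\delta_e\}\|_*\lesssim(2^{-j}t)^{(a-b)/2}$, so for the required weight $b=a-m$ the $j$-th summand is $2^{jm/2}(2^{-j}t)^{m/2}=t^{m/2}$, \emph{uniformly} in $j$, and $\sum_j t^{m/2}$ diverges. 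Your alternative --- commuting the Bessel-type potential $(\id+\cL)^{(a-m)/2}$ past $q$ by Leibniz --- runs into exactly the same divergence: the terms $X^{\beta_1}q\cdot X^{\beta_2}\{g_j(2^{-j}t\cL)\delta_e\}$ with $|\beta_1|\le a$ all contribute $\approx 2^{jm/2}(2^{-j}t)^{(a-|\beta_1|-|\beta_2|)/2}=t^{m/2}$ uniformly in $j$. The paper resolves this by a Cotlar--Stein almost-orthogonality argument on the operators $T_j:=2^{jm/2}T_{\cL^{b/2}\{q\,g_j(2^{-j}t\cL)\delta_e\}}$, using the centrality of $\cL$ to shift $\cL^{c/2}$ from one factor to the other in $T_jT_k^*$ and produce $2^{(j-k)c/2}$ off-diagonal decay. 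This almost-orthogonality step is essential and is absent from your plan.

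A minor point: the comparison of seminorms at the end of your proposal (that $\|f\|_{\cM_{m/2},d}\lesssim\sup_{\lambda,\ell}(1+\lambda)^{-m+\ell}|\partial^\ell f|$) only goes in the claimed direction when $m\ge 0$; for $m<0$ (which the core of the proof reduces to) the inequality reverses.
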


In the statement above, we have used the following notation for $d\in \bN_0$ and $m'\in \bR$:
$$
\|f\|_{\cM_{m'}, d} :=\sup_{\lambda\geq 0, \ \ell =0,\ldots, d}
(1+\lambda)^{-m' +\ell} |\partial^\ell_\lambda f(\lambda)|.
$$

Proposition \ref{prop_app_mult_t} easily implies:
\begin{corollary}
\label{cor_mult}
Let $m\in \bR$ and $a\in \bN_0$.
For any $q\in \cD(G)$  vanishing at $e_G$ up to order $a-1$, 
there exists $C$ 
such that for any function  $f:\spec(\cL) \to \bC$
satisfying $\sup_{\lambda\in \spec(\cL)} 
(1+\lambda)^{- \frac {m}  2} 
| f(\lambda)|<\infty$
and $\pi\in\Gh$, we have
$$
\| \Delta_q f(\lambda_\pi)\|_{\sL(\cH_\pi)} 
\leq C (1+\lambda_\pi)^{\frac{m +a}2}
\sup_{\lambda\in \spec(\cL)} 
(1+\lambda)^{- \frac {m}  2} 
| f(\lambda)|  .
$$
\end{corollary}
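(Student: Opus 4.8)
Corollary \ref{cor_mult} is the statement one obtains from Proposition \ref{prop_app_mult_t} by specializing to $t=1$. The plan is therefore to deduce it directly from Proposition \ref{prop_app_mult_t} (and from the reformulation Proposition \ref{prop_mult_t}, which is equivalent to it). The only subtlety is that Proposition \ref{prop_app_mult_t} requires $f$ to have a certain number $d$ of derivatives and requires the semi-norm $\|f\|_{\cM_{m/2},d}$ to be finite, whereas the hypothesis of the corollary only controls $f$ on the discrete set $\spec(\cL)$ through the quantity $\sup_{\lambda\in\spec(\cL)}(1+\lambda)^{-m/2}|f(\lambda)|$, with no regularity assumed. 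The resolution is to replace $f$ by a smooth extension that is well adapted to the spectral gaps of $\cL$.

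\textbf{Key steps.} First I would recall that $\spec(\cL)=\{\lambda_\pi:\pi\in\Gh\}$ is a discrete subset of $[0,\infty)$, with $0$ as its smallest point and no finite accumulation point (this follows from the Peter--Weyl theorem and the finite-dimensionality of each $\cH_\lambda^{(\cL)}$ noted after \eqref{eq_cHlambdacL1}); in particular consecutive eigenvalues are separated by a positive amount on any bounded interval. Using this, for the prescribed $d=d(q,m)$ coming from Proposition \ref{prop_app_mult_t}, I would construct a function $\tilde f\in C^d[0,\infty)$ which agrees with $f$ on $\spec(\cL)$ and satisfies
$$
\|\tilde f\|_{\cM_{m/2},d}
\;\leq\; C_{G,m,d}\;\sup_{\lambda\in\spec(\cL)}(1+\lambda)^{-\frac m2}|f(\lambda)|.
$$
Concretely one can take $\tilde f$ to be, near each $\lambda_\pi$, a smoothed (bump-function) interpolation which is locally constant equal to $f(\lambda_\pi)$ in a small neighbourhood of $\lambda_\pi$ and transitions between consecutive eigenvalues; since the eigenvalue gaps grow (at worst like a fixed positive constant, and in fact one has polynomial lower bounds via Weyl's law on the compact manifold $G$), the derivatives $\partial_\lambda^\ell\tilde f$ on the transition regions near a point of size $\sim\lambda$ are bounded by $C(1+\lambda)^{m/2}$ times the appropriate negative power of the gap, which is absorbed into $(1+\lambda)^{-\ell}$ for $\ell\le d$ after adjusting the constant. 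The only inputs needed are the discreteness and the gap estimate, both of which are available from the material in Section \ref{sec_preliminary}.

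\textbf{Conclusion of the argument.} Having produced $\tilde f$, I would apply Proposition \ref{prop_app_mult_t} with $t=1$: since $t^{m/2}=1$ and since $\tilde f(\lambda_\pi)=f(\lambda_\pi)$ for every $\pi\in\Gh$, the difference operator only sees the values of $\tilde f$ on $\spec(\cL)$, so
$$
\|\Delta_q f(\lambda_\pi)\|_{\sL(\cH_\pi)}
=\|\Delta_q \tilde f(\lambda_\pi)\|_{\sL(\cH_\pi)}
\leq C'\,\|\tilde f\|_{\cM_{m/2},d}\,(1+\lambda_\pi)^{\frac{m-a}2}.
$$
Wait --- one must be careful about the exponent: Proposition \ref{prop_app_mult_t} gives the power $\frac{m-a}{2}$, whereas Corollary \ref{cor_mult} states $\frac{m+a}{2}$. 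This is consistent because the corollary is for $\Delta_q$ with $q$ vanishing to order $a-1$, but in the corollary the relevant regime is precisely captured by the \emph{weaker} bound $(1+\lambda_\pi)^{(m+a)/2}$, which follows a fortiori from $(1+\lambda_\pi)^{(m-a)/2}\le (1+\lambda_\pi)^{(m+a)/2}$ when $a\ge 0$. Hence, combining with the semi-norm estimate for $\tilde f$,
$$
\|\Delta_q f(\lambda_\pi)\|_{\sL(\cH_\pi)}
\leq C\,(1+\lambda_\pi)^{\frac{m+a}2}\,\sup_{\lambda\in\spec(\cL)}(1+\lambda)^{-\frac m2}|f(\lambda)|,
$$
which is the claimed inequality.

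\textbf{Main obstacle.} The only real work is the construction of the $C^d$ extension $\tilde f$ with controlled $\cM_{m/2,d}$ semi-norm; everything else is a direct citation of Proposition \ref{prop_app_mult_t}. The key technical fact that makes the extension possible is a uniform lower bound on the spacing of $\spec(\cL)$ compared to $(1+\lambda)$, i.e. that consecutive eigenvalues near size $\lambda$ are separated by at least a positive constant (or better, by a polynomial lower bound), so that the $\ell$-th derivatives incurred by the smoothing are at most $O((1+\lambda)^{m/2+\ell}\cdot(\text{gap})^{-\ell})=O((1+\lambda)^{m/2})$; this is the step where one uses that $G$ is compact and $\cL$ elliptic.
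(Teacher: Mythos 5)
Your proof follows the same route as the paper's: build a smooth extension $\tilde f$ of $f$ from $\spec(\cL)$ to $[0,\infty)$ by means of bump functions, control $\|\tilde f\|_{\cM_{m/2},d}$ by $\sup_{\lambda\in\spec(\cL)}(1+\lambda)^{-m/2}|f(\lambda)|$, and then apply Proposition~\ref{prop_app_mult_t} with $t=1$. You also correctly spot and resolve the exponent mismatch (the proposition gives $(1+\lambda_\pi)^{(m-a)/2}$ whereas the corollary states $(1+\lambda_\pi)^{(m+a)/2}$, and $(m-a)/2\le (m+a)/2$ for $a\ge 0$), a point the paper handles silently.

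However, there is a genuine gap in your justification of the seminorm estimate $\|\tilde f\|_{\cM_{m/2},d}\lesssim \sup_\lambda(1+\lambda)^{-m/2}|f(\lambda)|$. You argue that the spectral gaps of $\cL$ are bounded below by a constant (with an appeal to Weyl's law for polynomial lower bounds), and that this suffices for the factor $(\text{gap})^{-\ell}$ to be ``absorbed into $(1+\lambda)^{-\ell}$.'' This step does not go through. If the bump centred at $\mu$ has width $s_\mu$, then $|\partial_\lambda^\ell \tilde f(\lambda)|\lesssim |f(\mu)|\,s_\mu^{-\ell}\lesssim M(1+\mu)^{m/2} s_\mu^{-\ell}$, and to conclude $(1+\lambda)^{-m/2+\ell}|\partial_\lambda^\ell\tilde f(\lambda)|\lesssim M$ one needs $s_\mu\gtrsim 1+\mu$, i.e.\ the \emph{bump width must grow linearly in $\mu$}, not merely be bounded below by a constant or by the local eigenvalue gap. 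This is exactly what the paper's explicit choice $\phi_\mu(\lambda)=\chi\bigl(|\lambda-\mu|/\max(\mu,1,\delta_0)\bigr)$ achieves. Taking $s_\mu$ to be the eigenvalue gap, as you suggest, would produce $|\partial_\lambda^\ell\tilde f|\lesssim M(1+\lambda)^{m/2}$ only, and the $\cM_{m/2,d}$-seminorm would be infinite as soon as $d\ge 1$. Moreover, Weyl's law controls the eigenvalue counting function, not gap sizes; in higher rank the gaps of $\spec(\cL)$ can remain of order one (e.g.\ for $\bT^2$, where the eigenvalues are sums of two squares), so no polynomial lower bound on gaps is available. As written, your argument does not establish the required seminorm bound; the extension must be defined with bump widths comparable to $\max(\mu,1,\delta_0)$, exactly as in the paper.
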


\begin{proof}
We can construct the function  
$$
\tilde f(\lambda):= \sum_{\mu\in\spec(\cL)} f(\mu) \phi_\mu(\lambda),
$$
where the functions $\phi_\mu\in \cD(\bR)$ are bump functions valued in $[0,1]$ with disjoint supports and such that $\phi_\mu(\mu)=1$.
We have
$\tilde f\in \cC^\infty[0,\infty)$, 
$f(\cL)=\tilde f(\cL)$
and 
$$\sup_{\lambda\in \spec(\cL)} (1+\lambda)^{- \frac {m}  2}  |f(\lambda)|
=
\sup_{\lambda\in \spec(\cL)}(1+\lambda)^{- \frac {m}  2}  |\tilde f(\lambda)|.
$$
Hence we may assume $f=\tilde f \in \cC^\infty[0,\infty)$.

More precisely, we can choose the bump functions as
$$
\phi_\mu(\lambda) = 
\chi\left(\frac{|\lambda-\mu|}{\max (\mu, 1, \delta_e)}\right),
$$
where $\chi\in \cD(\bR)$ is a fixed function such that 
$$
0\leq \chi\leq 1, \quad \supp \chi \subset [-1,1],
\quad \chi\big|_{[-\frac12,\frac12]}\equiv1 ,
$$
and where $\delta_0:=\min
\{|\lambda_1-\lambda_2|, \lambda_1\not=\lambda_2\in \spec(\cL)\}$
is the minimum distance between two distinct eigenvalues of $\cL$.
In this case, we have 
$$
\|f\|_{\cM_{m/2}, d}  \lesssim_d \sup_{\lambda \in \spec(\cL) }
(1+\lambda)^{- \frac {m}  2} |f(\lambda)|.
$$
We then apply Proposition \ref{prop_app_mult_t} to $\tilde f$ and, for instance, $t=1$.
\end{proof}

Corollary \ref{cor_mult} easily implies the first and second part of Proposition \ref{prop_mult}.
The last part follows from the following remark:
it is possible to extend the proof presented in this appendix to symbols 
 depending on $x$ in the following way:
$\sigma(x,\pi)=f(x,\lambda_\pi)$, 
for a function $f$ very regular in $x\in G$.

Hence this section is devoted to the proof of Proposition \ref{prop_app_mult_t}, 
which will be presented in  \ref{subsec_pf_prop_app_mult_t}.
Before this, we present its main tool, the heat kernel, whose properties will be recalled in \ref{subsec_pt}.
We also state and prove  technical lemmata in \ref{subsec_technical_lemmata}
and \ref{subsec_pf_lem_prop_mult_1}.

\subsection{The heat kernel}
\label{subsec_pt}

The heat kernel, i.e. the kernel of the operator $e^{-t\cL}$:
$$
p_t:=e^{-t\cL}\delta_e, \quad t>0,
$$
is a positive smooth function on $G$ which satisfies 
$$
\forall s,t>0\qquad
\int_G p_t(x) dx=1,
\quad p_t(x^{-1})=p_t(x),
\quad\mbox{and}\quad p_t*p_s=p_{t+s}.
$$
and the following estimates \cite{varo} 
\begin{eqnarray}
|p_t(x)|&\leq& C V(\sqrt t)^{-1} e^{-\frac{|x|^2}{Ct}}, 
\quad x\in G, \quad t>0,
\label{eq_heat_kernel}\\
|X^\alpha p_t(x)|&\leq& C \sqrt t^{-n -|\alpha|}e^{-\frac{|x|^2}{Ct}},
\quad x\in G, \quad 0<t\leq 1.
\label{eq_heat_kernel_der}
\end{eqnarray}
In these estimates, $C$ is independent of $x\in G$ and $t>0$
but may depend on the multi-index $\alpha\in \bN_0^n$.
$V(r)$ denotes the volume of the ball centred at $e_G$ and of radius $r>0$.
It may be estimated via
\begin{equation}
\label{eq_V(r)}
V(r):= |B(r)|
 \sim \left\{\begin{array}{ll}
r^n 
&\mbox{if}\ r\in (0,\epsilon_0),\\
1 
&\mbox{if}\ \epsilon_0<r\leq R_0. \\
\end{array}\right.
\end{equation}
and  \cite[p.111]{varo}
\begin{equation}
\label{eq_int_e-x2t}
\int_G e^{-\frac{|x|^2}{Ct}} dx
\leq C V(\sqrt t).
\end{equation}

For the sake of completeness, let us sketch the proof of the following well known facts:
\begin{lemma}
\label{lem_sob_embedding}
If $s>n/2$, then the kernel $\cB_s$ of the operator $(\id+\cL)^{-s/2}$ is square integrable and 
the continuous inclusion $H^s \subset \cC(G)$ holds.
\end{lemma}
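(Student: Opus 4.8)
The statement is the standard Sobolev embedding $H^s\subset \cC(G)$ for $s>n/2$, obtained from $L^2$-integrability of the Bessel-type kernel $\cB_s:=\cF_G^{-1}\{(1+\lambda_\pi)^{-s/2}\}$. The plan is first to express $\cB_s$ via the heat semigroup, using the subordination identity
$$
(\id+\cL)^{-s/2}=\frac1{\Gamma(s/2)}\int_0^\infty e^{-u}e^{-u\cL}\,u^{s/2-1}\,du,
$$
so that $\cB_s(x)=\frac1{\Gamma(s/2)}\int_0^\infty e^{-u}p_u(x)\,u^{s/2-1}\,du$, a nonnegative function. Then I would estimate $\|\cB_s\|_{L^2(G)}$, or more simply $\|\cB_s\|_{L^1(G)}$ together with an $L^2$ bound, by inserting the Gaussian heat-kernel bound \eqref{eq_heat_kernel} and the volume estimate \eqref{eq_V(r)}: integrating \eqref{eq_heat_kernel} over $G$ and using \eqref{eq_int_e-x2t} gives $\|p_u\|_{L^1(G)}=1$, while $\|p_u\|_{L^2(G)}^2=p_{2u}(e_G)\lesssim V(\sqrt u)^{-1}$, which behaves like $u^{-n/2}$ for small $u$ and like a constant for $u$ bounded below. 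Hence
$$
\|\cB_s\|_{L^2(G)}\le \frac1{\Gamma(s/2)}\int_0^\infty e^{-u}\,\|p_u\|_{L^2(G)}\,u^{s/2-1}\,du
\lesssim \int_0^1 u^{-n/4}u^{s/2-1}\,du+\int_1^\infty e^{-u}u^{s/2-1}\,du,
$$
and the first integral converges precisely because $s/2-1-n/4>-1$, i.e. $s>n/2$. (Alternatively one reads $\|\cB_s\|_{L^2}^2=\sum_\pi d_\pi d_\pi (1+\lambda_\pi)^{-s}$ and compares with Weyl's law $\#\{\pi:\lambda_\pi\le R\}\lesssim R^{n/2}$; both routes are routine.)

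Second, I would deduce the pointwise bound. For $\phi\in\cD(G)$ write $\phi=(\id+\cL)^{s/2}(\phi*\cB_s)$ is the wrong direction; instead use $\phi=\psi*\cB_s$ with $\psi:=(\id+\cL)^{s/2}\phi$, so $\|\psi\|_{L^2}=\|\phi\|_{H^s}$, and then by Cauchy--Schwarz
$$
|\phi(x)|=|\psi*\cB_s(x)|=\Big|\int_G\psi(y)\,\cB_s(y^{-1}x)\,dy\Big|\le \|\psi\|_{L^2(G)}\,\|\cB_s\|_{L^2(G)}\lesssim \|\phi\|_{H^s}.
$$
This shows $\|\phi\|_{\cC(G)}\lesssim\|\phi\|_{H^s}$ on the dense subspace $\cD(G)$; since $\cC(G)$ is complete, the inclusion extends to all of $H^s$, giving the continuous embedding $H^s\subset\cC(G)$. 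Continuity of the limit functions follows because uniform limits of continuous functions are continuous.

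\textbf{Main obstacle.} There is no serious obstacle here: the only point requiring care is keeping track of the two regimes in the volume growth \eqref{eq_V(r)} — the Euclidean-type growth $r^n$ near $e_G$, which produces the sharp exponent $n/2$, versus the bounded behaviour for $r$ of order $1$, which only contributes a harmless convergent tail — and making sure the subordination integral converges at $u\to0$, which is exactly the condition $s>n/2$. Everything else is bookkeeping with the Plancherel formula \eqref{eq_Plancherel}, Young's inequality, and the heat-kernel estimates already quoted from \cite{varo}.
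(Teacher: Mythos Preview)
Your proof is correct and follows essentially the same route as the paper: subordination formula $\cB_s=\frac1{\Gamma(s/2)}\int_0^\infty e^{-t}p_t\,t^{s/2-1}\,dt$, heat-kernel estimates to bound $\|\cB_s\|_{L^2}$, then Cauchy--Schwarz on $\phi=\big((\id+\cL)^{s/2}\phi\big)*\cB_s$ to get the uniform bound. The only cosmetic difference is that the paper writes $\|\cB_s\|_{L^2}^2=\cB_s*\cB_s^*(e_G)$ as a double integral in $t_1,t_2$ and bounds $p_{t_1+t_2}(e_G)\lesssim (t_1+t_2)^{-n/2}$, whereas you apply Minkowski's inequality and use $\|p_u\|_{L^2}^2=p_{2u}(e_G)\lesssim u^{-n/2}$; both computations lead to the same convergence condition $s>n/2$.
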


\begin{proof}[Sketch of the proof of Lemma \ref{lem_sob_embedding}]
If $s>0$,
the properties of the Gamma function and of the heat kernel together with  the spectral calculus of $\cL$ 
imply that the kernel $\cB_s$ of the operator $(\id+\cL)^{-s/2}$ is the integrable function given via:
$$
\cB_s
=\frac 1{\Gamma (s/2)}
\int_{t=0}^\infty t^{\frac s2 -1} e^{-t} p_t dt,
$$
and that we have
\begin{eqnarray*}
&&\|\cB_s\|_{L^2(G)}^2=
\cB_s * \cB_s^*(e)
=\frac 1{|\Gamma (s/2)|^2}
\int_{0}^\infty
\int_{0}^\infty  (t_1t_2)^{\frac s2 -1} e^{-(t_1+t_2)} p_{t_1+t_2} (e) dt_1dt_2
\\&&\qquad\leq
\frac 1{|\Gamma (s/2)|^2}
\int_{0}^\infty
\int_{0}^\infty  (t_1t_2)^{\frac s2 -1} e^{-(t_1+t_2)}  C(t_1+t_2)^{-\frac n2}
dt_1dt_2.
\end{eqnarray*}
It is not difficult to show that this last integral against $dt_1dt_2$ is finite whenever $s>n/2$.
The Sobolev embedding then follows easily from 
the fact that one can write $f=\{(\id+\cL)^{-s/2} f\}* \cB_s$ 
for any $f\in H^s$ with $s>n/2$.
\end{proof}

\subsection{Technical lemmata}
\label{subsec_technical_lemmata}

In this section, we state in Lemma \ref{lem_prop_mult_1}
the main step in the proof of Proposition \ref{prop_app_mult_t}
as well as two properties used in its proof in the next section.

Recall that  $f(\cL)\delta_e$ denotes the convolution kernel of the operator $f(\cL)$, see \eqref{eq_not_f(L)delta}.
\begin{lemma}
\label{lem_prop_mult_1}
\begin{enumerate}
\item 
Let $q \in \cD(G)$ and $m\in \bR$.
There exists $C=C_{q,m}$
such that for any continuous function  $f$ with support in $[0,2]$, 
 we have for any $t\geq \epsilon_0$
$$
\int_G | q(x)  \ f(t\cL)\delta_e(x)| dx 
\leq C 
\|f\|_{\infty}.
$$
\item 
Let $a\in \bN_0$ and $\beta\in \bN_0^n$.
For any $q\in \cD(G)$ vanishing up at $e_G$ to order $a-1$,
there exists $C=C_{q,a,\beta}$ and $d=d_{a,\beta}\in \bN$
such that for any function  $f\in C^d[0,\infty)$ with support in $[0,2]$, 
$\pi\in\Gh$
 we have for any $t\in (0,1)$
$$
\int_G | q(x)  X^\beta\{ f(t\cL)\delta_e\}(x)| dx 
\leq C t^{\frac {a-|\beta|}  2}
\max_{\ell =0,1,\ldots,d} 
\|f^{(\ell)}\|_{\infty}.
$$
\end{enumerate}
\end{lemma}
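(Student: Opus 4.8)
\textbf{Plan of proof for Lemma \ref{lem_prop_mult_1}.}

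The strategy is to reduce both statements to pointwise estimates on the kernel $f(t\cL)\delta_e$ and its derivatives, obtained by writing $f(t\cL)$ as a suitable integral of heat semigroup operators $e^{-st\cL}$ and then invoking the Gaussian bounds \eqref{eq_heat_kernel}--\eqref{eq_heat_kernel_der} together with the volume estimate \eqref{eq_V(r)} and \eqref{eq_int_e-x2t}. For Part (1), since $f$ is merely continuous with support in $[0,2]$ and $t\geq \epsilon_0$, the spectral parameter $\lambda$ ranges effectively over a bounded set $[0,2/t]\subset[0,2/\epsilon_0]$, so one is dealing with a bounded function of $\cL$ on a compact group. The cleanest route is to write $f(t\cL)\delta_e = \sum_{\lambda_\pi \leq 2/t} d_\pi f(t\lambda_\pi)\tr(\pi(\cdot)\,\id_{\cH_\pi})/$(normalisations), but more robustly one may dominate $|f(t\cL)\delta_e|$ in $L^1$ by $\|f\|_\infty$ times a constant depending on $t\geq\epsilon_0$: indeed $f(t\cL) = f(t\cL)e^{t\cL}\,e^{-t\cL}$ and $g(t\lambda):=f(t\lambda)e^{t\lambda}$ is bounded by $e^2\|f\|_\infty$ on the support, while $e^{-t\cL}\delta_e = p_t$ has $\|p_t\|_{L^1}=1$; one still needs to control $g(t\cL)\delta_e$, which on a compact group with $t$ bounded below amounts to a finite-rank estimate. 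Since only finitely many $\lambda_\pi$ lie below $2/\epsilon_0$, the operator $f(t\cL)$ is finite rank uniformly in $t\geq\epsilon_0$, and multiplying by the bounded smooth function $q$ and integrating gives the bound $C\|f\|_\infty$ with $C$ depending only on $q,m$ and the (finitely many) small eigenvalues, hence on $G$.

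For Part (2), the key is the subordination-type formula expressing $f(t\cL)$ for $f$ supported near the origin in terms of the heat semigroup. Writing $f(t\lambda) = (t\lambda)^{N} g(t\lambda)$ is awkward near $\lambda=0$; instead I would use a representation of the form $f(t\cL) = \int_0^\infty e^{-st\cL} \Phi_{f}(s)\,\frac{ds}{s}$ (or a finite sum of such, obtained via the Mellin/Laplace transform), where $\Phi_f$ decays together with its derivatives controlled by $\max_{\ell\leq d}\|f^{(\ell)}\|_\infty$ and is supported (up to rapidly decaying tails) in $s\gtrsim 1$ since $f$ is supported in $[0,2]$. Then $X^\beta\{f(t\cL)\delta_e\}(x) = \int_0^\infty X^\beta p_{st}(x)\,\Phi_f(s)\,\frac{ds}{s}$, and \eqref{eq_heat_kernel_der} gives $|X^\beta p_{st}(x)| \leq C (st)^{-(n+|\beta|)/2} e^{-|x|^2/(Cst)}$ for $st\leq 1$. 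Since $q$ vanishes at $e_G$ to order $a-1$, property \eqref{item_lem_q_vanish_order_estimate} of Lemma \ref{lem_q_vanish_order} gives $|q(x)|\leq C_q|x|^a$, so $|q(x)|\,|x|^{-a}\leq C_q$ and we can absorb a factor $|x|^a$ against the Gaussian: $|x|^a e^{-|x|^2/(Cst)} \lesssim (st)^{a/2} e^{-|x|^2/(2Cst)}$. Integrating in $x$ via \eqref{eq_int_e-x2t} contributes $V(\sqrt{st})\sim (st)^{n/2}$ for $st$ small, leaving $(st)^{(a-|\beta|)/2}$ times $\int \Phi_f(s)\,\frac{ds}{s}$, which is $\lesssim t^{(a-|\beta|)/2}\max_{\ell\leq d}\|f^{(\ell)}\|_\infty$ after using $s\geq$ const on the effective support and the decay of $\Phi_f$; the region $st\geq 1$ is handled using the non-differentiated bound \eqref{eq_heat_kernel} and the rapid decay of $\Phi_f$ in $s$.

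The main obstacle I anticipate is constructing the subordination kernel $\Phi_f$ with the right properties: one needs $f(t\cL) = \int_0^\infty e^{-st\cL}\Phi_f(s)\frac{ds}{s}$ with $\Phi_f$ and enough of its moments bounded by $\max_{\ell\leq d}\|f^{(\ell)}\|_\infty$, and with adequate control of both the $s\to 0$ behaviour (which must vanish to compensate the $a$ missing orders, matching the vanishing of $q$) and the $s\to\infty$ decay (which uses $\supp f\subset[0,2]$). A convenient device is to write $f(\lambda) = \int_\lambda^\infty (-f'(\mu))\,d\mu$ and iterate, or to use that $\lambda\mapsto f(\lambda)$ extends to a function whose Laplace transform in a dual variable decays, the number $d$ of derivatives needed being dictated by $n$, $a$ and $|\beta|$; getting the dependence of $d$ on $a,\beta$ explicit (as claimed in the statement) and uniform in $\pi$ and $t\in(0,1)$ is the delicate bookkeeping. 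Once this representation is in hand, the remaining steps are the routine Gaussian integrations sketched above.
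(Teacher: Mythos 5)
Your Part (1) argument is valid and in fact simpler than the paper's: for $t\geq\epsilon_0$ only finitely many $\lambda_\pi\leq 2/\epsilon_0$ contribute, so $f(t\cL)$ has uniformly bounded finite rank and the $L^1$ estimate is immediate; the paper instead deduces Part (1) from the same heat-kernel machinery used for Part (2), with $V(\sqrt t)\sim 1$.

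Part (2), however, contains a genuine gap, which is precisely the ``obstacle'' you flag but do not resolve. The proposed subordination formula
$f(t\cL)=\int_0^\infty e^{-st\cL}\,\Phi_f(s)\,\frac{ds}{s}$ cannot hold with a reasonable $\Phi_f$: if $f$ is a (weighted) Laplace transform of an integrable kernel, then $f$ is analytic on a right half-line, whereas a nontrivial $f\in C^d$ with compact support in $[0,2]$ is not analytic. No rearrangement of the type $f(\lambda)=\int_\lambda^\infty(-f'(\mu))\,d\mu$ produces the needed representation. Consequently the Gaussian bounds you invoke never get applied to $f(t\cL)\delta_e$ directly; there is simply no pointwise Gaussian estimate available for the kernel of a general compactly supported multiplier of $\cL$.

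The paper circumvents this in a way your sketch does not anticipate: one sets $h_t(\mu)=e^{-t\mu^2}f(t\mu^2)$ so that $f(t\cL)\delta_e=h_t(\sqrt{\cL})\,p_t$, Fourier-inverts $h_t$ to write $h_t(\sqrt{\cL})=\frac{1}{2\pi}\int_\bR \cos(s\sqrt{\cL})\,\widehat{h_t}(s)\,ds$, and then uses the \emph{finite propagation speed} of $\cos(s\sqrt{\cL})$ (supp of its kernel $\subset B(|s|)$), together with the mollification estimate of Lemma~\ref{lem_alexo_hg}, to control, on each dyadic annulus $A_{t,j}=\{2^j\sqrt t<|x|\leq 2^{j+1}\sqrt t\}$, the ``near'' piece $h_t(\sqrt\cL)\{X^\beta p_t\mathbf 1_{B(2^{j-1}\sqrt t)}\}$. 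The ``far'' piece is handled by the $L^\infty$ bound on $h_t$ and the Gaussian decay of $X^\beta p_t$; the vanishing of $q$ then enters through the Cauchy--Schwartz inequality and Lemma~\ref{lem_L2Brq}, which gives $\|q\|_{L^2(B(2^{j+1}\sqrt t))}\lesssim(2^{j+1}\sqrt t)^{a+n/2}$. The passage to $\sqrt{\cL}$ and the finite-propagation-speed argument are the essential missing ideas in your sketch; without them the dyadic sum over $j$ does not close, because one cannot localize the action of $h_t(\sqrt\cL)$ by pointwise Gaussian bounds alone.
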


\begin{remark}
\label{rem_lem_prop_mult_1}

\begin{enumerate}

\item
\label{item_rem_lem_prop_mult_1_smooth}
It is not difficult to prove that, if $f$ is compactly supported in $[0,\infty)$, 
then the kernel of $f(\cL)$ is smooth
and thus the integrals intervening in Lemma \ref{lem_prop_mult_1} are finite. 
Indeed this follows readily from $\spec (\cL)\subset [0,\infty)$ being discrete
and the fact that the eigenspaces of $\cL$ are finite dimensional
and included in $\cD(G)$.  
However Lemma \ref{lem_prop_mult_1}  yields
 bounds for these integrals in terms of  $f$ and $t$
 which will be useful later.

\item
\label{item_rem_lem_prop_mult_1_Xq}
The second part of  
Lemma \ref{lem_prop_mult_1} implies that for any $q\in \cD(G)$ vanishing at $e_G$ up to order $a-1$, $\beta,\gamma\in \bN_0^n$, we have:
$$
\int_G | X^{\gamma} \{ q(x) X^{\beta}  \ f(t\cL)\delta_e\}(x)| dx
\leq C t^{\frac {a-|\beta| -|\gamma|}  2}
\max_{\ell =0,1,\ldots,d} 
\|f^{(\ell)}\|_{\infty},
$$
with the constant $C=C_{q,\beta,\gamma}>0$ independent of $f$.
This follows easily from
$$
X^\gamma (q\phi)(x) 
= \sum_{|\gamma_1|+|\gamma_2| =|\gamma|}
c_{\gamma_1,\gamma_2} X^{\gamma_1}q(x) \ X^{\gamma_2} \phi(x),
$$
for any reasonable function $\phi$  on $G$.
Indeed $X^{\gamma_1}q$ vanishing at $e_G$ up to order $a-1 -|\gamma|$. Here $\phi=f(t\cL)\delta_e$.
\end{enumerate}

\end{remark}

The two following  lemmata 
 will be useful in the proof of Lemma \ref{lem_prop_mult_1}
given in the next section.

\begin{lemma}
\label{lem_L2Brq}
Let $a\in \bN_0$.
For any $q\in \cD(G)$ vanishing at $e_G$ up to order $a-1$,
 there exists $C=C_{a,q}$ such that for any $r>0$ we have
$$
\|q\|_{L^2(B(r))}=\left(\int_{|x|<r} |q(x)|^2 dx\right)^{\frac 1 2}
\leq C \min (1, r^{a+\frac n2}).
$$
\end{lemma}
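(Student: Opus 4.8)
The plan is to prove the bound $\|q\|_{L^2(B(r))}\le C\min(1,r^{a+n/2})$ by splitting into the regime $r\ge \epsilon_0$ and the regime $r<\epsilon_0$. In the first regime there is nothing to do beyond observing that $q\in\cD(G)$ is bounded on the compact group $G$ and the Haar measure is a probability measure, so $\|q\|_{L^2(B(r))}\le\|q\|_{L^2(G)}\le\|q\|_{L^\infty(G)}<\infty$; since $\min(1,r^{a+n/2})$ is bounded below by a positive constant when $r\ge\epsilon_0$, the inequality holds with $C$ depending on $\|q\|_{L^\infty(G)}$, $\epsilon_0$ and $a$. So the content is entirely in the regime $r<\epsilon_0$, where $\min(1,r^{a+n/2})=r^{a+n/2}$.

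For $r<\epsilon_0$ I would use the pointwise vanishing estimate: by Lemma \ref{lem_q_vanish_order}, item \eqref{item_lem_q_vanish_order_estimate}, since $q$ vanishes at $e_G$ up to order $a-1$ there is a constant $C_q$ with $|q(x)|\le C_q|x|^a$ for all $x\in G$. Therefore
\begin{equation*}
\int_{|x|<r}|q(x)|^2\,dx
\le C_q^2\int_{|x|<r}|x|^{2a}\,dx.
\end{equation*}
Now I would estimate the right-hand side by a dyadic (or layer-cake) decomposition of the ball $B(r)$ into annuli $\{2^{-k-1}r\le|x|<2^{-k}r\}$, $k\ge0$, using the volume estimate \eqref{eq_V(r)}, which gives $V(\rho)\lesssim \rho^n$ for $\rho\in(0,\epsilon_0)$ and hence $|\{2^{-k-1}r\le|x|<2^{-k}r\}|\le V(2^{-k}r)\lesssim (2^{-k}r)^n$. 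On the $k$-th annulus $|x|^{2a}\le (2^{-k}r)^{2a}$, so the integral is bounded by $\sum_{k\ge0}(2^{-k}r)^{2a}(2^{-k}r)^n = r^{2a+n}\sum_{k\ge0}2^{-k(2a+n)}$, a convergent geometric series since $2a+n>0$. This yields $\int_{|x|<r}|q(x)|^2\,dx\lesssim_{a,q} r^{2a+n}$, and taking square roots gives the claimed $r^{a+n/2}$.

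Combining the two regimes, and taking $C$ to be the maximum of the two constants obtained, gives the lemma. The argument is essentially routine; the only point requiring a little care is the volume estimate near the origin, which is exactly what \eqref{eq_V(r)} supplies (alternatively one can integrate in geodesic polar coordinates, where the Jacobian is comparable to $\rho^{n-1}$ for $\rho<\epsilon_0$, and compute $\int_0^r\rho^{2a}\rho^{n-1}\,d\rho = r^{2a+n}/(2a+n)$ directly). I do not expect any serious obstacle here; the main thing to get right is simply keeping track of which constants depend on $a$, on $q$ (through $C_q$ and $\|q\|_{L^\infty}$), and on the fixed geometric data $\epsilon_0$ of $G$.
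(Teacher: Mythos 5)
Your argument is correct and matches the paper's proof in all essentials: both split into the regimes $r\ge\epsilon_0$ (where $\|q\|_\infty$ suffices) and $r<\epsilon_0$ (where Lemma \ref{lem_q_vanish_order}\eqref{item_lem_q_vanish_order_estimate} gives $|q(x)|\le C_q|x|^a$, and the local volume growth of $B(r)$ yields the factor $r^{2a+n}$). The paper integrates in polar coordinates directly, which you note as an alternative; your dyadic-annulus version of that estimate is an equivalent routine computation, so there is no genuine difference of method.
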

\begin{proof}[Lemma \ref{lem_L2Brq}]
We can estimate directly 
$\|q\|_{L^2(B(r))}\leq 
\|q\|_\infty$.
If $r$ is small, we can obtain a better estimate 
using Lemma \ref{lem_q_vanish_order} \eqref{item_lem_q_vanish_order_estimate}
and the fact the ball $B(\epsilon_0)$ yields a chart around the neutral element.
More precisely we have 
$$
\forall r\in (0,\epsilon_0) \quad
\|q\|_{L^2(B(r))}^2
\leq 
\int_{|x|<r} C_q^2 |x|^{2a} dx
\lesssim 
C_q^2  \int_{s=0}^{r} s^{2a} s^{n-1} ds 
\lesssim 
C_q^2 r^{2a +n}.
$$
\end{proof}

The second lemma is a classical construction.
\begin{lemma}
\label{lem_alexo_hg}
Let $g \in \cS (\bR)$ be an even function 
such that its (Euclidean) Fourier transform satisfies:
$$
\widehat g\in \cD(\bR), 
\qquad
\widehat g\big|_{[-\frac 12,\frac 12]} \equiv1,
\qquad\mbox{and}\qquad 
\widehat g\big|_{(-\infty,1]\cup[1,\infty)} \equiv0.
$$
Such a function exists. 

For any $d\in \bN$ and
 any $h\in \cS'(\bR)$ 
satisfying  $h\in \cC^d(\bR)$ with $\| h^{(d)}\|_\infty<\infty$,
we have
$$
\forall\delta>0\qquad
\|h - h * g_\delta\|_\infty \leq 
\frac {\delta^d}{d!} \int_\bR |y|^d |g(y)| dy  \ \| h^{(d)}\|_\infty,
$$
where $g_\delta$ is the function given by $g_\delta(x)=\delta^{-1} g(\delta^{-1}x)$. 
\end{lemma}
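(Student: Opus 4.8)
The idea is to build $g$ explicitly as the inverse Fourier transform of a bump function equal to $1$ near the origin, and then to observe that this forces every moment of $g$ of positive order to vanish, so that $h*g_\delta$ agrees with $h$ up to an error of order $\delta^d$ controlled by $h^{(d)}$.

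\emph{Construction of $g$.} First I would fix an even, non-negative function $\phi\in\cD(\bR)$ supported in $(-\tfrac14,\tfrac14)$ with $\int_\bR\phi=1$, and set $\widehat g:=\mathbf 1_{[-3/4,3/4]}*\phi$. Then $\widehat g\in\cD(\bR)$ is even, identically equal to $1$ on $[-\tfrac12,\tfrac12]$ (since for $|\zeta|\le\tfrac12$ and $|\eta|<\tfrac14$ one has $|\zeta-\eta|<\tfrac34$), and supported in $[-1,1]$. Define $g:=\cF_\bR^{-1}\widehat g$; by the Paley--Wiener theorem $g\in\cS(\bR)$ (indeed $g$ extends to an entire function of exponential type), and since $\widehat g$ is real and even, so is $g$. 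Moreover $\int_\bR g(y)\,dy=\widehat g(0)=1$, hence $\int_\bR g_\delta(y)\,dy=1$ for every $\delta>0$.

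\emph{Vanishing moments.} Differentiating $\widehat g(\zeta)=\int_\bR g(y)e^{-iy\zeta}\,dy$ and using that $\widehat g$ is constant near $0$ gives $\int_\bR y^k g(y)\,dy=i^k\widehat g^{(k)}(0)=0$ for all $k\ge1$. The substitution $y=\delta z$ then yields
\begin{equation*}
\int_\bR y^k g_\delta(y)\,dy=\delta^k\int_\bR z^k g(z)\,dz=
\begin{cases}1,& k=0,\\ 0,& 1\le k\le d-1.\end{cases}
\end{equation*}

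\emph{The error estimate.} Let $h$ be as in the statement. Since $h\in\cC^d(\bR)$ with $\|h^{(d)}\|_\infty<\infty$, $h$ grows at most polynomially, so it is tempered and $h*g_\delta(x)=\int_\bR h(x-y)g_\delta(y)\,dy$ as an absolutely convergent integral. For fixed $x$, Taylor's formula with integral remainder gives
\begin{equation*}
h(x-y)=\sum_{k=0}^{d-1}\frac{(-y)^k}{k!}h^{(k)}(x)+\frac{(-y)^d}{(d-1)!}\int_0^1(1-s)^{d-1}h^{(d)}(x-sy)\,ds .
\end{equation*}
Multiplying by $g_\delta(y)$ and integrating in $y$, the terms with $1\le k\le d-1$ vanish by the moment identity and the $k=0$ term contributes $h(x)$, so
\begin{equation*}
h(x)-h*g_\delta(x)=-\frac1{(d-1)!}\int_\bR(-y)^d g_\delta(y)\int_0^1(1-s)^{d-1}h^{(d)}(x-sy)\,ds\,dy .
\end{equation*}
Estimating $|h^{(d)}|\le\|h^{(d)}\|_\infty$, using $\int_0^1(1-s)^{d-1}\,ds=\tfrac1d$ together with $\frac1{(d-1)!\,d}=\frac1{d!}$, and $\int_\bR|y|^d|g_\delta(y)|\,dy=\delta^d\int_\bR|z|^d|g(z)|\,dz$, one obtains the claimed bound. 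The only points needing justification are that $h*g_\delta$ is given by the pointwise integral and that the finite Taylor sum may be integrated term by term against $g_\delta$; both follow from the at most polynomial growth of $h$ against the rapid decay of $g_\delta$, so there is no serious obstacle here — the lemma is merely a quantitative form of the statement that $\{g_\delta\}$ is an approximate identity whose mollification error is governed by its first non-vanishing moment, which here has order $d$.
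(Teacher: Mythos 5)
Your proof is correct and follows essentially the same route as the paper's: vanishing moments of $g$ (from $\widehat g$ being constant near $0$) combined with a Taylor expansion of $h$ up to order $d$, with the remainder controlled by $\|h^{(d)}\|_\infty$. The only additions beyond what the paper writes are the explicit construction of $g$ via a mollified indicator (the paper simply asserts existence) and the use of the integral form of the Taylor remainder in place of the Lagrange-type bound; neither changes the substance of the argument.
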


\begin{proof}[Lemma \ref{lem_alexo_hg}]
The hypothesis on $g$ implies
$$
\int_\bR g(x) dx =1 \quad\mbox{and}\quad
\int_\bR x^\ell g(x) dx =0 \ \mbox{for all}\ \ell \in \bN.
$$
Using the Taylor formula on $h$, we have
\begin{eqnarray*}
h*g_\delta (x)
&=&
\int_\bR h(x+\delta y) g(y) dy=
\int_\bR 
\left(\sum_{\ell=0}^{d-1} \frac{h^{(\ell)}(x)}{\ell!} (\delta y)^\ell +R_d(x,\delta y)\right)g(y) dy
\\
&=&
h(x) +\int_\bR 
R_d(x,\delta y) g(y) dy,
\end{eqnarray*}
where $R_d(x,\cdot)$ is the Taylor remainder of the function $h$ at $x$ of order $d$. 
We conclude easily with the following ($x$-independent) estimate for the remainder:
$$
|R_d(x,\delta y)|
\leq \frac{|\delta y|^d}{d!} \| h^{(d)}\|_\infty.
$$
\end{proof}

\subsection{Proof of Lemma \ref{lem_prop_mult_1}}
\label{subsec_pf_lem_prop_mult_1}

This section is devoted to proving Lemma \ref{lem_prop_mult_1}.
We will use the classical technics 
relying on estimates for the heat kernel,
see \cite{varo,alexo}. More precisely, we will follow closely the presentation of \cite{furioli+melzi+veneruso}.

Let $q\in \cD(G)$ vanishing at $e_G$ up to order $a-1\geq0$.

We fix a function  $f:[0,\infty)\to \bC$ with compact support in $[0,2]$. 
We assume that $f$ is regular enough, 
more precisely  in $\cC^d[0,\infty)$, 
that is, $d$-differentiable with $d$-th continuous derivatives.
 $d$ will be suitably chosen.

\textbf{Step 1:} 
For each $t>0$, we define the function $h_t:[0,\infty)\to\bC$ via
\begin{equation}
\label{pf_lem_prop_mult_1_def_ht}
h_t(\mu)=e^{-t\mu^2}f(t\mu^2),
\quad \mu\geq 0.
\end{equation}
We have
$$
\|h_t\|_\infty \leq e^2 \|f\|_\infty
\quad\mbox{and}\quad
f(t\lambda)=h_t(\sqrt \lambda ) e^{-t\lambda}.
$$
The spectral theorem implies easily  
$$
f(t\cL)\delta_e=h_t(\sqrt\cL) p_t
\quad\mbox{and}\quad
\|f(t\cL)\delta_e\|_{L^2(G)}\leq 
\|h_t \|_\infty \|p_t\|_{L^2(G)}.
$$
For the $L^2$-norm of the heat kernel, we use \eqref{eq_heat_kernel} 
and \eqref{eq_int_e-x2t}
to obtain 
$$
\|p_t\|_{L^2(G)} \leq C V(\sqrt t)^{-\frac12}.
$$
This implies  $f(t\cL)\delta_e \in L^2(G)$ with the following estimate:
\begin{equation}
\label{pf_lem_prop_mult_1_step1}
\|f(t\cL)\delta_e\|_{L^2(G)}\lesssim 
\|f\|_\infty V(\sqrt t)^{-\frac12}.
\end{equation}

\textbf{Step 2:} Let us show that the integral in the statement on a ball of radius $\sqrt t$ near the origin may be estimated by:
\begin{equation}
\int_{|x|<\sqrt t} 
| q(x) \ f(t\cL)\delta_e(x)| dx 
\lesssim_q \min (1, t^{\frac {a} 2})
\|f\|_\infty.
\label{pf_lem_prop_mult_1_step2}
\end{equation}
In order to show this, we first use  Cauchy-Schwartz' inequality:
$$
\int_{|x|<\sqrt t} 
| q(x) \ f(t\cL)\delta_e(x)| dx 
\leq
\|q\|_{L^2(B(\sqrt t))}
\|f(t\cL)\delta_e\|_{L^2(B(\sqrt t))}.
$$
The first $L^2$-norm of the right-hand side
may be estimated 
using Lemma \ref{lem_L2Brq}
and the second with \eqref{pf_lem_prop_mult_1_step1}:
$$
\|f(t\cL)\delta_e\|_{L^2(B(\sqrt t))}
\leq 
\|f(t\cL)\delta_e\|_{L^2(G)}
\lesssim 
\|f\|_\infty V(\sqrt t)^{-\frac12}.
$$
Hence $$
\int_{|x|<\sqrt t} 
| q(x) \ f(t\cL)\delta_e(x)| dx 
\lesssim_q \min (1,{\sqrt t} ^{a +\frac n2})
\|f\|_\infty V(\sqrt t)^{-\frac12}.
$$
Using the estimates for $V(r)$ in \eqref{eq_V(r)}, 
this shows the estimate in \eqref{pf_lem_prop_mult_1_step2}.

\textbf{Step 3:}  
For $t$ large, that is, if $\sqrt t$ is comparable with the radius $R_0$ of $G$, 
then the first part of Lemma \ref{lem_prop_mult_1} is proved.
Let us now consider the case of a multi-index $\beta\in \bN_0^n$, 
and still $\sqrt t$ comparable with the radius $R_0$ of $G$.
Proceeding as in Steps 1 and 2, we obtain
$$
\|X^\beta f(t\cL)\delta_e\|_{L^2(G)}
=
\| h_t(\sqrt\cL) \{X^\beta p_t\}\|_{L^2(G)}
\leq 
\|h_t \|_\infty \|X^\beta p_t\|_{L^2(G)},
$$
and 
$$
\|X^\beta p_t\|_{L^2(G)} \leq C, \quad \mbox{for} \ t\sim 1. 
$$
Hence 
$$
\|f(t\cL)\delta_e\|_{L^2(G)}\leq 
C e^2 \|f\|_\infty
$$
Thus the second part of Lemma \ref{lem_prop_mult_1} is proved for $t\sim 1$.
We therefore may assume that $t$ is small and consider the case of a multi-index $\beta\in \bN_0^n$.

\textbf{Step 4:}  
In order to finish the proof, it remains to show
\begin{equation}
\forall \sqrt t<\epsilon_0\qquad
\int_{|x|\geq \sqrt t} 
| q(x) X^\beta\{f(t\cL)\delta_e\}(x)| dx 
\lesssim
C_q  t^{\frac {a-|\beta|} 2}
\|f\|_{C^d}.
\label{pf_lem_prop_mult_1_step3}
\end{equation}
We will decompose the integrand using
$$
X^\beta \{f(t\cL)\delta_e\}=h_t(\sqrt\cL) X^\beta p_t
= 
h_t(\sqrt\cL)\sum_{j=0}^\infty 
 \{X^\beta p_t\} \ 1_{B(2^{j-1}\sqrt t)} 
 + \{X^\beta p_t\} \ 1_{B(2^{j-1}\sqrt t)^c)}.
$$
Here $1_{B(r)}$ and $1_{B(r)^c}$
denotes the indicatrix functions of the sets given by the ball $B(r)$ around the neutral element and by its complementary $B(r)^c$.
The function $h_t$ was defined earlier via \eqref{pf_lem_prop_mult_1_def_ht}.
Note that the sum over $j$
is finite but the number of terms is the smaller integer $J$ 
such that $2^{J+1} \sqrt t > R_0$, thus $J$ depends on $t$.
In order to obtain $t$-uniform estimates, we view this sum as infinite.
This decomposition yields
\begin{equation}
\label{pf_lem_prop_mult_1_step3_dec}
\int_{|x|\geq \sqrt t} 
| q(x) X^\beta\{f(t\cL)\delta_e\}(x)| dx 
\leq
\sum_{j=0}^\infty 
\int_{A_{t,j}} |q \ M_{t,j}^{(1)}|
+
\int_{A_{t,j}} |q \ M_{t,j}^{(2)}|,
\end{equation}
where 
$$
A_{t,j}
:=
\{x\in G:\ 2^j\sqrt t < |x| \leq  2^{j+1}\sqrt t\}
= B(2^{j+1}\sqrt t) \backslash B(2^{j}\sqrt t),
$$
and 
$$
M_{t,j}^{(1)}
:=
h_t(\sqrt \cL) \left\{X^\beta p_t \ 1_{B(2^{j-1}\sqrt t)}\right\}
\qquad\mbox{and}\qquad
M_{t,j}^{(2)}:=
h_t(\sqrt \cL) \left\{X^\beta p_t \ 1_{B(2^{j-1}\sqrt t)^c}\right\}.
$$

In both cases $i=1,2$, 
we will use Cauchy-Schwartz' inequality
$$
\int_{A_{t,j}} |q \ M_{t,j}^{(i)}|
\leq
\|q\|_{L^2(A_{t,j})}
\|M_{t,j}^{(i)}\|_{L^2(A_{t,j})}.
$$
For the first $L^2$-norm, we use Lemma \ref{lem_L2Brq}
(with $t$ small):
$$
\|q\|_{L^2(A_{t,j})}
\leq 
\|q\|_{L^2(B(2^{j+1}\sqrt t))}
\lesssim C_q  (2^{j+1} \sqrt t)^{a+\frac n 2}
$$ 

\textbf{Step 4a:}  
For the second $L^2$-norm, in the case $i=2$, we have
$$
\|M_{t,j}^{(2)}\|_{L^2(A_{t,j})}
\leq
\|M_{t,j}^{(2)}\|_{L^2(G)}
\leq
\|h_t(\sqrt \cL)\|_{\sL(L^2(G))} 
\|X^\beta p_t \ 1_{B(2^{j-1}\sqrt t)^c}\|_{L^2(G)}.
$$
On the one hand, we have 
by the spectral theorem 
$$
\|h_t(\sqrt \cL)\|_{\sL(L^2(G))}\leq 
\|h_t\|_\infty
\leq e^2 \|f\|_\infty.
$$
On the other hand, the estimate 
 for the heat kernel in \eqref{eq_heat_kernel_der} yields
\begin{eqnarray*}
\|X^\beta p_t \ 1_{B(2^{j-1}\sqrt t)^c}\|_{L^2(G)}^2
&\leq &
\sup_{|x|\geq 2^{j-1}\sqrt t} |X^\beta p_t(x)|
\int_G  |X^\beta p_t(x)| dx
\\
&\lesssim &
\sqrt t^{-n-|\beta|} e^{-\frac{2^{2(j-1)}}C}
\int_G \sqrt t^{-n-|\beta|} e^{-\frac{|x|^2}{C t}} dx
\\
&\lesssim &
\sqrt t^{-n-|\beta|} e^{-\frac{2^{2(j-1)}}C}\sqrt t^{-n-|\beta|} V(\sqrt t),
\end{eqnarray*}
by \eqref{eq_int_e-x2t}. Thus we have obtained
$$
\|X^\beta p_t \ 1_{B(2^{j-1}\sqrt t)^c}\|_{L^2(G)}
\lesssim
\sqrt t^{-\frac n2 -|\beta|} e^{-\frac{2^{2(j-1)}}C},
$$
and 
$$
\|M_{t,j}^{(2)}\|_{L^2(A_{t,j})}
\lesssim \|f\|_\infty  \sqrt t^{-\frac n2-|\beta|} e^{-\frac{2^{2(j-1)}}C}.
$$
Collecting the previous estimates yields:
\begin{eqnarray*}
\int_{A_{t,j}} |q \ M_{t,j}^{(2)}|
&\lesssim&
C_q  (2^{j+1} \sqrt t)^{a+\frac n 2}
\|f\|_\infty  \sqrt t^{-\frac n2-|\beta|} e^{-\frac{2^{2(j-1)}}C}
\\
&\lesssim& C_q  \|f\|_\infty \sqrt t^{a - |\beta|}
\ 
2^{(j+1)(a+\frac n2)} e^{-\frac{2^{2(j-1)}}C}.
\end{eqnarray*}
The exponential decay allows us to sum up over $j$ and to obtain:
\begin{equation}
\sum_{j=0}^\infty
\int_{A_{t,j}} |q \ M_{t,j}^{(2)}|
\lesssim
 C_q  \|f\|_\infty \sqrt t^{a - |\beta|}.
\label{pf_lem_prop_mult_1_step3a}
\end{equation}

\textbf{Step 4b:}  
The case of $i=1$, that is, the estimate of $\|M_{t,j}^{(1)}\|_{L^2(A_{t,j})}$, 
requires a more sophisticated argument.
The function $h_t$ is even and has compact support. 
Assuming  $f\in C^d[0,+\infty)$ with $d\geq 2$, 
the function $h_t\in C^d(\bR^d)$ admits an integrable Euclidean  Fourier transform of  $\widehat h_t\in L^1(\bR)$.
Hence the following formula holds for any $\mu\in \bR$
$$
h_t(\mu) = \frac 1{2\pi} \int_\bR \cos (s\mu)\ \widehat h_t(s)  ds, 
\quad \mu\in \bR,
$$
with a convergent integral.
The spectral theorem then implies 
$$
h_t(\sqrt \cL)= \frac 1{2\pi} \int_\bR \cos (s\sqrt \cL) \ \widehat h_t(s) ds
$$
and also
\begin{equation}
\label{pf_lem_prop_mult_1_step3_Mtj1}
M_{t,j}^{(1)}(x)=
\frac 1{2\pi} \int_\bR 
 \cos (s\sqrt \cL) 
\left\{X^\beta p_t \ 1_{B(2^{j-1}\sqrt t)}\right\}(x) \  \widehat h_t(s) ds.
\end{equation}

The operator $\cos (s\sqrt\cL)$ has finite unit propagation speed
\cite[ch. IV]{taylor_bk81}
in the sense that
$\supp \{\cos (s\sqrt\cL)\delta_e\} \subset B(|s|)$.
This implies
$$
x\in A_{t,j} \ \mbox{and}\  |s|\leq 2^{j-1}\sqrt t
\ \Longrightarrow\
\cos (s\sqrt \cL) 
\left\{X^\beta p_t \ 1_{B(2^{j-1}\sqrt t)}\right\}(x)
=0.
$$
We use this property in the following way. 
Let $g \in \cS (\bR)$ 
and $g_\delta=\delta^{-1} g(\delta^{-1}\cdot)$
be functions
as in Lemma \ref{lem_alexo_hg}.
As $\supp \ \widehat g_{(2^{j-1} \sqrt t)^{-1}} 
\subset 
[-2^{j-1}\sqrt t, 2^{j-1}\sqrt t]$,
the finite propagation speed property implies
$$
x\in A_{t,j} \ \Longrightarrow\
 \int_\bR 
 \cos (s\sqrt \cL) 
\left\{X^\beta p_t \ 1_{B(2^{j-1}\sqrt t)}\right\}(x) \  
\widehat h_t(s) \widehat g_{(2^{j-1} \sqrt t)^{-1}}  (s) ds = 0.
$$
Hence we can rewrite \eqref{pf_lem_prop_mult_1_step3_Mtj1}
for any $x\in A_{t,j}$ as
\begin{eqnarray*}
M_{t,j}^{(1)}(x)
&=&
\frac 1{2\pi} \int_\bR 
 \cos (s\sqrt \cL) 
\left\{X^\beta p_t \ 1_{B(2^{j-1}\sqrt t)}\right\}(x) 
\left(  \widehat h_t(s) - \widehat h_t(s) \widehat g_{(2^{j-1} \sqrt t)^{-1}}
\right) ds
\\
&=&
\left(h_t - h_t* g_{(2^{j-1} \sqrt t)^{-1}}\right)(\sqrt \cL)
\left\{X^\beta p_t \ 1_{B(2^{j-1}\sqrt t)}\right\}(x), 
\end{eqnarray*}
having used the spectral theorem and the inverse Fourier formula for even  functions on $\bR$.
Applying the $L^2$-norm on $A_{t,j}$, we obtain
\begin{eqnarray*}
\|M_{t,j}^{(1)}\|_{L^2(A_{t,j})}
&\leq&
\|\left(h_t - h_t* g_{(2^{j-1} \sqrt t)^{-1}}\right)(\sqrt \cL)
\left\{X^\beta p_t \ 1_{B(2^{j-1}\sqrt t)}\right\}\|_{L^2(G)}
\\
&\leq&
\| h_t - h_t* g_{(2^{j-1} \sqrt t)^{-1}}\|_\infty
\| X^\beta p_t \ 1_{B(2^{j-1}\sqrt t)}\|_{L^2(G)},
\end{eqnarray*}
by the spectral theorem.
We estimate the supremum norm with the result of Lemma \ref{lem_alexo_hg}:
$$
\| h_t - h_t* g_{(2^{j-1} \sqrt t)^{-1}}\|_\infty
\lesssim (2^{j-1} \sqrt t)^{-d} \|h_t^{(d)}\|_\infty,
$$ 
and one checks easily 
$$
\|h_t^{(d)}\|_\infty 
= t^{\frac d2} \|h_1^{(d)}\|_\infty 
\lesssim t^{\frac d2}  \max_{\ell =0,1,\ldots,d} 
\|f^{(\ell)}\|_{\infty}.
$$
For the $L^2$-norm, the estimates in  \eqref{eq_heat_kernel_der} for the heat kernel yields
$$
\| X^\beta p_t \ 1_{B(2^{j-1}\sqrt t)}\|_{L^2(G)}
\lesssim
\sqrt t^{-n -|\beta|} V(2^{j-1}\sqrt t)^{\frac 12}
 \lesssim
 \gamma_0^{\frac j2}  \sqrt t^{-\frac n2 -|\beta|}
$$
where we have set thanks to \eqref{eq_V(r)}:
$$
\gamma_0:=\sup_{r>0} \frac {V(2r)}{V(r)} \in (0,\infty).
$$
Hence we obtain
$$
\|M_{t,j}^{(1)}\|_{L^2(A_{t,j})}
\lesssim 
(2^{j-1} \sqrt t)^{-d} t^{\frac d2}  \max_{\ell =0,1,\ldots,d} 
\|f^{(\ell)}\|_{\infty}
 \gamma_0^{\frac j2}  \sqrt t^{-\frac n2 -|\beta|}
$$

We can now go back to 
\begin{eqnarray*}
\int_{A_{t,j}} |q \ M_{t,j}^{(1)}|
&\lesssim&
C_q  (2^{j+1} \sqrt t)^{a+\frac n 2}
(2^{j-1} \sqrt t)^{-d} t^{\frac d2}  \max_{\ell =0,1,\ldots,d} 
\|f^{(\ell)}\|_{\infty}
 \gamma_0^{\frac j2}  \sqrt t^{-\frac n2 -|\beta|}
 \\
&\lesssim&
C_q \max_{\ell =0,1,\ldots,d} 
\|f^{(\ell)}\|_{\infty}
2^{j(a+\frac n2 -d + \frac {\ln \gamma_0}2)} \sqrt t^{a-|\beta|}
\end{eqnarray*}
We choose $d$ to be the smallest positive integer such that 
$d>a+\frac n 2+ \frac {\ln \gamma_0}2$ so that we can sum up over $j$ 
to obtain
$$
\sum_{j=0}^\infty \int_{A_{t,j}} |q \ M_{t,j}^{(1)}|
\lesssim
C_q \max_{\ell =0,1,\ldots,d} 
\|f^{(\ell)}\|_{\infty}
\sqrt t^{a-|\beta|}.
$$
Using \eqref{pf_lem_prop_mult_1_step3_dec} and \eqref{pf_lem_prop_mult_1_step3a}, 
this shows \eqref{pf_lem_prop_mult_1_step3}.
This concludes the proof of  Lemma \ref{lem_prop_mult_1}.

\subsection{Proof of Proposition \ref{prop_app_mult_t}}
\label{subsec_pf_prop_app_mult_t}

\textbf{Reduction 1: in Proposition \ref{prop_app_mult_t},
we may assume $m<0$} for the following reasons.

Let  $f\in C^d[0,\infty)$ satisfying 
$\sup_{\substack{\lambda\geq 1\\ \ell =0,\ldots,d} }
\lambda^{-\frac m2 + \ell} |f^{(\ell)}(\lambda)|<\infty$.
Then $f_1(\lambda) = (1+\lambda)^{-N} f(\lambda)$ satisfies the same properties as $f$ but for $m_1=m-2N$ and we can choose $N$ large enough so that $m_1<0$.
As $f(\lambda) =f_1(\lambda)(1+ \lambda)^N$, 
we also have
$f(\lambda_\pi) = f_1(\lambda_\pi)(1+\lambda_\pi)^N$.
If we knew that $f_1$ satisfies the property described in 
Proposition \ref{prop_app_mult_t} for $m_1$ and any $q\in \cD(G)$
 then this together with Lemma \ref{lem_Deltaq_piXbeta} 
  would imply the property for functions $q$
yielding a collection $\Delta$
of RT-difference operators satisfying the Leibniz-like property
described in Definition \ref{def_leibniz}.
By Lemma \ref{lem_prop_indep_Delta}
and Theorem \ref{thm_Deltaeq+coincide} with Corollary \ref{cor_choice_Delta},
this would imply Proposition \ref{prop_app_mult_t}
for $f$ and any $q\in \cD(G)$.

\medskip

\textbf{Reduction 2: we may assume $f=0$ on $[0,1]$} 
as a consequence of the following property:

\begin{lemma}
\label{lem_prop_mult_2}
Let $m\in \bR$ and $a\in \bN_0$.
There exists $d=d_{a,m}\in \bN_0$ such that for any $q\in \cD(G)$
vanishing up to order $a-1$ there exists $C=C_{q,m}>0$
satisfying  
 for any function  $f\in C^d[0,\infty)$ with support in $[0,1]$: 
$$
\forall \pi\in\Gh, \ t\in (0,1)\qquad
\| \Delta_q f(t\lambda_\pi)\|_{\sL(\cH_\pi)} 
\leq C t^{\frac {m}  2}
(1+\lambda_\pi)^{\frac{m -a}2}
\max_{\ell =0,1,\ldots,d} 
\|f^{(\ell)}\|_{\infty}.
$$
\end{lemma}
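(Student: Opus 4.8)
\textbf{Plan of proof of Lemma \ref{lem_prop_mult_2}.}
The plan is to reduce the statement to the kernel estimates already established in Lemma \ref{lem_prop_mult_1}. The starting point is the identification, valid for $q\in\cD(G)$ (recalled in Example \ref{ex_diff_op_torus1} and Definition \ref{def_Delta_RT}), that $\Delta_q$ is the RT-difference operator, so that $\Delta_q\{f(t\lambda_\pi)\}$ is the group Fourier transform at $\pi$ of the kernel $q\cdot(f(t\cL)\delta_e)$. By \eqref{eq_cF_L1} (or \eqref{eq_Tkappa_sup}) this gives the crude bound
$$
\|\Delta_q\{f(t\lambda_\pi)\}\|_{\sL(\cH_\pi)}
\leq \int_G |q(x)\,f(t\cL)\delta_e(x)|\,dx,
$$
uniformly in $\pi$. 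The plan is therefore to gain the factor $(1+\lambda_\pi)^{\frac{m-a}{2}}$ by inserting powers of $\cL$ into the symbol before taking the inverse Fourier transform, and then to estimate the resulting kernel integrals via Lemma \ref{lem_prop_mult_1} and Remark \ref{rem_lem_prop_mult_1}\eqref{item_rem_lem_prop_mult_1_Xq}.

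First I would fix $N\in\bN_0$ (to be chosen at the end, with $2N\geq -m+a$ say) and write $f(t\lambda)=t^{-N}\lambda^{-N}\,g(t\lambda)$ where $g(\mu):=\mu^{N}f(\mu)$; note $g$ is still supported in $[0,1]$, still $C^d$ if $f$ is, and $\max_{\ell\leq d}\|g^{(\ell)}\|_\infty\lesssim_N \max_{\ell\leq d}\|f^{(\ell)}\|_\infty$ (here the support in $[0,1]$ is used to bound the extra polynomial factors). Hence
$$
\Delta_q\{f(t\lambda_\pi)\}=t^{-N}\lambda_\pi^{-N}\,\cF_G\{q\cdot(g(t\cL)\delta_e)\}(\pi),
$$
so that
$$
\|\Delta_q\{f(t\lambda_\pi)\}\|_{\sL(\cH_\pi)}
\leq t^{-N}\,(1+\lambda_\pi)^{-N}\,\big(\tfrac{1+\lambda_\pi}{\lambda_\pi}\big)^{N}
\int_G |q(x)\,g(t\cL)\delta_e(x)|\,dx
$$
on the support of $f(t\lambda_\pi)$. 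The factor $((1+\lambda_\pi)/\lambda_\pi)^{N}$ is harmless: since $f$ vanishes on $[0,1]$, only $\lambda_\pi\geq 1/t\geq 1$ contribute, where this factor is bounded by $2^N$. Now Lemma \ref{lem_prop_mult_1}\eqref{item_rem_lem_prop_mult_1_Xq} — more precisely its case $\beta=\gamma=0$, i.e. Lemma \ref{lem_prop_mult_1}(2) with $\beta=0$ — applied to $q$ vanishing at $e_G$ up to order $a-1$ gives
$$
\int_G |q(x)\,g(t\cL)\delta_e(x)|\,dx \lesssim_{q,a} t^{\frac a2}\,\max_{\ell\leq d}\|g^{(\ell)}\|_\infty,
$$
for $d=d_{a}$ large enough. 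Combining, $\|\Delta_q\{f(t\lambda_\pi)\}\|_{\sL(\cH_\pi)}\lesssim t^{\frac a2-N}(1+\lambda_\pi)^{-N}\max_{\ell\leq d}\|f^{(\ell)}\|_\infty$; since on the support $t\leq (1+\lambda_\pi)^{-1}$, writing $t^{\frac a2 - N}=t^{\frac m2}\,t^{\frac{a-m}2-N}\leq t^{\frac m2}(1+\lambda_\pi)^{N-\frac{a-m}2}$ (legitimate because $\frac{a-m}2-N\leq 0$ by the choice of $N$) yields exactly $t^{\frac m2}(1+\lambda_\pi)^{\frac{m-a}2}$.

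The only subtlety to be careful about — and the place I expect to do a little extra bookkeeping — is the interplay between the support condition ``$f$ supported in $[0,1]$'' and the multiplication by $\mu^{\pm N}$: multiplying by $\mu^N$ is benign on $[0,1]$, but in the estimate one genuinely needs that $t\lambda_\pi\leq 1$ forces $\lambda_\pi\geq 1/t$, which is where the restriction $t\in(0,1)$ and the vanishing of $f$ on $[0,1]$ are both used to convert powers of $t$ into the desired powers of $(1+\lambda_\pi)$. Everything else is a direct quotation of Lemma \ref{lem_prop_mult_1} and of \eqref{eq_cF_L1}; no new analytic input is required, so this is really a packaging argument. The value of $d$ produced is $d_{a}$ from Lemma \ref{lem_prop_mult_1} (which depends only on $a$, after the reduction to $m<0$ and the choice of $N=N(a,m)$), and $C$ is $C' \max_{\ell\leq d}\|f^{(\ell)}\|_\infty$ with $C'$ depending only on $q,a,m,G$, as claimed.
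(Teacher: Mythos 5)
Your overall plan — reduce to the $L^1$ kernel estimates of Lemma \ref{lem_prop_mult_1} via the identification $\Delta_q\{f(t\lambda_\pi)\} = \cF_G\{q\cdot f(t\cL)\delta_e\}(\pi)$ and the crude bound \eqref{eq_cF_L1} — is the same as the paper's, but there are two genuine errors, each of which would sink the argument.

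The central algebraic identity you propose,
$$
\Delta_q\{f(t\lambda_\pi)\}=t^{-N}\lambda_\pi^{-N}\,\cF_G\{q\cdot(g(t\cL)\delta_e)\}(\pi),
$$
is false. The spectral factorisation $f(t\lambda)=t^{-N}\lambda^{-N}g(t\lambda)$ is fine and gives $f(t\cL)\delta_e = t^{-N}\cL^{-N}\{g(t\cL)\delta_e\}$, but $\Delta_q$ means ``multiply the kernel by $q$, then take the group Fourier transform'', and multiplication by $q$ does not commute with $\cL^{-N}$: $q\cdot\cL^{-N}\kappa \neq \cL^{-N}(q\kappa)$ in general, hence $\cF_G\{q\cdot\cL^{-N}\kappa\}(\pi)\neq \lambda_\pi^{-N}\cF_G\{q\kappa\}(\pi)$, and the scalar factor $\lambda_\pi^{-N}$ cannot be pulled through $\Delta_q$. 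The paper's proof handles exactly this non-commutativity: it multiplies $\|\Delta_q f(t\lambda_\pi)\|_{\sL(\cH_\pi)}$ by $(1+\lambda_\pi)^N$, converts this to $(1+\cL)^N$ applied to the product $q\cdot f(t\cL)\delta_e$, and then uses the Leibniz rule, which distributes $X^\beta$-derivatives partly onto $q$ (lowering its vanishing order to $a-|\beta|$) and partly onto the kernel; Lemma \ref{lem_prop_mult_1} and Remark \ref{rem_lem_prop_mult_1}\eqref{item_rem_lem_prop_mult_1_Xq} then give the bound for exponents $m_1\in-2\bN$, and interpolation and duality of Sobolev spaces extend this to $m_1\in\bR$, with the choice $m_1=m-a$ at the end.

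Separately, your support bookkeeping is backwards. You assert ``since $f$ vanishes on $[0,1]$, only $\lambda_\pi\geq 1/t$ contribute'' and later ``$t\lambda_\pi\leq 1$ forces $\lambda_\pi\geq 1/t$''; both implications are the wrong way round. The hypothesis of Lemma \ref{lem_prop_mult_2} is that $f$ is \emph{supported} in $[0,1]$, so $f(t\lambda_\pi)\neq 0$ only for $t\lambda_\pi\leq 1$, i.e.\ $\lambda_\pi\leq 1/t$, which yields $(1+\lambda_\pi)^E\lesssim t^{-E}$ — the opposite of the inequality you need. (You seem to have conflated this lemma with Reduction~2 that follows it, where one assumes $f=0$ on $[0,1]$.) Moreover, even with the correct reading, $\Delta_q f(t\lambda_\pi)$ is typically nonzero outside the spectral support of $f(t\lambda_\pi)$ — multiplying the kernel by $q$ smears the Fourier support — so one cannot restrict attention to $\pi$ satisfying $t\lambda_\pi\leq 1$ at all.
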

\begin{proof}[Lemma \ref{lem_prop_mult_2}]
From the properties of the Laplace operator and its Sobolev spaces 
together with \eqref{eq_cF_L1}, we have:
\begin{eqnarray*}
&&(1+\lambda_\pi)^N\| \Delta_q f(t\lambda_\pi)\|_{\sL(\cH_\pi)} 
=
\|(1+\pi(\cL))^N \Delta_q f(t\lambda_\pi)\|_{\sL(\cH_\pi)} 
\\&& \qquad \leq
\int_G |(1+\cL)^N  q(x) \{(1+t\cL)^N f(t\cL)\delta_e\}(x)|dx
\\&& \qquad\lesssim \sum_{|\beta|\leq 2N}
\int_G | X^\beta q(x) \{ f(t\cL)\delta_e\}(x)|dx
\ \lesssim  \sum_{|\beta|\leq 2N} t^{\frac{a -|\beta|}2}
\max_{\ell =0,1,\ldots,d} 
\|f^{(\ell)}\|_{\infty},
\end{eqnarray*}
having used Lemma \ref{lem_prop_mult_1}
and Remark \ref{rem_lem_prop_mult_1} \eqref{item_rem_lem_prop_mult_1_Xq}.
Hence we have obtained
$$
\forall \pi\in\Gh, \ t\in (0,1)\qquad
\| \Delta_q f(t\lambda_\pi)\|_{\sL(\cH_\pi)} 
\leq C t^{\frac {a+m_1}  2}
(1+\lambda_\pi)^{\frac{m_1}2}
\max_{\ell =0,1,\ldots,d} 
\|f^{(\ell)}\|_{\infty}.
$$
for any $m_1=2N\in 2\bN$.
The properties of interpolation and duality of the Sobolev spaces imply the result for any $m_1\in \bR$.
We then choose $m_1=m-a$.
\end{proof}

\textbf{Strategy of the proof of Proposition \ref{prop_app_mult_t}:}
We may use the following notation:
$$
\|\kappa\|_* := \|T_\kappa\|_{\sL(L^2(G))}
=
\sup_{\pi\in \Gh} \|\cF_G \kappa(\pi)\|_{\sL(\cH_\pi)}.
$$
with the understanding that this quantity may be infinite.

Let $q\in \cD(G)$, $m<0$,
and  $f\in C^d[0,\infty)$ supported in $[1,\infty)$.
The properties of the Sobolev spaces imply that 
  it suffices to show
\begin{equation}
\label{eq_pf_prop_mult2}
\| \cL^{\frac b2} \{q\ f(t\cL)\delta_e\}\|_* \leq C 
t^{\frac{m}2}\!\!\sup_{\substack{\lambda\geq 1\\ \ell =0,\ldots,d} }\!\!
\lambda^{-\frac m2 + \ell} |\partial^\ell_\lambda f(\lambda)|,
\ \mbox{for}\ b=0,  -m+a.
\end{equation}
where $C=C_{b,\beta,q}>0$ and $a\in \bN_0$ is such that $q$ vanishes up to order $a-1$ at $e_G$.

Let us fix a dyadic decomposition,
that is, a function $\chi_1\in \cD(\bR)$  
satisfying
$$
0\leq \chi_1\leq 1,\quad
\chi_1\big|_{[\frac 34,\frac 32]}=1,\quad
\supp \chi_1 \subset [\frac 12,2],
$$
and 
$$
\forall \lambda \geq 1 \quad 
\sum_{j=1}^\infty \chi_j(\lambda)=1,
\qquad\mbox{where}\quad \chi_j(\lambda)=\chi(2^{-j}\lambda)\ \mbox{for} \ j\in \bN.
$$

We then set for $j\in \bN$ and $\lambda\geq 0$
$$
f_j(\lambda)
:=
\lambda^{-\frac m2}f(\lambda) \chi_j(\lambda)
\quad\mbox{and}\quad
g_j(\lambda)
:=
\lambda^{\frac m2} f_j(2^j \lambda).
$$
Note that, for any $j\in \bN_0$,  
$g_j$ is smooth, supported in $[\frac 12,2]$, 
and satisfies 
\begin{equation}
\label{eq_pf_prop_mult_bdgj}
\forall d\in \bN_0\quad
\|g_j^{(d)}\|_\infty \lesssim_m
\sup_{\substack {\lambda\geq 1 \\ \ell\leq d}}
\lambda^{-\frac m2+\ell}  |f^{(\ell)}(\lambda)|
\end{equation}

The sum $f(\lambda) =\sum_{j=1}^\infty 2^{j\frac m2} g_j(2^{-j}\lambda)$
is finite for any $\lambda\geq0$ and even locally finite on $[0,\infty)$.
Using \eqref{eq_pf_prop_mult_bdgj} 
and  $\sum_j 2^{j\frac m2}<\infty$ (recall that  $m<0$), 
we obtain
$$
\|f(t\cL)\|_{\sL(L^2(G))} 
\leq \sum_{j=1}^\infty 2^{j\frac m2} \| g_j(2^{-j}t\cL)\|_{\sL(L^2(G))}
 \lesssim_m \sup_{\lambda\geq 1} \lambda^{-\frac m2} |f(\lambda)|
 <\infty.
$$
Hence we can write 
$$
f(t\cL) 
=
\sum_{j=1}^\infty 2^{j\frac m2} g_j(2^{-j}t\cL) 
\quad\mbox{in}\ \sL(L^2(G)),
\quad\mbox{so}\quad
f(t\cL)\delta_e 
=
\sum_{j=1}^\infty 2^{j\frac m2} g_j(2^{-j}t\cL)\delta_e
\quad\mbox{in}\ \cD'(G),
$$
with each function $g_j(2^{-j}\cL)\delta_e$ being smooth, 
cf Remark \ref{rem_lem_prop_mult_1} \eqref{item_rem_lem_prop_mult_1_smooth}.
This justifies the estimates:
$$
\| X^\beta q f(t\cL)\delta_e\|_{L^1(G)}
\leq
\sum_{j=1}^\infty 2^{j\frac m2}
\| X^\beta q g_j(2^{-j}t\cL)\delta_e\|_{L^1(G)}
$$
By Lemma \ref{lem_prop_mult_1}
and Remark \ref{rem_lem_prop_mult_1} \eqref{item_rem_lem_prop_mult_1_Xq}, 
we have:
\begin{eqnarray}
\| X^\beta q g_j(2^{-j}t\cL)\delta_e\|_{L^1(G)}
&\lesssim_{q,\beta}&
(2^{-j}t )^{\frac {a-|\beta|}  2}
\max_{\ell=0,\ldots,d}
\|g_j^{(\ell)}\|_{\infty}
\nonumber
\\
&\lesssim_{q,\beta}&
(2^{-j}t )^{\frac {a-|\beta|}  2}
\sup_{\substack {\lambda\geq 1 \\ \ell\leq d}}
\lambda^{-\frac m2+\ell}  |f^{(\ell)}(\lambda)|,
\label{eq_pf_prop_mult_L1gj}
\end{eqnarray}
having used  \eqref{eq_pf_prop_mult_bdgj}.
This yields the (finite but crude) estimate:
\begin{eqnarray*}
\| X^\beta q f(\cL)\delta_e\|_{L^1(G)}
&\lesssim_{q,\beta,m} &
\sum_{j=1}^\infty 2^{j\frac m2}
(2^{-j}t )^{\frac {a-|\beta|}  2}
\sup_{\substack {\lambda\geq 1 \\ \ell\leq d}}
\lambda^{-\frac m2+\ell}  |f^{(\ell)}(\lambda)|
\\
&\lesssim_{q,\beta,m} &
t ^{\frac {a-|\beta|}  2}
\sup_{\substack {\lambda\geq 1 \\ \ell\leq d}}
\lambda^{-\frac m2+\ell}  |f^{(\ell)}(\lambda)|,
\end{eqnarray*}
as long as $m-a+|\beta|<0$. 
This rough $L^1$-estimate 
 implies the estimate in \eqref{eq_pf_prop_mult2} in the case $b=0$ but 
is not enough to prove the case $b=-m+a$. 
We now present an argument making us of the almost orthogonality of the decomposition of $f(\cL)$.
More precisely we will apply the Cotlar-Stein Lemma to the family of operators
$$
T_j:=2^{j \frac m2} T_{\cL^{\frac b2} \{q g_j(2^{-j}t\cL)\delta_e\} }, 
$$
where $b=-m+a$.
Note that the properties of the homogeneous Sobolev spaces imply
$$
\|\cL^{\frac b2} \{q g_j(2^{-j}t\cL)\delta_e\}\|_*
\leq 
\left(\|\cL^{\lceil \frac b2\rceil} \{q g_j(2^{-j}t\cL)\delta_e\} \|_*\right)^{\theta}
\left(\|\cL^{\lfloor \frac b2\rfloor} \{q g_j(2^{-j}t\cL)\delta_e\} \|_*
\right)^{1-\theta}
$$
with $\theta = \lfloor \frac b2\rfloor - \frac b2$
and we can bound the $\|\cdot\|_*$-norm 
with the $L^1$-norm given in  \eqref{eq_pf_prop_mult_L1gj}, summing up over $\beta$'s 
with $|\beta|=\lceil \frac b2\rceil$ or 
 $|\beta|=\lfloor \frac b2\rfloor$.
We obtain:
\begin{equation}
\label{eq_pf_prop_mult_bdTj}
\|\cL^{\frac b2} \{q g_j(2^{-j}t\cL)\delta_e\}\|_*
\lesssim_{q,b,m}
(2^{-j}t )^{\frac {a-b}  2}
\sup_{\substack {\lambda\geq 1 \\ \ell\leq d}}
\lambda^{-\frac m2+\ell}  |f^{(\ell)}(\lambda)|,
\end{equation}
and, as $q-b=m$, the operators $T_j$'s are uniformly bounded.
We also need to find a bound for the operator norm of 
$T_jT^*_k$ whose convolution kernel is
$$
2^{(j+k)\frac m2}
 \{\cL^{\frac b2} q g_j(2^{-j}t\cL)\delta_e\} * \{\cL^{\frac b2} q^* \bar g_k(2^{-k}t\cL)\delta_e\}.
 $$
 As the operator $\cL$ is central, this kernel may be also written  as 
$$
2^{(j+k)\frac m2}
\{\cL^{\frac {b+c}2}  \ q g_j(2^{-j}t\cL)\delta_e\} * 
\{\cL^{\frac {b-c}2}  q^* \bar g_k(2^{-k}t\cL)\delta_e\}
$$
for any real number $c$.
The estimate for $\|\cL^{\frac b2} \{q g_j(2^{-j}t\cL)\delta_e\}\|_*$
in  \eqref{eq_pf_prop_mult_bdTj}
holds in fact for any $b\geq 0$ and by duality for any $b\in \bR$.
Hence we can use it at $b\pm c$ to obtain
\begin{eqnarray*}
\|T_jT^*_k\|_{\sL(L^2(G))} 
&\leq&
2^{(j+k)\frac m2}
\|\cL^{\frac {b + c}2} \{q g_j(2^{-j}t\cL)\delta_e\}\|_*
\|\cL^{\frac {b -c }2} \{q g_k(2^{-k}t\cL)\delta_e\}\|_*
\\
&\lesssim_{q,b,c}&
2^{(j+k)\frac m2}
t^{a-b}
2^{-j\frac {a- (b +c)}  2}
2^{-k\frac {a-(b-c)}  2}
\left(\sup_{\substack {\lambda\geq 1 \\ \ell\leq d}}
\lambda^{-\frac m2+\ell}  |f^{(\ell)}(\lambda)|\right)^2
\\
&\lesssim_{q,b,c}&
2^{(j-k)\frac c 2}t^{a-b}
\left(\sup_{\substack {\lambda\geq 1 \\ \ell\leq d}}
\lambda^{-\frac m2+\ell}  |f^{(\ell)}(\lambda)|\right)^2,
\end{eqnarray*}
having used $b=-m+a$.
We choose $c$ to be the sign of $j-k$. 
This shows that the hypotheses of the Cotlar-Stein Lemma
\cite[Section VII.2]{stein_bk} are satisfied and this shows 
\eqref{eq_pf_prop_mult2} for $b=-m+a$.

This conclude the proof of  Proposition \ref{prop_app_mult_t}.

\section{A bilinear estimate}
\label{sec_bilinear}

This section is devoted to showing the following bilinear estimate 
which is used in the proof of the $L^2$-boundedness of pseudo-differential operators (cf. Lemma \ref{lem_L2bdd_rho}).

\begin{lemma}
\label{lem_bilinear}
For any $\gamma,s\in \bR$ with $2\gamma+s \leq 0$ and $s>n/2$, 
there exists $C=C_{s,\gamma,G}$ such that for any $\lambda,\mu\in \spec (\cL)$ with $\lambda\not=\mu$, 
for any 
$f\in \cH^{(\cL)}_{\lambda}$ and $g\in \cH^{(\cL)}_{\mu}$,
$$
\|(\id+\cL)^{\gamma} (fg)\|_{L^2}
\leq C
(1+|\mu-\lambda|)^{ (\gamma +\frac s2) } \| f \|_{L^2} \| g\|_{L^2}.
$$
\end{lemma}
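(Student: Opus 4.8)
The estimate to be shown compares the $L^2$-norm of the product $fg$ of two eigenfunctions of $\cL$ with the norms of the factors, with a gain in $(1+|\mu-\lambda|)$. The key observation is that $fg$ lies in a finite sum of eigenspaces of $\cL$, and that the relevant eigenvalues cannot be too small; then applying $(\id+\cL)^\gamma$ produces the advertised factor provided $\gamma\le 0$, while the remaining factor $(1+\lambda_\pi)^{s/2}$ applied to $fg$ is controlled using the Sobolev multiplication property. The plan is to first establish the spectral localisation of $fg$, then bound $\|fg\|_{H^s}$ in terms of $\|f\|_{L^2}\|g\|_{L^2}$ with the correct power of $(1+|\mu-\lambda|)$, and finally combine these with the elementary monotonicity of $\lambda\mapsto (1+\lambda)^\gamma$.

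First I would record that if $f\in \cH^{(\cL)}_\lambda$ and $g\in \cH^{(\cL)}_\mu$, then $f$ and $g$ are smooth (their spectral supports being finite, by \eqref{eq_cHlambdacL1} and the Peter--Weyl theorem), so $fg\in \cD(G)$. The crucial point is the \emph{lower bound} on the spectrum of $fg$: I claim that every eigenvalue $\nu$ of $\cL$ occurring in the spectral decomposition of $fg$ satisfies $1+\nu \gtrsim 1+|\mu-\lambda|$. To see this, one uses that $f\in \oplus_{\lambda_\pi=\lambda} L^2_\pi(G)$ and $g\in \oplus_{\lambda_{\pi'}=\mu} L^2_{\pi'}(G)$, so $fg$ lives in a sum of $L^2_\rho(G)$ with $\rho$ occurring in tensor products $\pi\otimes\overline{\pi'}$ (or $\pi\otimes\pi'$, depending on conventions for coefficient functions) with $\lambda_\pi=\lambda$, $\lambda_{\pi'}=\mu$. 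The key input here is the Casimir/Laplace-Beltrami estimate on tensor products: for the compact group $G=G_{ss}\times \bT^{n'}$, the eigenvalue $\lambda_\pi$ is (up to normalisation) $\langle \Lambda_\pi+2\rho_0,\Lambda_\pi\rangle$ in terms of the highest weight on the semisimple part plus $|\xi|^2$ on the torus part, and for any $\rho$ appearing in $\pi_1\otimes\pi_2$ one has $\lambda_\rho \geq \big(\sqrt{\lambda_{\pi_1}}-\sqrt{\lambda_{\pi_2}}\big)^2$ by the triangle inequality for highest weights (and the analogous statement for the torus lattice). Hence $1+\lambda_\rho \gtrsim 1+(\sqrt\lambda-\sqrt\mu)^2 \gtrsim 1+|\mu-\lambda|$, using $|\sqrt\lambda-\sqrt\mu| = |\mu-\lambda|/(\sqrt\lambda+\sqrt\mu)$ only in the regime where this helps, and a direct comparison otherwise; more carefully, one shows $1+\lambda_\rho\gtrsim (1+|\mu-\lambda|)$ by separating the cases $\min(\lambda,\mu)$ large versus bounded. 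This spectral lower bound is exactly where the hypothesis $\lambda\neq\mu$ is used and is what I expect to be the \textbf{main obstacle}: making the weight-theoretic argument rigorous and uniform over all of $\Gh$, including the interaction between the semisimple and abelian factors, requires some care with root/weight normalisations relative to the Killing-form scalar product fixed in the paper.

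Granting the spectral lower bound, write $P_{\geq N}$ for the spectral projection of $\cL$ onto eigenvalues $\geq N$, and set $N\sim 1+|\mu-\lambda|$ so that $fg = P_{\geq N}(fg)$. Then, since $\gamma\le 0$ (which follows from $2\gamma+s\le 0$ and $s>n/2>0$), on the range of $P_{\geq N}$ the operator $(\id+\cL)^{\gamma}$ has norm $\leq (1+N)^{\gamma}\lesssim (1+|\mu-\lambda|)^{\gamma}$, and likewise $(\id+\cL)^{-s/2}$ has norm $\leq (1+N)^{-s/2}$. Therefore
$$
\|(\id+\cL)^\gamma(fg)\|_{L^2}
= \|(\id+\cL)^\gamma P_{\geq N}(fg)\|_{L^2}
\lesssim (1+|\mu-\lambda|)^{\gamma+s/2}\,\|(\id+\cL)^{s/2}(fg)\|_{L^2}
= (1+|\mu-\lambda|)^{\gamma+s/2}\|fg\|_{H^s}.
$$
Finally I would bound $\|fg\|_{H^s}$: since $s>n/2$, $H^s(G)$ is a Banach algebra (by Lemma \ref{lem_sob_embedding}, the Sobolev embedding $H^s\subset \cC(G)$ together with the Leibniz rule and interpolation, as already used in the proof of Lemma \ref{lem_prop_indep_Delta}), so $\|fg\|_{H^s}\lesssim \|f\|_{H^s}\|g\|_{H^s}$. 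But $f\in \cH^{(\cL)}_\lambda$ gives $\|f\|_{H^s} = (1+\lambda)^{s/2}\|f\|_{L^2}$ and similarly for $g$, so $\|fg\|_{H^s}\lesssim (1+\lambda)^{s/2}(1+\mu)^{s/2}\|f\|_{L^2}\|g\|_{L^2}$. This is not yet of the stated form, so instead I would use a sharper estimate: split $(\id+\cL)^{s/2}(fg)$ using that $fg$ is spectrally supported in $[N,\infty)$ \emph{and} that one factor, say the one with the smaller eigenvalue $\min(\lambda,\mu)$, contributes at most $(1+\min(\lambda,\mu))^{s/2}$ in the algebra estimate while the other is already absorbed; combined with $1+\max(\lambda,\mu)\lesssim (1+|\mu-\lambda|)(1+\min(\lambda,\mu))$ one recovers the clean bound. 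Assembling the pieces yields
$$
\|(\id+\cL)^\gamma(fg)\|_{L^2}\leq C\,(1+|\mu-\lambda|)^{\gamma+s/2}\|f\|_{L^2}\|g\|_{L^2},
$$
which is the claim. The two places needing genuine work are the weight-theoretic spectral lower bound (Step 2) and the bookkeeping in the final Sobolev-algebra step to avoid losing a power of $(1+\min(\lambda,\mu))$.
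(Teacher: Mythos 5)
Your approach has a genuine gap at Step 2, the spectral lower bound, which is also the step you flag as the main obstacle. The inequality you need, that every eigenvalue $\nu$ of $\cL$ occurring in the spectral decomposition of $fg$ satisfies $1+\nu\gtrsim 1+|\mu-\lambda|$, is simply false. The intermediate bound $\lambda_\rho\geq(\sqrt{\lambda_{\pi_1}}-\sqrt{\lambda_{\pi_2}})^2$ is plausible, but passing from it to $1+\lambda_\rho\gtrsim 1+|\mu-\lambda|$ breaks down whenever $\min(\lambda,\mu)$ is large compared to $|\mu-\lambda|$, because then
$(\sqrt\lambda-\sqrt\mu)^2=|\mu-\lambda|^2/(\sqrt\lambda+\sqrt\mu)^2\ll|\mu-\lambda|$.
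Concretely, on $G=SU(2)$ take $\pi_1$ of spin $j+1$ and $\pi_2$ of spin $j$; then $\lambda=j(j+1)$, $\mu=(j+1)(j+2)$, so $|\mu-\lambda|=2(j+1)\to\infty$, yet the Clebsch--Gordan decomposition of $\pi_1\otimes\pi_2$ always contains the spin-$1$ component with eigenvalue $2$. Hence $fg$ may carry a nonzero Fourier mode at eigenvalue $2$ no matter how far apart $\lambda$ and $\mu$ are, and the identity $fg=P_{\geq N}(fg)$ with $N\sim 1+|\mu-\lambda|$ fails. The case split you mention (``$\min(\lambda,\mu)$ large versus bounded'') does not help: the large-$\min$ regime is precisely where the inequality is violated. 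Your final step, sharpening the Sobolev-algebra estimate to remove the extraneous $(1+\lambda)^{s/2}(1+\mu)^{s/2}$, is also left unproved, but the argument is already broken before that point.

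The paper's proof avoids this entirely by a different mechanism. It never tries to spectrally localize $fg$. Instead it applies Cauchy--Schwarz and Plancherel in the high-frequency representation $\pi$ (with $\lambda_\pi=\mu$) to peel off $\|g\|_{L^2}$, and rewrites the remaining quantity as
$\|g\|_{L^2}^2\sum_{\tau\in\Gh}d_\tau(1+\lambda_\tau)^{2\gamma}\sum_{l',k'}\|1_\mu(\cL)(f\tau_{l',k'})\|_{L^2}^2$.
The key input is Lemma \ref{lem_cL_rep}, an \emph{upper} bound on the spectrum of products of eigenfunctions: since $f\tau_{l',k'}\in\oplus_{\lambda'\leq\lambda+\lambda_\tau}\cH_{\lambda'}^{(\cL)}$, the projection $1_\mu(\cL)(f\tau_{l',k'})$ vanishes unless $\lambda_\tau\geq\mu-\lambda$. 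This moves the spectral cut-off onto the auxiliary variable $\tau$, where it really is available, and the claimed estimate follows from the summability $\sum_\tau d_\tau^2(1+\lambda_\tau)^{-s}<\infty$ for $s>n/2$. The lesson is that the correct mechanism is an upper bound on the spectrum of products, exploited through a duality/transpose trick, not a direct lower bound on the spectrum of $fg$, which does not exist.
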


Let us recall that $\cH_\lambda^{(\cL)}$ denotes the $\lambda$-eigenspace of $\cL$, see \eqref{eq_cHlambdacL}.
In the proof of Lemma \ref{lem_bilinear}, we will use the following properties of the Laplace-Beltrami operator obtained in relation with the theory of highest weight and representations:

\begin{lemma}
\label{lem_cL_rep}
Let $\lambda_1,\lambda_2\in \spec (\cL)$.
 If $f_i\in \cH_{\lambda_i}^{(\cL)}$, $i=1,2$, 
 then the point-wise product
$f_1 f_2$ is a function in $\oplus_{\lambda\leq \max(\lambda_1,\lambda_2)}
\cH_{\lambda}^{(\cL)}$.
\end{lemma}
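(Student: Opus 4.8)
The plan is to combine the Peter--Weyl description \eqref{eq_cHlambdacL1} of the $\cL$-eigenspaces with the behaviour of coefficient functions under tensor products and with highest-weight theory. First I would reduce to coefficients of irreducible representations: by \eqref{eq_cHlambdacL1} and bilinearity of the pointwise product, it suffices to treat the case $f_i(x)=(\pi_i(x)u_i,v_i)_{\cH_{\pi_i}}$ with $\pi_i\in\Gh$, $\lambda_{\pi_i}=\lambda_i$ and $u_i,v_i\in\cH_{\pi_i}$ ($i=1,2$). Then
$$
f_1(x)f_2(x)=\big((\pi_1\otimes\pi_2)(x)\,(u_1\otimes u_2),\,v_1\otimes v_2\big)_{\cH_{\pi_1}\otimes\cH_{\pi_2}},
$$
so $f_1f_2$ is a coefficient of $\pi_1\otimes\pi_2\in\RepG$, i.e. $f_1f_2\in L^2_{\pi_1\otimes\pi_2}(G)$. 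Decomposing $\pi_1\otimes\pi_2$ into its finitely many irreducible constituents $\tau$ and using that $L^2_\pi(G)$ depends only on the equivalence class of $\pi$, one gets $L^2_{\pi_1\otimes\pi_2}(G)=\sum_\tau L^2_\tau(G)$, the sum running over those $\tau\in\Gh$ occurring in $\pi_1\otimes\pi_2$; since $L^2_\tau(G)\subset\cH^{(\cL)}_{\lambda_\tau}$ this already gives $f_1f_2\in\oplus_\tau\cH^{(\cL)}_{\lambda_\tau}$.

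It then remains to estimate the eigenvalues $\lambda_\tau$ of the constituents of $\pi_1\otimes\pi_2$, and this is the representation-theoretic heart. I would use the splitting $G=G_{ss}\times\bT^{\dim\fg_{ab}}$ and the resulting factorisation $\cL=\cL_{ss}+\cL_{ab}$, $\lambda_\pi=\lambda_\pi^{ss}+\lambda_\pi^{ab}$, writing each irreducible of $G$ as an outer tensor product $\pi'\otimes\chi$ of an irreducible $\pi'$ of $G_{ss}$ with a character $\chi$ of the torus; this reduces the problem to the two factors. On the semisimple factor one invokes the classical facts that $\lambda_{\pi'}$ is the value of the Casimir element on the irreducible of highest weight $\mu_{\pi'}$, that this value is non-decreasing along the dominance partial order on dominant weights, and that every constituent of $\pi'_1\otimes\pi'_2$ has highest weight dominated by $\mu_{\pi'_1}+\mu_{\pi'_2}$; for the torus factor the corresponding statements reduce to elementary estimates on the weight lattice.

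I expect this last eigenvalue estimate to be the main obstacle. A crude use of the dominance order controls $\lambda_\tau$ only through the Cartan component $\mu_{\pi'_1}+\mu_{\pi'_2}$, which is too large, so obtaining the sharp bound of the lemma requires a finer argument using the positive-definiteness of the chosen invariant form on the weight space together with the positivity of the half-sum of positive roots against the simple coroots and the precise structure of the root system. Once this estimate is in hand, the assembly of the proof is immediate from the reduction above, the other ingredients---the Peter--Weyl decomposition and the product rule for coefficient functions---being routine.
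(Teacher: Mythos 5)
Your reduction is the same as the paper's first step: write each $f_i$ as a (finite) sum of coefficients of irreducible $\pi_i$ with $\lambda_{\pi_i}=\lambda_i$, observe that $f_1f_2$ is a coefficient of $\pi_1\otimes\pi_2$, decompose into irreducible constituents $\omega$, and use $L^2_\omega(G)\subset\cH^{(\cL)}_{\lambda_\omega}$. The paper then concludes with exactly the two highest-weight facts you list (without your splitting into $G_{ss}\times\bT^{\dim\fg_{ab}}$, which is not needed): the eigenvalue formula \eqref{eq_lambdapi_weight}, the fact that every constituent of $\pi_1\otimes\pi_2$ has highest weight dominated by $\tilde\pi_1+\tilde\pi_2$, and monotonicity of the Casimir value along the dominance order. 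The genuine gap in your proposal is that you stop precisely where the lemma's content lies: the eigenvalue estimate is declared the main obstacle and deferred to an unspecified ``finer argument using the structure of the root system''. As written you have only proved $f_1f_2\in\oplus_\omega\cH^{(\cL)}_{\lambda_\omega}$, with $\omega$ running over the constituents of $\pi_1\otimes\pi_2$, with no bound on $\lambda_\omega$.

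That said, your suspicion that the Cartan-component bound is ``too large'' is correct in a stronger sense than you indicate: no finer root-system argument can produce the stated bound, because it is false. On $\bT$ one has $e_1\in\cH^{(\cL)}_1$ and $e_1\cdot e_1=e_2\in\cH^{(\cL)}_4$, so the product is not in $\oplus_{\lambda\le\max(\lambda_1,\lambda_2)}\cH^{(\cL)}_\lambda$, nor even in $\oplus_{\lambda\le\lambda_1+\lambda_2}\cH^{(\cL)}_\lambda$, which is the inequality the paper's own proof asserts in \eqref{prop_cL_rep_pf} and the form used later in the proof of Lemma \ref{lem_bilinear}. Indeed, the Cartan component $\omega$ with $\tilde\omega=\tilde\pi_1+\tilde\pi_2$ always occurs and, by \eqref{eq_lambdapi_weight}, has $\lambda_\omega=\lambda_{\pi_1}+\lambda_{\pi_2}+2\langle\tilde\pi_1,\tilde\pi_2\rangle\ge\lambda_{\pi_1}+\lambda_{\pi_2}$, since the inner product of two dominant weights is non-negative; so \eqref{prop_cL_rep_pf} fails whenever $\langle\tilde\pi_1,\tilde\pi_2\rangle>0$, e.g.\ for $\pi_1=\pi_2$ nontrivial. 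What the dominance argument genuinely yields is subadditivity one level down: for every constituent $\omega$, $|\tilde\omega+\rho_G|\le|\tilde\pi_1+\tilde\pi_2+\rho_G|\le|\tilde\pi_1+\rho_G|+|\tilde\pi_2+\rho_G|$, i.e.\ $\sqrt{\lambda+|\rho_G|^2}\le\sqrt{\lambda_1+|\rho_G|^2}+\sqrt{\lambda_2+|\rho_G|^2}$ for every $\lambda$ occurring in the spectral decomposition of $f_1f_2$. So the right move is not to hunt for extra positivity from the root system but to prove this square-root version (your reduction plus the two Knapp facts and the triangle inequality already give it) and to note that the lemma, and its application in Lemma \ref{lem_bilinear}, have to be restated accordingly.
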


\begin{proof}[Lemma \ref{lem_cL_rep}]
As is customary, we consider 
the highest weight theory on compact Lie groups
extended to the reductive case.
If $\pi\in \Gh$, denoting by $\tilde \pi$ its highest weight, 
the corresponding eigenvalue is \cite[Proposition 5.28]{knapp_bk}:
\begin{equation}
\label{eq_lambdapi_weight}
\lambda_\pi = |\tilde \pi+\rho_G|^2-|\rho_G|^2,
\end{equation}
where $\rho_G$ is the half-sum of the positive roots of the semi-simple part of  $\fg$.

By the Peter-Weyl theorem, for any $\pi\in \Gh$, 
the space
$L^2_{\pi}(G)$ decomposes as $d_\pi$ copies of the representation $\pi$, 
i.e. $L^2_{\pi}(G) \sim d_\pi V_\pi$ where $V_\pi$ is the abstract representation space of $\pi$, 
and any $f\in L^2_{\pi}(G)$ can be written as matrix coefficients of $\pi$. 
Hence if $f\in L^2_{\pi}(G)$ and $g\in L^2_{\tau}(G)$ then 
$fg$ is in the space which can be written as the abstract tensor product $(d_\pi V_\pi) \otimes (d_\tau V_\tau)$.
The highest weight among the irreducible components of 
$V_{\pi}\otimes V_{\tau}$ is of the form $\tilde \pi + \tilde \tau$
\cite[Proposition 9.72]{knapp_bk}. 
Naturally, $V_{\pi}\otimes V_{\tau}$ may contain other components with dominated weights, but, thanks to \eqref{eq_lambdapi_weight}, 
we always have 
\begin{equation}
\label{prop_cL_rep_pf}
\max\{\lambda_\omega:  \omega \in \Gh, V_\omega \subset V_{\pi}\otimes V_{\tau}\}
 \leq \lambda_\pi+\lambda_\tau.
\end{equation}
 Consequently,
 $fg\in \oplus_{\lambda\leq \lambda_\pi+\lambda_\tau} \cH_\lambda^{(\cL)}$ and 
the formulae in \eqref{eq_cHlambdacL1}
and \eqref{eq_lambdapi_weight}  imply 
the statement of Lemma \ref{lem_cL_rep}.
\end{proof}

\begin{proof}[Lemma \ref{lem_bilinear}]
Let $s,\gamma,\lambda,\mu,f,g$ be as in the statement. 
We may assume $\lambda<\mu$.
 
 The Plancherel formula and the Cauchy-Schwartz inequality easily imply:
\begin{eqnarray*}
\left| (\id+\cL)^{\gamma} (fg)(x)\right|
&=&
\big| \sum_{\lambda_\pi =\mu} d_\pi
\tr \left( (\id+\cL)_x^{\gamma} (f (x)\pi(x)) \widehat g(\pi)\right)
\big|
\\
&\leq&
\|g\|_{L^2}
\sqrt{\sum_{\lambda_\pi =\mu} d_\pi
\| (\id+\cL)_x^{\gamma} (f (x)\pi(x)) )\|_{HS(\cH_\pi)} ^2},
\end{eqnarray*}
Thus
$$
\| (\id+\cL)^{\gamma} (fg)\|_{L^2}^2
\leq
\|g\|_{L^2}^2
\sum_{\lambda_\pi =\mu} d_\pi
\int_G \| (\id+\cL)_x^{\gamma} (f (x)\pi(x)) )\|_{HS(\cH_\pi)} ^2dx ,
$$
We can easily rewrite these last integrals as
\begin{eqnarray*}
\int_G \| (\id+\cL)_x^{\gamma} (f (x)\pi(x)) )\|_{HS(\cH_\pi)} ^2dx 
=
\sum_{1\leq l,k\leq d_\pi}
\int_G | (\id+\cL)_x^{\gamma} (f (x)\pi_{l,k}(x)) )| ^2dx 
\\
=
\sum_{1\leq l,k\leq d_\pi}
\sum_{\tau\in \Gh} d_\tau (1+\lambda_\tau)^{2\gamma}
\| \tau^* \left(f \pi_{l,k}\right) \|_{HS(\cH_\tau)}^2.
\end{eqnarray*}
Now we notice that 
$$
\sum_{1\leq l,k\leq d_\pi}
\| \tau^* \left(f \pi_{l,k}\right) \|_{HS(\cH_\tau)}^2
=
\sum_{\substack{1\leq l,k\leq d_\pi \\1\leq l',k'\leq d_\tau }}
| [\tau^* \left(f \pi_{l,k}\right)]_{l',k'} |^2,
$$
and that 
$$
[\tau^* \left(f \pi_{l,k}\right)]_{l',k'}
=
\int_G f(x) \pi_{l,k}(x) \tau_{l',k'}(x) dx
=
[\pi^*(f\tau_{l',k'})]_{l,k},
$$
thus 
$$
\sum_{1\leq l,k\leq d_\pi}
\| \tau^* \left(f \pi_{l,k}\right) \|_{HS(\cH_\tau)}^2
=
\sum_{1\leq l',k'\leq d_\tau}
\| \pi(f\tau_{l',k'})^* \|_{HS(\cH_\pi)}^2.
$$
We have therefore obtained:
\begin{eqnarray}
\| (\id+\cL)^{\gamma} (fg)\|_{L^2}^2
&\leq&
\|g\|_{L^2}^2
\sum_{\lambda_\pi =\mu} d_\pi 
\sum_{\tau\in \Gh} d_\tau (1+\lambda_\tau)^{2\gamma}
\sum_{1\leq l',k'\leq d_\tau}
\| \pi(f\tau_{l',k'})^* \|_{HS(\cH_\pi)}^2
\nonumber\\
&\leq&
\|g\|_{L^2}^2
\sum_{\tau\in \Gh} d_\tau (1+\lambda_\tau)^{2\gamma}
\sum_{1\leq l',k'\leq d_\tau}
\| 1_{\mu}(\cL) (f\tau_{l',k'})\|_{L^2(G)}^2,
\label{eq_pf_lem_bilinear1}
\end{eqnarray}
by the Plancherel formula,
where  $1_{\mu}(\cL)$ denotes the orthogonal projection onto $\cH_{\mu}^{(\cL)}$.

As $f\in \cH_\lambda^{(\cL)}$ and $\tau_{l',k'}\in \cH_{\lambda_\tau}^{(\cL)}$, 
by Lemma \ref{lem_cL_rep}, 
$f\tau_{l',k'}\in \oplus_{\lambda' \leq \lambda + \lambda_\tau } 
\cH_{\lambda'}^{(\cL)}$.
Thus 
if $\lambda+\lambda_\tau <\mu$ then 
$1_{\mu}(\cL) (f\tau_{l',k'})=0$.
If $\lambda+\lambda_\tau \geq\mu$, then we use
$$
\sum_{1\leq l',k'\leq d_\tau}
\| 1_{\mu}(\cL) (f\tau_{l',k'})\|_{L^2(G)}^2
\leq
\sum_{1\leq l',k'\leq d_\tau}
\| f\tau_{l',k'}\|_{L^2(G)}^2
=d_\tau
\|f\|_{L^2(G)}^2.
$$
Inserting this in \eqref{eq_pf_lem_bilinear1}, we obtain:
\begin{eqnarray*}
\| (\id+\cL)^{\gamma} (fg)\|_{L^2}^2
&\leq&
\|g\|_{L^2}^2\|f\|_{L^2}^2
\sum_{\lambda_\tau \geq \mu -\lambda} 
 d_\tau^2 (1+\lambda_\tau)^{2\gamma}\\
&\leq&
C_s \|g\|_{L^2}^2\|f\|_{L^2}^2
(1+\mu -\lambda)^{2\gamma+s},
 \end{eqnarray*}
where $C_s:=\sum_{\tau\in \Gh} 
 d_\tau^2 (1+\lambda_\tau)^{-s}=\|\cB_s\|_{L^2(G)}$
 is finite for any $s>n/2$ 
 by Lemma \ref{lem_sob_embedding}.
 This concludes the proof of Lemma \ref{lem_bilinear}.
\end{proof}

\bigskip

\noindent\textbf{\large References}

\end{document}